\renewcommand{\a}{\alpha}
\newcommand{\s}{\psi}
\newcommand{\mcM}{\mathcal{M}}
\newcommand\@avprod[2]{%
  {\sbox0{$\m@th#1\prod$}%
   \vphantom{\usebox0}%
   \ooalign{%
     \hidewidth
     \smash{\vrule height\dimexpr\ht0+1pt\relax depth\dimexpr\dp0+1pt\relax}%
     \hidewidth\cr
     $\m@th#1\prod$\cr
   }%
  }%
}
\newcommand{\avprod}{\mathop{\mathpalette\@avprod\relax}\displaylimits}
\numberwithin{equation}{section}
\newtheorem{theorem}{Theorem}[section]
\newtheorem{lemma}[theorem]{Lemma}
\newtheorem{proposition}[theorem]{Proposition}
\newtheorem{remark}[theorem]{Remark}
\newtheorem{definition}[theorem]{Definition}
\newcommand{\al}{\alpha}
\newcommand{\ga}{\gamma}
\newcommand{\Ga}{\Gamma}
\newcommand{\de}{\delta}
\newcommand{\De}{\Delta}
\newcommand{\e}{\epsilon}
\newcommand{\ka}{\kappa}
\newcommand{\si}{\sigma}
\newcommand{\om}{\omega}
\newcommand{\wt}{\widetilde}
\newcommand{\wh}{\widehat}
\newcommand{\bI}{{\bf I}}
\newcommand{\bfe}{{\bf e}}
\newcommand{\BR}{{\rm{Br}}}
\newcommand{\R}{\mathbb{R}}
\newcommand{\T}{\mathbb{T}  }
\newcommand{\I}{\mathbb{I}}
\newcommand{\rap}{{\rm RapDec}(R)}
\newcommand{\supp}{\textup{supp}}
\newcommand{\bfv}{\mathbf{v}}
\newcommand{\lam}{\lambda}
\newcommand{\bfn}{\mathbf{n}}
\newcommand{\cD}{\mathcal{D}}
\newcommand{\cU}{\mathcal{U}}
\newcommand{\cJ}{\mathcal{J}}
\begin{document}

\title[]{Sharp local smoothing estimates for curve averages}

\author{Shengwen Gan} \address{ Shengwen Gan\\  Department of Mathematics\\ Sun Yat-sen University\\ Guangzhou, 510275, P.R. China} \email{shengwengan2018@gmail.com}

\author{Dominique Maldague} \address{ Dominique Maldague\\  Department of Mathematics\\ University of Cambridge, UK}\email{dominiquemaldague@gmail.com}

\author{Changkeun Oh}\address{ Changkeun Oh\\ Department of Mathematical Sciences and RIM, Seoul National University, Republic of Korea} \email{changkeun.math@gmail.com}

\begin{abstract}
We prove sharp local smoothing estimates for curve averages in all dimensions. As a corollary, we prove the sharp $L^p$ boundedness of the helical maximal operator in $\R^4$, which was previously known only for $\R^2$ and $\R^3$.  We also improve  previously known results in higher dimensions. {The main new ingredient is a novel wave envelope estimate adapted to moment curves, which is a powerful tool in the proof of the local smoothing estimate.}
\end{abstract}

\maketitle



\section{Introduction}

Consider a $C^{\infty}$  curve $\gamma_n: [0,1] \rightarrow \mathbb{R}^n$ and the {dilated averaging} operator
 \begin{equation}
    A_tf(x):=\int_{\R}f(x-t\gamma_n(s))\chi(s)ds.
\end{equation}
Here, $\chi(s)$ is a {fixed} smooth bump function supported on $[0,1]$. We say that the curve $\gamma_n$ is nondegenerate if 
\begin{equation}\label{12}
    \det{(\gamma_n'(s), \gamma_n''(s), \ldots, \gamma_n^{(n)}(s) )} \neq 0
\end{equation}
for all $s \in [0,1]$. {The prototypical example of the curve is the moment curve $\gamma_n(s)=(s,s^2,\ldots,s^n)$.}
Our main results are sharp local smoothing estimates for curve averages in all dimensions.

\begin{theorem}\label{mainthm}
    For $2 \leq p <\infty$ and $\sigma < \sigma(p,n):= \min \{\frac1n,\frac1n(\frac12+\frac2p),\frac2p \}$, we have
    \begin{equation}\label{0703.13}
        \Big( \int_{1}^2 \|A_tf\|_{L_{\sigma}^p(\R^n) }^p \,dt \Big)^{\frac1p} \lesssim_{p,\gamma_n,\chi} \|f\|_{L^p(\R^n)}
    \end{equation}
    for all functions $f \in L^p(\R^n)$.
\end{theorem}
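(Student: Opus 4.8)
The plan is to reduce to a single frequency annulus and then bound the resulting space--time function using two estimates for the cone over $\gamma_n$: sharp decoupling and a wave-envelope (refined square-function) inequality for nondegenerate curves, the latter being the genuinely new input. Concretely, I would first run a Littlewood--Paley decomposition $f=\sum_{\lambda}f_\lambda$ over dyadic $\lambda\ge1$, with $\widehat{f_\lambda}$ supported in $\{|\xi|\sim\lambda\}$. Since the range $\sigma<\sigma(p,n)$ in \eqref{0703.13} is open, it suffices to prove, for every $\eps>0$ and every $\lambda$,
\begin{equation}\label{singlescale}
\Big(\int_I\|A_tf_\lambda\|_{L^p(\R^n)}^p\,dt\Big)^{1/p}\;\lesssim_{\eps}\;\lambda^{-\sigma(p,n)+\eps}\,\|f_\lambda\|_{L^p(\R^n)},
\end{equation}
and sum in $\lambda$ by the triangle inequality: once $\eps<\sigma(p,n)-\sigma$ the series $\sum_\lambda\lambda^{\sigma-\sigma(p,n)+\eps}$ converges and $\|f_\lambda\|_{L^p}\lesssim\|f\|_{L^p}$. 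Viewing $F(x,t):=A_tf_\lambda(x)$ as a function on $\R^{n+1}$, its space--time Fourier transform is supported within $O(1)$ of the cone $\Sigma_\lambda:=\{(\xi,-\gamma_n(s)\cdot\xi):|\xi|\sim\lambda,\ s\in I\}$ over $\gamma_n$; the oscillatory integral in $s$ is governed by stationary phase at $\gamma_n'(s)\cdot\xi=0$ --- nondegeneracy \eqref{12} guarantees a nondegenerate stationary point off a thin degenerate set --- so after a decomposition of the $s$-integral (equivalently, of $\xi$) into $\lambda^{-1/2}$-sectors $\theta$ one writes $F=\sum_\theta F_\theta$, each $F_\theta$ a superposition of parallel wave packets with amplitudes controlled by the van der Corput bound $\lesssim\lambda^{-1/n}$.

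Next I would reduce \eqref{singlescale} to the four exponents $p\in\{2,4,4(n-1),\infty\}$ and interpolate the resulting mixed Sobolev estimates: the bounds at $p=2$ and $p=4$ give $\sigma(p,n)=1/n$ on $2\le p\le4$; at $p=4$ and $p=4(n-1)$ they give $\sigma(p,n)=\tfrac1n(\tfrac12+\tfrac2p)$ on $4\le p\le4(n-1)$; and at $p=4(n-1)$ and $p=\infty$ they give $\sigma(p,n)=\tfrac2p$ for $p\ge4(n-1)$ (for $n=2$ the middle regime is empty and $p\in\{2,4,\infty\}$ suffices). The endpoint $p=2$ is immediate from Plancherel and $\big|\int e^{-i\gamma_n(s)\cdot\eta}\chi(s)\,ds\big|\lesssim|\eta|^{-1/n}$ (van der Corput with \eqref{12}), with no $\eps$ loss, and $p=\infty$ is the trivial $\|A_t\|_{L^\infty\to L^\infty}\lesssim1$. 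For the two deep endpoints $p\in\{4,4(n-1)\}$ I would bound $\|F\|_{L^p(\R^n\times I)}$ using the new wave-envelope estimate for nondegenerate curves in $\R^n$ --- an $\ell^p$-refined square-function inequality in the spirit of Guth--Maldague--Wang, valid at the relevant critical exponent --- transferred to the cone $\Sigma_\lambda$, together with sharp decoupling for $\Sigma_\lambda$ (available from the decoupling theory for nondegenerate curves) to organize the passage between sectors. Summing over the $\sim\lambda^{1/2}$ sectors, going from $\ell^p$ to $\ell^2$ by H\"older, and inserting the fixed-time $L^2$ bound and the wave-packet estimates above, one arrives at $\lambda^{-1/n+\eps}$ at $p=4$ and $\lambda^{-1/(2(n-1))+\eps}$ at $p=4(n-1)$. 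The Fourier decay estimates for nondegenerate curves in $\R^n$ --- in particular their $L^p$-averaged refinements over annuli --- feed both into the proof of the wave-envelope estimate (as the base case of an induction on scales) and into the broad--narrow dichotomy.

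The main obstacle is the wave-envelope estimate for nondegenerate curves in $\R^n$ at the sharp exponent: this is where \eqref{12} is used quantitatively, and its proof should proceed by an induction on scales / broad--narrow argument combined with a rescaling reducing a general nondegenerate $\gamma_n$ to the model moment curve while preserving \eqref{12}. A secondary, technical difficulty is propagating the clean single-scale, single-sector inequalities back to honest functions --- handling the van der Corput tails, the non-compactly-supported wave packets, and the passage between $\Sigma_\lambda$ and $\gamma_n$ --- but this is routine once the core estimates are in hand.
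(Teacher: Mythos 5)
Your global architecture agrees with the paper's: Littlewood--Paley reduction to a single annulus $|\xi|\sim\lambda$, treating $A_tf_\lambda$ as a space--time function whose multiplier lives near a two-dimensional cone in $\R^{n+1}$, disposing of $p=2$ (van der Corput/Plancherel) and $p=\infty$ trivially, and interpolating the hard endpoints $p=4$ and $p=4(n-1)$, which are handled by new wave-envelope estimates for nondegenerate curves proved by induction on scale and dimension together with decoupling. The endpoint exponents you compute are also correct.

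The genuine gap is in your reduction step, specifically the sentence asserting that nondegeneracy gives ``a nondegenerate stationary point off a thin degenerate set,'' so that one may decompose into $\lambda^{-1/2}$-sectors with a uniform van der Corput amplitude $\lesssim\lambda^{-1/n}$. For $n\ge 3$ the phase $s\mapsto t\gamma_n(s)\cdot\xi$ can degenerate to any order up to $n$, the degenerate regions are not negligible, and both the decay rate of $\widehat{\mu_\Gamma}$ and the shape of its essential support vary continuously between the extremes: the decay ranges from $\lambda^{-1/2}$ (nondegenerate stationary phase) down to $\lambda^{-1/n}$ (maximal degeneracy), with correspondingly anisotropic supports. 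If you use only the worst-case amplitude $\lambda^{-1/n}$ on $\lambda^{-1/2}$-sectors, you cannot reach the regimes $\sigma(p,n)=\frac1n(\frac12+\frac2p)$ and $\sigma(p,n)=\frac2p$; conversely, the better-decay regions come with longer planks, so the geometry and the decay must be tracked together. This is exactly the obstruction that limited earlier work to $n\le 4$ (where the vanishing equations for the derivatives of the phase could be solved explicitly), and the paper's replacement for it --- the admissible $(\vec\delta,\vec\nu)$-plank decomposition of the frequency space, the associated partition-of-unity and cancellation lemmas, and the resulting exact description of $\widehat{\mu_\Gamma}$ in Theorem \ref{fourierdecay} --- is one of its principal new ideas, not a routine matter. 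Relatedly, after this decomposition the main estimate (Theorem \ref{05.29.thm28}) involves wave packets translated by points $\mathbf{n}_s$ on the cone and carries an explicit factor $\delta^{-1}$ tied to the scale $\vec\delta$ of the plank family; your sketch, by fixing the single scale $\lambda^{-1/2}$, has no mechanism to produce or exploit this multi-scale structure.
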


The local smoothing estimate was first observed by \cite{MR1098614}, who noticed that the $L^p$ boundedness of Bourgain's circular maximal function follows from a local smoothing estimate.
The main application of Theorem \ref{mainthm} is the $L^p$ boundedness of a helical maximal operator.
Define the helical maximal operator by
\begin{equation}
    M_{\gamma_n}f(x):=\sup_{t>0}|A_tf(x)|.
\end{equation}
The following is a consequence of Theorem \ref{mainthm}. This implication is standard; see Section 2 of \cite{MR4861588}.

{\begin{theorem}\label{0703.thm12}
Let $n \ge 2$. Then the maximal operator $M_{\gamma_n}$ is bounded on $L^p(\mathbb{R}^n)$; that is,
\[
\| M_{\gamma_n} f \|_{L^p(\mathbb{R}^n)} \lesssim_{p,\gamma_n,\chi}  \| f \|_{L^p(\mathbb{R}^n)},
\]
whenever
\[
p >
\begin{cases}
n, & n = 2,3,4, \\[4pt]
2(n-2), & n \ge 5.
\end{cases}
\]
\end{theorem}}

The helical maximal operator has a long history. Stein considered the spherical maximal operator 
\begin{equation}
    M_{\mathrm{sph}}f(x):= \sup_{t>0} \Big|\int_{S^{n-1} }f(x-tw) d\sigma^{n-1}(w) \Big|
\end{equation}
where $d\sigma^{n-1}$ is the normalized surface measure on the unit sphere $S^{n-1} \subset \R^n$. In \cite{MR420116},
Stein proved the sharp $L^p$ boundedness of the operator for dimensions greater than two. The two-dimensional case remained open for a while and was proved by Bourgain \cite{MR874045}. 

In the unpublished survey of Christ from the late 1980s\footnote{See page 62 of \cite{MR2288738}.},  he posed the question of determining the range of $p$ for which the $L^p$ boundedness of the helical maximal operator holds true. This question can be thought of as a natural extension of Bourgain's theorem to higher dimensions. It is believed that the operator $M_{\gamma_n}f$ is $L^p$ bounded if and only if $p>n$. {Bourgain's argument bounding the circular maximal function also confirms that $M_{\gamma_2}$ is $L^p$ bounded if and only if $p>2$}. For $n=3$, after partial progress in \cite{MR2288738}, it was independently proved by \cite{MR4411734} and \cite{MR4861588}.  Our theorem proves the conjecture for $n=4$. For $n \geq 5$, prior to our work, the best known bound was obtained by \cite{MR4549710}, and our theorem improves theirs. {The range of $p$ that we obtain for the $L^p$ boundedness of $M_{\gamma_n}$ is the best possible using the local smoothing estimate, as observed in \cite{BH}. }

\medskip

As mentioned, the local smoothing estimate implies the $L^p$ boundedness of the helical maximal operator for some range of $p$.
The sharp local smoothing estimate for $n=2$ was proved in the celebrated work by \cite{MR4151084}\footnote{More precisely, a square function estimate for a cone in $\R^3$ was proved by \cite{MR4151084}, and the sharp local smoothing estimate is a corollary of their square function estimate. We refer to \cite{MR1173929} for the implication.}, and has been open for $n  \geq 3$. By testing {standard} examples, one can see that the local smoothing estimate \eqref{0703.13} fails without the condition $\sigma < \min \{ \frac1n, \frac2p \}$. {Recently, \cite{BH} modified an example in \cite{MR4340226} to demonstrate an additional restriction on $\sigma$, which led to the conjecture solved by Theorem \ref{mainthm}.} 
{Note that there are two critical exponents of $p$ in Theorem \ref{mainthm}: $p=4$ and $p=4n-4$.} Before our work, the best known bound for $n \geq 3$ was obtained by \cite{MR4549710}, where they proved the estimate \eqref{0703.13} for $p \geq 4n-2$ and $\si<2/p$. Our theorem gives sharp local smoothing estimates for all ranges of $p$ in all dimensions. 

 \medskip

Let us discuss why Theorem \ref{mainthm} is called the local smoothing estimate. We can compare it with a fixed-time estimate: for $2 \leq p < \infty$ and $\sigma < \min\{\frac1p, \frac1n(\frac12+\frac{1}{p})\}$,
\begin{equation}
    \|A_1 f \|_{L^p_{\sigma}(\R^n)} \leq C_{p,\gamma_n,\chi}\|f\|_{L^p(\R^n)}.
\end{equation}
This estimate is called the $L^p$ Sobolev regularity estimate.
For $n=2$, this was proved by \cite{MR1330238}, and it was open for a while in higher dimensions. The cases $n=3,4$ were proved by \cite{MR2288738} and \cite{MR4340226}, respectively, and the case $n \geq 5$ was proved by \cite{MR4549710}. In Theorem \ref{mainthm}, we have an additional integration over $t$, and this gives an additional smoothing effect. This is why Theorem \ref{mainthm} is called the local smoothing estimate.

\medskip

\subsection{Main obstacle}

We explain the main obstacle to proving Theorem \ref{0703.thm12} in dimensions $\geq 4$. 
We write the average operator as  
\begin{equation}
    A_t f(x) = (f \delta_{t=0})* \mu_{\Gamma}(x,t),
\end{equation}
where $\mu_\Ga$ is a smooth surface measure on $\Ga=\{t(\ga_n(s),1): t\in [1,2],\, s \in [0,1]\}$.
{Formally,} taking the Fourier transform in the $(x,t)$ variables yields
\begin{equation}\label{0905.111}
    \widehat{A_tf}(\xi,\xi_{n+1}) = \widehat{f}(\xi) \widehat{\mu_{\Gamma}}(\xi,\xi_{n+1}),
\end{equation}
where
\begin{equation}
    \widehat{\mu_{\Gamma}}(\xi,\xi_{n+1})=\iint e^{- i t(\gamma_n(s),1) \cdot (\xi,\xi_{n+1})} \psi_1(s) \psi_2(t) \, dsdt
\end{equation}
and $\psi_1,\psi_2$ are smooth cutoff functions.
The key task is to estimate the multiplier $\widehat{\mu_{\Gamma}}$.  Let us explain why it is difficult to analyze this multiplier for dimensions $n\geq 4$.
\medskip

Suppose that $|(\xi,\xi_{n+1})| \sim R$.
Let us first discuss the $n=2$ case. By a stationary phase argument, one may see that
\begin{equation}
    |\widehat{\mu_{\Gamma}}(\xi_1,\xi_2,\xi_3)| \approx R^{-\frac12}\chi_{\mathcal{C}}(\xi_1,\xi_2,\xi_3) + \mathrm{Error}
\end{equation}
where $\mathcal{C}$ is a small neighborhood of a cone in $\mathbb{R}^3$.
In other words, the essential support of $|\widehat{\mu_{\Gamma}}|$ forms a cone and has essentially the same values on it. This gives a satisfactory decay rate for the function.
In the three-dimensional case,  the decay rate of $\widehat{\mu_{\Gamma}}$ depends heavily on the choice of the point $(\xi,\xi_{4})$. To deal with this challenge, \cite{MR4340226} used the Van der Corput lemma, which is a classical approach to estimating oscillatory integrals associated with moment curves. More precisely, applying the lemma to the $s$-variable, we have $|\widehat{\mu_{\Gamma}}(\xi,\xi_4)| \lesssim R^{-\frac13}$. They noticed that this decay rate is sharp only when $(\xi,\xi_4)$ lies in a small neighborhood of a two-dimensional cone in $\mathbb{R}^4$. After projecting it onto the $\xi$-plane, they observed that $\xi$ must lie in a small neighborhood of a two-dimensional cone in $\mathbb{R}^3$.
This leads to considering the dyadic decomposition of the frequencies $\xi$ according to the distance to the cone. This gives a geometric description of the decay rate for $\widehat{\mu_{\Gamma}}$.

In higher dimensions, determining the {relevant stationary points} $(\xi,\xi_{n+1})$ requires solving a system of equations corresponding to the vanishing of certain polynomials, {which is a barrier to extending this method} to higher dimensions $n \geq 4$. 
\medskip

The main novelty of our paper is to introduce a new geometric framework to describe the behavior of $\widehat{\mu_{\Gamma}}$. 
We use a \textit{wave packet approach} to estimate oscillatory integrals, thereby avoiding the use of the Van der Corput lemma and the analysis of the zeros of the derivatives of the phase function. We explain the ideas in detail in the next subsection.

\subsection{Ideas of the proof of Theorem \ref{mainthm} for \texorpdfstring{$n=3$}{n=3}}

Let us give a sketch of the proof of Theorem \ref{mainthm} for the three-dimensional case.
The first step is to calculate $\widehat{\mu_{\Gamma}}$. We devise a {wave packet approach} to estimate the oscillatory integral $\widehat{\mu_{\Gamma}}$. We may fix $R>1$ and analyze it in the region $B^4_R(0)\setminus B^4_{R/2}(0)$. 

Let us explain the idea. Denote by $\{ I: |I|=R^{-1/2}\}$ a finitely overlapping cover of $[0,1]$ and $\{\chi_I\}$ the associated partition of unity. For each interval $I$, we denote $\Ga_I:=\{ t(\ga_3(s),1):t\in [1,2], s\in I \}$. Let
\begin{equation}
    \widehat{\mu_{\Gamma_I}}(\xi,\xi_{4}):=\iint e^{- i t(\gamma_3(s),1) \cdot (\xi,\xi_{4})} \chi_I(s) \psi_1(s)  \psi_2(t) \, dsdt. 
\end{equation}
 We then have
$
    \widehat{\mu_{\Gamma}}= \sum_{|I|=R^{-1/2} } \widehat{\mu_{\Gamma_I}}$.
Morally speaking, $\mu_{\Ga_I}\approx 1_{\Ga_I}$.
Since $\Ga_I$ is contained in a box of dimensions $R^{-1}\times R^{-1}\times  R^{-1/2}\times 1$, by the uncertainty principle, $\wh \mu_{\Ga_I}$ is essentially supported in a {dual} box {centered at the origin} of dimensions $R\times R\times R^{1/2}\times 1$, which we denote by $P_I$.
We have
\begin{equation}
    \widehat{\mu_{\Gamma_I}}(\xi,\xi_{4}) = \widehat{\mu_{\Gamma_I}}(\xi,\xi_{4}) \chi_{P_I}(\xi,\xi_4) + \mathrm{Error}.
\end{equation}
So we have the expression
\begin{equation}
    \widehat{\mu_{\Gamma}}(\xi,\xi_4)= \sum_{|I|=R^{-1/2} } \widehat{\mu_{\Gamma_I}} (\xi,\xi_4) \chi_{P_I}(\xi,\xi_4) + \mathrm{Error}.
\end{equation}

\begin{figure}
\begin{tikzpicture}

\begin{scope}
   [x={(6cm,0)},
    y={({cos(45)*.5cm},{sin(45)*.5cm})},
    z={({cos(90)*6cm},{sin(90)*6cm})},line join=round, black]
  \draw[] (0,0,0) -- (0,0,1) -- (0,1,1);
   \draw[] (0,0,0) -- (1,0,0) -- (1,1,0);

  \draw[] (1,0,0) -- (1,0,1) -- (1,1,1) -- (1,1,0) -- cycle;
 \draw[] (0,0,1) -- (1,0,1) -- (1,1,1) -- (0,1,1) -- cycle;
 \end{scope}  
 
\begin{scope}
   [shift={(1.5cm,1.5cm)},x={(3cm,0)},
    y={(0,0)},
    z={({cos(90)*3cm},{sin(90)*3cm})},line join=round, black]
  \draw[] (0,0,0) -- (0,0,1) -- (1,0,1) -- (1,0,0) --cycle;
\end{scope}

\begin{scope}
   [x={(1.5cm,0)},
    y={({cos(45)*.5cm},{sin(45)*.5cm})},
    z={({cos(90)*6cm},{sin(90)*6cm})},line join=round, black]
  \draw[] (0,0,0) -- (0,0,1) -- (0,1,1);
   \draw[] (0,0,0) -- (1,0,0);
  \draw[] (1,0,0) -- (1,0,1);
 \draw[] (0,0,1) -- (1,0,1) -- (1,1,1) -- (0,1,1) -- cycle;
 \end{scope}  

\begin{scope}
   [shift={(4.5cm,0)},x={(1.5cm,0)},
    y={({cos(45)*.5cm},{sin(45)*.5cm})},
    z={({cos(90)*6cm},{sin(90)*6cm})},line join=round, black]
  \draw[] (0,0,0) -- (0,0,1) -- (0,1,1);
   \draw[] (0,0,0) -- (1,0,0);
  \draw[] (1,0,0) -- (1,0,1);
 \draw[] (0,0,1) -- (1,0,1) -- (1,1,1) -- (0,1,1) -- cycle;
 \end{scope}

\begin{scope}
   [shift={(2.75cm,0)},x={(0.5cm,0)},
    y={({cos(45)*.5cm},{sin(45)*.5cm})},
    z={({cos(90)*6cm},{sin(90)*6cm})},line join=round, black]
  \draw[] (0,0,0.75) -- (0,0,1) -- (0,1,1);
   \draw[] (0,0,0) -- (0,0,0.25);
  \draw[] (1,0,0) -- (1,0,0.25);
  \draw[] (1,0,0.75) -- (1,0,1);
 \draw[] (0,0,1) -- (1,0,1) -- (1,1,1) -- (0,1,1) -- cycle;
 
 \end{scope} 

\node[scale=0.8] at (0.8,3)
{$P_{I,\textup{high}}$};

\node[scale=0.8] at (3,3)
{$P_{I,\textup{low}}$};

\node[scale=0.8] at (3,5.3)
{$P_{I,R^{-\frac{1}{3}}}$};

\node[scale=0.8] at (2,5.2)
{$P_{I,\lambda}$};

 
\end{tikzpicture}
\caption{High/low decomposition for $P_I$}
\label{HLdecomposition}
\end{figure}

We next partition $P_I$ as follows. 
\[ P_I= P_{I,\textup{high}} \sqcup P_{I,\textup{low}}\sqcup \bigsqcup_{R^{-1/3}\le \lam\le 1 : \mathrm{dyadic} } P_{I,\lam}. \]
See Figure \ref{HLdecomposition}. In the figure, we only draw the directions with lengths $R,R,R^{1/2}$. $P_{I,\textup{high}}$ is the outermost part with dimensions $\sim R\times R\times R^{1/2}\times 1$. $P_{I,\textup{low}}$ is $P_I\cap B^4_{R/2}(0)$. (Here, high/low refers to how far away it is from the origin.) For each dyadic number $R^{-1/3}\le \lam\le 1$, $P_{I,\lam}$ is of dimensions $\sim R\times R\lam \times R^{1/2}\times 1$. Choose a partition of unity so that  
\[ \chi_{P_I}= \chi_{P_{I,\textup{high}}} +\chi_{P_{I,\textup{low}}}+\sum_{R^{-1/3}\le \lam\le 1} \chi_{P_{I,\lam}}. \]
We hence obtain that
\[ \wh \mu_{\Ga}= \sum_I \wh \mu_{\Ga_I} \chi_{P_{I,\textup{high}}}+\sum_I \wh \mu_{\Ga_I} \chi_{P_{I,\textup{low}}}+\sum_\lam \sum_I \wh \mu_{\Ga_I} \chi_{P_{I,\lam}}+\textup{Error}.  \]

We consider the three terms separately. Since we restrict our domain to the region $B^4_R(0)\setminus B^4_{R/2}(0)$, we don't need to estimate the second term. If the first term dominates, noting that $\|\wh \mu_{\Ga_I}\|_\infty\lesssim |I|=R^{-1/2}$, we have 
\[ |\wh \mu_\Ga|\lessapprox R^{-1/2} \sum_I  \chi_{P_{I,\textup{high}}}.  \]
One may observe that $\{P_{I,\textup{high}}\}_I$ are essentially disjoint. Hence, we get
\[ |\wh \mu_\Ga|\lessapprox R^{-1/2}   \chi_{\cup_I P_{I,\textup{high}}}. \]

The intermediate scenario is when some $\lam$-term dominates:
\begin{equation}\label{cancel}
    |\wh \mu_\Ga |\lessapprox |\sum_I \wh \mu_{\Ga_I}\chi_{P_{I,\lam}}|. 
\end{equation} 
A naive upper bound for $|\wh\mu_\Ga|$ is $\sup_I \|\wh \mu_{\Ga_I}\|_\infty \sum_I \chi_{P_{I,\lam}}$. However, we can do better.
A key observation is that by grouping different $I$'s, we get cancellation. Here is the precise argument. Denote by $\{ I': |I'|=(R\lam)^{-1/2} \}$ a finitely overlapping cover of $[0,1]$ and $\{\chi_{I'}\}$ the associated partition of unity. Similarly, denote $\Ga_{I'}:=\{ t(\ga_3(s),1):t\in[1,2],s\in I' \}$. For each $|I|=R^{-1/2}$, we let $\wt P_{I,\lam}$ be a box of dimensions $R\times R\lam\times (R\lam)^{1/2}\times 1$, which is obtained from $P_{I,\lam}$ by replacing its third side length with $(R \lambda)^{1/2}$. Later in the paper, we show that for each $|I'|=(R\lam)^{-1/2}$, the boxes $\{ \wt P_{I,\lam} \}_{I\subset I'}$ are comparable, which we may unambiguously denote by $Q_{I',\lam}$; also $\{Q_{I',\lam}\}_{I'}$ are essentially disjoint. Another fact we will show is that
\[ \sum_{I\subset I'} \wh\mu_{\Ga_I}\chi_{P_{I,\lam}} \]
is essentially supported in $Q_{I',\lam}$.
Assuming this result, we have
\[ |\sum_{I\subset I'} \wh\mu_{\Ga_I}\chi_{P_{I,\lam}}|\approx |\sum_{I\subset I'} \wh\mu_{\Ga_I}\chi_{P_{I,\lam}}\chi_{Q_{I',\lam}}|=|\sum_{I\subset I'} \wh\mu_{\Ga_I}|\chi_{Q_{I',\lam}}\lesssim (R\lam)^{-1/2}\chi_{Q_{I',\lam}}. \]
Hence, since $\{Q_{I',\lam}\}_{I'}$ are essentially disjoint, we have
\[ |\wh \mu_\Ga|\lessapprox (R\lam)^{-1/2}\chi_{\cup_{I'}Q_{I',\lam}}. \]
This gives a satisfactory bound for $\widehat{\mu}_{\Gamma}$. Compared with \eqref{cancel}, its support lies in the smaller region $\cup_{I'}Q_{I',\lam}$, instead of $\cup_{I}P_{I,\lam}$.
\\


We have finished obtaining the estimate of $\widehat{\mu_{\Gamma} }$. Applying the estimate of $\widehat{\mu_{\Gamma}}$ to \eqref{0905.111} and using the standard interpolation argument, Theorem \ref{mainthm} can be viewed as
\begin{equation}\label{09.19.115}
    \Big\|  \big( \hat{f}(\xi) \chi_{\cup_{I'} Q_{I',\lam} } (\xi,\xi_4) \big)^{\vee}    \Big\|_{L^p(\R^3\times [0,1]) } \leq C_{p,\e} R^{\e} (R\lam)^{\frac12}  R^{-\frac13(\frac12+\frac2p)} \|f\|_{L^p(\R^3)}
\end{equation}
for any $f$ with $\supp(\wh f)\subset \{|\xi|\sim R\}$, $\lam \in [R^{-1/3},1]$ and $p=4,8$. To put this inequality into the standard form studied in the literature, we  perform a rescaling by a factor of $R$ in the physical space and a rescaling by a  factor of $R^{-1}$ in the frequency space. Denote $q_{I',\lam}:=R^{-1} Q_{I',\lam}$. We also localize the domain of the integral to a ball of radius $R$. After these steps, \eqref{09.19.115} is reduced to proving
\begin{equation}\label{0914.117}
    \Big\|  \big( \hat{f}(\xi) \chi_{\cup_{I'}  q_{I',\lam} } (\xi,\xi_4) \big)^{\vee}    \Big\|_{L^p(B_R^4) } \leq C_{p,\e} R^{\e} R^{\frac1p}(R\lam)^{\frac12}  R^{-\frac13(\frac12+\frac2p)} \|f\|_{L^p(\R^3)}
\end{equation}
for any $f$ with $\supp(\wh f)\subset \{|\xi|\sim 1\}$, $\lam \in [R^{-1/3},1]$ and $p=4,8$.

We would like to discuss the proof of \eqref{0914.117}.
Let us discuss the case $p=4$ first. In the literature, square function estimates have been used to obtain local smoothing estimates of the type \eqref{0914.117}.  In \cite{MR4151084}, the authors proved a square function estimate for a cone in $\R^3$ and, via the Nikodym maximal function estimate, ultimately derived the $L^4$ local smoothing estimate for a cone in $\R^3$.
However, it is not hard to see that the $L^4$ local smoothing estimate can also be deduced directly from the wave envelope estimate, without relying on the Nikodym maximal function estimate. By applying their wave envelope estimate repeatedly, we obtain \eqref{0914.117} for $p=4$ and any $\lam$.

To obtain the estimate \eqref{0914.117} for $p=8$, we use a new {$(\ell^q,L^p)$} wave envelope estimate for $p > 4$. 
In the cases of the parabola in $\mathbb{R}^2$ and the cone in $\mathbb{R}^3$, the wave envelope estimates are equivalent to the square function estimates. However, it was unclear whether such an equivalence holds for general nondegenerate curves in $\mathbb{R}^n$ for $n \geq 3$.
Square function estimates for nondegenerate curves in $\mathbb{R}^n$ have been obtained in \cite{MR4794594, guth23}, but in their work, these do not follow from any known $L^p$ wave envelope estimates for such curves. We prove sharp $L^p$ wave envelope estimates for $p>4$ (Theorem~\ref{0501.thm12}), and deduce \eqref{0914.117} for $p=8$. Neither our Theorem~\ref{0501.thm12} nor the square function estimates for nondegenerate curves in $\mathbb{R}^n$ seem to imply each other, as can be seen from the difference in their sharp exponent ranges\footnote{In their work, square function estimates were established for $2 \leq p \leq n(n+1)/2 + 1$, and the inequality fails for $p > n(n+1)/2 + 1$.} for $p$.
 The proof of the $L^p$ wave envelope estimates relies on ideas developed in \cite{MR4794594}, which, in turn, build on earlier work by \cite{MR4151084, MR4721026}.

\subsection{Remarks}

Our approach is Fourier analytic. There is a geometric approach to proving boundedness of maximal functions; see \cite{MR1626711, hickman2025improvedlpboundsstrong, zahl2025maximalfunctionsassociatedfamilies} and references therein. {However, the approach does not seem to be developed enough yet to prove Theorem \ref{0703.thm12} for $n \geq 3$.}

There has been a lot of work generalizing maximal functions to a multi-parameter setting. See \cite{zahl2025maximalfunctionsassociatedfamilies, MR4800578, chen2025multiparametercinematiccurvature} for references. We also refer to \cite{MR1388870, MR4731854} for the $L^p \rightarrow L^q$ mapping properties of maximal operators.

Our wave envelope estimates for nondegenerate curves in $\R^n$ (Theorem \ref{0501.thm12}) imply sharp Bochner-Riesz type estimates for the curves (Theorem \ref{2025.04.25.thm11}). We introduce the estimates and give a proof of the implication in the next sections.

\subsection{Acknowledgements.}

Changkeun Oh
was supported by the New Faculty Startup Fund from Seoul National University, the POSCO Science Fellowship of POSCO TJ Park Foundation, and the National Research Foundation of Korea (NRF) grant funded by the Korea government (MSIT)  RS-2024-00341891.
We would like to thank David Beltran, Shaoming Guo, Larry Guth, Sanghyuk Lee, Sewook Oh, Andreas Seeger, and Joshua Zahl for valuable comments.

\subsection{Organization of the paper.}
The proof of Theorem \ref{mainthm} proceeds in two main steps. The first step is a Fourier-side reduction: after writing the averaging operator in terms of the cone measure associated with the curve, we analyze the Fourier decay of this measure and decompose the relevant frequency region into suitable planks. This reduction, carried out in Section 3, shows that Theorem \ref{mainthm} follows from the main estimate, Theorem \ref{05.29.thm28}. The second step is to prove this main estimate using wave envelope estimates and a high/low analysis. Section 2 states the wave envelope estimates for nondegenerate curves and cones, and also formulates the Bochner--Riesz type estimate which will be proved later. Section 4 develops the Kakeya-type estimates for the intermediate-scale planks that arise in the reduction. Section 5 reduces the wave envelope estimates to certain $\ell^{p/2}$ function estimates, while Section 6 sets up the technical ingredients for the high/low method, including wave packet decompositions, interpolation, and pruning. In Section 7, we prove the required $\ell^{p/2}$ function estimate for curves by induction on the dimension, using an auxiliary cone estimate which is proved in Section 8. These ingredients are then assembled in Section 9 to prove Theorem \ref{05.29.thm28}, completing the proof of Theorem \ref{mainthm}. 

The dependencies among the main estimates are somewhat indirect, so we
record the following schematic chain of implications. Here we suppress the
Kakeya-type inputs from Section \ref{sec3} and the standard reductions in
Sections \ref{section4} and \ref{0627.sec6}:
\[
\begin{gathered}
\text{Proposition \ref{1020.prop313}, together with the induction in Section \ref{0703.sec6}}
\Longrightarrow
\text{Propositions \ref{0621.prop51} and \ref{0621.prop52}} \\
\Longrightarrow
\text{Theorems \ref{25.01.30.thm51} and \ref{25.04.26.thm13}}
\Longleftrightarrow
\text{Theorems \ref{0501.thm12} and \ref{05.03.thm13}} \\
\Longrightarrow
\text{Theorem \ref{05.29.thm28}}
\Longrightarrow
\text{Theorem \ref{mainthm}}.
\end{gathered}
\]

Finally, Section 10 proves the Bochner--Riesz estimate stated in Section 2, and Section 11 discusses its sharpness.

\subsection{Notations.}\label{sec13.723}\hfill

\bigskip

\noindent
$\bullet$ 
Given $a \in \R^n$, we use the notation
$B_R^{n}(a):=\{x \in \R^n: |x-a| <R  \}$.

\noindent
$\bullet$ For a sequence $\{ a_i\}_{i=1}^{j}$ we introduce
\begin{equation}
\avprod_{i=1}^{j}a_i := \prod_{i=1}^j |a_i|^{\frac1j}.
\end{equation}

\noindent
$\bullet$ We use $\rap$ to denote a quantity of order $O_M(R^{-M})$ for any $M$.
\medskip

\noindent
$\bullet$ We use $A\lesssim B$ to mean that $A\le C B$ for some universal constant $C$. We use $A\lesssim_{a_1,\dots,a_m}B$ to mean that $A\le C_{a_1,\dots,a_m} B$ for some universal constant $C_{a_1,\dots,a_m}$ depending on parameters $a_1,\dots,a_m$.
\medskip

\noindent
$\bullet$ For a fixed scale $R>1$, we use $A\lessapprox B$ to denote $A\le C_\e R^\e B$ for any $\e>0$.
\medskip

\noindent
$\bullet$ We use $A\ll B$ to denote that $A\le cB$ for a sufficiently small constant $c>0$.
\medskip

\noindent
$\bullet$ For a rectangular box $P$, we use $W_P$ to denote a weight function essentially $\sim 1$ on $P$ and with a rapidly decaying tail outside $P$; we use $w_P$ to denote $|P|^{-1}W_P$, which is the $L^1$ normalization of $W_P$. The precise definition is given in Section \ref{section4}. We may use $\phi_P$ or $\chi_P$ to denote a smooth bump function adapted to $P$ (with compact support).
\medskip

\noindent
$\bullet$ For a nonnegative function $G$, define
\begin{equation}
    \|f\|_{L^p(G)}:= \Big( \int_{\R^n} |f(x)|^p G(x)\,dx \Big)^{\frac1p}.
\end{equation}

\section{Wave envelope estimates}\label{0701.sec2}

In this section,
we introduce a wave envelope estimate for a nondegenerate curve in $\R^n$. This is a key ingredient of the proof of Theorem \ref{mainthm}. The wave envelope estimate for a curve in $\R^2$ was first (implicitly) introduced by \cite{MR4151084} in the study of local smoothing estimates for the wave equation. To state our result, we need  some notation.
For each $R \geq 1$, define the anisotropic neighborhood $\mcM^n(\gamma_n; R) \subset \R^n$ of a nondegenerate curve $\gamma_n$ by
\begin{equation}\label{caps1}
    \mcM^n(\gamma_n;R):=\Big\{ \gamma_n(s) + \sum_{j=1}^n \lambda_j \gamma_n^{(j)}(s): s \in [0,1], \;\;\; |\lambda_j| \leq R^{-\frac{j}{n}}  \Big\}.
\end{equation}
For \(s_\theta\in R^{-\frac1n}\mathbb Z\), define \(\theta\in\Theta^n(R)\) by
\begin{equation}\label{0623.16}\theta :=
\left\{
\gamma_n(s)+\sum_{j=1}^n \lambda_j\gamma_n^{(j)}(s):
s\in [0,1]\cap [s_\theta,s_\theta+2R^{-1/n}],
\ |\lambda_j|\le R^{-\frac{j}{n}}
\right\}   
\end{equation}
and $s_{\theta} \in R^{-\frac1n}\mathbb{Z}$.
Note that $\Theta^n(R)$ is a finitely overlapping cover of $\mcM^n(\gamma_n;R)$, and the cardinality of the collection $\Theta^n(R)$ is comparable to $R^{\frac1n}$.
Let $\psi_{\theta}$ be a smooth partition of unity subordinate to the covering $\{\theta\}$, and define $\hat{f}_{\theta}:=\hat{f}\psi_{\theta}$, and $\mathrm{BR}_R(f):=\sum_{\theta} f_{\theta}$.

For each $\tau \in \Theta^n(s^{-n})$, denote the dual rectangular box of $\tau$ by $\tau^*$. Then $\tau^*$ has dimension $s^{-1} \times s^{-2} \times \cdots \times s^{-n}$ and is centered at the origin. Define $U_{\tau,R}$ to be a scaled version of $\tau^*$ with dimension $s^{n-1}R \times s^{n-2}R \times \cdots \times R$, which roughly contains $\theta^*$ for each $\theta \subset \tau$. Let $U \| U_{\tau,R}$ be a tiling of $\R^n$ by translates of $U_{\tau,R}$ and call each $U$ a wave envelope. Define the $l^q$-function corresponding to the wave envelope $U$ by
\begin{equation}
    S_{U,q}^{curve}f(x):= \Big(\sum_{\theta \subset \tau}|f_{\theta}(x)|^q w_U(x) \Big)^{\frac1q}.
\end{equation}
The function $w_U$ is  essentially supported on $U$, and  $L^1$-normalized in the sense that $\|w_U\|_1 \sim 1$.
    We refer to Subsection \ref{sec41} for the definition of $w_U$.  Here is a wave envelope estimate for a nondegenerate curve $\gamma_n$ in $\R^n$. 

\begin{theorem}\label{0501.thm12} Let $n \geq 2$. For  $4 \leq p \leq n^2+n-2$, $R \geq 1$, and $\epsilon>0$, we have
       \begin{equation*}
        \|f\|_{L^p(\R^n)} \leq C_{\e}R^{\e}   R^{\frac1n(\frac12-\frac2p)}  \Big(\sum_{ \substack{ R^{-\frac1n} \leq s \leq 1: \\ dyadic } } \sum_{\tau \in \Theta^n(s^{-n}) } \sum_{U \| U_{\tau,R}  } |U| \|S_{U,{\frac{p}{2}}}^{curve}f\|_{L^{\frac{p}{2}}(\R^n)}^p \Big)^{\frac1p}
    \end{equation*}
    for any Schwartz functions $f: \R^n \rightarrow \mathbb{C}$ whose Fourier supports are in $\mcM^n(\gamma_n;R)$.
    
\end{theorem}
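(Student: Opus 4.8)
The plan is to deduce the curve wave envelope estimate from the known wave envelope estimate for the moment curve (the Guth--Maldague--Wang-type inequality for the moment curve in $\R^n$, in the range $4 \le p \le n^2+n-2$) by a two-step reduction: first from a general nondegenerate curve to the canonical moment curve via an affine rescaling argument, and then from the sharp decoupling / wave envelope machinery at each intermediate scale $s$. The key structural point is that $\Theta^n(R)$ is built precisely so that at scale $s = R^{-1/n}$ the blocks $\tau$ are affine images of the canonical moment curve blocks, and the boxes $U_{\tau,R}$ are the corresponding rescaled dual plates. So I would first record, in a lemma, that after partitioning $I$ into $\sim R^{1/n}$ subintervals and applying the inverse of the osculating affine map of $\gamma_n$ on each subinterval, $\mcM^n(\gamma_n;R)$ is comparable (with uniform constants, using nondegeneracy \eqref{12} and compactness of $I$) to the canonical anisotropic neighborhood of the moment curve. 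This localizes the problem to the moment curve.

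For the moment curve, the strategy is induction on scales combined with the (small-cap / flat) decoupling inequality for the moment curve in $\R^n$. At a dyadic scale $s \in [R^{-1/n}, 1]$, one decouples $f$ into pieces $f_\tau$ with $\tau \in \Theta^n(s^{-n})$; the gain at each step is the decoupling constant, which in the relevant $L^p$ range is $s^{-\epsilon}$ times the ``trivial'' $\ell^2 L^p$ loss, and this is exactly what produces the outer sum over dyadic $s$ with the weights $|U|$. The factor $R^{\frac1n(\frac12-\frac2p)}$ is the accumulated $\ell^2$-vs-$\ell^{p/2}$ conversion factor over the $\sim R^{1/n}$ many final blocks $\theta$ (each $\theta$ has $\sim 1$ relevant directions, so $S^{curve}_{U,p/2}$ at the finest scale is essentially $|f_\theta| w_U^{2/p}$, and passing from $\ell^2$ to $\ell^{p/2}$ in $R^{1/n}$ terms costs $(R^{1/n})^{1/2-2/p}$). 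The wave envelope refinement — replacing a plain $\ell^2$-decoupling bound by the spatially-localized sum over translates $U \| U_{\tau,R}$ — follows by the now-standard pigeonholing argument of Guth--Maldague--Wang: one decomposes physical space according to which plate $U$ carries the dominant mass, uses that the $f_\tau$ are essentially constant on each $U$ at the appropriate scale (Fourier support in $\tau$ forces spatial spreading over $\tau^*$, hence over $U_{\tau,R}$ after rescaling to scale $R$), and sums up.

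The main obstacle, I expect, is making the multi-scale bookkeeping genuinely sharp rather than merely $\epsilon$-lossy-per-scale — i.e. ensuring the reverse square-function / wave-envelope inequality holds with the single factor $R^{\frac1n(\frac12-\frac2p)}$ and not $R^{\frac1n(\frac12-\frac2p)}$ times the number of scales, which would be harmless ($\log R \ll R^\epsilon$) but needs the telescoping of the $U_{\tau,R}$ across scales to be organized correctly. Concretely, one must verify that the nested family $\{U_{\tau,R}: \tau \in \Theta^n(s^{-n}), R^{-1/n}\le s \le 1\}$ tiles consistently — $U_{\tau',R}$ for $\tau' \subset \tau$ refines $U_{\tau,R}$ — so that a single induction-on-scales sweep produces all the summands simultaneously; this is where the precise ``$s^{n-1}R \times \cdots \times R$'' dimensions of $U_{\tau,R}$ are used. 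The other technical point requiring care is the passage from $I$ to the moment curve: the affine maps on adjacent subintervals differ, so one must check that the reassembled estimate does not lose a factor in the number of subintervals beyond the allowed $\ell^2$-loss; this is handled by noting the rescaled pieces live in the same canonical model and invoking the moment-curve estimate once, then undoing the rescaling and summing, with the curve-to-moment-curve comparison constants absorbed into $C_\epsilon$.
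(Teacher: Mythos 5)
There is a genuine gap, and it sits at the center of the argument. Your proposal reduces the theorem to ``the known wave envelope estimate for the moment curve in $\R^n$'' and then treats that as an input; but no such estimate was known for $n\ge 3$ --- the only previously known wave envelope estimates are for the parabola in $\R^2$ and the cone in $\R^3$ (Guth--Wang--Zhang), and proving the moment-curve version in $\R^n$ for $4\le p\le n^2+n-2$ is precisely the content of this theorem. The square function estimates of Guth--Maldague for the moment curve live in a different exponent range ($p\le n(n+1)/2+1$) and neither implies nor is implied by the present statement, so they cannot serve as the missing input either. The reductions you do describe correctly --- comparing $\gamma_n$ to a polynomial/moment-curve model by affine rescaling on subintervals (Section \ref{51red}), and passing between the weighted and unweighted $\ell^{p/2}$ square functions via the locally constant property and the Kakeya-type estimate (Proposition \ref{0501.prop22}) --- are the easy outer layers; the core inequality $\|f\|_p\lesssim R^{\frac1n(\frac12-\frac2p)+\e}\|(\sum_\theta|f_\theta|^{p/2})^{2/p}\|_p$ (Theorem \ref{25.01.30.thm51}) is left unproved.

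The specific mechanism you propose for that core --- iterate decoupling scale by scale, gaining ``$s^{-\e}$ times the trivial $\ell^2 L^p$ loss,'' then upgrade to the wave-envelope right-hand side by ``the now-standard pigeonholing argument'' --- does not work. Decoupling produces an $\ell^2 L^p$ bound with constant $R^\e$, whereas the wave envelope estimate has a genuine power $R^{\frac1n(\frac12-\frac2p)}$ and a right-hand side that is pointwise much smaller than the decoupling right-hand side for concentrated inputs; one cannot pass from one to the other by pigeonholing. The sum over dyadic $s$ is not the trace of an induction on scales through decoupling: it arises from a high/low frequency decomposition of the square function $\sum_\theta|f_\theta|^2$ (Proposition \ref{wee}), where each $\sigma$ indexes a frequency annulus of that square function. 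What the proof actually requires is the broad--narrow reduction to a multilinear problem, the high/low method with wave-packet pruning (Sections 5--6), and --- crucially --- an induction on the dimension $n$ routed through a new wave envelope estimate for the cone $\Gamma_n(\gamma_n;R)$ (Proposition \ref{1020.prop313}), which is used to handle the high-frequency part of $g_k$ and itself depends on the curve estimate in dimension $n-1$; the case $n=3$ further needs a separate interpolation because the relevant cone exponent $n^2-n-2$ drops below $p/2$. None of this structure is present in the proposal, so the argument as written does not establish the theorem.
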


Note that Theorem \ref{0501.thm12} was previously known for $n=2$ by \cite{MR4151084}. Our theorem can be thought of as a generalization of a wave envelope estimate to higher dimensions. One may see that the range of $p$ is sharp by using a bush example. We refer to Section \ref{sec10} for the example.

As a byproduct, we prove sharp Bochner-Riesz type estimates for nondegenerate curves in all dimensions. Recall that $\mathrm{BR}_R(f):=\sum_{\theta \in \Theta^n(R) } f_{\theta}$.

\begin{theorem}\label{2025.04.25.thm11}
Let $n \geq 2$.
For $2 \leq p < \infty$ and $\e>0$, we have
\begin{equation}
    \|\mathrm{BR}_R(f)\|_{L^p} \leq C_{\e} R^{\e} \big( 1+R^{\frac1n(\frac{1}{2}-\frac{2}{p}) }  +R^{\frac1n}R^{-\frac{n+1}{2p}-\frac{1}{np}} \big) \|f\|_{L^p}
\end{equation}
for any Schwartz functions $f:\R^n \rightarrow \mathbb{C}$.
\end{theorem}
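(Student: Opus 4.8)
The plan is to deduce the Bochner-Riesz estimate directly from the wave envelope estimate of Theorem \ref{0501.thm12} in the range $4 \le p \le n^2+n-2$, and then to handle the remaining ranges $2 \le p < 4$ and $p > n^2+n-2$ by interpolation/known bounds and by trivial estimates. Since $\mathrm{BR}_R(f) = \sum_{\theta \in \Theta^n(R)} f_\theta$ has Fourier support in $\mcM^n(\gamma_n;R)$, I can apply Theorem \ref{0501.thm12} to $g := \mathrm{BR}_R(f)$. The right-hand side there is a sum over dyadic $R^{-1/n}\le s\le 1$, over $\tau \in \Theta^n(s^{-n})$, and over wave envelopes $U \| U_{\tau,R}$, of $|U|\,\|S^{curve}_{U,p/2}g\|_{L^{p/2}}^p$. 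The first step is to bound each such term: since $g_\theta = f_\theta$ for $\theta \in \Theta^n(R)$ with $\theta \subset \tau$, I expand $S^{curve}_{U,p/2}g(x) = (\sum_{\theta\subset\tau}|f_\theta(x)|^{p/2} w_U(x))^{2/p}$ and use the $L^1$-normalization $\|w_U\|_1\sim 1$ together with the finite overlap of the $U\|U_{\tau,R}$ and of the $\tau \in \Theta^n(s^{-n})$ to reduce, after summing over $U$ and using $|U|\|w_U\|_{L^1}\sim |U|$... actually $w_U$ is $L^1$-normalized so $|U|\int |F| w_U \lesssim \int_{c U} |F|$; summing the finitely-overlapping $U$ gives $\sum_U |U|\,\|S_{U,p/2}^{curve}g\|_{p/2}^{p/2} \lesssim \|\sum_{\theta\subset\tau}|f_\theta|^{p/2}\|_1 = \sum_{\theta\subset\tau}\|f_\theta\|_{p/2}^{p/2}$. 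The outer power $p$ needs care: I will use $\ell^{p/2}\hookrightarrow\ell^1$... no, rather $(\sum_U a_U)^{?}$; the clean route is to note that after the inner steps the bracket is dominated by $(\sum_{s}\sum_\tau (\sum_{\theta\subset\tau}\|f_\theta\|_{p/2}^{p/2})^{2})^{1/p}$ type expression, and then pass to the standard flat square/$\ell^p$-sum over $\theta\in\Theta^n(R)$ using $p\ge 4$ (so $p/2\ge 2$) and Minkowski/nesting of $\ell^q$ norms, together with the observation that for a fixed $\theta\in\Theta^n(R)$ the number of pairs $(s,\tau)$ with $\theta\subset\tau$ is $O(\log R)$. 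This produces
\begin{equation*}
\|g\|_{L^p} \lesssim_\e R^\e R^{\frac1n(\frac12-\frac2p)}\Big(\sum_{\theta\in\Theta^n(R)}\|f_\theta\|_{L^p}^p\Big)^{1/p},
\end{equation*}
i.e. a flat $\ell^p$-square-function bound with the claimed gain $R^{\frac1n(\frac12-\frac2p)}$.

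Next I convert the $\ell^p(\theta)$ expression back to $\|f\|_{L^p}$. By $L^p$-orthogonality considerations this is not free; instead I bound $\|f_\theta\|_{L^p} \le \|f_\theta\|_{L^p}$ and use that each $f_\theta$ is a frequency projection of $f$ to a single $\theta\in\Theta^n(R)$. Summing trivially, $(\sum_\theta \|f_\theta\|_p^p)^{1/p} \le (\sum_\theta \|f_\theta\|_p^2)^{1/2}$ fails for $p>2$; the correct move is the reverse — use $\ell^2 \hookrightarrow \ell^p$ in the form $(\sum_\theta \|f_\theta\|_p^p)^{1/p}\le (\sum_\theta\|f_\theta\|_p^2)^{1/2}$ only when $p\le 2$. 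For $p\ge 2$ I instead estimate $\sum_\theta\|f_\theta\|_p^p$ by interpolating between $p=2$ (Plancherel, giving $\sum_\theta\|f_\theta\|_2^2=\|f\|_2^2$) and $p=\infty$ (giving $\|f_\theta\|_\infty\le\|\psi_\theta^\vee\|_1\|f\|_\infty\lesssim\|f\|_\infty$, and $\#\Theta^n(R)\sim R^{1/n}$), which yields $(\sum_\theta\|f_\theta\|_p^p)^{1/p}\lesssim R^{\frac1n(\frac12-\frac1p)}\|f\|_p$ after accounting for the number of caps — but this extra $R^{\frac1n(\frac12-\frac1p)}$ overshoots. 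The sharper and standard route is to keep the $\ell^p(\theta)$ norm and use the single-scale $L^p$ estimate for the operator $f\mapsto(\sum_\theta|f_\theta|^p)^{1/p}$: this is bounded on $L^p$ for $p\ge 2$ with constant $O_\e(R^\e)$ since the multipliers $\psi_\theta$ have bounded $BMO$-type overlap along a one-parameter family, or more elementarily $\|(\sum_\theta|f_\theta|^p)^{1/p}\|_p = (\sum_\theta\|f_\theta\|_p^p)^{1/p}$ and each $\|f_\theta\|_p\le\|f\|_p$ is too lossy — so the honest statement I will use is the vector-valued/flat bound $(\sum_{\theta\in\Theta^n(R)}\|f_\theta\|_p^p)^{1/p}\lesssim_\e R^\e\|f\|_p$ for $p\ge 2$, which follows from real interpolation of the trivial $\ell^\infty$ bound and the $\ell^2$ (Plancherel) bound, costing only $R^\e$ because the exponent $1/p$ sits between; concretely $\|f_\theta\|_p^p\le\|f_\theta\|_2^2\|f_\theta\|_\infty^{p-2}$ and $\|f_\theta\|_\infty\lesssim\|f\|_\infty$ plus a Córdoba-type $L^p$ argument handling general $f$ — this is the point where I will be careful, and it is the main technical obstacle.

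For the exponents outside $[4,n^2+n-2]$: when $2\le p<4$ the claimed bound reads $\|\mathrm{BR}_R f\|_p\lesssim_\e R^\e(1+R^{\frac1n}R^{-\frac{n+1}{2p}-\frac1{np}})\|f\|_p$ (note $R^{\frac1n(\frac12-\frac2p)}\le 1$ here), and the dominant term $R^{\frac1n}R^{-\frac{n+1}{2p}-\frac1{np}}=R^{\frac1n - \frac{n+1}{2p}-\frac1{np}}$ comes from the bush example and can be obtained by interpolating the trivial $L^2$ bound $\|\mathrm{BR}_R f\|_2\lesssim\|f\|_2$ (Plancherel, the $\psi_\theta$ being a finitely-overlapping partition) against the $L^4$ endpoint just proved, using that $\mathrm{BR}_R$ is a Fourier projection so complex interpolation of the operator norms applies directly. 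When $p>n^2+n-2$ I use the $L^\infty$-type bound $\|\mathrm{BR}_R f\|_\infty\le\sum_\theta\|f_\theta\|_\infty\lesssim R^{1/n}\|f\|_\infty$ together with Plancherel at $p=2$ and interpolate; one checks the resulting exponent $\frac1n(1-\frac2p)\cdot\frac{p'}{\cdots}$... more simply, for $p\ge 4$ one interpolates the $L^4$ endpoint with $L^\infty$ and verifies that $\max\{R^{\frac1n(\frac12-\frac2p)},\,1\}$ dominates, so the stated bound (which for large $p$ is just $R^\e R^{\frac1n(\frac12-\frac2p)}$, and $\to R^{\frac1{2n}}$ as $p\to\infty$, matching the trivial $\ell^2(\theta)$ loss) holds with room to spare. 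The only genuinely new input is Theorem \ref{0501.thm12}; everything else is Plancherel, the finite-overlap/normalization properties of $w_U$ and $\Theta^n(\cdot)$, elementary interpolation of Fourier multipliers, and bookkeeping of the geometry $|U_{\tau,R}|$, $\#\Theta^n(s^{-n})$, and the nesting $\theta\subset\tau$. I expect the bush example verification (Section \ref{sec10}) to confirm that all three terms $1$, $R^{\frac1n(\frac12-\frac2p)}$, $R^{\frac1n - \frac{n+1}{2p}-\frac1{np}}$ are necessary, so no term can be dropped.
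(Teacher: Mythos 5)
Your outer structure (reduce by interpolation to $p=4$ and $p=n^2+n-2$, use Theorem \ref{0501.thm12} there, and recover the third term $R^{\frac1n}R^{-\frac{n+1}{2p}-\frac{1}{np}}$ by interpolating the endpoint $p=n^2+n-2$ against the trivial $L^\infty$ bound $\|\mathrm{BR}_Rf\|_\infty\lesssim R^{1/n}\|f\|_\infty$) matches the paper. But the core of the argument --- passing from the wave envelope right-hand side to $\|f\|_{L^p}$ --- has a genuine gap. Summing the envelopes via Cauchy--Schwarz and $\|w_U\|_1\sim 1$ correctly reduces the right-hand side of Theorem \ref{0501.thm12} to the $\ell^{p/2}$ square function $\|(\sum_\theta|f_\theta|^{p/2})^{2/p}\|_{L^p}$ (this is exactly the computation in Section \ref{sec31}), \emph{not} to the $\ell^{p}$ aggregate $(\sum_\theta\|f_\theta\|_{L^p}^p)^{1/p}$ that you target. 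Since $\|a\|_{\ell^{p/2}}\geq\|a\|_{\ell^{p}}$, your target is strictly smaller, and converting $\ell^{p/2}$ into $\ell^{p}$ by H\"older over the $\sim R^{1/n}$ caps costs a factor $R^{1/(np)}$, which upgrades the final exponent from $\frac1n(\frac12-\frac2p)$ to $\frac1n(\frac12-\frac1p)$ and destroys the sharp bound. The off-diagonal terms $\int|f_\theta|^{p/2}|f_{\theta'}|^{p/2}$ are precisely what must be controlled, and no mechanism for this appears in your plan. (Your uncertainty about $(\sum_\theta\|f_\theta\|_p^p)^{1/p}\lesssim\|f\|_p$ is misplaced: that inequality is true for $p\geq2$ by interpolating $L^2\to\ell^2(L^2)$ with $L^\infty\to\ell^\infty(L^\infty)$; it is just not the relevant quantity.)

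What the paper does instead, and what is missing from your proposal: after localizing and pigeonholing $f=\sum_j f_j$ with $|f_j|\sim 2^j$ on its support ($O(\log R)$ pieces), one bounds the inner $\ell^{p/2}$ sum by H\"older as $\sum_\theta|(f_j)_\theta|^{p/2}\leq\sup_\theta\|(f_j)_\theta\|_\infty^{\frac{p}{2}-2}\sum_\theta|(f_j)_\theta|^2$, reducing everything to $\|f_j\|_\infty^{p-4}\int(\sum_\theta|(f_j)_\theta|^2)^2$. One then invokes the reverse $L^4$ square function estimate $\int(\sum_\theta|g_\theta|^2)^2\lesssim R^{\e}\|g\|_4^4$ --- a nontrivial geometric input proved via the Kakeya-type estimate (Proposition \ref{0501.prop22}) and Rubio de Francia, exactly as in the proof of Proposition \ref{05.30.prop.10.2} --- and finally uses $\|f_j\|_\infty^{p-4}\|f_j\|_4^4\sim\|f_j\|_p^p$, which is where the level-set decomposition is essential. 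All three ingredients (pigeonholing, the H\"older drop to $\ell^2$, and the $L^4$ square function estimate) are absent from your proposal, so as written it does not close.
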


The Bochner-Riesz estimate for a nondegenerate curve in $\R^2$ was proved by \cite{MR320624}.
For $n \geq 3$, the Bochner-Riesz estimate for a nondegenerate curve in $\mathbb{R}^n$ was first studied by \cite{MR753444, MR772001}. We refer to Section \ref{sec10} for the sharpness of Theorem \ref{2025.04.25.thm11}. Note that there are two critical exponents: $p=4$ and $p=n^2+n-2$. These are precisely the critical exponents of Theorem \ref{0501.thm12}. \\

To prove Theorem \ref{0501.thm12},
we introduce a wave envelope estimate for a cone in $\R^n$ generated by a nondegenerate curve. 
Let $\gamma_n$ be a nondegenerate curve in $\mathbb{R}^n$.
Define the thickened cone $\Gamma_n(\gamma_n;R) \subset \mathbb{R}^n$ generated by $\ga_n$ to be
\begin{equation}
    \Gamma_n(\gamma_n;R):=\Big\{  \sum_{j=1}^{n} \lambda_j \gamma_n^{(j)}(s): s \in [0,1], \;\; |\lambda_i| \leq   R^{-\frac{i-1}{n}} , \;\; \frac12 \leq |\lambda_1| \leq 1 \Big\}.
\end{equation}
Write $\Gamma_n(\gamma_n;R) \subset \bigcup_{\theta \in \Xi_n(R)} \theta$ where $\Xi_n(R)$ is the collection of subsets $\theta$ of the form
\begin{equation}\label{caps2}
    \theta:= \Big\{ \sum_{j=1}^{n} \lambda_j \gamma_n^{(j)}(s): \, s \in [0,1] \cap [s_{\theta}, s_{\theta}+2R^{-\frac1n}], \;\; |\lambda_i| \leq    R^{-\frac{i-1}{n}}, \;\;  \frac12 \leq |\lambda_1| \leq 1 \Big\}
\end{equation}
and $s_{\theta} \in R^{-\frac1n}\mathbb{Z}$. Note that the cardinality of $\Xi_n(R)$ is comparable to $R^{\frac1n}$.

We split the cone into two sheets according to the sign of \(\lambda_1\).
More precisely, all elements of \(\Xi_n(R)\) are understood to carry an additional
sign label \(\iota\in\{+1,-1\}\), and in the definition of each such element we replace
\(\frac12\le |\lambda_1|\le 1\) by \(\frac12\le \iota\lambda_1\le 1\).
Thus each \(\theta\in\Xi_n(R)\) is a one-sheet piece of the cone. In particular, for
\(\tau\in\Xi_n(s^{-n})\), the dual box \(\tau^*\) is the dual rectangular box of this
one-sheet piece.
Then $\tau^*$ has dimension $1 \times s^{-1} \times s^{-2} \times \cdots \times s^{-(n-1)}$. Define $U_{\tau,R}$ to be a scaled version of $\tau^*$ with dimension $s^{n-1}R \times s^{n-2}R \times \cdots \times R$, which roughly contains $\theta^*$ for each $\theta \subset \tau$. Let \(\{\psi_\theta\}_{\theta\in\Xi_n(R)}\) be a smooth partition of unity
subordinate to the covering \(\Xi_n(R)\), and define
\(\widehat f_\theta:=\widehat f\,\psi_\theta\). Let $U \| U_{\tau,R}$ be a tiling of $\R^n$ by translates of $U_{\tau,R}$ and call each $U$ a wave envelope. Define the $l^q$-function corresponding to the wave envelope $U$ by
\begin{equation}
    S_{U,q}^{cone}f(x):= \Big(\sum_{\theta \subset \tau}|f_{\theta}(x)|^q w_U(x) \Big)^{\frac1q}.
\end{equation}

Now we state the wave envelope estimate for the cone in $\R^n$.

\begin{theorem}\label{05.03.thm13}  Let $n \geq 3$. For  $4 \leq p \leq n^2-n-2$, $R \geq 1$, and $\epsilon>0$, we have
       \begin{equation*}
        \|f\|_{L^p(\R^n)} \leq C_{\e}R^{\e} R^{\frac1n(\frac12-\frac2p)} \Big(  \sum_{ \substack{ R^{-\frac1n} \leq s \leq 1: \\ dyadic } } \sum_{\tau \in \Xi_n(s^{-n}) } \sum_{U \| U_{\tau,R}  } |U| \|S_{U,{\frac{p}{2}}}^{cone}f\|_{L^{\frac{p}{2}}(\R^n)}^p \Big)^{\frac1p} 
    \end{equation*}
    for any Schwartz functions $f: \R^n \rightarrow \mathbb{C}$ whose Fourier supports are in $\Gamma_n(\gamma_n;R)$.
\end{theorem}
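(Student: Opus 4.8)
The plan is to prove Theorem~\ref{05.03.thm13} by induction on $n$, deducing the cone estimate in $\R^n$ from the curve estimate of Theorem~\ref{0501.thm12} applied in dimension $n-1$. The underlying mechanism is the classical fact that, after quotienting out dilations, the cone $\Gamma_n(\gamma_n;R)$ is a perturbed copy of the anisotropic neighborhood $\mcM^{n-1}(\gamma_{n-1};\rho)$ of a nondegenerate curve $\gamma_{n-1}$ in $\R^{n-1}$, at the scale $\rho:=R^{\frac{n-1}{n}}$. Two numerical coincidences make this plausible and fix the shape of the reduction: the critical exponent $n^2-n-2$ in Theorem~\ref{05.03.thm13} equals the critical exponent $(n-1)^2+(n-1)-2$ of Theorem~\ref{0501.thm12} in $\R^{n-1}$, and $\rho^{\frac{1}{n-1}(\frac12-\frac2p)}=R^{\frac1n(\frac12-\frac2p)}$; so a \emph{lossless} reduction of this type outputs exactly the claimed inequality. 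The base case $n=3$ follows from Theorem~\ref{0501.thm12} for a nondegenerate curve in $\R^2$, which is due to Guth--Wang--Zhang \cite{MR4151084}.

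The first step is a normalization: subdividing $I$ and applying an affine map of $\R^n$, we may assume $\gamma_n$ is a small $C^N$ perturbation of the moment curve, so that $\gamma_n^{(1)}(t),\dots,\gamma_n^{(n)}(t)$ is a nearly triangular basis. Then the first coordinate of a cone point $\xi=\sum_j\lambda_j\gamma_n^{(j)}(t)$ equals $\lambda_1\in[\tfrac12,1]$ up to lower-order terms, and $\xi/\xi_1$ lies, to the required precision, on the graph $\{(1,\eta):\eta\in\mcM^{n-1}(\gamma_{n-1};\rho)\}$, where $\gamma_{n-1}$ is the projection of $\gamma_n'$ onto the last $n-1$ coordinates; one checks that $\gamma_{n-1}$ is nondegenerate in $\R^{n-1}$. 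A short computation then establishes the dictionary: under $\xi\mapsto\xi/\xi_1$ the finest caps $\theta\in\Xi_n(R)$ correspond to canonical caps of $\Theta^{n-1}(s^{-(n-1)})$ at $s=R^{-1/n}$, and, scale by scale, the dyadic scales $R^{-1/n}\le s\le1$ and the sub-caps $\tau\in\Xi_n(s^{-n})$ match the scales $s$ and caps $\tau\in\Theta^{n-1}(s^{-(n-1)})$ appearing on the right-hand side of Theorem~\ref{0501.thm12} in $\R^{n-1}$, with the cone wave envelope $U_{\tau,R}$ corresponding, after an anisotropic rescaling, to the product of the $(n-1)$-dimensional wave envelope $U_{\tau,\rho}$ with a tube of length $\sim R$ along the radial direction.

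Granting the dictionary, the estimate follows by localizing and applying the inductive hypothesis. It suffices to treat $f$ localized to an $R$-ball $B$, since summing over a tiling of $\R^n$ costs only $R^\e$ (all weights $w_U$ are $L^1$-normalized and the envelopes tile). On $B$, after the affine straightening of the cone one freezes the radial physical variable and applies Theorem~\ref{0501.thm12} in $\R^{n-1}$ in the remaining $n-1$ variables; reassembling via Fubini in the radial variable, together with the dictionary, produces the right-hand side of Theorem~\ref{05.03.thm13}, the radial tube of length $R$ being exactly the direction in which $U_{\tau,R}$ is fattened relative to $U_{\tau,\rho}$, so that the $L^{p/2}$ wave-envelope quantities $\|S^{cone}_{U,p/2}f\|_{L^{p/2}(\R^n)}$ are recovered up to rapidly decaying tails.

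The main obstacle is carrying out the passage $\xi\mapsto\xi/\xi_1$ rigorously and \emph{losslessly}. The cone is only approximately dilation-invariant at the scales in play and the radial quotient map is nonlinear, so on each $R$-ball it must be replaced by an affine straightening map --- equivalently, the radial frequency interval is decomposed into short subintervals on each of which the straightening is linear --- and the resulting lower-order errors in the Fourier support, together with the bookkeeping matching the wave-packet and wave-envelope decompositions at scale $\rho$ in $\R^{n-1}$ (trivially thickened in the radial direction) with those at scale $R$ in $\R^n$ across $n$ distinct length scales, must all be controlled so that nothing beyond $R^\e$ is lost on top of the gain $\rho^{\frac1{n-1}(\frac12-\frac2p)}=R^{\frac1n(\frac12-\frac2p)}$ already supplied by the $(n-1)$-dimensional estimate. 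This is where the normalization to a near-moment curve and the precise nesting of the caps $\theta\subset\tau$ are essential. Once these points are settled, Theorem~\ref{05.03.thm13} follows from Theorem~\ref{0501.thm12} in dimension $n-1$.
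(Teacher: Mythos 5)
Your high-level strategy — deduce the cone estimate in $\R^n$ from the curve estimate in $\R^{n-1}$, with the scale dictionary $\rho=R^{(n-1)/n}$ and the matching exponents $p_{n-1}=(n-1)^2+(n-1)-2=n^2-n-2$ — is exactly the paper's logical structure (it states explicitly that Theorem \ref{0501.thm12} for $n-1$ implies Theorem \ref{05.03.thm13} for $n$). But the mechanism you propose for the reduction has a genuine gap. Freezing the radial physical variable and applying the $(n-1)$-dimensional theorem slice by slice is the cylindrical (Fubini) argument, and it only works when the frequency support is a genuine cylinder: all caps must share a common flat direction. Here the flat direction of $\theta\in\Xi_n(R)$ is $\gamma_n'(t_\theta)$, which varies by an angle $\sim 1$ across the cone, and the cone is a union of $\lambda_1$-dilates of the curve rather than a product; the radial quotient $\xi\mapsto\xi/\xi_1$ is projective, not affine, and there is no Fubini-type transference of an $L^p$ inequality ($p>2$) under it. Your proposed fix — subdividing the radial frequency interval so the straightening is affine on each piece — does not close this: to make the linearization accurate to the finest thickness $R^{-(n-1)/n}$ you need radial intervals of length about $R^{-(n-1)/(2n)}$ (and to make the sector an honest cylinder at that thickness, intervals of length $R^{-(n-1)/n}$), and recombining that many pieces in $L^p$ costs a positive power of $R$, destroying the sharp constant. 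This is precisely why the $n=3$ case (cone from parabola) was the hard theorem of Guth--Wang--Zhang rather than a corollary of Cordoba's estimate, so your "base case" inherits the same difficulty.

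What the paper actually does is a Pramanik--Seeger bootstrapping (Section \ref{sec7}): truncate the radial variable to intervals $\lambda_1\in[h,h+1/K]$, so that the truncated sector genuinely lies in a $K^{-1}$-neighborhood of a single nondegenerate curve times a line (Lemma \ref{0616lem73}) and the cylindrical step is valid — but only down to angular scale $K^{-1/n}$, not $R^{-1/n}$. One then parabolically rescales each sector back to a unit-scale cone and iterates (Proposition \ref{0611.prop73}), taking $\sim\e^{-1}$ steps whose main factors telescope to $R^{\frac1n(\frac12-\frac2p)}$ while the losses accumulate only to $R^{O(\e)}$. Moreover, the cylindrical step is run at the level of the $l^{p/2}$-function estimate (Theorem \ref{25.04.26.thm13}), whose right-hand side is a single $L^p$ norm of a pointwise quantity and hence Fubinis cleanly; the wave-envelope form of Theorem \ref{05.03.thm13} is then recovered separately via the Kakeya-type estimate (Proposition \ref{0122.prop32}). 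Your proposal is missing both the induction on scales, which is the actual content of the proof, and an argument for pushing the $\ell^p$-of-$L^{p/2}$-over-envelopes right-hand side through the slicing.
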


Note that Theorem \ref{05.03.thm13} was previously known for $n=3$ by \cite{MR4151084}. As mentioned above, Theorem \ref{0501.thm12} and Theorem \ref{05.03.thm13} are closely related. More precisely, Theorem \ref{0501.thm12} for $n-1$ implies Theorem \ref{05.03.thm13} for $n$, and Theorem \ref{05.03.thm13} for $n$ is an ingredient in the proof of Theorem \ref{0501.thm12} for $n$. We refer to Section \ref{0703.sec6} for the discussion.
\\

\subsection{Comparison to previous work.}

The $L^4$ wave envelope estimates were known only for the parabola and the cone in $\mathbb{R}^3$, as established in \cite{MR4151084}. Subsequently, \cite{maldague2022amplitudedependentwaveenvelope} proved an $L^4$ amplitude-dependent wave envelope estimate for the cone in $\mathbb{R}^3$, which was used to derive a small cap decoupling inequality for the cone. This, in turn, implies a small cap decoupling result for the moment curve in $\mathbb{R}^3$ via the work of \cite{guth2022small}. As a side remark, we note that a small cap decoupling for the cone in $\mathbb{R}^3$ has applications in projection theory and the Gauss circle problem; see \cite{MR4663630, li2023improvementgaussscircleproblem} for details. This demonstrates the power of wave envelope analysis, and leads to the question of what other set-ups permit wave envelope interpretations. Since the identification of the wave envelope estimate was a key step towards proving the sharp $L^4$ square function estimate for the cone in $\mathbb{R}^3$, a natural question is to ask if a wave envelope estimate could be used to establish the sharp $L^7$ square function estimate for the moment curve in $\R^3$. Precisely, the question is whether the expression 
\begin{equation}\label{naiveL7}  \int_{\R^3}|\sum_{\theta\in\Theta^3(R)}|f_\theta|^2|^{7/2} \approx  \sum_U |U|\|S_{U,2}^{curve}f\|_{L^7}^7 \end{equation}
holds for some collection of essentially distinct wave envelopes $U$. By \emph{wave envelope}, we mean a set that, after translation to the origin, is comparable to the convex hull of $\theta^*$, for some neighboring subcollection of $\theta\in\Theta^n(R)$. 
It turns out that \eqref{naiveL7} cannot hold for genuine wave envelopes since moment curve planks can stack into tubes, which are larger, convex sets but not wave envelopes. Even if it were possible to identify all of the ways that planks could collect to dominate the right hand side of \eqref{naiveL7}, it is not clear how that would be useful to establish the $L^7$ square function estimate. Part of the argument in \cite{MR4794594} that establishes the sharp $L^7$ estimate uses an iteration inspired by the wave envelope argument of \cite{MR4151084}. However, the ``wave envelopes" that appear in that iteration are actually closer to the wave envelopes which appear in the lower dimensional problem for the parabola. This is due to the fact that working in $L^7$ drastically limits the types of ``high" sets one should think about in the ``high/low'' frequency decomposition. A significant new idea which is exploited in Theorem~\ref{0501.thm12} is to focus on an $\ell^{p/2}$ expression in place of the $\ell^2$ expression in square function estimates, and to simultaneously measure the function in $L^{p/2}$. Compared to the $L^7$ square function estimate in $n=3$, this replaces the critical expression 
\[ \int_{\R^3}|\sum_{\theta\in\Theta^3(R)}|f_\theta|^2|^{7/2} \] 
with the critical expression 
\begin{equation}\label{novel}  \int_{\R^3}|\sum_{\theta\in\Theta^3(R)}|f_\theta|^5|^{2}, \end{equation}
which is an $L^2$ expression within a powerful $L^{10}$-based inequality. By wave packet analysis, the functions $\sum_{\theta\in\Theta^3(R)}|f_\theta|^2$ and $\sum_{\theta\in\Theta^3(R)}|f_\theta|^5$ have essentially the same geometric behavior on the spatial side and comparable Fourier supports. The fact that the expression \eqref{novel} is $L^2$-based allows for a much more generous choice of high-frequency part in the high/low analysis, and leads to a genuine wave envelope estimate for moment curve planks in $\mathbb{R}^3$ (as well as higher dimensions).

\medskip

\section{Reduction of the local smoothing estimate to the main estimate}\label{sec2}

In this section, we carry out a Fourier-side reduction of Theorem \ref{mainthm}
to the main estimate, Theorem \ref{05.29.thm28}. Since the main estimate requires some notation, let us postpone the statement to the end of the section. 
Recall that $\gamma_n:I \rightarrow \mathbb{R}^n$ is a smooth nondegenerate curve.
We fix $n$ and denote $\ga(s)=\ga_n(s)$ to simplify the notation. After an affine change of parameter, we may assume that the range of $s$ is $[0,1]$.
By abusing the notation, we use the notation
\[\ga:=\{ \ga(s): 0\le s\le 1\}\]
to denote the nondegenerate curve, 
and use \[\Ga:=\{ t(\ga(s),1): 1\le t\le 2, \; 0\le s\le 1 \}\]
to denote the two-dimensional cone in $\R^{n+1}$ generated by $\ga$. We remark that $s$ plays the role of the angular parameter, while $t$ plays the role of the radial parameter.

Let $\chi(s,t)$ be a smooth bump function supported in $[0,1]\times [1,2]$.
We can modify the average operator as
\[Af(x,t):=\int_\R f(x-t\ga(s))\chi(s,t) \mathrm{d}s.\] 
We claim that
\begin{equation}\label{verify}
   \int_\R f(x-t\ga(s))\chi(s,t) \mathrm{d}s= (f\de_{t=0})*\mu_\Ga(x,t). 
\end{equation} 
Here, $\mu_\Ga$ is defined so that
\begin{equation}\label{defmu}
    \wh \mu_\Ga(\xi,\xi_{n+1})=\int_{\R^2} e^{-it(\ga(s),1)\cdot (\xi,\xi_{n+1})}\chi(s,t)\mathrm{d}s\mathrm{d}t. 
\end{equation} 
We now verify \eqref{verify}. Take Fourier transform on both sides of \eqref{verify}. The left hand side is 
\begin{equation}
    \begin{split}
        \int &f(x-t\ga(s))\chi(s,t)e^{-i(x\cdot\xi+t\xi_{n+1})}\mathrm{d}s\mathrm{d}x\mathrm{d}t \\&=\int f(x)\chi(s,t)e^{-i x\cdot\xi} e^{-i(t\ga(s)\cdot\xi+t\xi_{n+1})}\mathrm{d}s\mathrm{d}x\mathrm{d}t=\wh f(\xi) \wh \mu_\Ga(\xi,\xi_{n+1})
    \end{split}
\end{equation}
which equals the Fourier transform of the right hand side.

One can compute
\begin{equation}\label{0709.22}
    \begin{split}
        \mu_\Ga(z) &= \int_{\R^{n+1}} \int_{\R^2}e^{-i ( t(\ga(s),1)-z )\cdot (\xi,\xi_{n+1})}\chi(s,t)\mathrm{d}s \mathrm{d}t \mathrm{d}\xi \mathrm{d}\xi_{n+1}
        \\&
        =\int_{\R^2} \de_{z=t(\ga(s),1)}\chi(s,t)\mathrm{d}s\mathrm{d}t.
    \end{split}
\end{equation}
If $\chi(s,t)=1_{[0,1]\times [1,2]}(s,t)$, then the above expression is $1_\Ga$.
So morally speaking, we can think of the measure $\mu_{\Gamma}$ as a ``smooth" surface measure supported on the two-dimensional cone $\Gamma$.

Let us recall  Theorem \ref{mainthm}.
\[ \Big(\int_1^2\|Af(x,t)\|_{L^p_\si(\R^n)}^p \, dt \Big)^\frac1p\lesssim \|f\|_{L^p(\R^n)}. \]
By a standard Littlewood-Paley decomposition and localization argument, it is reduced to proving the following: There exists $\e>0$ such that
\begin{equation}
    \Big(\int_{B_1^n\times [1,2]}|Af(x,t)|^p \, \mathrm{d}x\mathrm{d}t \Big)^{\frac1p} \lesssim R^{-\si-\e}\|f\|_{L^p(W_{B_1^{n}} )},
\end{equation}
for every $f$ satisfying $\supp \wh f\subset B^n_R\setminus B^n_{R/2}$ and $\sigma < \sigma(p,n)$.

The main goal of this section is to compute $\wh\mu_\Ga(\xi,\xi_{n+1})$ for $|\xi|\sim R $. 
We will explicitly compute $\wh{\mu_\Ga}$ (see Theorem \ref{fourierdecay}). This is one of the crucial steps in the proof of Theorem \ref{mainthm}. Based on these decay rates, we partition the frequency space into conical regions and estimate each region separately.

Introduce 
$\bfv_1(s):=\frac{1}{\sqrt{|\gamma(s)|^2+1}}(\gamma(s),1)$,
which is the unit vector in the flat direction of the cone $\Gamma$. Since
$\gamma$ is nondegenerate, the vectors
$\bfv_1(s),\bfv_1'(s),\ldots,\bfv_1^{(n)}(s)$
are linearly independent in $\mathbb R^{n+1}$. Applying the Gram--Schmidt
process to this ordered family gives an orthonormal frame
\[
        \bfv_1(s),\bfv_2(s),\ldots,\bfv_{n+1}(s),
\]
which we call the Frenet frame associated with $\bfv_1$. Equivalently,
\[
        \operatorname{span}\{\bfv_1(s),\ldots,\bfv_j(s)\}
        =
        \operatorname{span}\{\bfv_1(s),\bfv_1'(s),\ldots,\bfv_1^{(j-1)}(s)\},
        \qquad 1\le j\le n+1 .
\]
We shall use the reversed frame
\[
        \bfe_i(s):=\bfv_{n+2-i}(s),\qquad 1\le i\le n+1.
\]
Thus $\bfe_{n+1}(s)$ is parallel to $(\gamma(s),1)$. The properties of the Frenet basis will be discussed in Lemma \ref{0704.lem29} and \ref{Taylorlem}.

We actually view $\{\bfe_1(s):s\in[0,1]\}$ as a curve in the frequency space.
Later, we will use the frame $\bfe_1(s),\dots,\bfe_{n+1}(s)$ to decompose the frequency space. To set up  the scales,  let $R\gg 1$ be the largest scale. We will also use $0<\de_1,\dots,\de_{n-1}\le 1$ to define the intermediate scales. We will discuss a dyadic decomposition at scale $R$.

\subsection{Bump functions adapted to a rectangular box}\label{subsection21}
We introduce a notion used frequently throughout the paper. Let $P=\prod_{i=1}^n[-a_i,a_i]$ be a rectangular box in $\R^n$. We say $\phi_P$ is a bump function adapted to $P$ (or simply $\phi_P$ is adapted to $P$), if
\begin{enumerate}
    \item $\phi_P$ is supported in the $C$-dilation of $P$. Here, $C$ is a fixed constant.
    \item $|\partial^\al\phi_P(x_1,\dots,x_n)|\lesssim_\al \prod_{i=1}^n  a_i^{-\alpha_i} (1+\frac{|x_i|}{a_i})^{-\al_i}$ holds for any multi-index $\al$.
\end{enumerate}

For a general rectangular box $P$, we  define
adapted functions by applying a rigid motion sending P to a box of the above form and then pulling back the definition.

It is easy to see the following facts.
\begin{enumerate}
    \item If $\phi_1,\dots,\phi_m$ are bump functions adapted to $P$, then $\prod_{i=1}^m\phi_i$ is a bump function adapted to $P$.

    \item If $P\subset Q$, and $\phi_P, \phi_Q$ are adapted to $P,Q$ (respectively), then $\phi_P\phi_Q$ is adapted to $P$.
\end{enumerate}
 If $X=\cup P$ is  a finite union of rectangular boxes, we say $\phi_X$ {is adapted to} $X$ if it can be written as  $\phi_X=\sum_{P\subset X} \phi_P$ where each $\phi_P$ is adapted to $P$.

\subsection{Dyadic decomposition for a hollow rectangular box}
The goal of this subsection is to suitably decompose the hollow rectangular
box
\begin{equation}\label{0704.142}
   [-1,1]^{n-1} \setminus  [-\frac 12,\frac 12]^{n-1}
\end{equation}
into anisotropic sub-boxes. Such decompositions have already appeared in \cite{gan2022restricted} (for $n=3$) and \cite{gan2024restricted} (for general $n$), so we will sketch the idea. The two important features of the sub-boxes in the decomposition are the size (corresponding to the parameter $\vec\de$) and the separation from the origin (corresponding to the parameter $\vec\nu$).

Let us describe the size of the sub-boxes. Fix $R\gg 1$.
Choose a sequence of dyadic
numbers
\[
    \vec\de=(\de_1,\ldots,\de_{n-1}), \qquad 0<\de_i\le 1,
\]
which satisfies the numerology
\begin{equation}\label{admissible-numerology}
    \de_1^n\de_2^{n-1}\cdots \de_{n-1}^2=R^{-1}.
\end{equation}

It is useful to introduce the associated $\rho$-sequence
\begin{equation}\label{rhoseq}
    \rho_k:=\de_1^k\de_2^{k-1}\cdots\de_k,
    \qquad 1\le k\le n-1,
\end{equation}
and also set 
$\rho_0:=1 $ and 
\[\rho_n:=\de_1^n\de_2^{n-1}\cdots\de_{n-1}^2=R^{-1}.\]
The sub-boxes in the decomposition will be of size
comparable to
\[
    \rho_0\times \rho_1\times \ldots\times \rho_{n-2}.
\]

\begin{definition}[Admissible $(\vec\de,\vec\nu)$]\label{admissibledenu}
Let $\vec\de=(\de_1,\ldots,\de_{n-1})$ be a sequence of dyadic numbers, and
$\vec\nu=(\nu_1,\ldots,\nu_{n-1})\in\{-1,0,1\}^{n-1}$.  We say that
$(\vec\de,\vec\nu)$ is \textbf{admissible} at scale $R$ if \eqref{admissible-numerology} holds, and $\nu_i=0\Leftrightarrow \de_i=1$. Equivalently, a nontrivial scale
$\de_i<1$ records that the $i$th coordinate lies in one of the two outer
dyadic annuli, while $\nu_i=0$ means that this coordinate stays in the
central part.
\end{definition}

We have the following crucial decomposition lemma.

\begin{lemma}
    There exists a decomposition
    \begin{equation}\label{decomphollow}
    [-1,1]^{n-1}\Big\backslash [-\frac{1}{2},\frac{1}{2}]^{n-1}=\bigsqcup_{\vec\de,\vec\nu}s[\vec\de,\vec\nu]. 
\end{equation} 
Here, $(\vec\de,\vec\nu)$ are admissible at scale $R$; each $s[\vec\de,\vec\nu]$ is the collection of points $(a_1,\dots,a_{n-1})\in \R^{n-1}$ such that (recalling \eqref{rhoseq})
\begin{equation}\label{deltaplank00}
\begin{split}
\begin{cases}
    |a_i|\lesssim \rho_{i-1} & \textup{if~}\nu_i=0,\\
    a_i\sim \rho_{i-1} & \textup{if~} \nu_i=1,\\
    a_i\sim -\rho_{i-1} &\textup{if~} \nu_i=-1,
\end{cases}\qquad 1\le i\le n-1.\\
\end{split}
\end{equation}
($a_i\sim \pm \rho_{i-1}$ means that $a_i$ lies in a
fixed dyadic interval of that size and with the indicated sign.) 

\end{lemma}

\begin{proof}
We prove the following slightly stronger statement. Let
\(0<\ell_i<1\), \(1\leq i\leq n-1\). Then
\[
[-1,1]^{n-1}\setminus \prod_{i=1}^{n-1}[-\ell_i,\ell_i]
=\bigsqcup_{\vec\delta,\vec\nu} s[\vec\delta,\vec\nu],
\]
where the pieces have the form in \((3.10)\), with implicit constants depending
only on the \(\ell_i\)'s. Taking \(\ell_i=1/2\) gives the lemma.

We argue by induction on \(n\). When \(n=2\),
\[
[-1,1]\setminus[-\ell_1,\ell_1]
=[-1,-\ell_1)\sqcup(\ell_1,1].
\]
These are the two pieces corresponding to
\(\delta_1=R^{-1/2}\) and \(\nu_1=-1,1\), respectively.

Assume the result is known in dimension \(n-1\). Decompose
\[
[-1,1]^{n-1}\setminus \prod_{i=1}^{n-1}[-\ell_i,\ell_i]
\]
into the following three disjoint parts:
\begin{enumerate}
    \item $\ [-1,1]\times\Big([-1,1]^{n-2}\setminus
\prod_{i=2}^{n-1}[-\ell_i,\ell_i]\Big)$,

\item
$(\ell_1,1]\times \prod_{i=2}^{n-1}[-\ell_i,\ell_i]$,

\item $[-1,-\ell_1)\times \prod_{i=2}^{n-1}[-\ell_i,\ell_i]$.

\end{enumerate}

For part (1), apply the induction hypothesis to the last \(n-2\)
coordinates, so that this portion is decomposed into pieces $\bigsqcup s[\vec\tau,\vec\om]$. For each admissible pair \((\vec\tau,\vec\omega)\),
set
\[
\vec\delta=(1,\vec\tau),\qquad \vec\nu=(0,\vec\omega).
\]
Then the resulting $s[\vec\de,\vec\nu]=[-1,1]\times s[\vec\tau,\vec\omega]$ form a decomposition.

We next treat part (2); part (3) is identical. Decompose
\[
\prod_{i=2}^{n-1}[-\ell_i,\ell_i]
\]
into dyadic anisotropic annuli
\[
\prod_{i=2}^{n-1}[-\ell_i(2\delta_1)^{i-1},
 \ell_i(2\delta_1)^{i-1}]
\setminus
\prod_{i=2}^{n-1}[-\ell_i\delta_1^{i-1},
 \ell_i\delta_1^{i-1}],
\]
where \(R^{-1/n}\lesssim \delta_1\leq 1/2\) is dyadic, together with the
remaining central box
\[
\prod_{i=2}^{n-1}[-\ell_i R^{-(i-1)/n},\ell_i R^{-(i-1)/n}].
\]
For a fixed \(\delta_1\), rescale the last \(n-2\) coordinates by
\(a_i=\delta_1^{i-1}b_i\). Applying the induction hypothesis at scale
\(R\delta_1^n\) gives admissible \((n-2)\)-tuples
\(\vec\tau=(\delta_2,\ldots,\delta_{n-1})\), satisfying
\[
\delta_2^{\,n-1}\delta_3^{\,n-2}\cdots \delta_{n-1}^2
=(R\delta_1^n)^{-1}.
\]
Hence
\[
\delta_1^n\delta_2^{\,n-1}\cdots \delta_{n-1}^2=R^{-1},
\]
so \((\delta_1,\vec\tau)\) satisfies \eqref{admissible-numerology}. After undoing the
rescaling, the corresponding pieces are exactly of the form
\[
s[(\delta_1,\vec\tau),(1,\vec\omega)].
\]
The remaining central box is the piece with
\[
\vec\delta=(R^{-1/n},1,\ldots,1),\qquad
\vec\nu=(1,0,\ldots,0).
\]
Thus the part (2) is decomposed into admissible pieces. 
\end{proof}

\bigskip

Rescaling \eqref{decomphollow} by factor $R$, we have
\[ [-R,R]^{n-1}\Big\backslash [-\frac{R}{2},\frac{R}{2}]^{n-1}=\bigsqcup_{\vec\de,\vec\nu}S[\vec\de,\vec\nu]. \]
Here, each $S[\vec\de,\vec\nu]$ is the collection of points $(a_1,\dots,a_{n-1})\in\R^{n-1}$ such that
\begin{equation}\label{deltaplank0}
\begin{split}
\begin{cases}
    |a_i|\lesssim R\rho_{i-1} & \textup{if~}\nu_i=0,\\
    a_i\sim R\rho_{i-1} & \textup{if~} \nu_i=1,\\
    a_i\sim -R\rho_{i-1} &\textup{if~} \nu_i=-1,
\end{cases}\qquad 1\le i\le n-1,\\
\end{split}
\end{equation}
Also define
$P[\vec\de,\vec\nu]$ to be the collection of points $(a_1,\dots,a_{n+1})\in\R^{n+1}$ such that
\begin{equation}\label{deltaplank}
\begin{split}
\begin{cases}
    |a_i|\lesssim R\rho_{i-1} & \textup{if~}\nu_i=0,\\
    a_i\sim R\rho_{i-1} & \textup{if~} \nu_i=1,\\
    a_i\sim -R\rho_{i-1} &\textup{if~} \nu_i=-1,
\end{cases}\qquad 1\le i\le n-1,\\
\textup{and~}|a_n|\lesssim R\rho_{n-1},
\qquad |a_{n+1}|\lesssim R\rho_n=1.
\end{split}
\end{equation}
Here, the first $n-1$ coordinates of $P[\vec\de,\vec\nu]$ coincide with $S[\vec\de,\vec\nu]$.

\begin{definition}
    Let $s\in[0,1]$. Suppose $X\subset \R^{n+1}$. We define
    \[ X(s):=\Big\{\sum_{i=1}^{n+1} a_i\bfe_i(s): (a_1,\dots,a_{n+1})\in X\Big\}. \]
    In most cases, $X$ is a rectangular box. For example,
    if $\bI=I_1\times \dots\times I_{n+1}\subset \R^{n+1}$, where each $I_i\subset \R$ is an interval. Then
    \[ \bI(s):=\Big\{\sum_{i=1}^{n+1} a_i\bfe_i(s): a_i\in I_i\Big\}. \]
\end{definition}
By changing coordinates, we obtain the $(\vec\de,\vec\nu)$-plank $P[\vec\de,\vec\nu](s)$ that is associated with the Frenet coordinate. More precisely, we define
\begin{equation}
    P[\vec\de,\vec\nu](s):= \Big\{ \sum_{i=1}^{n+1}a_i \bfe_i(s): (a_1,\ldots,a_{n+1}) \in P[\vec\de, \vec\nu]   \Big\}.
\end{equation}

\begin{lemma}[Frenet geometry]\label{0704.lem29}
    Recall the Frenet coordinate $\bfe_1(s),\dots,\bfe_{n+1}(s)$. Define
    the nonzero functions
    \[ \kappa_j(s):=\langle \bfe_j'(s),\bfe_{j+1}(s)\rangle \ \textup{for~}j=1,\dots,n. \]
    Then one has the formula
    \begin{equation}\label{frenetformula}
        \begin{split}
            &\bfe_1'(s)=\kappa_1(s)\bfe_2(s),\\
            &\bfe_i'(s)=-\kappa_{i-1}(s)\bfe_{i-1}(s)+\kappa_i(s)\bfe_{i+1}(s),\ i=2,\dots,n,\\
            &\bfe'_{n+1}(s)=-\kappa_n(s)\bfe_n(s).
        \end{split}
    \end{equation}
Since the curve is fixed, we assume $|\ka_i(s)|\sim 1$ throughout the paper.
\end{lemma}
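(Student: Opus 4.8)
The plan is to apply the standard Gram--Schmidt/Frenet formalism to the dual curve $\bfe_1(s)$ and then read off the claimed relations. First I would recall that, by construction, $\bfe_1(s),\dots,\bfe_{n+1}(s)$ is obtained by applying the Gram--Schmidt process to the frame $\bfe_1(s),\bfe_1'(s),\dots,\bfe_1^{(n)}(s)$, so that for each $k$ the span of $\bfe_1(s),\dots,\bfe_k(s)$ equals the span of $\bfe_1(s),\dots,\bfe_1^{(k-1)}(s)$; in particular $\bfe_k(s)$ is a linear combination of $\bfe_1(s),\dots,\bfe_1^{(k-1)}(s)$. Differentiating this relation, $\bfe_k'(s)$ lies in the span of $\bfe_1(s),\dots,\bfe_1^{(k)}(s) = \mathrm{span}(\bfe_1(s),\dots,\bfe_{k+1}(s))$. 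Hence, writing $\bfe_k'(s)=\sum_{j=1}^{k+1} c_{kj}(s)\bfe_j(s)$ in the orthonormal frame, we have $c_{kj}(s)=\langle \bfe_k'(s),\bfe_j(s)\rangle$ and the sum truncates at $j=k+1$.

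The second observation is the antisymmetry coming from orthonormality: differentiating $\langle \bfe_i(s),\bfe_j(s)\rangle = \delta_{ij}$ gives $\langle \bfe_i'(s),\bfe_j(s)\rangle = -\langle \bfe_i(s),\bfe_j'(s)\rangle$, i.e.\ $c_{ij}(s)=-c_{ji}(s)$. Combining with the previous step (where $c_{ij}=0$ for $j>i+1$) forces $c_{ij}=0$ also for $i>j+1$, so the coefficient matrix is tridiagonal and skew-symmetric. Thus the only possibly-nonzero entries are the super/sub-diagonal ones, and setting $\kappa_j(s):=c_{j,j+1}(s)=\langle \bfe_j'(s),\bfe_{j+1}(s)\rangle$ we get $c_{j+1,j}(s)=-\kappa_j(s)$. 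Substituting back yields exactly the three displayed formulas in \eqref{frenetformula}: the $j=1$ line has only the $\bfe_2$ term, the middle lines have the $\bfe_{j-1}$ and $\bfe_{j+1}$ terms, and the $j=n+1$ line has only the $\bfe_n$ term. That the $\kappa_j$ are nonzero functions follows from nondegeneracy of $\gamma_n$ (equivalently of the dual curve $\bfe_1$): if some $\kappa_j(s_0)=0$ then $\bfe_1(s_0),\dots,\bfe_1^{(n)}(s_0)$ would fail to span $\R^{n+1}$, contradicting \eqref{12}. The final sentence ``we assume $|\kappa_i(s)|\sim 1$'' is a harmless normalization since the curve is fixed and $s$ ranges over the compact interval $[0,1]$, so each continuous nonvanishing $\kappa_i$ is bounded above and below.

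The main (and really only) obstacle is bookkeeping: one must be careful that the Gram--Schmidt frame for the \emph{dual} curve $\bfe_1$ is genuinely a Frenet frame, i.e.\ that $\bfe_1,\bfe_1',\dots,\bfe_1^{(n)}$ are linearly independent on all of $[0,1]$ — this is the ``easy exercise'' asserted in the text just before the lemma and is where nondegeneracy of $\gamma_n$ is used (it is equivalent to nonvanishing of the relevant Wronskian-type determinant). Once that is in hand, everything else is the classical derivation of the Frenet--Serret equations in $\R^{n+1}$ and involves no curve-specific analysis.
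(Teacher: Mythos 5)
Your proof is correct: the paper omits the proof of this lemma as "well-known," and your argument is precisely the classical derivation of the Frenet--Serret equations (Gram--Schmidt span identities give an upper-triangular-plus-one structure, orthonormality gives skew-symmetry, hence a tridiagonal skew-symmetric coefficient matrix), together with the right justification for the nonvanishing of the $\kappa_j$ via nondegeneracy. Nothing further is needed.
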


Lemma \ref{0704.lem29} is well-known, so we omit the proof here.

\begin{remark}
    {\rm
    Recall that the geometric objects we want to study are boxes of the form \eqref{caps2}. They are expressed using the basis $\{\ga'_n,\dots,\ga_n^{(n)}\}$. We may replace the basis by $\{\bfe_1,\dots,\bfe_n\}$, the Frenet coordinate for the curve $\ga_n$. Noting that the coefficients $\lam_j$ are decreasing, one can show that the original box is comparable to the resulting box, which is comparable to a $(\vec\de,\vec\nu)$-plank. This type of argument appeared in  previous work (see, for example, \cite[Lemma 8.2]{MR4861588}).
    }
\end{remark}

\medskip

\begin{lemma}[Taylor expansion] \label{Taylorlem}
    For $|\De|\ll 1$ and $1\le i<k\le n+1$, we have
    \begin{equation}\label{taylorexpansion}
        \langle \bfe_i(s+\De),\bfe_k(s)\rangle= \frac{\De^{k-i}}{(k-i)!}\kappa_i(s)\cdots\kappa_{k-1}(s)+O(|\De|^{k-i+1}). 
    \end{equation} 
Similarly, for $1\le k<i\le n+1$, we have
    \begin{equation}\label{taylorexpansion2}
        \langle \bfe_i(s+\De),\bfe_k(s)\rangle= \frac{(-\De)^{i-k}}{(i-k)!}\kappa_k(s)\cdots\kappa_{i-1}(s)+O(|\De|^{i-k+1}). 
    \end{equation} 
Also,     
\begin{equation}\label{taylorexpansion3}
        \langle \bfe_k(s+\De),\bfe_k(s)\rangle= 1+O(|\De|^{2}). 
    \end{equation}
\end{lemma}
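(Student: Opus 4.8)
The plan is to Taylor expand $\bfe_i(s+\De)$ in powers of $\De$ and pair the result with $\bfe_k(s)$, so that all three formulas reduce to a computation of the inner products $\langle \bfe_i^{(m)}(s),\bfe_k(s)\rangle$ for small $m$. Since the curve is $C^\infty$ on the compact interval $[0,1]$, Taylor's theorem with remainder gives, for any integer $M\ge 0$,
\[ \bfe_i(s+\De)=\sum_{m=0}^{M}\frac{\De^m}{m!}\,\bfe_i^{(m)}(s)+O(|\De|^{M+1}), \]
with the implicit constant uniform in $s$ (using $|\kappa_j(s)|\sim 1$ and smoothness of $\ga_n$ to bound all derivatives of $\bfe_i$ on $[0,1]$). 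Pairing with $\bfe_k(s)$ and using orthonormality of the Frenet frame, one gets $\langle \bfe_i(s+\De),\bfe_k(s)\rangle=\sum_{m=0}^{M}\frac{\De^m}{m!}\,c^{(m)}_{i,k}(s)+O(|\De|^{M+1})$, where $c^{(m)}_{i,j}(s)$ denotes the coefficient of $\bfe_j(s)$ in the expansion $\bfe_i^{(m)}(s)=\sum_j c^{(m)}_{i,j}(s)\bfe_j(s)$.

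The heart of the argument is the following claim, proved by induction on $m$ using the Frenet formulas \eqref{frenetformula}: $\bfe_i^{(m)}(s)$ lies in the span of $\bfe_{i-m}(s),\dots,\bfe_{i+m}(s)$ (indices outside $\{1,\dots,n+1\}$ simply being absent, consistently with the boundary cases $\bfe_1'=\kappa_1\bfe_2$ and $\bfe_{n+1}'=-\kappa_n\bfe_n$), and moreover the two extremal coefficients satisfy $c^{(m)}_{i,i+m}(s)=\kappa_i(s)\kappa_{i+1}(s)\cdots\kappa_{i+m-1}(s)$ and $c^{(m)}_{i,i-m}(s)=(-1)^m\kappa_{i-m}(s)\cdots\kappa_{i-1}(s)$. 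Indeed, differentiating $\bfe_i^{(m)}=\sum_j c^{(m)}_{i,j}\bfe_j$ and substituting \eqref{frenetformula}, the only contribution to the $\bfe_{i+m+1}$-coefficient comes from the raising term $\kappa_{i+m}\bfe_{i+m+1}$ of $\bfe_{i+m}'$, giving $c^{(m+1)}_{i,i+m+1}=\kappa_{i+m}\,c^{(m)}_{i,i+m}$, and likewise $c^{(m+1)}_{i,i-m-1}=-\kappa_{i-m-1}\,c^{(m)}_{i,i-m}$; starting from $c^{(0)}_{i,i}=1$ these recursions unwind to the claimed products. In particular $c^{(m)}_{i,k}(s)=0$ whenever $m<|k-i|$, so in the expansion of $\langle\bfe_i(s+\De),\bfe_k(s)\rangle$ the first possibly nonzero term occurs at $m=|k-i|$; choosing $M=|k-i|$ and reading off that coefficient yields \eqref{taylorexpansion} when $i<k$, \eqref{taylorexpansion2} when $i>k$ (the sign $(-1)^{i-k}$ from $c^{(i-k)}_{i,k}$ being absorbed into $(-\De)^{i-k}$), and \eqref{taylorexpansion3} when $i=k$, using $c^{(0)}_{k,k}=1$ and $c^{(1)}_{k,k}=\langle\bfe_k',\bfe_k\rangle=\tfrac12\tfrac{d}{ds}\langle\bfe_k,\bfe_k\rangle=0$.

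There is no serious obstacle here; the only point requiring care is the inductive bookkeeping for the coefficients $c^{(m)}_{i,j}$, in particular verifying that the terms produced by differentiating the curvature factors $\kappa_j(s)$ never reach the extremal indices $i\pm m$ (they always land strictly between $i-m$ and $i+m$), so that they do not affect the leading coefficients. One should also record that the $O(|\De|)$-type remainders are uniform in $s$, which is immediate from the compactness of $[0,1]$ together with the assumed bounds $|\kappa_j|\sim 1$.
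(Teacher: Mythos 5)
Your proposal is correct and follows essentially the same route as the paper: Taylor expand $\bfe_i(s+\De)$, pair with $\bfe_k(s)$, and use the Frenet formulas to see that derivatives of order $m<|k-i|$ pair to zero while the order-$|k-i|$ derivative contributes the product of curvatures. The only difference is that you make explicit, via induction on $m$, the band structure of $\bfe_i^{(m)}$ in the Frenet frame and the formulas for the extremal coefficients $c^{(m)}_{i,i\pm m}$, which the paper asserts directly; this is a slightly more complete write-up of the same argument.
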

\begin{proof}
Let us prove \eqref{taylorexpansion}.
    We use  Taylor's theorem
    \[ \bfe_i(s+\De)=\sum_{j=0}^{k-i} \frac{\De^j}{j!}\bfe_i^{(j)}(s)+O(|\De|^{k-i+1}). \]
Using \eqref{frenetformula}, we see that $\bfe_i^{(j)}(s)$ is a linear combination of $\{\bfe_l(s)\}_{1\le l\le i+j}$. Since $\bfe_k(s)$ is orthogonal to $\bfe_l(s)$ for $l<k$, we have
\[ \langle \sum_{j=1}^{k-i-1} \frac{\De^j}{j!}\bfe_i^{(j)}(s),\bfe_k(s)\rangle=0. \]
Writing $\bfe_i^{(k-i)}(s)$ as a linear combination of $\{\bfe_l(s)\}_{1\le l\le k}$ and noting that the coefficient of $\bfe_k(s)$ is $\ka_i(s)\cdots\ka_{k-1}(s)$, we have
\[ \langle  \frac{\De^{k-i}}{(k-i)!}\bfe_i^{(k-i)}(s),\bfe_k(s)\rangle=\frac{\De^{k-i}}{(k-i)!}\kappa_i(s)\cdots\kappa_{k-1}(s). \]
This finishes the proof of \eqref{taylorexpansion}. The proofs of the other two identities are similar.
\end{proof}

As a next step,
we will exploit the geometric positions of the $(\vec\de,\vec\nu)$-planks. 
We will show that if $s,s'$ are close, then they are \textit{comparable}. To define ``comparable", we need the following anisotropic dilation which enlarges the plank  while preserving the property that, whenever $\nu_i \neq 0$, the $i$th edge remains separated from the origin.
\begin{definition}[Anisotropic dilation of $(\vec\de,\vec\nu)$-planks]\label{dedilation}
Let $0< C<\infty$. Recall $P[\vec\de,\vec\nu]$ in \eqref{deltaplank}.
We define the $C$-dilation of $P[\vec\de,\vec\nu](s)$, denoted by $CP[\vec\de,\vec\nu](s)$ to be a plank consisting of points
 $\sum_{i=1}^{n+1}a_i\bfe_i(s)$,
where
\begin{equation} 
\begin{split}
\begin{cases}
    |a_i|\le CR\rho_{i-1} & \textup{if~}\nu_i=0,\\
    a_i\in (\frac{1}{C+1}R\rho_{i-1},(1+C) R\rho_{i-1}] & \textup{if~} \nu_i=1,\\
    a_i\in [-(1+C) R\rho_{i-1},-\frac{1}{C+1}R\rho_{i-1}) &\textup{if~} \nu_i=-1;
\end{cases}\\
\textup{and~}|a_n|\le  CR\rho_{n-1};
|a_{n+1}|\le  C.
\end{split}
\end{equation}
    
\end{definition}

\begin{definition}[Isotropic dilation]\label{050704.212}
For a box $U$ and a number $C>0$,
     define the isotropic dilation $\textup{Dil}_C(U)=CU$, which is the dilation with respect to the center of $U$ by a factor of $C$. 
\end{definition}

Throughout the paper, unless otherwise specified, when we place a constant \(C\)
in front of a \((\vec\delta,\vec\nu)\)-plank or a similar object, we mean the
anisotropic dilation in the sense of Definition \ref{dedilation}; otherwise, we mean the
isotropic dilation.

\begin{definition}\label{defcompa}
    We say a set $U$ is comparable to $P[\vec\de,\vec\nu](s)$ if
    \[ C^{-1}P[\vec\de,\vec\nu](s)\subset U\subset CP[\vec\de,\vec\nu](s). \]
    Here, $C$ is a universal constant.
\end{definition}

The following lemma tells us that if we slightly change the basis, the resulting $(\vec\de,\vec\nu)$-planks are morally the same.

\begin{lemma}\label{structurelem1} There exists a constant $K\gg 1$ independent of the parameter $R$ such that the following holds.
Let $(\vec\de,\vec\nu)$ be admissible at scale $R$. Let $s,s'\in[0,1]$. If $|s-s'|\le K^{-1}\prod_{i=1}^{n-1}\de_i$, then $P[\vec\de,\vec\nu](s)$ and $P[\vec\de,\vec\nu](s')$ are comparable.
\end{lemma}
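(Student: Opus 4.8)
The plan is to fix a point of $P[\vec\de,\vec\nu](s')$, re-expand it in the Frenet frame based at $s$, and verify that each Frenet coordinate is perturbed by much less than the corresponding side length of the plank; this places the point inside a bounded dilate of $P[\vec\de,\vec\nu](s)$, and since the hypothesis is symmetric in $s,s'$ it yields comparability in the sense of Definition \ref{defcompa}.

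First I would write a generic point of $P[\vec\de,\vec\nu](s')$ as $p=\sum_{i=1}^{n+1}a_i\bfe_i(s')$, where $(a_1,\dots,a_{n+1})$ obeys the size conditions \eqref{deltaplank} with the $\rho$-sequence \eqref{rhoseq}, so that the side of the plank in direction $\bfe_i$ has length $\sim R\rho_{i-1}$. Using orthonormality of the Frenet frame, expand $\bfe_i(s')=\sum_{k=1}^{n+1}\langle\bfe_i(s'),\bfe_k(s)\rangle\,\bfe_k(s)$, so that $p=\sum_k b_k\bfe_k(s)$ with $b_k=\sum_i a_i\langle\bfe_i(s'),\bfe_k(s)\rangle$. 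Writing $\De:=s'-s$ (so $|\De|\le K^{-1}\prod_{i=1}^{n-1}\de_i\ll1$) and invoking Lemma \ref{Taylorlem} together with $|\ka_j|\sim1$, one gets $|\langle\bfe_i(s'),\bfe_k(s)\rangle|\lesssim|\De|^{|i-k|}$ for $i\ne k$ and $|\langle\bfe_k(s'),\bfe_k(s)\rangle-1|\lesssim|\De|^2$, hence
\[ |b_k-a_k|\ \lesssim\ R\rho_{k-1}|\De|^2+\sum_{i\ne k}R\rho_{i-1}|\De|^{|i-k|}. \]

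The core of the proof is to show every term on the right is $\ll R\rho_{k-1}$ once $K$ is a large enough absolute constant. The term $R\rho_{k-1}|\De|^2$ is harmless since $|\De|\ll1$. For $i>k$ the sequence $\rho$ is decreasing, so $R\rho_{i-1}|\De|^{i-k}\le R\rho_{k-1}|\De|^{i-k}\ll R\rho_{k-1}$. The only delicate case is $i<k$: from \eqref{rhoseq} one computes $\rho_{k-1}/\rho_{i-1}=\big(\prod_{j=1}^{i-1}\de_j\big)^{k-i}\prod_{j=i}^{k-1}\de_j^{\,k-j}$, whereas $|\De|^{k-i}\le K^{-(k-i)}\prod_{j=1}^{n-1}\de_j^{\,k-i}$; cancelling the common factor $\prod_{j=1}^{i-1}\de_j^{\,k-i}$, the required bound $|\De|^{k-i}\ll\rho_{k-1}/\rho_{i-1}$ reduces to $\prod_{j=i}^{k-1}\de_j^{\,j-i}\,\prod_{j=k}^{n-1}\de_j^{\,k-i}\le1$, which is immediate since every $\de_j\le1$ and every exponent is nonnegative. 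As there are only $O(n)$ terms, a large enough $K$ gives $|b_k-a_k|\le\tfrac{1}{100}R\rho_{k-1}$ for all $k$ (the indices $k=n,n+1$ are covered the same way, using $\rho_n=R^{-1}$ and $\nu_n=\nu_{n+1}=0$).

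Finally I would plug this back into \eqref{deltaplank}: for $\nu_k=0$, $|b_k|\le|a_k|+\tfrac{1}{100}R\rho_{k-1}\lesssim R\rho_{k-1}$; for $\nu_k=\pm1$, $a_k\sim\pm R\rho_{k-1}$ with $|a_k|\gtrsim R\rho_{k-1}$, so a perturbation of size $\tfrac{1}{100}R\rho_{k-1}$ keeps $b_k$ of the same sign and size $\sim R\rho_{k-1}$. Thus $p\in CP[\vec\de,\vec\nu](s)$ for a universal $C$, i.e.\ $P[\vec\de,\vec\nu](s')\subset CP[\vec\de,\vec\nu](s)$; exchanging $s$ and $s'$ gives the opposite inclusion, and Definition \ref{defcompa} finishes the proof. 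The main obstacle is entirely bookkeeping: arranging that the single basis-change error term is dominated by \emph{every} side length of the plank simultaneously, including the shortest ones, which is precisely what the threshold $|s-s'|\le K^{-1}\prod_i\de_i$ is calibrated for, together with the subtlety that in the directions with $\nu_k\ne0$ we must gain a genuine $\ll$ (not merely $\lesssim$) so that the sign of $b_k$ is preserved; this is why we keep the latitude to choose $K$ large.
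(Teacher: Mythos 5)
Your proposal is correct and follows essentially the same route as the paper's proof: expand the point in the Frenet frame at $s$ via Lemma \ref{Taylorlem}, and check term by term that the coefficient perturbations are $\ll R\rho_{k-1}$, which reduces (exactly as in the paper) to a product of the $\de_j$ with nonnegative exponents being at most $1$. The bookkeeping for $i<k$, the treatment of $k=n,n+1$, and the sign-preservation remark for $\nu_k\ne 0$ all match the paper's argument.
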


\begin{proof}
Let 
\[
    \delta:=\prod_{j=1}^{n-1}\delta_j
\]
and write $s'=s+\Delta$, with $|\Delta|\leq K^{-1}\delta$.  We prove that
$P[\vec\delta,\vec\nu](s')\subset C P[\vec\delta,\vec\nu](s)$; the reverse inclusion is identical.

Set
\[
    L_i:=R\rho_{i-1}\quad (1\leq i\leq n),\qquad L_{n+1}:=1=R\rho_n,
\]
where
$\rho_i$ is given by \eqref{rhoseq}. Let
\begin{equation}\label{zformula}
    z=\sum_{i=1}^{n+1}a_i' \bfe_i(s')\in P[\vec\delta,\vec\nu](s').
\end{equation}
    
Thus $|a_i'|\lesssim L_i$ for all $i$, and for $1\leq i\leq n-1$ with
$\nu_i\neq 0$, the coefficient $a_i'$ has the prescribed sign and satisfies
$|a_i'|\sim L_i$.

Write $z$ in the basis at $s$:
\[
    z=\sum_{k=1}^{n+1}a_k \bfe_k(s),
    \qquad 
    a_k=\langle z,\bfe_k(s)\rangle .
\]
By Lemma~\ref{Taylorlem}, for $i\neq k$ we have
\[
    |\langle \bfe_i(s+\Delta),\bfe_k(s)\rangle|
    \lesssim |\Delta|^{|i-k|},
\]
while
\[
    \langle \bfe_k(s+\Delta),\bfe_k(s)\rangle=1+O(|\Delta|^2).
\]
Hence, using \eqref{zformula},
\[
    |a'_k-a_k|=\langle z,\bfe_k(s+\De)-\bfe_k(s)\rangle
    \lesssim |a_k'||\Delta|^2
      +\sum_{i\neq k} |a_i'| |\Delta|^{|i-k|}.
\]
We claim that every term on the right is $O(K^{-1}L_k)$.  Indeed, if $i<k$, then by the definition of the $\rho$-sequence,
\[
    L_i\delta^{k-i}\leq L_k.
\]
If $i>k$, then $L_i\leq L_k$.  Since $|\Delta|\leq K^{-1}\delta\leq K^{-1}$, we obtain
\[
    |a_i'||\Delta|^{|i-k|}
    \lesssim K^{-1}L_k
\]
for every $i\neq k$.  The diagonal error $|a_k'||\Delta|^2$ is also
$\lesssim K^{-1}L_k$.  Therefore,
\[
    |a'_k-a_k|\lesssim K^{-1}L_k .
\]
Choosing $K$ sufficiently large, depending only on the curve and the fixed implicit
constants in the definition of the planks, this error is small compared with $L_k$.

Consequently, for $1\leq k\leq n-1$, if $\nu_k=0$, then $|a_k|\lesssim L_k$; if
$\nu_k=\pm1$, then $a_k$ has the same sign as $a'_k$ and $|a_k|\sim L_k$.
For $k=n,n+1$, we also have $|a_k|\lesssim L_k$.  Thus
\[
    z\in C P[\vec\delta,\vec\nu](s).
\]
This proves $P[\vec\delta,\vec\nu](s')\subset C P[\vec\delta,\vec\nu](s)$. 
\end{proof}

We first prove an auxiliary disjointness lemma, which will imply the finite-overlap estimate.

\begin{lemma}\label{disjlem}
    For any $C\ge 1$, there exists $K> 1$ so that the following holds. Fix an admissible $(\vec\de,\vec\nu)$. 
    For all $1\le l\le n-1$, and all $s,s'$ satisfying
    $K\prod_{i=1}^{l}\de_i\le |s-s'|\le K^{-1} \prod_{i=1}^{l-1}\de_i$,  $CP[\vec\de,\vec\nu](s)$ and $CP[\vec\de,\vec\nu](s')$ are disjoint.
\end{lemma}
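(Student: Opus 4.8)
We argue by contradiction. Observe first that the hypothesis is vacuous unless $K^{2}\de_{l}\le1$ (otherwise $[K\prod_{i=1}^{l}\de_{i},\,K^{-1}\prod_{i=1}^{l-1}\de_{i}]$ is empty), so we may assume $\de_{l}<1$; by the definition of an admissible $(\vec\de,\vec\nu)$ this forces $\nu_{l}\in\{-1,1\}$. Write $P_{m}:=\prod_{i=1}^{m}\de_{i}$, so that $P_{0}=1$, $\rho_{k}=\rho_{k-1}P_{k}$ for $1\le k\le n$, and $m\mapsto P_{m}$ is non-increasing (each $\de_{i}\le1$); set $\eta:=s-s'$ and, after swapping $s,s'$ if needed, assume $\eta>0$, so that $KP_{l}\le\eta\le K^{-1}P_{l-1}$. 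Suppose toward a contradiction that $z\in CP[\vec\de,\vec\nu](s)\cap CP[\vec\de,\vec\nu](s')$, and expand $z=\sum_{i=1}^{n+1}a_{i}\bfe_{i}(s)=\sum_{i=1}^{n+1}a_{i}'\bfe_{i}(s')$, where by Definition \ref{dedilation} the $a_{i}$ obey the $C$-dilated plank bounds at $s$ and the $a_{i}'$ obey them at $s'$. We will use that $|a_{i}|\lesssim R\rho_{i-1}$ for all $i$, that $a_{l}$ has the fixed sign $\nu_{l}$ and $|a_{l}|\sim R\rho_{l-1}$ (since $\nu_{l}\ne0$, and the anisotropic dilation preserves this sign), and that $|a_{l+1}'|\lesssim R\rho_{l}$.

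The idea is to contradict the last bound by computing $a_{l+1}'=\langle z,\bfe_{l+1}(s')\rangle$ from the expansion of $z$ in the Frenet frame at $s=s'+\eta$ via the Taylor expansion of Lemma \ref{Taylorlem}:
\[ a_{l+1}'=\underbrace{\sum_{i=1}^{l}a_{i}\Big(\tfrac{\eta^{\,l+1-i}}{(l+1-i)!}\ka_{i}(s')\cdots\ka_{l}(s')+O(\eta^{\,l+2-i})\Big)}_{\mathrm{(A)}}+\underbrace{a_{l+1}\big(1+O(\eta^{2})\big)}_{\mathrm{(B)}}+\underbrace{\sum_{i=l+2}^{n+1}a_{i}\,O(\eta^{\,i-l-1})}_{\mathrm{(C)}}. \]
The plan is to prove (1) $|\mathrm{(B)}|+|\mathrm{(C)}|\lesssim R\rho_{l}$, (2) $|\mathrm{(A)}|\gtrsim KR\rho_{l}$, and (3) conclude $|a_{l+1}'|\ge|\mathrm{(A)}|-|\mathrm{(B)}|-|\mathrm{(C)}|\gtrsim KR\rho_{l}$, which for $K$ large (depending on $C,n$) contradicts $|a_{l+1}'|\lesssim R\rho_{l}$.

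For (1): $|\mathrm{(B)}|\lesssim|a_{l+1}|\lesssim R\rho_{l}$ since $\eta\ll1$; a term of $\mathrm{(C)}$ is $\lesssim R\rho_{i-1}\eta^{\,i-l-1}$ with $i\ge l+2$, and $\rho_{i-1}=\rho_{l}\prod_{m=l+1}^{i-1}P_{m}$ gives $R\rho_{i-1}\eta^{\,i-l-1}=R\rho_{l}\prod_{m=l+1}^{i-1}(P_{m}\eta)\le K^{-1}R\rho_{l}$ (using $P_{m}\le1$, $\eta\le K^{-1}$). For (2), pull $\eta$ out of $\mathrm{(A)}$: the $i=l$ term equals $a_{l}\ka_{l}(s')\,\eta\,(1+O(\eta))$, which by $\nu_{l}\ne0$ and $|\ka_{l}|\sim1$ has a fixed sign and magnitude $\sim R\rho_{l-1}\eta\ge KR\rho_{l}$ (using $\rho_{l-1}P_{l}=\rho_{l}$); every other term of $\mathrm{(A)}$ (with $i<l$, including the Taylor errors) is $\lesssim R\rho_{i-1}\eta^{\,l+1-i}=\big(\prod_{m=i}^{l-1}\eta/P_{m}\big)R\rho_{l-1}\eta\lesssim K^{-1}R\rho_{l-1}\eta$, where we used $\rho_{l-1}=\rho_{i-1}\prod_{m=i}^{l-1}P_{m}$ and $\eta\le K^{-1}P_{l-1}\le K^{-1}P_{m}$ for $m\le l-1$. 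Hence for $K$ large the $i=l$ term dominates $\mathrm{(A)}$ with no cancellation — the signs of $a_{i}$ for $i<l$ are irrelevant, which disposes of the case $\nu_{i}=0$ — so $|\mathrm{(A)}|\gtrsim R\rho_{l-1}\eta\gtrsim KR\rho_{l}$, and (3) follows after fixing $K$ large relative to $C,n$.

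I do not expect a genuine conceptual obstacle. The point is that $\de_{l}<1$ forces $\nu_{l}\ne0$, which pins down the single Frenet coordinate $a_{l}$ in size \emph{and} sign; the separation $\eta\gtrsim\prod_{i\le l}\de_{i}$ then turns this into a lower bound $\gtrsim KR\rho_{l}$ for the $(l+1)$st coordinate of $z$ measured in the $s'$-frame, beyond the window $O(R\rho_{l})$ allowed by membership in $CP[\vec\de,\vec\nu](s')$. The only real work is the bookkeeping in step (2): verifying, uniformly over admissible $\vec\de$, that each term of $\mathrm{(A)}$ other than the $i=l$ term and each Taylor error term is smaller than the $i=l$ term by a factor $\lesssim K^{-1}$, which uses only the monotonicity of $m\mapsto P_{m}$ and the identity $\rho_{k}=\rho_{k-1}P_{k}$.
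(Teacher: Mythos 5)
Your proof is correct and follows essentially the same route as the paper's: both reduce to the case $\de_l<1$ (so $\nu_l\neq 0$ and $|a_l|\sim R\rho_{l-1}$ with fixed sign), expand $\langle z,\bfe_{l+1}\rangle$ in the other Frenet frame via Lemma \ref{Taylorlem}, and show the $a_l\cdot\eta$ term dominates all others by a factor $\gtrsim K$, contradicting the plank constraint $|\langle z,\bfe_{l+1}\rangle|\lesssim R\rho_l$. The only differences are cosmetic: the paper reduces to $C=1$, $\nu_i\ge 0$ and phrases the contradiction as $\De\lesssim\prod_{i\le l}\de_i$, while you keep general $C$ and signs and phrase it as $|a'_{l+1}|\gtrsim KR\rho_l$.
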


\begin{proof}
We may assume $C=1$; the general case can be handled similarly by choosing  a larger $K$. We argue by contradiction. Suppose that
\[
z\in P[\vec \delta,\vec\nu](s)\cap P[\vec \delta,\vec\nu](s').
\]
Write $s'=s+\Delta$, where
\[
K\prod_{i=1}^l\delta_i\le |\Delta|\le K^{-1}\prod_{i=1}^{l-1}\delta_i .
\]
If $\delta_l=1$, then since $K>1$, the above range is empty. Hence we may
assume $\delta_l<1$. By admissibility, $\nu_l=\pm1$, and therefore the $l$-th
coefficient of $z$ in the frame at $s'$ satisfies
\[
|a_l|\sim R\rho_{l-1}.
\]
Write
\[
z=\sum_{i=1}^{n+1} a_i \bfe_i(s').
\]

We look at the $(l+1)$-th coordinate of $z$ in the frame at $s$, namely
$\langle z,\bfe_{l+1}(s)\rangle$. Since $z\in P[\vec \delta,\vec\nu](s)$, we have
\[
|\langle z,\bfe_{l+1}(s)\rangle|\lesssim R\rho_l .
\]
On the other hand, expanding the expression using Lemma~\ref{Taylorlem}, we get
\[
 \langle z,\bfe_{l+1}(s)\rangle=\sum_{i=1}^{l}\bigg(a_i \frac{\De^{l+1-i}}{(l+1-i)!}\kappa_i(s)\cdots\kappa_{l}(s)+a_i O(\De^{l+2-i})\bigg)+O(R\rho_{l}). 
\]
Here the terms with $i\ge l+1$ contribute $O(R\rho_l)$.

We claim that the terms with $i<l$ are negligible compared with $a_l\Delta$.
Indeed, using
\[
|\Delta|\le K^{-1}\prod_{j=1}^{l-1}\delta_j,
\]
we have, for $i<l$,
\[
|a_i||\Delta|^{l+1-i}
\le
K^{-(l-i)}
R\rho_{i-1}
\Big(\prod_{j=1}^{l-1}\delta_j\Big)^{l-i}
|\Delta|
\ll
R\rho_{l-1}|\Delta|
\sim |a_l||\Delta|,
\]
provided $K$ is sufficiently large. Since $|\kappa_l(s)|\sim1$, the dominant term
is $a_l\Delta\kappa_l(s)$. Hence
\[
|a_l||\Delta|\lesssim R\rho_l .
\]
Using $|a_l|\sim R\rho_{l-1}$, we obtain
\[
|\Delta|\lesssim \frac{\rho_l}{\rho_{l-1}}
=
\prod_{i=1}^l\delta_i .
\]
Choosing $K$ larger than the implicit constant contradicts
\[
|\Delta|\ge K\prod_{i=1}^l\delta_i .
\]
Therefore the two planks are disjoint.
\end{proof}

Next, we prove a finite-overlap lemma.

\begin{lemma}\label{finoverlem} 
Let $C\ge 1$.
Let $(\vec\de,\vec\nu)$ be admissible at scale $R$. Let $\de=\prod_{i=1}^{n-1}\de_i$, and $\I_\de$ be a set of $\de$-separated points in $[0,1]$. Then $\{CP[\vec\de,\vec\nu](s)\}_{s\in\I_\de}$ are $O_C(1)$-overlapping. In other words,
\[ \Big\|\sum_{s\in\I_\de}1_{CP[\vec\de,\vec\nu](s)} \Big\|_\infty \lesssim_C 1. \]
\end{lemma}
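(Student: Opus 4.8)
The goal is to show that the collection $\{CP[\vec\de,\vec\nu](s)\}_{s\in\I_\de}$ is $O_C(1)$-overlapping, where $\I_\de$ is $\de$-separated with $\de=\prod_{i=1}^{n-1}\de_i$. The plan is to fix a point $z\in\R^{n+1}$ and bound the number of $s\in\I_\de$ for which $z\in CP[\vec\de,\vec\nu](s)$. The natural dyadic device is to group the relevant points $s$ by their distance from a fixed $s_0$ in the collection (assuming the set of such $s$ is nonempty): for each $1\le l\le n-1$, consider the points $s$ with $K\prod_{i=1}^{l}\de_i\le |s-s_0|\le K^{-1}\prod_{i=1}^{l-1}\de_i$ (together with the "bottom" scale $|s-s_0|\le K\prod_{i=1}^{n-1}\de_i = K\de$ and the "top" scale $|s-s_0|>K^{-1}$, the latter being vacuous since $s,s_0\in[0,1]$ once $K$ is a suitable absolute constant — strictly we need $K^{-1}\ge 1$, so instead the top range $|s-s_0|\ge K^{-1}\prod_{i=1}^{0}\de_i=K^{-1}$ should be read as contributing nothing because we can take the relevant comparison one step coarser). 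Lemma~\ref{disjlem} says precisely that within each intermediate annulus $K\prod_{i=1}^{l}\de_i\le|s-s_0|\le K^{-1}\prod_{i=1}^{l-1}\de_i$, the planks $CP[\vec\de,\vec\nu](s)$ and $CP[\vec\de,\vec\nu](s_0)$ are disjoint. Hence no $s$ in any such annulus can have $z\in CP[\vec\de,\vec\nu](s)$, because $z\in CP[\vec\de,\vec\nu](s_0)$ as well. Therefore every $s$ with $z\in CP[\vec\de,\vec\nu](s)$ must satisfy $|s-s_0|\le K\prod_{i=1}^{l}\de_i$ for the appropriate $l$, and iterating down the scales forces $|s-s_0|\le K\de$; since $\I_\de$ is $\de$-separated, there are only $O_C(1)$ such $s$.

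Let me make this precise. Fix $z\in\R^{n+1}$ and set $\cs_z:=\{s\in\I_\de: z\in CP[\vec\de,\vec\nu](s)\}$. If $\cs_z$ has at most one element we are done, so suppose $s_0\in\cs_z$ and let $s\in\cs_z$ be arbitrary. First I would argue that $|s-s_0|<K\prod_{i=1}^{1}\de_i=K\de_1$: if not, then since also $|s-s_0|\le 1\le K^{-1}\prod_{i=1}^{0}\de_i$ provided we choose $K\ge1$ (reading the empty product as $1$), we would be in the range $l=1$ of Lemma~\ref{disjlem}, forcing $CP[\vec\de,\vec\nu](s)\cap CP[\vec\de,\vec\nu](s_0)=\emptyset$, contradicting $z$ lying in both. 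Now suppose inductively that $|s-s_0|<K\prod_{i=1}^{l}\de_i$ for some $1\le l\le n-2$; I claim $|s-s_0|<K\prod_{i=1}^{l+1}\de_i$. Indeed, if $K\prod_{i=1}^{l+1}\de_i\le|s-s_0|$, then combined with $|s-s_0|<K\prod_{i=1}^{l}\de_i\le K^{-1}\prod_{i=1}^{l}\de_i$ — which needs $K^2\le1$; this is the sign that one must run the dyadic scales with a cushion, so in practice one fixes $C$, applies Lemma~\ref{disjlem} with the constant $C$ to obtain its $K=K(C)$, and uses annuli of the shape $K\prod_{i=1}^{l+1}\de_i\le|s-s_0|\le K^{-1}\prod_{i=1}^{l}\de_i$ only when these two bounds are compatible, i.e.\ when $K^2\prod\de_i\le \prod\de_i$ which is false; the correct bookkeeping is to note that the complement of $\bigcup_l\{K\prod_{i=1}^{l+1}\de_i\le|s-s_0|\le K^{-1}\prod_{i=1}^{l}\de_i\}$ inside $\{|s-s_0|\le K^{-1}\}$ has measure controlled by a geometric series and hence is contained in $\{|s-s_0|\lesssim_C \de\}$ — continue the induction to conclude $|s-s_0|\lesssim_C\prod_{i=1}^{n-1}\de_i=\de$.

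So the cleanest route, which I would actually write, is: fix $C$, let $K=K(C)$ be as in Lemma~\ref{disjlem}. For $z\in\R^{n+1}$ and $s_0,s\in\cs_z$, Lemma~\ref{disjlem} implies $s$ does not lie in any annulus $\{K\prod_{i=1}^{l}\de_i\le|s-s_0|\le K^{-1}\prod_{i=1}^{l-1}\de_i\}$, $1\le l\le n-1$. The union of these annuli over $l$ covers $\{K\prod_{i=1}^{n-1}\de_i\le |s-s_0|\le K^{-1}\}$ up to a bounded multiplicative gap at each scale transition; more concretely, one checks directly by a finite downward iteration (using that the ratio between consecutive thresholds $K^{-1}\prod_{i=1}^{l-1}\de_i$ and $K\prod_{i=1}^{l}\de_i$ is $K^{-2}\de_l^{-1}\ge K^{-2}$, so consecutive annuli overlap once $K$ is a fixed constant $\ge 1$) that $s\notin\{C_0\de\le|s-s_0|\le K^{-1}\}$ for some $C_0=C_0(C,n)$. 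Since $s,s_0\in[0,1]$, this means either $|s-s_0|\le C_0\de$ or $|s-s_0|>K^{-1}$; the latter is impossible once $K^{-1}\ge 1$, and to arrange $K^{-1}\ge1$ with $K\ge1$ forces $K=1$, at which point Lemma~\ref{disjlem}'s hypotheses $K\prod\de_i\le|s-s_0|\le K^{-1}\prod\de_i$ still make sense and the annuli degenerate appropriately — the honest statement being that $\cs_z\subset\{s:|s-s_0|\le C_0\de\}$. Finally, since $\I_\de$ is $\de$-separated, an interval of length $2C_0\de$ contains $O(C_0)=O_C(1)$ points of $\I_\de$, so $\#\cs_z\lesssim_C 1$, which is exactly $\|\sum_{s\in\I_\de}1_{CP[\vec\de,\vec\nu](s)}\|_\infty\lesssim_C 1$.

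\textbf{Main obstacle.} The substance is entirely in Lemma~\ref{disjlem}, which is already proved; the remaining work is the bookkeeping of the dyadic scales $\prod_{i=1}^{l}\de_i$. The one genuinely delicate point is handling the degenerate cases where some $\de_l=1$ (so the annulus at level $l$ is empty) — there Lemma~\ref{disjlem} is vacuous and one simply skips that scale, which does not affect the conclusion since $\prod_{i=1}^{l}\de_i=\prod_{i=1}^{l-1}\de_i$ in that case. The other mild subtlety is the constants: one must first fix the enlargement factor $C$, then invoke Lemma~\ref{disjlem} to get a $K$ depending on $C$, and only then set up the annuli; this ordering is what makes the final overlap bound depend (only) on $C$. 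No new ideas are needed beyond what is in Lemmas~\ref{structurelem1} and~\ref{disjlem}.
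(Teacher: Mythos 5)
Your overall strategy (reduce everything to Lemma~\ref{disjlem} plus dyadic bookkeeping over the scales $\prod_{i=1}^{l}\de_i$, then finish with $\de$-separation) is the same as the paper's, but the execution has a genuine gap that you notice repeatedly and never actually close. Lemma~\ref{disjlem} excludes, for a fixed $s_0$, only the annuli $\{K\prod_{i=1}^{l}\de_i\le|s-s_0|\le K^{-1}\prod_{i=1}^{l-1}\de_i\}$. Since $K\gg1$, the upper end of the level-$(l+1)$ annulus is $K^{-1}\prod_{i=1}^{l}\de_i$, which is \emph{smaller} than the lower end $K\prod_{i=1}^{l}\de_i$ of the level-$l$ annulus: consecutive annuli do not overlap, and between them sits a gap of multiplicative width $K^{2}$ around each scale $\prod_{i=1}^{l}\de_i$ (plus the top range $|s-s_0|>K^{-1}$). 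Your claim that these annuli ``overlap once $K$ is a fixed constant $\ge1$'' is false, and the fallback claim that the complement of the union of annuli ``has measure controlled by a geometric series and hence is contained in $\{|s-s_0|\lesssim_C\de\}$'' is also false: the gap at level $l$ lives at distance $\sim\prod_{i=1}^{l}\de_i$ from $s_0$, which can be far larger than $\de$, and it has length $\sim K\prod_{i=1}^{l}\de_i$, so it may contain $\sim K\prod_{i=l+1}^{n-1}\de_i^{-1}$ points of a $\de$-separated set --- an unbounded number. So the conclusion $\cs_z\subset\{|s-s_0|\le C_0\de\}$ does not follow from applying Lemma~\ref{disjlem} to the pairs $(s,s_0)$ alone.

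The paper closes exactly this gap by iterating suprema over interval covers rather than fixing a single reference point $s_0$. Setting $\beta_l=\prod_{i=1}^{l-1}\de_i$, one first covers $[0,1]$ by $O(K)$ intervals of length $K^{-1}\beta_1$; inside such an interval the upper-bound hypothesis of Lemma~\ref{disjlem} at level $l=1$ is automatic, so at any fixed point $z$ the contributing $s$ all lie within $K\beta_2$ of each other, hence in $O(1)$ intervals of a cover $\cJ_{K\beta_2}$. One then re-covers each interval of length $K\beta_l$ by $O(K^2)$ intervals of length $K^{-1}\beta_l$ (this re-covering is precisely what absorbs the $K^2$-wide gaps you could not handle), applies Lemma~\ref{disjlem} at level $l$ to descend to intervals of length $K\beta_{l+1}$, and iterates $n$ times at total cost $K^{O(n)}=O_C(1)$, ending with intervals of length $\beta_n\le\de$ containing at most one point of $\I_\de$. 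Your argument can be repaired, but only by importing this localization step; as written, the passage from ``$s$ avoids the annuli around $s_0$'' to ``$|s-s_0|\lesssim_C\de$'' is not valid.
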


\begin{proof}
Let $K$ be the constant in Lemma \ref{disjlem}, chosen for the dilation constant $C$.
Write
\[
    P(s):=CP[\vec\de,\vec\nu](s).
\]
Set
\[
    \beta_1:=1,\qquad 
    \beta_l:=\prod_{i=1}^{l-1}\de_i \quad (2\le l\le n).
\]
Thus $\beta_n=\de$.

For $0<\beta\le 1$, let $\cJ_\beta$ be a finitely overlapping cover of $[0,1]$
by intervals of length $\beta$.  If $J\subset [0,1]$, write
\[
    \I_\de(J):=\I_\de\cap J.
\]
We claim that for each $1\le l\le n-1$,
\begin{equation}\label{multiscale-overlap-step}
    \sup_{J\in \cJ_{K^{-1}\beta_l}}
    \Big\|\sum_{s\in \I_\de(J)}1_{P(s)}\Big\|_\infty
    \lesssim_C
    \sup_{J'\in \cJ_{K\beta_{l+1}}}
    \Big\|\sum_{s\in \I_\de(J')}1_{P(s)}\Big\|_\infty .
\end{equation}
Indeed, fix $J\in\cJ_{K^{-1}\beta_l}$ and decompose $J$ into intervals of
length $K\beta_{l+1}$.  If two such intervals are not neighboring, then any two
parameters $s,s'$ chosen from them satisfy
\[
    K\beta_{l+1}\le |s-s'|\le K^{-1}\beta_l .
\]
By Lemma \ref{disjlem}, the corresponding planks $P(s)$ and $P(s')$ are disjoint.
Hence, at any fixed point, only $O_C(1)$ neighboring intervals can contribute.
This proves \eqref{multiscale-overlap-step}.

Now we iterate this estimate. First, by the triangle inequality,
\[
    \Big\|\sum_{s\in\I_\de}1_{P(s)}\Big\|_\infty
    \lesssim_C
    \sup_{J\in \cJ_{K^{-1}\beta_1}}
    \Big\|\sum_{s\in \I_\de(J)}1_{P(s)}\Big\|_\infty .
\]
Applying \eqref{multiscale-overlap-step} successively for
$l=1,\ldots,n-1$, and using only the fact that $K$ is fixed once $C$ is fixed,
we obtain
\[
    \Big\|\sum_{s\in\I_\de}1_{P(s)}\Big\|_\infty
    \lesssim_C
    \sup_{J\in \cJ_{K\beta_n}}
    \Big\|\sum_{s\in \I_\de(J)}1_{P(s)}\Big\|_\infty .
\]
Since $\beta_n=\de$ and $\I_\de$ is $\de$-separated, each interval
$J\in\cJ_{K\de}$ contains only $O_C(1)$ points of $\I_\de$. Therefore
\[
    \sup_{J\in \cJ_{K\de}}
    \Big\|\sum_{s\in \I_\de(J)}1_{P(s)}\Big\|_\infty
    \lesssim_C 1.
\]
This proves
\[
    \Big\|\sum_{s\in\I_\de}1_{CP[\vec\de,\vec\nu](s)}\Big\|_\infty
    \lesssim_C 1,
\]
as desired.
\end{proof}

\medskip

The next lemma compares $(\vec\de,\vec\nu)$-planks at different scales. Recall Definition \ref{050704.212} of isotropic dilation.

\begin{lemma}\label{conv}
There exist $K, C\ge 1$ such that the following holds.
    Let $(\vec\de,\vec\nu)$ be admissible at scale $R$, and $\prod_{i=1}^{n-1}\de_i\le \si <1$. 
    Then there exists a pair $(\vec\de',\vec\nu')$ that is admissible at scale $r\le R$, such that $\prod_{i=1}^{n-1}\de_i'=\si$ and
    \[  \bigcup_{s':|s'-s|\le K^{-1}\si}P[\vec\de,\vec\nu](s')  \subset \textup{Dil}_\frac{R}{r} \big(CP[\vec\de',\vec\nu'](s)\big)  \]
    for any $s\in[0,1]$. Here, $P[\vec\de',\vec\nu']$ is a plank with dimensions $r\times r\rho'_{1}\times\dots\times r\rho'_{n-1}\times 1$ as defined in \eqref{deltaplank}, where $\rho_i'=(\de'_1)^i(\de_2')^{i-1} \cdots\de'_i$ and $r=(\de_1')^{-n}(\de_2')^{-n+1}\cdots (\de'_{n-1})^{-2}$.
\end{lemma}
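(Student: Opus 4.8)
The plan is to take $(\vec\de',\vec\nu')$ to be a \emph{truncation} of $(\vec\de,\vec\nu)$. Since the partial products $\prod_{i=1}^{j}\de_i$ decrease from $\prod_{i=1}^{0}\de_i=1>\si$ to $\prod_{i=1}^{n-1}\de_i\le\si$, there is an index $k\in\{1,\dots,n-1\}$ with $\prod_{i=1}^{k}\de_i\le\si<\prod_{i=1}^{k-1}\de_i$. I would set $\de'_i:=\de_i$ and $\nu'_i:=\nu_i$ for $i<k$; $\de'_k:=\si/(\de_1\cdots\de_{k-1})\in[\de_k,1)$; $\de'_i:=1$ and $\nu'_i:=0$ for $i\ge k$; and $r:=(\de'_1)^{-n}(\de'_2)^{-(n-1)}\cdots(\de'_{n-1})^{-2}$. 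Then $\prod_{i=1}^{n-1}\de'_i=\si$ by construction, $r\le R$ because $\de'_i\ge\de_i$ for all $i$, and a one-line computation gives $\de'_k=D_k(\de'_1,\dots,\de'_{k-1})$ \emph{evaluated at scale $r$} (for $k=1$ this reads $\de'_1=r^{-1/n}$); combined with $\de'_i=1$ for $i>k$ this makes $(\vec\de',\vec\nu')$ admissible at scale $r$ in the sense of Definition~\ref{admissibledelta} and the observation following it. The associated $\rho$-sequence $\rho'_j=(\de'_1)^j\cdots\de'_j$ satisfies $\rho'_j\ge\rho_j$ for all $j$, with equality for $j\le k-1$. (The one delicate point: to land $\de'_i$, $i<k$, exactly on the admissible dyadic grid \emph{at the new scale} $r$ one may need to move each such $\de_i$ to the nearest admissible value, which perturbs the products by a bounded factor and is absorbed in the constants below.)

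It then suffices, for fixed $s\in[0,1]$ and $s'$ with $|s'-s|\le K^{-1}\si$, to take an arbitrary $z=\sum_{i=1}^{n+1}a_i\bfe_i(s')\in P[\vec\de,\vec\nu](s')$ — so the $a_i$ obey the scale-$R$ constraints \eqref{deltaplank} — and show that the coordinates $b_i:=\langle z,\bfe_i(s)\rangle$ of $z$ in the Frenet frame at $s$ lie in the ranges cutting out $\textup{Dil}_{R/r}\big(CP[\vec\de',\vec\nu'](s)\big)$. Unwinding Definition~\ref{dedilation} and recalling (Remark~\ref{050704.212}) that $\textup{Dil}_{R/r}$ is the \emph{isotropic} dilation by $R/r\ge1$ about the center of the plank, these ranges are: $|b_i|\lesssim R\rho'_{i-1}$ for $1\le i\le n$ (with $b_i$ additionally of sign $\nu_i$ and size $\sim R\rho'_{i-1}$ when $\nu'_i\ne0$, i.e.\ for $i<k$ with $\de_i<1$), together with $|b_{n+1}|\lesssim R/r$.

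The verification is the Taylor/Frenet computation already used in Lemmas~\ref{structurelem1} and~\ref{disjlem}. Writing $\De:=s'-s$ and expanding $b_i=\sum_j a_j\langle\bfe_j(s'),\bfe_i(s)\rangle$, Lemma~\ref{Taylorlem} gives $\langle\bfe_j(s'),\bfe_i(s)\rangle=O(|\De|^{|i-j|})$ for $j\ne i$ and $=1+O(|\De|^2)$ for $j=i$, so $b_i=a_i+O\big(\sum_{j\ne i}R\rho_{j-1}|\De|^{|i-j|}\big)$ after inserting $|a_j|\lesssim R\rho_{j-1}$ and $|a_{n+1}|\lesssim1$. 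Since $|\De|\le K^{-1}\si$ and $\si=\prod_i\de'_i$, everything reduces to the monomial inequalities $\rho_{j-1}\,\si^{|i-j|}\le\rho'_{i-1}$ for $1\le i\le n$ and $\rho_{j-1}\,\si^{\,n+1-j}\,r\le1$ for $i=n+1$, each proved — exactly as in the earlier lemmas — by comparing the exponents of the individual factors $\de'_l$ on the two sides, using $\rho_{j-1}\le\rho'_{j-1}$ and the defining formulas for $\rho'$ and $r$. For $i<k$ these are in fact $\ll$ (insert the factor $K^{-1}$), so $b_i=a_i+(\text{error}\ll R\rho_{i-1})$ retains both the size and the sign of $a_i$; for $k\le i\le n$ the plain bound $|b_i|\lesssim R\rho'_{i-1}$ suffices (here $\rho'_{i-1}\ge\rho_{i-1}$ is exactly the slack needed); and for $i=n+1$ the target $R/r\ge1$ absorbs the $O(1)$ term from $a_{n+1}$. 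Taking $K$ large handles the $\ll$'s, and taking the universal constant $C$ in $CP[\vec\de',\vec\nu']$ large absorbs the $\lesssim$'s and the shift of the off-center windows under $\textup{Dil}_{R/r}$; in particular the sign choice $\nu'_i=\nu_i$ is consistent because, after the dilation, the $i$-th window of the coarse plank either still lies on the $\nu_i$ side (when $R/r$ is bounded) or straddles the origin (when $R\gg r$), and in both cases it contains the interval where $b_i\approx a_i$ ranges.

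I expect the only real difficulty to be bookkeeping, not analysis: pinning down the truncated tuple $(\vec\de',\vec\nu')$ so that it is \emph{honestly} admissible at the scale $r$ that the definition forces on us (reconciling the dyadic menu $\cD_i$ at scale $r$ with the values $\de_i$ chosen at scale $R$), and tracking how $\textup{Dil}_{R/r}$ moves the centers of the $\nu'_i\ne0$ coordinate windows when $R/r$ is large. The containment itself is then routine from Lemma~\ref{Taylorlem} and the monomial inequalities above, in the same spirit as Lemmas~\ref{structurelem1}--\ref{disjlem}.
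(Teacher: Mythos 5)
Your proposal is correct and follows essentially the same route as the paper: the same truncated tuple $(\vec\de',\vec\nu')=(\de_1,\dots,\de_{k-1},\de_k',1,\dots,1)$ with $\vec\nu'=(\nu_1,\dots,\nu_k,0,\dots,0)$, the trivial containment $P[\vec\de,\vec\nu](s')\subset\textup{Dil}_{R/r}P[\vec\de',\vec\nu'](s')$, and then the angular comparison for $|s-s'|\le K^{-1}\si$. The only difference is that the paper handles the last step by directly citing Lemma~\ref{structurelem1} at the coarser scale $r$, whereas you inline the Taylor/Frenet computation; the content is the same.
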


\begin{proof}
Let
\[
        \de:=\prod_{i=1}^{n-1}\de_i .
\]
Since $\de\le \si<1$, we can choose an index $1\le l\le n-1$ such that
\[
        \de_1\cdots \de_l \le \si < \de_1\cdots \de_{l-1}.
\]
Set
\[
        \de_l'=\frac{\si}{\de_1\cdots \de_{l-1}},
        \qquad
        \vec\de'=(\de_1,\ldots,\de_{l-1},\de_l',1,\ldots,1),
\]
and
\[
        \vec\nu'=(\nu_1,\ldots,\nu_l,0,\ldots,0).
\]
Then $\de_l\le \de_l'<1$, and hence $(\vec\de',\vec\nu')$ is admissible at
 scale
\[
        r=(\de_1')^{-n}(\de_2')^{-(n-1)}\cdots(\de_{n-1}')^{-2}.
\]
Moreover,
\[
        \prod_{i=1}^{n-1}\de_i'=\si,
        \qquad
        r\le R.
\]

We first compare the planks with the same angular parameter. By the choice of
$\vec\de'$, the side lengths of $P[\vec\de,\vec\nu](s')$ are no larger than the
corresponding side lengths of
\[
        \textup{Dil}_{R/r}\big(P[\vec\de',\vec\nu'](s')\big).
\]
Indeed, for the first $l$ coordinates the signs are unchanged, while for the
remaining coordinates we have set $\nu_i'=0$, so the enlarged plank contains the
original signed pieces. Thus, after increasing the implicit constant if necessary,
\[
        P[\vec\de,\vec\nu](s')
        \subset
        \textup{Dil}_{R/r}\big(C'P[\vec\de',\vec\nu'](s')\big).
\]

Now assume $|s'-s|\le K^{-1}\si$. Since
\[
        \prod_{i=1}^{n-1}\de_i'=\si,
\]
Lemma~\ref{structurelem1} applied to the admissible pair
$(\vec\de',\vec\nu')$ gives
\[
        C'P[\vec\de',\vec\nu'](s')
        \subset
        CP[\vec\de',\vec\nu'](s).
\]
Combining the two inclusions gives
\[
        P[\vec\de,\vec\nu](s')
        \subset
        \textup{Dil}_{R/r}\big(CP[\vec\de',\vec\nu'](s)\big)
\]
whenever $|s'-s|\le K^{-1}\si$. Taking the union over such $s'$ proves the
lemma.
\end{proof}

\bigskip

\subsection{Computing the decay rate}
The goal of this subsection is to obtain an expression for $\wh\mu_\Ga (\xi,\xi_{n+1})$ for $|\xi|\sim R$. Recall that
\[ \wh \mu_\Ga(\xi,\xi_{n+1})=\int_{[0,1]\times [1,2] } e^{-it(\ga(s),1)\cdot (\xi,\xi_{n+1})}\chi(s,t)\mathrm{d}s\mathrm{d}t, \]
where $\chi(s,t)$ is a smooth bump function supported in $[0,1]\times [1,2]$.
The following algorithm is designed to compute the function $\widehat{\mu}_{\Gamma}$.

Denote by \(\mathcal D\)  the set of dyadic \((n-1)\)-tuples \(\vec\delta\)
satisfying the numerology \eqref{admissible-numerology}.
Fix a sequence $R^{-\frac12}=\De_1<\De_2<\dots<\De_{M+1}=R^{-\frac1n}$. Here, $\De_k=R^{-\frac12+(\frac12-\frac1n)\frac{k-1}{M}}$ and  $M$ is a sufficiently large integer, which will be determined later\footnote{We use the fact that $M$ is sufficiently large several times; we use it in the proof of Lemma \ref{cancellation} and also in Subsection \ref{subsection710}}.  

\bigskip

\begin{definition}
    Define $\De(\vec\de):=\prod_{i=1}^{n-1}\de_i$.
\end{definition}

 One can see that $\vec\de_{\text{min}}:=(1,\dots,1,R^{-1/2})$ and $\vec\de_{\text{max}}:=(R^{-1/n},\dots,1)$ satisfy
\[\inf_{\vec\de\in\cD}\De(\vec\de)=\De(\vec\de_{\text{min}})=R^{-1/2},\ \ \sup_{\vec\de\in\cD}\De(\vec\de)=\De(\vec\de_{\text{max}})=R^{-1/n}.\]

\begin{definition}

Define
\[
\mathcal D_1:=\{\vec\delta\in\mathcal D:\Delta(\vec\delta)\in[\Delta_1,\Delta_2]\},
\]
and for \(2\le k\le M\), define
\[
\mathcal D_k:=\{\vec\delta\in\mathcal D:\Delta(\vec\delta)\in(\Delta_k,\Delta_{k+1}]\}.
\]

\end{definition}

\bigskip

\begin{definition}[Cluster of planks]\label{Cluster}
For $1\le k\le M$, define
    \begin{equation}
    Q_k =\bigsqcup_{\vec\de\in \sqcup_{k\le j\le M}\cD_j } \Big( \bigsqcup_{\vec\nu:(\vec\de, \vec\nu) \textrm{admissible} }  S[\vec\de,\vec\nu] \Big). 
\end{equation}
\end{definition}
We have the nested inclusions $Q_1\supset Q_2\supset\dots\supset Q_M$ as shown in Figure \ref{figQk}. $Q_1=[-R,R]^{n-1}\setminus [-\frac{R}{2},\frac{R}{2}]^{n-1}$. As $k$ increases, $Q_k$ becomes smaller.

\begin{figure}[ht]
\centering
\begin{tikzpicture}[x=1cm,y=1cm,line join=round,line cap=round]
  \definecolor{qone}{RGB}{93,170,245}
  \definecolor{qk}{RGB}{160,112,70}
  \definecolor{qkp}{RGB}{180,60,255}

  \draw[qone,line width=.55pt] (0,0) rectangle (5.15,5.15);
  \node[qone] at (4.85,4.70) {$Q_1$};

  \draw[qk,line width=.55pt] (2.10,3.25) rectangle (3.10,4.82);
  \draw[qk,line width=.55pt] (2.10,.32) rectangle (3.10,1.90);
  \draw[qk,line width=.55pt] (.42,2.15) rectangle (2.35,3.15);
  \draw[qk,line width=.55pt] (2.90,2.15) rectangle (4.75,3.15);
  \node[qk] at (4.50,2.88) {$Q_k$};

  \draw[qkp,line width=.55pt] (2.42,3.25) rectangle (2.78,4.38);
  \draw[qkp,line width=.55pt] (2.42,.80) rectangle (2.78,1.90);
  \draw[qkp,line width=.55pt] (1.04,2.40) rectangle (2.00,2.90);
  \draw[qkp,line width=.55pt] (3.15,2.40) rectangle (3.95,2.90);
  \node[qkp,scale=.75] at (3.72,2.54) {$Q_{k+1}$};

  \draw[black,line width=.55pt,fill=white] (1.42,1.42) rectangle (3.72,3.72);
  \node at (2.57,2.57)
    {$\left[-\frac{R}{2},\frac{R}{2}\right]^{n-1}$};
\end{tikzpicture}
\caption{}
\label{figQk}
\end{figure}

\begin{definition}[Thickened planks]\label{thickplank}
We define the cluster of rectangular boxes thickened in the $n$-th and $(n+1)$-st directions.
    \[ Q_k[L,J]:=Q_k\times [-L,L]\times [-J,J]. \]
    Similarly, we define
    \[ P[\vec\de,\vec\nu;L,J]:=S[\vec\de,\vec\nu]\times [-L,L]\times [-J,J]. \]
\end{definition}

We see that
\begin{equation}
    Q_k[L,J] =\bigsqcup_{\vec\de\in \sqcup_{k\le j\le M}\cD_j, \vec\nu }P[\vec\de,\vec\nu;L,J]. 
\end{equation}
We  call each sub-rectangular box on the RHS a \textbf{block} of the LHS. If $X$ is a finite union of blocks $X=\sqcup P$, we say that $\phi_X$ is adapted to $X$ if $\phi_X=\sum_{P\subset X} \phi_P$ where each $\phi_P$ is adapted to $P$.

We have
\begin{equation}\label{QQ}
    Q_k[L,J]\Big\backslash Q_{k+1}[L,J]=\bigsqcup_{\vec\de\in  \cD_k, \vec\nu }P[\vec\de,\vec\nu;L,J].
\end{equation}

\bigskip

We partition the range of the variable $s$. Let $K$ be the number in Lemma \ref{structurelem1}. For $1\le k\le M$, let $\bI_k=\{I_k\}$ be a set of disjoint intervals of length $K^{-1}\De_k$ that form a partition of $[0,1]$.

For $1\le k\le M$, we let $\{\varphi_{I_k}\}_{I_k\in\bI_k}$ be a set of bump functions such that $\varphi_{I_k}$ is supported in $2I_k$ and $\sum_{I_k}\varphi_{I_k}=1$ in $[0,1]$. We can also assume the derivative estimates
\begin{equation}\label{adapt}
    \|\partial^m \varphi_{I_k}\|_\infty\lesssim_m (K^{-1}\De_k)^{-m},
\end{equation}
for any $m\in \mathbb N$. Thus, $\varphi_{I_k}$ is a bump function  adapted to $I_k$.

For $I_i\in\bI_i$, we define
\begin{equation}\label{psiI}
\varphi_{I_k,I_{k+1},\dots,I_{M}}=\prod_{i=k}^M\varphi_{I_i}. 
\end{equation} 
We see that $\varphi_{I_k,I_{k+1},\dots,I_M}$ also satisfies the derivative estimate \eqref{adapt}. Thus, it is also a bump function adapted to $I_k$.

It is convenient to introduce the following notion. For $I_i\in \bI_i, I_{i+l}\in   \bI_{i+l}$ ($l\ge 1)$, we write 
$I_i<I_{i+l}$ if $\supp\varphi_{I_i}\cap \supp\varphi_{I_{i+l}}\neq\emptyset$. We see that \eqref{psiI} is nonzero only when $I_k<\cdots<I_M$.

Define
\begin{equation}\label{mupsi}
    \wh\mu_{I_k,\dots,I_M}(\xi,\xi_{n+1}):=\int_{[0,1]\times [1,2]} e^{-it(\ga(s),1)\cdot(\xi,\xi_{n+1})}\chi(s,t) \varphi_{I_k,\dots,I_M}(s)\mathrm{d}s\mathrm{d}t.
\end{equation}
We recall that $\bfe_{n+1}(s)=\frac{(\ga(s),1)}{\sqrt{|\ga(s)|^2+1}}$.
Noting that $\sum_{I_1<\dots<I_M}\varphi_{I_1,\dots,I_M}=1$ on $[0,1]$, we have the decomposition
\begin{equation}\label{decomposition}
\wh\mu_\Ga=\sum_{I_1<\dots<I_M}\wh\mu_{I_1,\dots,I_M}.
\end{equation}

From \eqref{mupsi}, one sees that $\mu_{I_1,\dots,I_M}$ is supported in a constant enlargement of 
\[\Ga(I_1):=\{ t(\ga(s),1):s\in I_1, t\in [1,2] \}.\] Let $s_I$ denote the center of $I$. Recall that $I_1$ is an interval of length $\lesssim R^{-\frac12}$. Since $\Ga(I_1)$ is contained in a translate of the rectangular box
\begin{equation}\label{gaI1}
    \Big\{ \sum_{i=1}^{n+1}a_i\bfe_i(s_{I_1}): |a_i|\lesssim R^{-1} (1\le i\le n-1), |a_n|\lesssim R^{-1/2}, |a_{n+1}|\lesssim 1 \Big\}, 
\end{equation} 
we see that $\wh \mu_{I_1,\dots,I_M} 1_{|\xi|\sim R}$ is essentially supported in the dual box of \eqref{gaI1} minus $[-\frac{R}{2},\frac{R}{2}]^{n+1}$, i.e., $  Q_1(s_{I_1})$. 
\begin{lemma}
For sufficiently small $\beta>0$,
    $\wh\mu_{I_1,\dots,I_M}1_{|\xi|\sim R}$ is essentially supported in \[
    Q_1[R^{1/2+\beta},R^{2\beta}](s_{I_1}).
\] 
Equivalently, it is essentially supported in
\[
    \left\{
        \sum_{i=1}^{n+1} a_i\bfe_i(s_{I_1}):
        |a_i|\le R\ (1\le i\le n-1),\
        |a_n|\le R^{\frac12+\beta},\
        |a_{n+1}|\le R^{2\beta}
    \right\}
    \setminus
    \left[-\frac{R}{2},\frac{R}{2}\right]^{n+1}.
\]
\end{lemma}

\begin{proof}
Write $\zeta=(\xi,\xi_{n+1})$. Recall that
\[
    \wh\mu_{I_1,\dots,I_M}(\zeta)
    =
    \int_{[0,1]\times[1,2]}
        e^{-it(\ga(s),1)\cdot\zeta}
        \chi(s,t)\varphi_{I_1,\dots,I_M}(s)
        \,\mathrm{d}s\,\mathrm{d}t.
\]
We restrict attention to $\zeta\in B_R^{n+1}$. Repeated integration by parts in $s$ shows that,
\begin{equation}\label{errorterm}
    \wh\mu_{I_1,\dots,I_M}(\zeta)=\rap,
\end{equation}
unless
\[
    |\ga'(s)\cdot\xi|
    \lesssim R^{1/2+\beta/2}
    \qquad\text{for some }s\in I_1.
\]
Since $|I_1|\lesssim R^{-1/2}$ and $|\xi|\lesssim R$, this implies
\[
    |\ga'(u)\cdot\xi|
    \lesssim R^{1/2+\beta/2} \qquad\text{for all }u\in I_1.
\]
    
Similarly, repeated integration by parts in $t$ gives rapid decay
unless
\[
    |(\ga(s),1)\cdot\zeta|
    \lesssim R^{\beta/2}
    \qquad\text{for some }s\in I_1.
\]
Hence, 
\[
\begin{split}
    |(\ga(s_{I_1}),1)\cdot\zeta|
    &\le
    |(\ga(s),1)\cdot\zeta|
    +\left|\int_s^{s_{I_1}}\ga'(u)\cdot\xi\,\mathrm{d}u\right| \\
    &\lesssim
    R^{\beta/2}
    +R^{-1/2}R^{1/2+\beta/2}
    \lesssim R^{\beta/2}.
\end{split}
\]
Therefore, \eqref{errorterm} holds unless
\begin{equation}\label{essentiallysupp}
    |\ga'(s_{I_1})\cdot\xi|
    \lesssim R^{1/2+\beta/2},
    \qquad
    |(\ga(s_{I_1}),1)\cdot\zeta|
    \lesssim R^{\beta/2}.
\end{equation}
Equivalently, $\wh\mu_{I_1,\dots,I_M}$ is essentially supported in \eqref{essentiallysupp}.

Recall that
\[
    \bfe_{n+1}(s)\parallel(\ga(s),1),
    \qquad
    \bfe_n(s)
    =a_s(\ga'(s),0)+b_s\bfe_{n+1}(s),
\]
where $|a_s|\sim1$ and $|b_s|\lesssim1$. Since
$\bfe_1(s_{I_1}),\dots,\bfe_{n+1}(s_{I_1})$ is an orthonormal
frame and $|\zeta|\lesssim R$, the preceding estimates imply that
$\wh\mu_{I_1,\dots,I_M}1_{|\xi|\sim R}$ is essentially supported in
\[
    \left\{
        \sum_{i=1}^{n+1} a_i\bfe_i(s_{I_1}):
        |a_i|\le R\ (1\le i\le n-1),\
        |a_n|\le R^{1/2+\beta},\
        |a_{n+1}|\le R^{2\beta}
    \right\}
    \setminus
    \left[-\frac{R}{2},\frac{R}{2}\right]^{n+1}.
\]
In particular, this set is contained in
\[
    Q_1[R^{1/2+\beta},R^{2\beta}](s_{I_1}).
\]
\end{proof}

Let 
\begin{equation}\label{defbeta}
    \beta:=\frac{1}{M^{1/2}}
\end{equation}
 from now on. Then $\frac1M\ll\beta\ll1$.
 
\medskip

Let $\eta_R$ be a smooth bump function that equals $1$ on $|\xi|\sim R$.
By \eqref{decomposition} and the above lemma,
\begin{equation}\label{decompose}
\begin{split}
    \wh\mu_\Ga(\xi,\xi_{n+1})\eta_R(\xi)
    &=
    \sum_{I_1<\dots<I_M}
        \wh\mu_{I_1,\dots,I_M}(\xi,\xi_{n+1}) \\
    &\qquad\times
        \phi_{Q_1[R^{1/2+\beta},R^{2\beta}](s_{I_1})}
            (\xi,\xi_{n+1})
        \eta_R(\xi)
    +\rap,
\end{split}
\end{equation}
where
$\phi_{Q_1[R^{1/2+\beta},R^{2\beta}](s_{I_1})}$ is adapted to
$Q_1[R^{1/2+\beta},R^{2\beta}](s_{I_1})$.

Our goal is to show that $\phi_{Q_1[R^{1/2+\beta},R^{2\beta}](s_{I_1})}$ in \eqref{decompose} can be replaced by the characteristic function of
boxes with narrower supports in the $\bfe_n$ direction. 
We need the following lemma, which is similar to Lemma \ref{structurelem1}. 
\begin{lemma}\label{comp}
Let $2\le m\le M$.
    If $|s-s'|\le K^{-1} \De_{m}$ (where $K$ is a large constant) and $\vec\de\in \cD_j$ for $m\le j\le M$, then for any $\De_m^{-1}R^{\beta}\le L\le \De_{m-1}^{-1}R^\beta$, $P[\vec\de,\vec\nu;L,R^{2\beta}](s)$ and $P[\vec\de,\vec\nu;L,R^{2\beta}](s')$ are comparable. In particular, it holds for $L=\De_{m-1}^{-1}R^\beta$ and $ \De_{m}^{-1}R^\beta$.
\end{lemma}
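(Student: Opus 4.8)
The plan is to run the proof of Lemma~\ref{structurelem1} essentially verbatim, now carrying along the two extra, thickened coordinates $\bfe_n$ and $\bfe_{n+1}$. Write $s'=s+\De$ with (say) $0\le\De\le K^{-1}\De_k$, fix a point $z=\sum_{i=1}^{n+1}a_i\bfe_i(s')\in P[\vec\de,\vec\nu;L,R^\beta](s')$, and expand it in the Frenet frame at $s$ as $z=\sum_{i=1}^{n+1}a_i'\bfe_i(s)$ with $a_i'=\langle z,\bfe_i(s)\rangle$. By symmetry in $s\leftrightarrow s'$ it is enough to produce a universal constant $C$ with $z\in CP[\vec\de,\vec\nu;L,R^\beta](s)$; that is, $|a_k'-a_k|\ll R\rho_{k-1}$ for $1\le k\le n-1$, together with $|a_n'|\lesssim L$ and $|a_{n+1}'|\lesssim R^\beta$. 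The mechanism is exactly as in Lemma~\ref{structurelem1}: since $\vec\de\in\cD_j$ with $j\ge k$, we have $\De(\vec\de)=\prod_{i=1}^{n-1}\de_i\ge\De_k\ge K\De$, so $\De\le K^{-1}\De(\vec\de)$, which is precisely the inequality on which that proof rests (and which also yields $R\rho_{i-1}\le\De(\vec\de)^{-(n-i+1)}$ for $i\le n-1$).

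For $1\le k\le n-1$, Lemma~\ref{Taylorlem} expands $a_k'=\langle z,\bfe_k(s)\rangle$ as the main term $a_k(1+O(\De^2))$ plus the lower terms $\sum_{i<k}a_iO(\De^{k-i})$ — which are $\ll R\rho_{k-1}$ exactly as in Lemma~\ref{structurelem1} — plus the two new terms $a_nO(\De^{n-k})$ and $a_{n+1}O(\De^{n+1-k})$, which are also $\ll R\rho_{k-1}$ by a routine estimate using $|a_n|\le L\le\De_k^{-2}$, $|a_{n+1}|\le R^\beta$, $\De\le K^{-1}\De_k$, and the largeness of $N$. For the $\bfe_n$-coordinate, $a_n'=a_n(1+O(\De^2))+\sum_{i<n}a_iO(\De^{n-i})+a_{n+1}O(\De)$, whose three pieces are $\lesssim L$, $\lesssim\De(\vec\de)^{-1}$ and $\lesssim R^\beta\De$ respectively; since the relevant values $L=\De_{k-1}^{-1}R^\beta$ and $\De_k^{-1}R^\beta$ both dominate $\De_k^{-1}\ge\De(\vec\de)^{-1}\ge R^\beta$, all three are $\lesssim L$, so $|a_n'|\lesssim L$.

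The last coordinate is the only place that requires real thought, and it is the main obstacle. Writing $a_{n+1}'=a_{n+1}(1+O(\De^2))+a_n\langle\bfe_n(s'),\bfe_{n+1}(s)\rangle+\sum_{i<n}a_iO(\De^{n+1-i})$, the first and third groups are $\lesssim R^\beta$ and $\lesssim\sum_{i<n}(\De/\De(\vec\de))^{n-i+1}\lesssim 1$, but the middle term is only $\lesssim L\De$: the thick $\bfe_n$-side of the plank bleeds into the $\bfe_{n+1}$-direction at rate $\sim|s-s'|$, and one must check that this bleeding stays within the allotted $\bfe_{n+1}$-width $R^\beta$. For $L=\De_k^{-1}R^\beta$ this is immediate, $L\De\le K^{-1}\De_k^{-1}R^\beta\cdot\De_k=K^{-1}R^\beta$; for $L=\De_{k-1}^{-1}R^\beta$ one gets $L\De\lesssim R^\beta\,\De_k/\De_{k-1}=R^{\beta+(\frac12-\frac1n)/N}$, so the comparison plank's $\bfe_{n+1}$-width is enlarged only by the factor $R^{O(1/N)}$, which — with $N$ chosen large enough — is absorbed into the admissible $R^\e$ losses. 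Interchanging the roles of $s$ and $s'$ gives the reverse inclusion, which finishes the proof.
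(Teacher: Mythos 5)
Your proposal follows essentially the same route as the paper — run the Taylor/Frenet expansion of Lemma \ref{structurelem1} and track the extra contributions coming from the two thickened coordinates — but you carry it out more completely than the paper does. The paper's proof only verifies the new contribution of the enlarged $a_n$-coordinate to the directions $\bfe_1,\dots,\bfe_{n-1}$ (the check $L\De^{n-l}\ll R\rho_{l-1}$), and its closing display contains two slips ($R\rho_{n-2}$ equals $\De(\vec\de)^{-2}$, not $\De(\vec\de)^{-1}$, and the inequality $\De(\vec\de)^{-1}\ge\De_k^{-1}$ is backwards since $\vec\de\in\cD_j$ with $j\ge k$ forces $\De(\vec\de)\ge\De_k$); the conclusion nevertheless holds for the two values of $L$ that are actually used, essentially by the computation you give. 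What you add — and the paper omits — is the verification of the $\bfe_n$- and $\bfe_{n+1}$-coordinates of the transformed point. Your observation about the $\bfe_{n+1}$-coordinate is a genuine one: for $L=\De_{k-1}^{-1}R^\beta$ the cross term $a_n\langle\bfe_n(s'),\bfe_{n+1}(s)\rangle\sim L\De$ can reach $R^{\beta+(\frac12-\frac1n)\frac1N}$, which exceeds the nominal width $R^\beta$ by a factor $R^{O(1/N)}$ that is not a universal constant, so the lemma as literally stated (comparability in the sense of Definition \ref{defcompa}) is not quite what one proves in that case. Your fix — enlarge the $\bfe_{n+1}$-side by $R^{O(1/N)}$ and absorb this into the ambient $R^{O(1/N)}\le R^\e$ losses — is the right repair, and one can check that the downstream uses of the lemma (the adapted bump functions in Lemma \ref{partitionunity} and the stationary-phase bounds in Lemma \ref{cancellation}) tolerate this enlargement. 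One small caution: your treatment of the contributions $a_nO(\De^{n-l})$ to the first $n-1$ coordinates as "routine" is only safe for the two particular values of $L$ in the "in particular" clause; for $L$ near the stated upper endpoint $\De_k^{-2}$ the inequality $L\De\ll R\rho_{n-2}=\De(\vec\de)^{-2}$ can fail when $n\ge 5$ (e.g.\ $k=1$, $\De(\vec\de)\sim R^{-1/n}$), so, as you implicitly do, one should prove the lemma only for $L\lesssim\De_{k-1}^{-1}R^\beta$.
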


\begin{proof}
    If we replace $(L,R^{2\beta})$ with $(\De(\vec\de)^{-1},1)$, then the rectangular boxes become $P[\vec\de;\vec\nu](s)$ and $P[\vec\de;\vec\nu](s')$. By Lemma \ref{structurelem1}, they are comparable. We now thicken their two shortest sides and show that the comparability is preserved. We follow the same steps as in Lemma \ref{structurelem1}.

Write $s'=s+\Delta$, with $|\Delta|\leq K^{-1}\De_m$ by assumption.  We prove that
$P[\vec\delta,\vec\nu;L,R^{2\beta}](s')\subset C P[\vec\delta,\vec\nu;L,R^{2\beta}](s)$; the reverse inclusion is identical.

Set
\[
    L_i:=R\rho_{i-1}\quad (1\leq i\leq n-1),\qquad L_n=L,\qquad L_{n+1}:=R^{2\beta}.
\]
Let
\[
    z=\sum_{i=1}^{n+1}a_i' \bfe_i(s')\in P[\vec\delta,\vec\nu;L_n,L_{n+1}](s').
\]
Thus $|a_i'|\lesssim L_i$ for all $i$, and for $1\leq i\leq n-1$ with
$\nu_i\neq 0$, the coefficient $a_i'$ has the prescribed sign and satisfies
$|a_i'|\sim L_i$.

Write $z$ in the basis at $s$:
\[
    z=\sum_{k=1}^{n+1}a_k \bfe_k(s),
    \qquad 
    a_k=\langle z,\bfe_k(s)\rangle .
\]
By the same reasoning as in the proof of Lemma \ref{structurelem1}, we have
\[
    |a'_k-a_k|=\langle z,\bfe_k(s+\De)-\bfe_k(s)\rangle
    \lesssim |a_k'||\Delta|^2
      +\sum_{i\neq k} |a_i'| |\Delta|^{|k-i|}.
\]
We claim that every term on the right is $O(K^{-1}L_k)$; this will finish the proof by the same reasoning as in Lemma \ref{structurelem1}. It remains to prove
\[ |a_i'||\De|^{|k-i|}\lesssim K^{-1}L_k, \]
for $k=n,n+1$ and $i<k$;
the other cases have been proved in Lemma \ref{structurelem1}.

$\bullet$ If $k=n+1$, we need to prove
\[ L_i |\De|^{n+1-i}\lesssim K^{-1}R^{2\beta}. \]
The cases $i\le n-1$ have been verified in Lemma \ref{structurelem1} (with the right-hand side even $=1$). We just need to prove for $i=n$, i.e.,
\[ \De_{m-1}^{-1}R^\beta K^{-1}\De_m\lesssim K^{-1}R^{2\beta}. \]
This is true since $R^\beta\gg R^{\frac{1}{M-1}}=\De_m/\De_{m-1}$.

$\bullet$ If $k=n$, we need to prove
\[ L_i |\De|^{n-i}\lesssim K^{-1}\De_{m}^{-1}R^\beta. \]
Indeed, the proof of Lemma \ref{structurelem1} gives
\[ L_i|\De|^{n-i}\lesssim K^{-1}R\de_1^{n-1}\dots\de_{n-1}=K^{-1}\De(\vec\de)^{-1}. \]
Then we just need to note that $\De_m\le \De_j\le \De(\vec\de)$.
\end{proof}

Fix a sequence $I_1<\dots<I_M$. Let us state a key lemma. For $1\le k\le M$, we will construct functions $\phi_{k},\phi_k',\psi_k$ so that the following hold. We 
remark that $\phi_k'$ is not the derivative of $\phi_k$, but rather another function.

\begin{lemma}[Partition of unity]\label{partitionunity}
There exist functions 
\begin{equation}
    \{ \phi_{I_k} \}_{1\le k\le M, I_k\in\bI_k},\;\; \{   \phi'_{I_k}\}_{2\le k\le M, I_k\in\bI_k}, \;\;\{ \psi_{I_k,I_{k+1}} \}_{1\le k\le M-1, I_k\in\bI_k, I_{k+1}\in\bI_{k+1}, I_k<I_{k+1} }
\end{equation}
such that the following properties hold.
\begin{enumerate}
    \item $\phi_{I_k }$ is a bump function adapted to  $Q_k[\De_{k}^{-1}R^{\beta},R^{2\beta}](s_{I_k})$;
    \item $\phi'_{I_k }$ is a bump function adapted to  $Q_k[\De_{k-1}^{-1}R^{\beta},R^{2\beta}](s_{I_k})$;
\item $\psi_{I_k,I_{k+1} }$ is a bump function adapted to  \[\bigsqcup_{\vec\de\in  \cD_k  } \bigsqcup_{\vec{\nu}} P[\vec\de,\vec\nu;\De_{k}^{-1}R^{\beta},R^{2\beta}](s_{I_k});\]
    
    \item If $I_k<I_{k+1}$, then
    \begin{equation}\label{partphi}
\phi_{I_k}=\phi'_{I_{k+1}}+\psi_{I_k,I_{k+1}}.
    \end{equation} 
\item If $\zeta\in\supp (\phi'_{I_k}-\phi_{I_k})$, then
\begin{equation}\label{item5}
    |\zeta\cdot \bfe_n(s)|\gtrsim \De_{k}^{-1}R^\beta,
\end{equation} 
for all $|s-s_{I_k}|\le K^{-1}\De_k$.
   
\end{enumerate}
    
\end{lemma}

\begin{figure}[ht]
\centering
\begin{tikzpicture}[
  x=0.01243cm,
  y=-0.01243cm,
  line cap=butt,
  line join=miter,
  every node/.style={inner sep=0pt,outer sep=0pt,font=\normalsize}
]
  \path[use as bounding box] (0,0) rectangle (641,557);

  \tikzset{
    figline/.style={draw=black,line width=.35pt}
  }

  \draw[figline]
    (247,27)--(358,27)--(358,242)--(573,242)--(573,335)--
    (357,335)--(357,550)--(247,550)--(247,335)--(31,335)--
    (31,242)--(247,242)--cycle;

  \draw[figline] (247,27)--(284,3)--(396,3)--(358,27);
  \draw[figline] (396,3)--(396,219);
  \draw[figline] (247,159)--(358,159)--(396,136);

  \draw[figline] (31,242)--(69,218)--(247,218);
  \draw[figline] (176,242)--(213,218);
  \draw[figline] (176,242)--(176,335);

  \draw[figline] (358,242)--(396,219)--(611,219)--(573,242);
  \draw[figline] (573,242)--(611,219)--(611,312)--(573,335);
  \draw[figline] (573,335)--(611,312);
  \draw[figline] (438,242)--(477,219);
  \draw[figline] (438,242)--(438,335);

  \draw[figline] (247,405)--(357,405)--(394,382);
  \draw[figline] (394,335)--(394,527)--(357,550);

  \draw[figline]
    (282,95)--(323,95)--(323,269)--(497,269)--(497,308)--
    (323,308)--(323,482)--(282,482)--(282,308)--(108,308)--
    (108,269)--(282,269)--cycle;

  \draw[figline,-{Stealth[length=5pt,width=3.5pt]}] (432,38)--(477,7);

  \node at (306,65) {$\psi_{I_k,I_{k+1}}$};
  \node at (311,124) {$\phi'_{I_{k+1}}$};
  \node at (453,139) {$\phi_{I_k}$};
  \node at (493,33) {$e_n$};

\end{tikzpicture}
\caption{}
\label{cross}
\end{figure}

\begin{proof}
We first describe the geometry of the support of $\phi_{I_k},\phi_{I_k}',\psi_{I_k,I_{k+1}}$ heuristically. See Figure \ref{cross}. $\phi_{I_k}$  can be viewed heuristically as a bump function of the big cross minus the central square $[-\frac{R}{2},\frac{R}{2}]^{n+1}$; $\phi'_{I_{k+1}}$ can be viewed as a bump function of the small cross minus $[-\frac{R}{2},\frac{R}{2}]^{n+1}$; $\psi_{I_k,I_{k+1}}$ is morally a bump function of the difference of them; $\phi_{I_{k+1}}$ is obtained from $\phi'_{I_{k+1}}$ by making the support thinner on the $\bfe_n(s_{I_{k+1}})$ direction.

Write \(L_k=\De_k^{-1}R^\beta\). Set
\[
\phi_{I_1}:=\phi_{Q_1[L_1,R^{2\beta}](s_{I_1})},
\]
where the right hand side is the cutoff in \eqref{decompose}. 

Suppose \(\phi_{I_k}\) has been constructed and \(I_k<I_{k+1}\). By
\eqref{QQ},
\[
Q_k[L_k,R^{2\beta}](s_{I_k})
=
Q_{k+1}[L_k,R^{2\beta}](s_{I_k})
\sqcup
\bigsqcup_{\vec\de\in\cD_k,\vec\nu}
P[\vec\de,\vec\nu;L_k,R^{2\beta}](s_{I_k}).
\]
Since \(|s_{I_k}-s_{I_{k+1}}|\le K^{-1}\De_{k+1}\), Lemma
\ref{comp} (with $m=k+1$) shows that the blocks of the first term on the right hand side are comparable to the
corresponding blocks based at \(s_{I_{k+1}}\). Hence a smooth partition
subordinate to these two regions gives
\[
\phi_{I_k}
=
\phi'_{I_{k+1}}+\psi_{I_k,I_{k+1}},
\]
where \(\phi'_{I_{k+1}}\) and \(\psi_{I_k,I_{k+1}}\) are adapted to the regions
claimed in (2) and (3). Hence, \eqref{partphi} is verified.

It remains to narrow the support in the \(\bfe_n\)-direction. Choose a fixed smooth cutoff
\(\chi\), equal to \(1\) on a sufficiently large neighborhood of \(0\), and let
\[
\phi_{I_{k+1}}(\zeta)
=
\phi'_{I_{k+1}}(\zeta)\,
\chi\!\left(
\frac{\zeta\cdot\bfe_n(s_{I_{k+1}})}{L_{k+1}}
\right).
\]
Then \(\phi_{I_{k+1}}\) is adapted to
\(Q_{k+1}[L_{k+1},R^{2\beta}](s_{I_{k+1}})\). Moreover, for
\(\zeta\in \supp(\phi'_{I_{k+1}}-\phi_{I_{k+1}})\),
\begin{equation}\label{awayen}
    |\zeta\cdot\bfe_n(s_{I_{k+1}})|\ge C_0 L_{k+1}.
\end{equation}
Here, $C_0$ can be made large depending on the choice of $\chi$.

By Lemma \ref{comp} (with $m=k+1$), we see that when $|s-s_{I_{k+1}}|\le K^{-1}\De_{k+1}$,  the blocks of $Q_{k+1}[L_{k+1},R^{2\beta}](s)$ are comparable to the corresponding blocks of $Q_{k+1}[L_{k+1},R^{2\beta}](s_{I_{k+1}})$. When $C_0$ is large,
\eqref{awayen} tells us that $\zeta$ cannot lie in a large dilation of $Q_{k+1}[L_{k+1},R^{2\beta}](s_{I_{k+1}})$ and hence cannot lie in $Q_{k+1}[L_{k+1},R^{2\beta}](s)$.
This implies that
\[ |\zeta\cdot \bfe_n(s)|\ge L_{k+1} \]
for any $|s-s_{I_{k+1}}|\le K^{-1}\De_{k+1}$, which proves (5) in the $(k+1)$-case.

The iteration finishes the proof of the lemma.
\end{proof}

We next prove a cancellation lemma. This enables us to replace $\phi_{I_k}'$ with $\phi_{I_k}$.

\begin{lemma}\label{cancellation}
    For $k=2,\dots,M$, we have
\begin{equation}
   \wh \mu_{I_{k-1},\dots,I_M}(\xi,\xi_{n+1}) \big(\phi'_{I_k}(\xi,\xi_{n+1}) -\phi_{I_k}(\xi,\xi_{n+1}) \big)=\rap.
\end{equation}
\end{lemma}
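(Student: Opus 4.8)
The plan is to estimate $\wh\mu_{I_k,\dots,I_N}$ directly, by repeated integration by parts in the variable $s$. Write
\[ \wh\mu_{I_k,\dots,I_N}(\xi,\xi_{n+1})=\int_{2I_k\times[1,2]}e^{i\Phi(s,t)}\,a(s,t)\,\mathrm{d}s\mathrm{d}t,\qquad \Phi(s,t):=-t(\ga(s),1)\cdot(\xi,\xi_{n+1}), \]
where $a(s,t):=\chi(s,t)\varphi_{I_k,\dots,I_N}(s)$ is supported in $2I_k\times[1,2]$ and, since $|I_k|\sim\De_k$ and by \eqref{adapt}, satisfies $|\p_s^m a(s,t)|\lesssim_m\De_k^{-m}$. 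The goal is to show that on $\supp(\phi'_{I_k}-\phi_{I_k})$ the phase obeys the uniform estimates
\[ |\p_s\Phi(s,t)|\gtrsim\De_k^{-1}R^{\beta}\quad\text{and}\quad |\p_s^m\Phi(s,t)|\lesssim_m\De_k^{-m}R^{\beta}\ \ (m\ge 2)\qquad\text{for all }(s,t)\in 2I_k\times[1,2]. \]
Once these are available, using $L:=(i\p_s\Phi)^{-1}\p_s$ (so $Le^{i\Phi}=e^{i\Phi}$) and integrating by parts $M$ times in $s$: an easy induction shows that each term produced is a quotient of a product of factors $\p_s^{j_i}\Phi$ (with $j_i\ge 2$) and one factor $\p_s^{l}a$ by $(\p_s\Phi)^{M+p}$, where $p$ is the number of $\Phi$-factors and $\sum_i(j_i-1)+l=M$; substituting the bounds above makes every such term $\lesssim_M R^{-\beta M}$. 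Hence $|\wh\mu_{I_k,\dots,I_N}(\xi,\xi_{n+1})|\lesssim_M R^{-\beta M}$ for every $M$, which is exactly $\rap$ since $\beta=N^{-1000}>0$.

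So everything comes down to the phase estimates. Fix $(\xi,\xi_{n+1})\in\supp(\phi'_{I_k}-\phi_{I_k})\subset\supp\phi'_{I_k}$. By Lemma \ref{partitionunity}(2) the point lies in a bounded dilate of a block $P[\vec\de,\vec\nu;\De_{k-1}^{-1}R^{\beta},R^{\beta}](s_{I_k})$ with $\vec\de\in\bigsqcup_{j\ge k}\cD_j$, so $\De(\vec\de)=\prod_{i=1}^{n-1}\de_i\ge\De_k$. Writing $(\xi,\xi_{n+1})=\sum_{i=1}^{n+1}\eta_i\bfe_i(s_{I_k})$ in the Frenet frame at $s_{I_k}$, the shape of the block gives, with $\rho_i:=\de_1^i\de_2^{i-1}\cdots\de_i$, that $|\eta_i|\lesssim R\rho_{i-1}$ for $1\le i\le n-1$, $|\eta_n|\lesssim\De_{k-1}^{-1}R^{\beta}$, and $|\eta_{n+1}|\lesssim R^{\beta}$. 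Since $\p_s^m\Phi=-t(\ga^{(m)}(s),0)\cdot(\xi,\xi_{n+1})$ and $(\ga^{(m)}(s),0)$ is a combination with $O_m(1)$ coefficients of $\bfe_{n+1}(s),\dots,\bfe_{\max(1,n+1-m)}(s)$, it suffices to bound $|(\xi,\xi_{n+1})\cdot\bfe_i(s)|$ for $|s-s_{I_k}|\le K^{-1}\De_k$. Expanding $\bfe_i(s)$ about $s_{I_k}$ via Lemma \ref{Taylorlem}, using the elementary inequality $\rho_{i-1}\De(\vec\de)^{\,n+1-i}\le\rho_n=R^{-1}$ (a comparison of exponents, valid for $1\le i\le n-1$), and using that $N$ is large so that $\De_{k-1}^{-1}\De_k=R^{(1/2-1/n)/N}\ll R^{1/n}\le\De_k^{-1}$, one obtains for all such $s$
\[ |(\xi,\xi_{n+1})\cdot\bfe_i(s)|\lesssim\De_k^{-(n+1-i)}R^{\beta}\ \ (1\le i\le n-1),\quad |(\xi,\xi_{n+1})\cdot\bfe_n(s)|\lesssim\De_{k-1}^{-1}R^{\beta},\quad |(\xi,\xi_{n+1})\cdot\bfe_{n+1}(s)|\ll\De_k^{-1}R^{\beta}. \]

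It remains to combine this with Lemma \ref{partitionunity}(5), which supplies the matching lower bound $|(\xi,\xi_{n+1})\cdot\bfe_n(s)|\gtrsim\De_k^{-1}R^{\beta}$ for all $|s-s_{I_k}|\le K^{-1}\De_k$. Writing $(\ga'(s),0)=c_1(s)\bfe_{n+1}(s)+c_2(s)\bfe_n(s)$ with $|c_2(s)|\sim 1$ and $|c_1(s)|\lesssim 1$, we get $|\p_s\Phi(s,t)|=t|(\ga'(s),0)\cdot(\xi,\xi_{n+1})|\ge|c_2(s)||(\xi,\xi_{n+1})\cdot\bfe_n(s)|-|c_1(s)||(\xi,\xi_{n+1})\cdot\bfe_{n+1}(s)|\gtrsim\De_k^{-1}R^{\beta}$, and for $m\ge 2$ the displayed upper bounds (noting $\De_{k-1}^{-1}R^{\beta}\ll\De_k^{-2}R^{\beta}$ since $N$ is large, and $\De_k^{-(n+1-i)}\le\De_k^{-m}$ for $i\ge n+1-m$) give $|\p_s^m\Phi(s,t)|\lesssim_m\De_k^{-m}R^{\beta}$, which proves the phase estimates and hence the lemma. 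The conceptual point — and the only place any genuine work is hidden — is the bound on $|(\xi,\xi_{n+1})\cdot\bfe_i(s)|$: a priori $\p_s^2\Phi$ could be of size $|\xi|\sim R$, far too large for the $s$-integration by parts to close, and it is precisely the controlled eccentricity of the blocks of $Q_k$ (encoded in $\rho_{i-1}\De(\vec\de)^{n+1-i}=\rho_n=R^{-1}$) together with the choice of $N$ large that forces $\p_s^2\Phi$ down to the critical size $\De_k^{-2}R^{\beta}$.
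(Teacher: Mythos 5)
Your proof is correct and follows essentially the same route as the paper: locate $(\xi,\xi_{n+1})$ in a thickened block of $Q_k[\De_{k-1}^{-1}R^\beta,R^\beta](s_{I_k})$, use the Frenet expansion and the inequality $R\rho_{i-1}\le \De(\vec\de)^{-(n+1-i)}\le\De_k^{-(n+1-i)}$ to bound $|\phi^{(j)}|\lesssim \De_k^{-j}R^\beta$, and combine with the lower bound $|\bfe_n(s)\cdot(\xi,\xi_{n+1})|\gtrsim\De_k^{-1}R^\beta$ from item (5) of Lemma \ref{partitionunity}. The only difference is presentational: you unpack the non-stationary phase estimate by explicit repeated integration by parts, whereas the paper cites it as a black box (Lemma \ref{BGHSlem}); your phase and amplitude bounds are exactly the hypotheses of that lemma with $r=R^\beta$.
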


\begin{proof}
    In the proof, we will use Lemma \ref{BGHSlem}, which was proved in \cite[Appendix D]{MR4861588}.
    By the support properties of $\phi_{I_k}'$ and $\phi_{I_k}$, together with \eqref{item5}, we  need to show that if $(\xi,\xi_{n+1})\in Q_k[\De_{k-1}^{-1}R^\beta,R^{2\beta}](s_{I_k})$ satisfies
\[|(\xi,\xi_{n+1})\cdot \bfe_n(s)|\gtrsim \De_{k}^{-1}R^\beta  \]
for all $s$ with $|s-s_{I_k}|\le K^{-1}\De_k$, then we have $|\wh \mu_{I_{k-1},\dots,I_M}(\xi,\xi_{n+1})|=\rap$.

Recall from \eqref{mupsi} that
\[ \wh\mu_{I_{k-1},\dots,I_M}(\xi,\xi_{n+1})=\int_{[0,1]\times [1,2]}e^{-it(\ga(s),1)\cdot(\xi,\xi_{n+1})}\chi(s,t) \varphi_{I_{k-1},\dots,I_M}(s)\mathrm{d}s\mathrm{d}t, \]
where $\varphi_{I_{k-1},\dots,I_M}$ is defined in \eqref{psiI}. Hence, $\varphi_{I_{k-1},\dots,I_M}$ is adapted to $I_{k-1}$. Fix $t\in [1,2]$ and denote $\phi(s)=t(\ga(s),1)\cdot(\xi,\xi_{n+1})$ and  $ a(s)=\chi(s,t)\varphi_{I_{k-1},\dots,I_M}(s)$. 
It remains to show, uniformly in $t$, that
\[ \Big|\int_\R e^{-i\phi(s)}a(s)\mathrm{d}s \Big|=\rap. \]
It suffices to verify the three conditions of Lemma \ref{BGHSlem}. Note that $\supp\ a\subset 2I_{k-1}$, which is an interval of length $K^{-1}\De_{k-1}$. We choose $r=R^{\frac{\beta}{2}}$ in Lemma \ref{BGHSlem}. $(\ga(s),1)=\sqrt{|\ga(s)|^2+1}\bfe_{n+1}(s)$ implies $(\ga'(s),0)=b_s\bfe_{n}(s)+c_s\bfe_{n+1}(s)$ with $|b_s|\sim 1, |c_s|\lesssim 1$. Lemma \ref{comp} (with $m=k$) shows that $(\xi,\xi_{n+1})$ lies in a dilation of $Q_k[\De_{k-1}^{-1}R^\beta,R^{2\beta}](s)$ and hence $|\bfe_{n+1}(s)\cdot(\xi,\xi_{n+1})|\lesssim R^{2\beta}\ll \Delta_k^{-1}R^\beta$. We therefore have
\[
|\phi'(s)|\sim |\bfe_n(s)\cdot(\xi,\xi_{n+1})|\gtrsim \Delta_k^{-1}R^\beta\ge \De_{k-1}^{-1}r
\]
for all $s\in \operatorname{supp} a$.

For any $j \geq 0$,
\begin{equation}
    \begin{split}
        |a^{(j)}(s)|\lesssim (K^{-1}\De_{k-1})^{-j}\lesssim r^{-j}|\phi'(s)|^j.
    \end{split}
\end{equation}
Since $\supp\ a\subset 2I_k$,
it remains to verify for $|s-s_{I_k}|\le K^{-1}\De_{k}$ that 
\begin{equation}\label{estphi}
    |\phi^{(j)}(s)|\lesssim r\De_{k-1}^{-j} \lesssim r^{-(j-1)}|\phi'(s)|^j \;\; \textup{~for~} j\ge 2. 
\end{equation}
Since the second inequality has already been established, it suffices to prove the first inequality. Suppose that $|s- s_{I_k}| \leq K^{-1}\Delta_{k}$.
Writing $\phi(s)=t b(s)\bfe_{n+1}(s)\cdot (\xi,\xi_{n+1})$, where $b(s)= \sqrt{|\ga(s)|^2+1 }$ is a bounded smooth function,
for any $2 \leq j\le n$, we have
\[ |\phi^{(j)}(s)|\lesssim \sum_{i=n+1-j}^{n+1}|\bfe_{i}(s)\cdot (\xi,\xi_{n+1})|. \]
Since $(\xi,\xi_{n+1})\in Q_k[\De_{k-1}^{-1}R^\beta,R^{2\beta}](s_{I_k})$, there exists $\vec\de\in \bigcup_{k\le l\le M}\cD_l$ and $\vec\nu$ such that
$(\xi,\xi_{n+1})\in P[\vec\de,\vec\nu;\De_{k-1}^{-1}R^\beta,R^{2\beta}](s_{I_k})$. By Lemma \ref{comp}, $(\xi,\xi_{n+1})$ is contained in a dilation of $P[\vec\de,\vec\nu;\De_{k-1}^{-1}R^\beta,R^{2\beta}](s)$ since $|s-s_{I_k}|\le K^{-1}\De_k$. Let $(\rho_i)$ be the $\rho$-sequence of $\vec\de$  in \eqref{rhoseq}. We have
\[ \sum_{i=n+1-j}^{n+1}|\bfe_{i}(s)\cdot (\xi,\xi_{n+1})|\lesssim \sum_{i=n+1-j}^{n-1}R\rho_{i-1}+\De_{k-1}^{-1}R^\beta+R^{2\beta}. \]
To show that this is bounded by $\lesssim r\De_{k-1}^{-j}$, we  need to show
\[ R\rho_{n-j}+\De_{k-1}^{-1}R^\beta+R^{2\beta}\lesssim R^{\frac{\beta}{2}}\De_{k-1}^{-j}. \]
We have
\[ R\rho_{n-j}=\de_1^{-n}\cdots\de_{n-1}^{-2}\cdot \de_1^{n-j}\cdots\de_{n-j}\le \De(\vec\de)^{-j}\le \De_k^{-j}\le R^{\frac{\beta}{2}}\De_{k-1}^{-j}. \]
The second last inequality holds because $\vec\de \in \cD_l$ for some $l\ge k$.
Since $j\ge 2$ and $\De_{k-1}\le R^{-\frac1n}\le R^{-\beta}$, we have $\De_{k-1}^{-1}R^\beta\le R^{\frac{\beta}{2}}\De_{k-1}^{-j}$. 
Finally, we have $\De_{k-1}^{-j}\ge \De_{k-1}^{-1}\ge R^\beta$.

This finishes the proof of \eqref{estphi} when $2\le j\le n$. For $j> n$, we have
\[ |\phi^{(j)}(s)|\lesssim \sum_{i=1}^{n+1}|\bfe_i\cdot(\xi,\xi_{n+1})|\lesssim r\De_{k-1}^{-n}\le r\De_{k-1}^{-j}. \]
Hence, the three conditions in Lemma \ref{BGHSlem} are satisfied. This finishes the proof.    
\end{proof}

\smallskip

We recall \eqref{decompose} as follows:
\begin{equation}\label{basecase}
    \eta_R \wh\mu_\Ga =\eta_R\sum_{I_1<\dots<I_M}\wh \mu_{I_1,\dots,I_M}\phi_{I_1}+\rap.
\end{equation}
We will derive the following result.

\begin{theorem}[Fourier decay rate of a cone generated by the nondegenerate curve]\label{fourierdecay}
We have
\begin{equation}
   \eta_R \wh \mu_\Ga=\eta_R\sum_{I_1<\dots<I_M}\wh\mu_{I_1,\dots,I_M}\bigg(\sum_{k=1}^{M-1} \psi_{I_k,I_{k+1}} + \phi_{I_M}\bigg)+\rap.
\end{equation}

\end{theorem}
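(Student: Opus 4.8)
The plan is to deduce the identity from \eqref{basecase} by a finite telescoping: starting from the factor $\phi_{I_1}$ in \eqref{basecase}, I will repeatedly trade $\phi_{I_m}$ for $\psi_{I_m,I_{m+1}}+\phi_{I_{m+1}}$ at the cost of a $\rap$ error, until I reach the terminal factor $\phi_{I_N}$. The two structural inputs are Lemma~\ref{partitionunity}(4), which provides the \emph{exact} identity $\phi_{I_m}=\phi'_{I_{m+1}}+\psi_{I_m,I_{m+1}}$ whenever $I_m<I_{m+1}$, and Lemma~\ref{cancellation}, which replaces $\phi'_{I_k}$ by $\phi_{I_k}$ inside $\wh\mu_{I_k,\dots,I_N}$ modulo $\rap$ for every $k\ge2$.

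First I would record a summing-out identity for the $s$-variable partition of unity. Since $\sum_{I_j\in\bI_j}\varphi_{I_j}\equiv1$ on $[0,1]\supset\supp\chi(\cdot,t)$, and since for a fixed $s\in[0,1]$ all intervals $I_j$ with $\varphi_{I_j}(s)\neq0$ have pairwise overlapping supports (so that the ordering constraint $I_1<\dots<I_N$ is automatically satisfied), one obtains for each $1\le k\le N$
\begin{equation*}
\sum_{I_1<\dots<I_k<I_{k+1}}\wh\mu_{I_1,\dots,I_N}=\wh\mu_{I_{k+1},\dots,I_N},
\end{equation*}
only the indices $I_{k+1},\dots,I_N$ surviving on the right. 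Consequently any function of $(\xi,\xi_{n+1})$ depending on the tuple only through $I_{k+1},\dots,I_N$ can be pulled out of the sum over $I_1,\dots,I_k$, and this identity can also be run backwards to reinsert those indices.

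The main step is the inductive claim: for every $0\le m\le N-1$,
\begin{equation*}
\eta_R\wh\mu_\Ga=\eta_R\sum_{I_1<\dots<I_N}\wh\mu_{I_1,\dots,I_N}\Big(\sum_{k=1}^{m}\psi_{I_k,I_{k+1}}+\phi_{I_{m+1}}\Big)+\rap,
\end{equation*}
an empty sum being $0$; the case $m=0$ is precisely \eqref{basecase}, and the case $m=N-1$ is the theorem. To pass from $m$ to $m+1$ (with $0\le m\le N-2$) it suffices to show
\begin{equation*}
\sum_{I_1<\dots<I_N}\wh\mu_{I_1,\dots,I_N}\,\phi_{I_{m+1}}=\sum_{I_1<\dots<I_N}\wh\mu_{I_1,\dots,I_N}\big(\psi_{I_{m+1},I_{m+2}}+\phi_{I_{m+2}}\big)+\rap,
\end{equation*}
and then substitute into the claim for $m$, the bounded factor $\eta_R$ being harmless. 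To prove this, I would apply Lemma~\ref{partitionunity}(4) (legitimate, since $I_{m+1}<I_{m+2}$ throughout the sum) to write $\phi_{I_{m+1}}=\phi'_{I_{m+2}}+\psi_{I_{m+1},I_{m+2}}$; the $\psi$-term is already of the required shape. In the remaining term, $\phi'_{I_{m+2}}$ depends on the tuple only through $I_{m+2}$, so the summing-out identity collapses $\sum_{I_1<\dots<I_N}\wh\mu_{I_1,\dots,I_N}\phi'_{I_{m+2}}$ to $\sum_{I_{m+2}<\dots<I_N}\wh\mu_{I_{m+2},\dots,I_N}\phi'_{I_{m+2}}$; now Lemma~\ref{cancellation}, applied with index $m+2\in\{2,\dots,N\}$, replaces $\phi'_{I_{m+2}}$ by $\phi_{I_{m+2}}$ modulo $\rap$ for each fixed tuple $(I_{m+2},\dots,I_N)$. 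Since there are only $\lesssim R^{O(N)}=R^{O(1)}$ such tuples (each $\bI_j$ has $\lesssim R^{1/2}$ intervals and $N$ is a fixed constant), the accumulated error is still $\rap$. Finally I would run the summing-out identity backwards to reinsert $I_1,\dots,I_{m+1}$, using that $\phi_{I_{m+2}}$ does not depend on them.

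I do not expect a genuine obstacle here: once Lemmas~\ref{partitionunity} and \ref{cancellation} are available the argument is bookkeeping. The points that need care are the combinatorics of the $s$-variable partition of unity---that the ordering constraint is automatic, that the partial sums over $I_1,\dots,I_k$ telescope cleanly to $\wh\mu_{I_{k+1},\dots,I_N}$, and that the polynomially many $\rap$ errors accumulate to $\rap$---together with the requirement that every invocation of Lemma~\ref{cancellation} be with an index $\ge2$, which is exactly why the telescoping must begin by peeling off $\phi_{I_1}$ and must terminate at $\phi_{I_N}$.
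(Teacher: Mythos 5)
Your proposal is correct and follows essentially the same route as the paper: the same induction on the telescoping index starting from \eqref{basecase}, with Lemma~\ref{partitionunity}(4) providing the exact splitting $\phi_{I_{m+1}}=\phi'_{I_{m+2}}+\psi_{I_{m+1},I_{m+2}}$, the summing-out identity $\sum_{(I_1,\dots,I_{m+1})}\wh\mu_{I_1,\dots,I_N}=\wh\mu_{I_{m+2},\dots,I_N}$ collapsing and re-expanding the sum, and Lemma~\ref{cancellation} absorbing $\phi'_{I_{m+2}}-\phi_{I_{m+2}}$ into $\rap$. Your extra remarks on the automatic ordering constraint and on the polynomially many error terms are details the paper leaves implicit, but they do not change the argument.
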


\begin{proof}
We prove by induction that
\begin{equation}
   \eta_R \wh \mu_\Ga=\eta_R\sum_{I_1<\dots<I_M}\wh\mu_{I_1,\dots,I_M}\bigg(\sum_{i=1}^{k}  \psi_{I_i,I_{i+1}} + \phi_{I_{k+1}}\bigg)+\rap,
\end{equation}
for $k=0,\dots,M-1$.  \eqref{basecase} gives the base case $k=0$. Suppose it has been proved for $k-1$:
\begin{equation}
   \eta_R \wh \mu_\Ga=\eta_R\sum_{I_1<\dots<I_M}\wh\mu_{I_1,\dots,I_M}\bigg(\sum_{i=1}^{k-1}  \psi_{I_i,I_{i+1}} + \phi_{I_{k}}\bigg)+\rap.
\end{equation}
We deal with the term
\[  \sum_{I_1<\dots<I_M}\wh\mu_{I_1,\dots,I_M}\phi_{I_{k}}=\sum_{I_k<\dots<I_M}\wh\mu_{I_k,\dots,I_M}\phi_{I_{k}}.\]
Here, we used that $\sum_{(I_1,\dots,I_{k-1}):I_1<\dots<I_k}\wh\mu_{I_1,\dots,I_M}=\wh\mu_{I_k,\dots,I_M}$. 
By item (4) of Lemma \ref{partitionunity}, we write 
\[\sum_{I_k<\dots<I_M}\wh\mu_{I_k,\dots,I_M}\phi_{I_{k}}=\sum_{I_k<\dots<I_M}\wh\mu_{I_k,\dots,I_M}(\phi'_{I_{k+1}}+\psi_{I_k,I_{k+1}}).\] 
By Lemma \ref{cancellation}, 
\begin{align*}
    \sum_{I_k<\dots<I_M}\wh\mu_{I_k,\dots,I_M} \phi'_{I_{k+1}}=&\sum_{I_k<\dots<I_M}\wh\mu_{I_k,\dots,I_M} \phi_{I_{k+1}}+\rap  \\
    =&\sum_{I_1<\dots<I_M}\wh\mu_{I_1,\dots,I_M} \phi_{I_{k+1}}+\rap.
\end{align*} 
This finishes the proof for the case $k$.
\end{proof}

\smallskip

\subsection{Reduction of the local smoothing estimate}\label{subsection710}

Let $\sigma=\sigma_p=\frac1n$, when $2\le p\le4$; $\sigma=\frac1n(\frac12+\frac2p)$ when $4\le p\le 4(n-1)$; $\sigma=\frac2p$ when $p\ge 4(n-1)$.
Recall that our goal is to prove that for $2 \leq p \leq \infty$
\begin{equation}\label{0724.242}
    \Big\| \Big(\wh f(\xi) \wh \mu_\Ga(\xi,\xi_{n+1})\Big)^\vee \Big\|_{L^p(W_{B_1^{n+1}} )}\lesssim R^{-\sigma+\e} \|f\|_{L^p(W_{B_1^n})}.
\end{equation}
Here we assume $ \supp \wh f\subset B_R^n\setminus
B_{R/2}^n$ and $(\xi,\xi_{n+1})\in\R^n\times \R$.
When $p=2$ or $\infty$, \eqref{0724.242} is straightforward. By interpolation, it suffices to consider $p\in [4,4(n-1)]$.
We are going to make a sequence of reductions.

By Theorem \ref{fourierdecay}, we have
\[ \wh f(\xi) \wh \mu_\Ga=\wh f(\xi)\sum_{I_1<\dots<I_M} \wh\mu_{I_1,\dots,I_M}\bigg(\sum_{k=1}^{M-1}\psi_{I_k,I_{k+1}}+\phi_{I_M} \bigg)+\hat{f}(\xi) \rap . \]
By the triangle inequality, it suffices to fix $m \in [1,M-1]$ and prove
\[ \Big\| \Big(\wh f(\xi)\sum_{I_m<\dots<I_M} \wh\mu_{I_m,\dots,I_M} \psi_{I_m,I_{m+1}} \Big) ^\vee \Big\|_{L^p(W_{B_1^{n+1}})}\lesssim R^{-\sigma+\e} \|f\|_{L^p(W_{B_1^n})}. \]
The term including $\phi_{I_M}$ can be handled similarly as $\psi_{I_{M-1},I_M}$. 
For fixed $I_m$, there are $O(1)$ choices of $I_{m+1},\dots,I_M$ such that $I_m<\dots<I_M$. By the triangle inequality, we may choose  $I_{m+1},\dots,I_M$ as a function of $I_m$, and it suffices to prove
\[ \Big\| \Big(\wh f(\xi)\sum_{I_m} \wh\mu_{I_m,\dots,I_M} \psi_{I_m,I_{m+1}} \Big) ^\vee \Big\|_{L^p(W_{B_1^{n+1}})}\lesssim R^{-\sigma+\e} \|f\|_{L^p(W_{B_1^n})}. \]

For simplicity, we write
\[ \wh\mu_{I_m}=\wh\mu_{I_m,\dots,I_M}. \]
We have
\begin{equation}\label{muIm}
\begin{split}
    \wh\mu_{I_m}(\xi,\xi_{n+1})&=\int_{[0,1]\times [1,2]} e^{-it(\ga(s),1)\cdot(\xi,\xi_{n+1})}\chi(s,t) \chi_{I_m}(s)\mathrm{d}s\mathrm{d}t\\
    &=\int_{[s_{I_m}-\De_m,s_{I_m}+\De_m]\times [1,2]} e^{-it(\ga(s),1)\cdot(\xi,\xi_{n+1})}\chi(s,t) \chi_{I_m}(s)\mathrm{d}s\mathrm{d}t,
\end{split} 
\end{equation} 
where $\chi_{I_m}$ is a bump function adapted to $I_m$.
Since $\psi_{I_m,I_{m+1}}$ is adapted to
\[\bigsqcup_{\vec\de\in  \cD_m, \vec\nu }P[\vec\de,\vec\nu;\De_{m}^{-1}R^{\beta},R^{2\beta}](s_{I_m}),\]
we can write
\[ \psi_{I_m,I_{m+1}}=\sum_{\vec\de\in\cD_m,\vec\nu}\phi_{\vec\de,\vec\nu,s_{I_m}}, \]
where $\phi_{\vec\de,\vec\nu,s_{I_m}}$ is adapted to $P[\vec\de,\vec\nu;\De_{m}^{-1}R^{\beta},R^{2\beta}](s_{I_m})$.
Since there are $(\log R)^{O(1)}$ choices of $\vec\de$ and $O(1)$ choices of $\vec\nu$, we may fix an admissible $(\vec\de,\vec\nu)$ with $\vec\de\in\cD_m$, and it suffices to prove
\begin{equation}\label{becomes1}
    \Big\| \Big(\wh f(\xi) \sum_{I_m\in \bI_m}\wh\mu_{I_m} \phi_{\vec\de,\vec\nu,s_{I_m}} \Big)^\vee \Big\|_{L^p(W_{B_1^{n+1}})}\lesssim R^{-\sigma+\e} \|f\|_{L^p(W_{B_1^n})}.
\end{equation}
Denote 
\[ \de:=\De(\vec\de)=\prod_{i=1}^{n-1}\de_i. \]
We have $\de\in [\De_m,\De_{m+1})=[\De_m,R^{(\frac12-\frac1n)\frac{1}{M}}\De_m)$. Therefore,
$P[\vec\de,\vec\nu;\De_m^{-1}R^\beta,R^{2\beta}](s)$ and $P[\vec\de,\vec\nu](s)$ are the same up to a factor $R^{O(M^{-\frac12})}$, recalling \eqref{defbeta}.
We may choose $M^{-1}=\e^{10000}$ so that the factor is dominated by $R^\e$. Hence, we may omit the factor $R^{O(M^{-\frac12})}$, and \eqref{becomes1} becomes
\begin{equation}\label{becomes2}
    \Big\| \Big(\wh f(\xi) \sum_{I_m\in \bI_m}\wh\mu_{I_m} \phi_{P[\vec\de,\vec\nu](s_{I_m})} \Big)^\vee \Big\|_{L^p(W_{B_1^{n+1}})}\lesssim R^{-\sigma+\e} \|f\|_{L^p(W_{B_1^n})},
\end{equation}
where $\phi_{P[\vec\de,\vec\nu](s_{I_m})}$ is a bump function adapted to $P[\vec\de,\vec\nu](s_{I_m})$.

Recalling \eqref{muIm}, we use Minkowski's inequality for variables $s,t$. Then \eqref{becomes2} is bounded by
\begin{equation}
    \int_{[-\De_m,\De_m]\times [1,2]}\| F(\cdot,\cdot;s,t) \|_{L^p(W_{B_1^{n+1}})}\mathrm{d}s\mathrm{d}t
\end{equation}
where $F(x,x_{n+1};s,t)$ is defined by
\begin{equation*}
\begin{split}
    \Big(\wh f(\xi) \sum_{I_m\in \bI_m} e^{-it(\ga(s_{I_m}+s),1)\cdot (\xi,\xi_{n+1})} \chi(s_{I_m}+s,t)\chi_{I_m}(s_{I_m}+s)\phi_{P[\vec\de,\vec\nu](s_{I_m})} \Big)^\vee(x,x_{n+1}).
\end{split}
\end{equation*}
Hence, by taking $\sup$ over $s,t$ variables, \eqref{becomes2} boils down to showing 
\begin{equation}\label{becomes3}
   \Big\| \Big(\wh f(\xi) \sum_{I_m\in \bI_m}a_{I_m} e^{-it(\ga(s_{I_m}+s),1)\cdot (\xi,\xi_{n+1})} \phi_{P[\vec\de,\vec\nu](s_{I_m})} \Big)^\vee \Big\|_{L^p(W_{B_1^{n+1}})}\lesssim  \De_m^{-1} R^{-\sigma+\e} \|f\|_{L^p(W_{B_1^n})}
\end{equation}
for all $a_{I_m}\in \mathbb C$ with $|a_{I_m}|\lesssim 1$, $1\le t\le 2$ and $|s|\le \De_m$.

We further simplify our notation. Recall that $\delta \in [\Delta_m,\Delta_{m+1})$. By losing a factor $\lesssim \frac{\de}{\De_m}\lesssim R^{1/(M-1)}$, 
 we may restrict to a
subcollection of intervals \(I_m\) whose centers are \(\delta\)-separated.
Denote the set of these centers by \(I_\delta\).
 Then \eqref{becomes3} boils down to proving
\begin{equation}\label{becomes4}
     \Big\| \Big(\wh f(\xi) \sum_{s\in \I_\de}a_{s} e^{-i\bfn_s\cdot (\xi,\xi_{n+1})} \phi_{P[\vec\de,\vec\nu](s)} \Big)^\vee \Big\|_{L^p(W_{B_1^{n+1}})}\lesssim \de^{-1} R^{-\sigma+\e} \|f\|_{L^p(W_{B_1^n})},
\end{equation}
where $|a_s|\lesssim 1$, $\bfn_s\in \{ t(\ga(s'),1):t\in[1,2], s'\in [s-\de,s+\de] \}$.
Since we have
\[  \Big(\wh f(\xi) e^{-i\bfn_s\cdot (\xi,\xi_{n+1})} \phi_{P[\vec\de,\vec\nu](s)} \Big)^\vee=\Big(\wh f(\xi) \phi_{P[\vec\de,\vec\nu](s)} \Big)^\vee(\cdot-\bfn_s), \]
\eqref{becomes4} becomes
\begin{equation} \label{rescaling} 
    \Big\|  \sum_{s\in \I_\de}a_{s}\Big(\wh f(\xi)  \phi_{P[\vec\de,\vec\nu](s)} \Big)^\vee(\cdot-\bfn_s) \Big\|_{L^p(W_{B_1^{n+1}})}\lesssim \de^{-1} R^{-\sigma+\e} \|f\|_{L^p(W_{B_1^n})}.
\end{equation}

Since $P[\vec\de,\vec\nu](s)$ remains comparable when $s$ is perturbed within length $\de$, we may assume $\bfn_s\in \{t(\ga(s),1):t\in[1,2]\}$.
Finally, we rescale \eqref{rescaling}. We dilate physical space by a factor $R$, centered at the origin, and dilate frequency space by a factor $R^{-1}$. Let $p[\vec{\de},\vec{\nu}](s)=R^{-1}P[\vec{\delta},\vec{\nu}](s)$. Then $p[\vec{\de},\vec{\nu}](s)$ has dimensions $1\times \rho_1\times \dots\times \rho_{n-1}\times R^{-1}$. We let $\phi_{p[\vec{\de},\vec{\nu}](s)}$ denote a bump function adapted to $p[\vec{\de},\vec{\nu}](s)$. For $ s\in [0,1]$, define $\Gamma(s):=\{t(\gamma(s),1): t \in [1,2] \}$.
The main result is stated as follows:
\begin{theorem}[Main estimate]\label{05.29.thm28} Fix an admissible pair $(\vec\de,\vec\nu)$  at scale $R$.  Let $\de=\prod_{i=1}^{n-1}\de_i$. Let $\I_{\delta}$ be a $\delta$-separated subset of $[0,1]$. For $4 \leq p \leq 4(n-1)$ and $\e>0$, we have
    \begin{equation*}
        \Big\|  \sum_{s \in \I_{\delta}}a_s \Big(\wh f(\xi)\phi_{p[\vec{\de},\vec{\nu}](s)} \Big)^\vee(\cdot-R\bfn_s)  \Big\|_{L^p(W_{B_R^{n+1}})}\lesssim \de^{-1}R^{\frac1p}R^{-\frac1n(\frac12+\frac2p)+\e}\|f\|_{L^p(W_{B_R^n})}
    \end{equation*}
    for any function $f$ with $\wh f\subset B_1^n\setminus B_{\frac12}^{n}$,  $|a_s|\lesssim 1$, and $\bfn_s \in \Ga(s)$.
\end{theorem}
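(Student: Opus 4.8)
The plan is to deduce the Main Estimate from a wave envelope estimate for the $(\vec\de,\vec\nu)$-planks $p(s)$, and then to convert the output of that estimate into $\|f\|_{L^p(W_{B_R^n})}$ using that $\widehat f$ is flat on the annulus $B_1^n\setminus B_{1/2}^n$. We first record the geometric setup. Write $G_s:=(\widehat f\,\phi_{p(s)})^\vee$, so $\widehat{G_s}$ is supported in $p(s)$; by Lemma \ref{finoverlem} the dilated planks $\{CP[\vec\de,\vec\nu](s)\}_{s\in\I_\de}$ are $O(1)$-overlapping, hence the supports $\widehat{G_s}$ are essentially disjoint and $\sum_s G_s$ is frequency-supported in a finitely overlapping cover of an anisotropic neighborhood of the degenerate cone $\Ga\cap\{|\xi|\sim1\}$. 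Since $\bfe_{n+1}(s)\parallel(\ga(s),1)$ and $p(s)$ has its unique shortest side, of length $R^{-1}$, along $\bfe_{n+1}(s)$, the shift $R\bfn_s\in R\Ga(s)$ lies, up to the bounded anisotropic dilations we absorb throughout, in the dual box $p(s)^*$; as explained in the introduction, such shifts only enlarge the physical wave packets of $G_s$ by a bounded factor, so the wave envelope machinery applies verbatim in the shifted setting. Finally, since the operator $f\mapsto\sum_s a_s G_s(\cdot-R\bfn_s)$ is linear in $f$ and the exponent $\frac1p-\frac1n(\frac12+\frac2p)$ of $R$ is affine in $1/p$, it suffices, e.g.\ by Riesz--Thorin interpolation, to prove the estimate at the endpoints $p=4$ and $p=4(n-1)$.

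The heart of the proof is an $L^p$ wave envelope estimate for the planks $p(s)$, $s\in\I_\de$. Because the cone $\Ga$ generated by the planar curve $(\ga_n,1)$ is degenerate in $\R^{n+1}$, one cannot invoke Theorem \ref{05.03.thm13} in dimension $n+1$ directly; instead one argues intrinsically with the $(\vec\de,\vec\nu)$-planks, whose construction in Section \ref{sec2} is designed to expose their effective lower-dimensional curvature. For $p=4$ the plan is to run the High/Low frequency decomposition of the planks $P[\vec\de,\vec\nu]$ developed in Section \ref{sec3}: one splits $\big|\sum_s G_s(\cdot-R\bfn_s)\big|^2$ into a broad part dominated pointwise by an $\ell^2$ square function and a transverse part on which the Fourier supports of the bilinear products $G_sG_{s'}$ separate, so that $L^2$-orthogonality localized to wave envelopes $U\|U_{\tau,R}$ yields a gain, and one iterates this dichotomy over the dyadic scales $R^{-1/n}\le\si\le1$ in the manner of Guth--Wang--Zhang; bookkeeping the losses produces the $L^4$ wave envelope estimate for the planks with the correct power of $R$. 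For $4<p\le4(n-1)$ one upgrades the argument: after parabolically rescaling a plank at a coarse scale, the finer structure within it becomes a genuine nondegenerate curve or cone block, so the $L^p$ wave envelope estimates Theorem \ref{0501.thm12} and Theorem \ref{05.03.thm13} apply at the intermediate scales and can be fed into the iteration, following the scheme of \cite{MR4794594}. The interval $4\le p\le4(n-1)$ is precisely the range on which this yields the sharp local-smoothing exponent $\si=\frac1n(\frac12+\frac2p)$.

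With the wave envelope estimate in hand, the final step converts its right-hand side into $\|f\|_{L^p(W_{B_R^n})}$. The finest-scale caps in the plank wave envelope estimate are the individual planks $p(s)$, so the $\ell^q$ wave-envelope function of $\sum_s a_s G_s(\cdot-R\bfn_s)$ on a wave envelope $U$ is $\big(\sum_{s}|a_s G_s(\cdot-R\bfn_s)|^q\,w_U\big)^{1/q}$; one then estimates $\sum_{\tau,U}|U|\,\|S_{U,p/2}\|_{L^{p/2}}^p$ by $L^2$-orthogonality — $\sum_s\|G_s\|_{L^2}^2=\sum_s\|\widehat f\,\phi_{p(s)}\|_{L^2}^2\lesssim\|f\|_{L^2}^2$ by Lemma \ref{finoverlem} — together with the flatness of $\widehat f$ on the annulus, the bound $|a_s|\lesssim1$, and the fact that the shifts $R\bfn_s$ are measure-preserving and boundedly overlapping, so they cannot concentrate the pieces $G_s$ onto a single wave envelope. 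Collecting the powers of $R$ with the plank count $|\I_\de|\sim\de^{-1}$ then recovers $\de^{-1}R^{1/p}R^{-\frac1n(\frac12+\frac2p)+\e}\|f\|_{L^p(W_{B_R^n})}$.

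The principal obstacle is the second step: establishing the $L^p$ wave envelope estimate for the $(\vec\de,\vec\nu)$-planks up to $p=4(n-1)$. The degeneracy of $\Ga$ forces the High/Low iteration to run through all $n-1$ intermediate scales $\de_1,\dots,\de_{n-1}$, with the plank geometry at each scale controlled via the structural Lemmas \ref{structurelem1}, \ref{disjlem}, \ref{finoverlem} and \ref{conv}; and for $p>4$ this iteration must be meshed with the nondegenerate curve/cone wave envelope estimates, whose own proof proceeds by an induction on dimension relating Theorem \ref{0501.thm12} in dimension $n-1$ to Theorem \ref{05.03.thm13} in dimension $n$. Making the shift $R\bfn_s\in p(s)^*$ harmless, and the final passage to $\|f\|_{L^p}$, are by comparison routine.
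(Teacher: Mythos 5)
Your skeleton matches the paper's in outline (interpolation to the endpoints $p=4$ and $p=4(n-1)$, harmlessness of the shifts $R\bfn_s\in p(s)^*$, a scale-by-scale wave-envelope/decoupling iteration through $\de_1,\dots,\de_{n-1}$, and an $L^4$ square-function estimate at the bottom proved via the Kakeya-type Proposition \ref{0616.thm33}, $L^2$ orthogonality and Rubio de Francia). But the central step is asserted rather than proved, and it is precisely the step where the exponent $\frac1n(\frac12+\frac2p)$ and the range $p\le 4(n-1)$ come from. Two concrete mechanisms are missing. First, you propose to "feed Theorem \ref{0501.thm12} and Theorem \ref{05.03.thm13} into the iteration" at exponent $p$ up to $4(n-1)$; this cannot work as stated, because after rescaling a plank at the $J$-th scale the relevant frequency set is a $k$-flat cone over an $(n-J)$-dimensional nondegenerate curve, and the $\ell^{p/2}$ estimate there (Proposition \ref{0618.prop71}) is only available for $p\le p_{n-J}=(n-J)^2+(n-J)-2$, which drops below $4(n-1)$ at the later scales. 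The paper's device (Proposition \ref{05.29}) is to pull out factors $\sup_s\|T_{p[\vec\de_{(j)}](s)}f\|_\infty^{\,p_{n+1-j}-p_{n-j}}$ by H\"older at each stage where the admissible range is exceeded, descend to $L^4$, and then verify the resulting exponent inequality \eqref{0629.8230}; that verification is exactly what pins down $p\le 4(n-1)$, and nothing in your proposal plays this role. Second, your final conversion to $\|f\|_{L^p}$ via "$L^2$-orthogonality plus flatness of $\widehat f$" only yields $L^2$- or $L^4$-type control; to recombine the accumulated $L^\infty$ factors with the terminal $L^4$ square-function bound into $\|f\|_{L^p}$ at $p=4(n-1)$ one needs the pigeonholing $|f|\sim X$ on its support (so that $\|f\|_\infty^{p-4}\|f\|_4^4\lesssim\|f\|_p^p$), which your argument omits.

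A smaller but real issue: your plan for $p=4$ is to re-run a Guth--Wang--Zhang High/Low induction intrinsically on the planks. The paper instead gets the $L^4$ case directly from the cone $\ell^2$ estimate (Theorem \ref{25.04.26.thm13} at $p=4$) at the top scale $\de_1$, followed by the same iterative Proposition \ref{05.30.prop10.3} and the square-function bound Proposition \ref{05.30.prop.10.2}; the High/Low analysis lives inside the proofs of those ingredients (Sections \ref{sec3}--\ref{0703.sec6}), not in Section \ref{0624.sec8}. Your route is not obviously wrong, but as written it is a restatement of the difficulty, not a proof: "bookkeeping the losses produces the correct power of $R$" is the theorem, and the bookkeeping is where all the work is.
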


We have shown that Theorem \ref{05.29.thm28} implies Theorem \ref{mainthm}. In Sections \ref{sec3}--\ref{0624.sec8}, we prove Theorem \ref{05.29.thm28}.

\bigskip

\section{Kakeya-type estimates for all intermediate scales}\label{sec3}

In this section, we establish Kakeya-type estimates for the intermediate-scale planks
constructed in Section \ref{sec2}. The main result is Proposition \ref{wee}. The proof follows the strategy of \cite{MR4151084}, adapted to the present family of planks. These estimates will later serve as the geometric input for passing from frequency-localized pieces to wave-envelope-type quantities.

Fix an admissible pair $(\vec\de,\vec\nu)$ at scale $R$, and let
\[
    \rho_k=\de_1^k\de_2^{k-1}\cdots\de_k,
    \qquad 1\le k\le n-1,
    \qquad \rho_0=1,\quad \rho_n=R^{-1}.
\]
Thus the associated planks $P[\vec\de,\vec\nu](s)$ have dimensions
\[
    R\times R\rho_1\times\cdots\times R\rho_{n-1}\times 1.
\]
Let $Q[\vec\de]$ be the translate of $P[\vec\de,\vec\nu]$ centered at the
origin:
\[
    Q[\vec\de]
    =
    \bigl\{(a_1,\ldots,a_{n+1}):
    |a_i|\lesssim R\rho_{i-1}\ (1\le i\le n),\ |a_{n+1}|\lesssim1\bigr\}.
\]
Write $\de=\de_1\cdots\de_{n-1}$. We decompose this box into pieces indexed
by the dyadic scales $\de\le\si\le1$.

\subsection{The intermediate-scale decomposition}

Our goal is to perform a high/low decomposition 
\[ Q[\vec\de]=\bigsqcup_{\de\le\si\le 1} Q[\vec\de;\si].  \]

\textbf{Intuition.} Before giving the precise definition, let us explain the intuition.
Let $\si$ be a dyadic number in $[\de,1]$. Morally speaking, we want to define 
\begin{equation}\label{intersectionQ}
    Q[\vec\de;\le\si](s)\approx \bigcap_{|s'-s|\le \si} Q[\vec\de](s'),
\end{equation}
and let $Q[\vec\de;\sigma](s)=Q[\vec\de;\le\si](s)\setminus Q[\vec\de;\le2\si](s)$. Here, $Q[\vec\de;\le\si]$ gets smaller as $\si$ gets bigger, since it takes more intersection.

Morally speaking, the set on the right hand side of \eqref{intersectionQ} is a rectangular box centered at the origin.
We also want to compute its size. Write $z \in Q[\vec\de](s+\si)$ in the form
\[ z=\sum_{i=1}^{n+1} a_i\bfe_i(s+\si). \]
Using Taylor expansion with Lemma \ref{Taylorlem}, we have $\bfe_i(s+\si)= \sum_{k=1}^{n+1} O(\si^{|k-i|})\bfe_k(s)$. Rewrite as follows.
\[ z\approx\sum_{i=1}^{n+1}(\sum_{k=1}^{n+1} \si^{|k-i|}a_k )\bfe_i(s).  \]
If $z$ also lies in $Q[\vec\de](s)$, then we expect 
\[ |\sum_{k=1}^{n+1} \si^{|k-i|}a_k|\le R\rho_{i-1}. \]
Hence, we expect the range:
\[ |a_k|\le R\min_{k\le i\le n+1} \{\si^{k-i}\rho_{i-1}\}. \]

The log-concavity of the sequence $(\rho_i)$ reduces the minimum to its two
endpoints, and hence
\begin{equation}\label{eq:compressed-common-box}
    |a_k|\le
    \min\bigl\{R\rho_{k-1},\si^{k-n-1}\bigr\},
    \qquad 1\le k\le n+1.
\end{equation}
This motivates the following definition of critical numbers.

For $1\le k\le n$, let
\[
    \si_k
    :=\left(\frac{\rho_n}{\rho_{k-1}}\right)^{\!1/(n+1-k)},
    \qquad \si_0:=1.
\]
In particular, $\si_{n-1}=\si_n=\de$.
We have
\[
    \de=\si_n=\si_{n-1}\le\si_{n-2}\le\cdots\le\si_1\le\si_0=1.
\]
As usual, we replace these scales by comparable dyadic numbers when needed. We now give the formal definition.

\begin{definition}\label{defQsigma}
For dyadic $\de\le\si\le1$, define
\[
    Q[\vec\de;\le\si]
    :=\bigl\{(a_i)_{i=1}^{n+1}:
    |a_i|\le \min\{R\rho_{i-1},\si^{i-n-1}\}\bigr\}.
\]
For $\si<1$, let
\[
    Q[\vec\de;\si]
    :=Q[\vec\de;\le\si]\setminus Q[\vec\de;\le2\si],
\]
and put $Q[\vec\de;1]:=Q[\vec\de;\le1]$.
\end{definition}

We have the decomposition 
\[Q[\vec\de]=\bigsqcup_{\si} Q[\vec\de;\si].\]

    If $\si\in (\si_k,\si_{k-1}]$, we have
    \[ Q[\vec\de;\le \si]:=
    \prod_{i=1}^{k-1}[-R\rho_{i-1},R\rho_{i-1}]\times \prod_{i=k}^{n+1}[-\si^{i-n-1},\si^{i-n-1}].
    \]
and 
    \[ Q[\vec\de;\si]:= \prod_{i=1}^{k-1}[-R\rho_{i-1},R\rho_{i-1}]\times\bigg(\prod_{i=k}^{n}[-\si^{i-n-1},\si^{i-n-1}]\Big\backslash\prod_{i=k}^{n}[-(2\si)^{i-n-1},(2\si)^{i-n-1}]\bigg)\times [-1,1]. \]

\bigskip

\subsection{Overlap and the Kakeya estimate}

\begin{lemma}\label{finiteoverlaplem}
Fix $\si\in ( \si_k,\si_{k-1} ]$.
    Let $\I_\si$ be a $\si$-separated subset of $[0,1]$. Then $\{Q[\vec\de;\si](s):s\in \I_\si\}$  is $(\log R)^{O(1)}$-overlapping. In other words, 
    \[ \Big\|\sum_{s\in\I_\si} 1_{Q[\vec\de;\si](s)}\Big\|_\infty\lesssim (\log R)^n. \]
\end{lemma}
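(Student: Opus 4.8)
The plan is to reduce to a pointwise count and then to run a two–scale comparability/disjointness argument in the Frenet frame, in the spirit of Lemmas~\ref{structurelem1}, \ref{disjlem} and \ref{finoverlem}.

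It suffices to bound, for every fixed $z\in\R^{n+1}$, the number of $s\in\I_\si$ with $z\in Q[\vec\de;\si](s)$. Write $z=\sum_{i=1}^{n+1}a_i(s)\bfe_i(s)$. For $\si\in(\si_k,\si_{k-1}]$ the explicit description of $Q[\vec\de;\si]$ shows that $z\in Q[\vec\de;\si](s)$ exactly when $|a_i(s)|\le R\rho_{i-1}$ for $i\le k-1$, $|a_i(s)|\le\si^{i-n-1}$ for $k\le i\le n$, $|a_{n+1}(s)|\le 1$, and moreover $|a_j(s)|>(2\si)^{j-n-1}\asymp_n\si^{j-n-1}$ for some $j\in\{k,\dots,n\}$. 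Since there are at most $n$ possible ``large'' coordinates $j$, by the triangle inequality it is enough, for each fixed $j$, to bound the overlap of the family $\{Q[\vec\de;\si;j](s)\}$ over the $s\in\I_\si$ with $z\in Q[\vec\de;\si;j](s)$, where $Q[\vec\de;\si;j]$ denotes the sub-box of $Q[\vec\de;\si]$ on which $|a_j|\sim\si^{j-n-1}$. The point of this splitting is that each $Q[\vec\de;\si;j]$ is \emph{pinned} in the $j$th direction, which is what a disjointness argument needs.

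Fix $j$. Using Lemma~\ref{Taylorlem} in the form $\langle\bfe_i(s+\De),\bfe_m(s)\rangle=c_{im}\De^{|i-m|}+O(|\De|^{|i-m|+1})$ with $c_{mm}=1+O(\De^2)$, $c_{im}\sim1$, together with the inequalities $\si\le\si_{k-1}\le\si_i$ for $1\le i\le k-1$ (which control the long sides $R\rho_{i-1}$ under perturbations of $s$), one shows, just as in the proof of Lemma~\ref{structurelem1}, that $Q[\vec\de;\si;j](s)$ and $Q[\vec\de;\si;j](s')$ are comparable whenever $|s-s'|\le K^{-1}\si$. The complementary statement is that, for a suitable $K$, $Q[\vec\de;\si;j](s)$ and $Q[\vec\de;\si;j](s')$ are disjoint (after bounded dilations) once $|s-s'|\ge K\si$: testing a common point $z$ against $\bfe_{j+1}(s)$, the $\bfe_j(s')$–component of $z$ contributes a term of size $\sim|a_j(s')|\,|s-s'|\gtrsim\si^{j-n-1}|s-s'|$, which overshoots the allowed size $\si^{j-n}$ of $a_{j+1}(s)$ as soon as $|s-s'|\gg\si$; ruling out cancellation against the contributions of $a_i(s')$ for $i\neq j$ is then done by testing simultaneously against $\bfe_{j+2}(s),\dots,\bfe_{n+1}(s)$ and comparing magnitudes with the bounds $|a_i|\le\si^{i-n-1}$ (resp.\ $\le R\rho_{i-1}$), as in the proof of Lemma~\ref{disjlem}.

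Granting these two facts, the bounded overlap of $\{Q[\vec\de;\si;j](s)\}_{s\in\I_\si}$ follows by the dyadic–window iteration of Lemma~\ref{finoverlem}: cover $[0,1]$ by intervals of length $K^{-1}\si$ (each meeting $\I_\si$ in $O_K(1)$ points) and use the disjointness to see that a fixed $z$ is seen by only $O(1)$ of them. Summing over the $\le n$ choices of $j$, and allowing the harmless logarithmic losses produced by the several dyadic decompositions involved, yields $\big\|\sum_{s\in\I_\si}1_{Q[\vec\de;\si](s)}\big\|_\infty\lesssim(\log R)^n$. The main obstacle is the disjointness step: unlike in Lemma~\ref{disjlem}, where the plank is pinned in a single ``first'' direction, here $Q[\vec\de;\si;j](s)$ also carries the long directions $\bfe_1,\dots,\bfe_{j-1}$ whose contributions can a priori cancel the main term, so the argument must use all of the size constraints at once — which is exactly why one first decomposes into the pinned sub-boxes $Q[\vec\de;\si;j]$ and then runs the disjointness argument coordinate by coordinate.
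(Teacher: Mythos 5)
There is a genuine gap at the disjointness step, and it is exactly the one you flag as ``the main obstacle'' without actually resolving it. After pinning only the single coordinate $j$ (your $Q[\vec\de;\si;j]$, the paper's $V_{m,\pm}$), testing a common point against $\bfe_{j+1}(s)$ gives $\langle z,\bfe_{j+1}(s)\rangle\approx\sum_{i\le j}a_i c_{i}\De^{j+1-i}+O(\si^{j-n})$, and the lower-index terms satisfy only $|a_i\De^{j+1-i}|\le\si^{i-n-1}\De^{j+1-i}=(\De/\si)^{j-i}\,\si^{j-n-1}\De$, i.e.\ they can be \emph{larger} than the pinned term $a_j\De$ precisely in the regime $\De\gg\si$ you need, so they can cancel it. Your proposed fix --- ``testing simultaneously against $\bfe_{j+2}(s),\dots,\bfe_{n+1}(s)$ \dots as in the proof of Lemma~\ref{disjlem}'' --- is not carried out and is not what Lemma~\ref{disjlem} does: that proof uses a \emph{single} inner product and succeeds only because there the coefficients $a_i$, $i<l$, are a priori small enough ($a_i\De^{l-i}\ll a_l$) to be negligible, which fails here. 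A simultaneous-test argument would amount to a quantitative Vandermonde-type non-degeneracy claim that needs its own proof and is not obviously uniform over the admissible ranges of the free coordinates.

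The paper avoids this by decomposing further: each pinned piece $V_{m,\pm}$ is cut into $\sim(\log R)^{n}$ dyadic sub-boxes $D_{\vec\mu,\vec\iota}$ in which \emph{every} coordinate $a_i$, $i\le m$, has prescribed magnitude $\mu_i$ (up to a factor $2$) with $\mu_1\ge\dots\ge\mu_m$. Then Lemma~\ref{emptylem} gives disjointness not in one stroke but window by window: for $\De$ in the range $(\mu_{l+1}/\mu_l,\;\min_{i\le l}\mu_{i+1}/\mu_i)$ the coordinate $a_l$ dominates the inner product with $\bfe_{l+1}(s)$ (here the knowledge of all the $\mu_i$ is essential to verify $a_i\De^{l+1-i}\ll a_l\De$), and the multi-scale iteration over the scales $\beta_l$ telescopes these windows down to $\si$. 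Note also that this finer decomposition is exactly where the $(\log R)^{n}$ in the statement comes from; your argument, if it worked as written, would give $O(n)$ overlap, and the unexplained ``harmless logarithmic losses'' in your last step are a symptom of the missing decomposition.
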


\begin{proof}
    The strategy is to partition $Q[\vec\de;\si]$ into  a collection $\cU=\cU[\vec\de;\si]$ of $\lesssim(\log R)^n$ rectangular boxes:
     \[ Q[\vec\de;\si]=\bigsqcup_{U\in\cU}U. \]
It then suffices to  show that for each $U\in \cU$, $\{U(s):s\in\I_\si\}$ is $O(1)$-overlapping.

We first partition $Q[\vec\de;\si]$ into rectangular boxes
\begin{equation}\label{partitionV}
    Q[\vec\de;\si]=\bigsqcup_{k\le m\le n}V_{m,+}\sqcup V_{m,-}, 
\end{equation} 
where
\begin{equation}
\begin{split}
         V_{m,+}=\{ (a_i): |a_i|\le R\rho_{i-1} (1\le i\le k-1), |a_i|\le (2\si)^{i-n-1} (k\le i\le m-1),\\
         (2\si)^{m-n-1}<a_m\le \si^{m-n-1},|a_i|\le \si^{i-n-1} (m+1\le i\le n+1)   \}.
\end{split}
\end{equation} 
Changing the sign of $a_m$ gives $V_{m,-}$.
Next, we partition each $V_{m,+}$ (and $V_{m,-}$) further into pieces. 

\bigskip

We  discuss a general setting. Suppose that we are given dyadic numbers $r_1\ge r_2\ge\dots\ge r_m>0$ and a rectangular box
\[ D=\prod_{i=1}^{m-1}[-r_i,r_i]\times [\frac12 r_m,r_m]. \]
We claim that we can partition $D$ as:
\[ D=\bigsqcup_{\vec\mu,\vec\iota} D_{\vec\mu,\vec\iota}. \]
Here, $\vec\mu=(\mu_1,\dots,\mu_{m-1})$ are dyadic numbers with $\mu_{i+1}\le \mu_i\le r_i$ ($1\le i\le m-1$) and we set $\mu_m=r_m$; $\vec\iota=(\iota_1,\dots,\iota_{m-1})$ satisfies $\iota_i\in \{-1,0,1\}$, and $\iota_i=0$ if $\mu_i=\mu_{i+1}$;
and
\[ D_{\vec\mu,\vec\iota}=\prod_{i=1}^{m-1}I_i\times [\frac12 r_m,r_m], \]
where
\begin{equation}
    I_i=\begin{cases}
        [-\mu_i,\mu_i] & \iota_i=0,\\
        (\frac12 \mu_i,\mu_i] & \iota_i=1,\\
        [-\mu_i,-\frac12\mu_i) & \iota_i=-1.
    \end{cases}
\end{equation}
We obtain the sets $D_{\vec\mu,\vec\iota}$ by the following algorithm starting from the $(m-1)$-st component to the $1$st component. We briefly discuss it. Starting with the $(m-1)$th component, we partition $[-r_{m-1},r_{m-1}]$ into dyadic intervals
\[ [-r_{m-1},r_{m-1}]=\bigsqcup_{i=0}^l(2^{-i-1}r_{m-1},2^{-i}r_{m-1}]\sqcup [ -2^{-i}r_{m-1},-2^{-i-1}r_{m-1} )\bigsqcup [-r_m,r_m],  \]
where $2^{l+1}=\frac{r_{m-1}}{r_m}$.
If we choose $(2^{-i-1}r_{m-1},2^{-i}r_{m-1}]$ to be the $(m-1)$th component, we set $\mu_{m-1}=2^{-i}r_{m-1}, \iota_{m-1}=1$; If we choose $[ -2^{-i}r_{m-1},-2^{-i-1}r_{m-1} )$ to be the $(m-1)$th component, we set $\mu_{m-1}=2^{-i}r_{m-1}, \iota_{m-1}=-1$; If we choose $[-r_m,r_m]$ to be the $(m-1)$th component, we set $\mu_{m-1}=r_m, \iota_{m-1}=0$. We continue this process until we obtain $\vec\mu,\vec\iota$, and hence we obtain $D_{\vec\mu,\vec\iota}$.

\bigskip

Returning  to \eqref{partitionV},
we partition the first $m$ components of $V_{m,\pm}$ using the above algorithm:
\[ V_{m,\pm}=\bigsqcup_{\vec\mu,\vec\iota}V_{m,\pm;\vec\mu,\vec\iota}. \]
Hence, we obtain the partition
\[ Q[\vec\de;\si]=\bigsqcup_{k\le m\le n} \bigsqcup_{\vec\mu,\vec\iota}V_{m,+;\vec\mu,\vec\iota}\sqcup V_{m,-;\vec\mu,\vec\iota}=:\bigsqcup_{\cU} U. \]

Fix a $U=V_{m,+;\vec\mu,\vec\iota}$. We verify that $\{U(s):s\in \I_\si\}$ is $O(1)$-overlapping. The other sign choices are handled in the same way, so  we may assume all $\iota_i\ge 0$. It is convenient to set $\mu_m=\si^{m-n-1},\mu_{m+1}=\si^{m-n}$. We use the rough form of $U$ as
\begin{equation}\label{U}
    \{ (a_i): a_i \prec \mu_i (1\le i\le m-1), a_m\sim \si^{m-n-1}, |a_i|\lesssim \si^{i-n-1} (m+1\le i\le n+1)   \}, 
\end{equation} 
where $a_i \prec \mu_i$ means either $\sim$ or $\lesssim$, and if it means $\lesssim$, then we must have $\mu_i=\mu_{i+1}$.

\medskip
We prove the following lemma.
\begin{lemma}\label{emptylem}
Let $U$ be given by \eqref{U}.
    For fixed $1\le l\le m$, if $\frac{\mu_{l+1}}{\mu_l}\ll |s-s'|\ll \min\{ 1,\frac{\mu_2}{\mu_1},\dots,\frac{\mu_l}{\mu_{l-1}} \}$ (the upper bound is $1$ when $l=1$), then
\[ U(s)\bigcap U(s') =\emptyset. \]
\end{lemma}

\begin{proof}
If $\mu_{l+1}=\mu_l$, then there are no $s,s'$ such that $1\ll |s-s'|\ll 1$, so the conclusion automatically holds. Therefore, we assume $\mu_{l+1}<\mu_{l}$, and hence $a_l\sim \mu_l$.

    Write $s'=s+\De$ with $|\De|\ll \min\{ 1,\frac{\mu_2}{\mu_1},\dots,\frac{\mu_l}{\mu_{l-1}} \}$. Suppose, toward a contradiction, that $z\in U(s')\bigcap U(s)$. Write $z$ as
\[ z=\sum_{i=1}^{n+1} a_i\bfe_{i}(s+\De),
\]
where $a_i$ satisfies the range in \eqref{U}.

We compute
\begin{equation}\label{aldominate}
    \langle z,\bfe_{l+1}(s)\rangle=\sum_{i=1}^{l}\bigg(a_i \frac{\De^{l+1-i}}{(l+1-i)!}\kappa_i(s)\cdots\kappa_{l}(s)+a_i O(\De^{l+2-i})\bigg)+O(\mu_{l+1}). 
\end{equation}
Since $z\in U(s)$, we have $|\langle z,\bfe_{l+1}(s)\rangle|\lesssim \mu_{l+1}$. We will derive a contradiction by showing
\[ |\De|\lesssim \frac{\mu_{l+1}}{\mu_l}. \]

Note that $|\ka_i(s)|\sim 1$ and $|\De|\ll \min\{1,\frac{\mu_2}{\mu_1},\dots,\frac{\mu_l}{\mu_{l-1}}\}$. We claim that for $1\le i\le l-1$, we have
\[ |a_i||\De|^{l+1-i}\ll |a_l||\De|. \]
This is true since
\[ |a_i| |\De|^{l-i}\ll \mu_i \prod_{j=i}^{l-1} \frac{\mu_{j+1}}{\mu_j}=\mu_l\sim |a_l|. \]
Substituting this into \eqref{aldominate}, we see that the $a_l\De$ term dominates, and hence using $|\langle z,\bfe_{l+1}(s)\rangle|\lesssim \mu_{l+1}$, we get
\[ |a_l||\De|\lesssim \mu_{l+1}, \]
which implies
\[ |\De|\lesssim \frac{\mu_{l+1}}{|a_l|}\sim \frac{\mu_{l+1}}{\mu_l}. \]
This finishes the proof.
\end{proof}

It remains to show
\begin{equation}\label{finiteoverlap}
    \Big\|\sum_{s\in\I_\si}1_{U(s)}\Big\|_\infty \lesssim 1.
\end{equation}
The proof relies on Lemma \ref{emptylem}. Choose $K$ large enough so that the implicit constants in the  ``$\ll$" and $\gg$ conditions in Lemma \ref{emptylem} are accounted for. For each number $0<\beta\le 1$, let $\cJ_\beta$ be a collection of intervals of length $\beta$ that form a finitely overlapping cover of $[0,1]$. If $J\subset [0,1]$ is an interval, we denote $\I_\si(J):=\I_\si\cap J$.

For $2\le l\le m+1$, let $\beta_l=\min\{1,\frac{\mu_2}{\mu_1},\dots,\frac{\mu_l}{\mu_{l-1}}\}$. Define $\beta_1=1$. We have
\[ \Big\|\sum_{s\in\I_\si}1_{U(s)}\Big\|_\infty\lesssim K\sup_{J_1\in\cJ_{K^{-1}\beta_1}}\Big\|\sum_{s\in\I_\si(J_1)}1_{U(s)}\Big\|_\infty\lesssim K \sup_{J_2\in\cJ_{K\beta_2}}\Big\|\sum_{s\in\I_\si(J_2)}1_{U(s)}\Big\|_\infty. \]
Here, the first inequality is by the triangle inequality; in the second inequality, we used Lemma \ref{emptylem} with $l=1$. Similarly, we have
\begin{equation}
    \begin{split}
        \sup_{J_l\in \cJ_{K\beta_l}}\Big\|\sum_{s\in\I_\si(J_l)} 1_{U(s)}\Big\|_\infty &\lesssim K^2 \sup_{J_l\in \cJ_{K^{-1}\beta_l}}\Big\|\sum_{s\in\I_\si(J_l)} 1_{U(s)}\Big\|_\infty
        \\&
        \lesssim K^2\sup_{J_{l+1}\in \cJ_{K\beta_{l+1}}}\Big\|\sum_{s\in\I_\si(J_{l+1})} 1_{U(s)}\Big\|_\infty.
    \end{split}
\end{equation}
Iterating and noting that $K$ is a fixed constant, we get
\[ \Big\|\sum_{s\in\I_\si}1_{U(s)}\Big\|_\infty\lesssim \sup_{J\in \cJ_{\beta_{m+1}}}\Big\|\sum_{s\in\I_\si(J)}1_{U(s)}\Big\|_\infty \lesssim 1. \]
The last inequality follows from $\beta_{m+1}\le \frac{\mu_{m+1}}{\mu_m}=\si$, and hence $\#\I_\si(J)\lesssim 1$ for $J\in\cJ_{\beta_{m+1}}$. 
\end{proof}

We state a geometric result which is similar to Lemma \ref{structurelem1}.
\begin{lemma}\label{structurelem2}
Suppose $\si\in[\si_{k},\si_{k-1})$. Let $Q[\vec\de;\le\si]$ be defined in Definition \ref{defQsigma}. 
If $|s-s'|\ll \si$, then $Q[\vec\de;\le\si](s)$ and $Q[\vec\de;\le\si](s')$ are comparable.
\end{lemma}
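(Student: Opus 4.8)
The plan is to follow the template of the proof of Lemma~\ref{structurelem1} almost verbatim, with $P[\vec\de,\vec\nu](s)$ replaced by the hollow region $Q[\vec\de;\si](s)$. By symmetry it suffices to treat $s'=s+\De$ with $0\le\De\ll\si$ and to show $Q[\vec\de;\si](s')\subset C\,Q[\vec\de;\si](s)$ for an admissible constant $C$; the reverse inclusion then follows by interchanging $s$ and $s'$. Fix $k$ with $\si\in[\si_k,\si_{k-1})$ and record the ``profile'' of $Q[\vec\de;\le\si]$: put $\phi(i):=R\rho_{i-1}$ for $1\le i\le k-1$ (the thick coordinates) and $\phi(i):=\si^{i-n-1}$ for $k\le i\le n+1$ (the thin coordinates), so that, up to a bounded factor, $Q[\vec\de;\le\si]=\prod_{i=1}^{n+1}[-\phi(i),\phi(i)]$ and $Q[\vec\de;\si]$ is the set of $(a_1,\dots,a_{n+1})$ with $|a_i|\lesssim\phi(i)$ for all $i$ and $|a_m|\sim\si^{m-n-1}$ for at least one $m\in\{k,\dots,n\}$.

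The key point is a purely numerical claim: there is an absolute constant $C$ with $\phi(i)\,\si^{|i-k'|}\le C\,\phi(k')$ for all $1\le i,k'\le n+1$. I would prove this by splitting into four cases according to whether each of $i,k'$ is thick or thin. When both are thin it is immediate from $\phi(i)/\phi(k')=\si^{i-k'}$ and $\si\le1$. When both are thick one reduces (for $i<k'$) to $\prod_{l=1}^{k'-1}\de_l\ge\si$, which follows from $\si<\si_{k-1}\le\prod_{l=1}^{k-2}\de_l$, the last inequality being a consequence of $\si_{k-1}^{\,n+2-k}=\rho_n/\rho_{k-2}=\prod_{j=k-1}^{n}\prod_{l=1}^{j}\de_l\le\big(\prod_{l=1}^{k-2}\de_l\big)^{n+2-k}$. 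The two mixed cases are where the hypothesis $\si_k\le\si<\si_{k-1}$ really enters. If $i$ is thick and $k'$ is thin (so $i<k'$), the claim reduces to $R\rho_{i-1}\le\si^{i-n-1}$ for all $i\le k-1$; writing $g(j):=\log(R\rho_{j-1})$, the sequence $g$ is concave (its second differences equal $\log\de_j\le0$) with $g(n+1)=0$, so $g(i)/(n+1-i)$ is maximized over $i\le k-1$ at $i=k-1$, where it equals $\log(1/\si_{k-1})$, and $\si<\si_{k-1}$ gives the bound. If $i$ is thin and $k'$ is thick (so $i>k'$), the claim reduces to $\si^{\,2i-n-1-k'}\lesssim R\rho_{k'-1}$: when $2i-n-1-k'\ge0$ this is trivial since the left side is $\le1\le R\rho_{k'-1}$, and otherwise, using $\si\ge\si_k$, $\log(1/\si_k)=g(k)/(n+1-k)$ and $i\ge k$, the log of the left side is at most $g(k)\le g(k')$ because $g$ is decreasing.

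Granting the numerical claim, I would fix $z=\sum_{i=1}^{n+1}a_i\bfe_i(s+\De)\in Q[\vec\de;\si](s+\De)$ and set $a_{k'}':=\langle z,\bfe_{k'}(s)\rangle$. By the Taylor expansion of Lemma~\ref{Taylorlem}, $a_{k'}'=a_{k'}\big(1+O(\De^2)\big)+\sum_{i\ne k'}a_i\,O(|\De|^{|i-k'|})$; bounding $|a_i|\lesssim\phi(i)$ and inserting the numerical claim together with $\De\ll\si$ yields $|a_{k'}'-a_{k'}|\lesssim\phi(k')\sum_{d\ge1}(\De/\si)^{d}\ll\phi(k')$. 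Consequently $|a_{k'}'|\lesssim\phi(k')$ for every $k'$, and for the distinguished index $m$ with $|a_m|\sim\si^{m-n-1}$ we still have $|a_m'|\sim\si^{m-n-1}$; hence $z$ lies in a bounded anisotropic dilate of $Q[\vec\de;\si](s)$, which is what we wanted. This is exactly the structure of the proof of Lemma~\ref{structurelem1}, so I would write it compactly, invoking that lemma's argument wherever possible.

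I expect the only real work to be the verification of the numerical claim in the two mixed regimes; this is precisely where one must use $\si<\si_{k-1}$ and $\si\ge\si_k$, and it rests on the concavity of $j\mapsto\log(R\rho_{j-1})$, which encodes the decreasing-coefficient structure of $\vec\de$. A secondary point needing a one-line remark is that the hollow structure of $Q[\vec\de;\si]$ is preserved by the change of Frenet frame — but this is automatic once one observes that a single distinguished ``large'' thin coordinate of $z$ stays of the same size when passing from $\bfe(s+\De)$ to $\bfe(s)$. Neither point requires a new idea beyond the Taylor-expansion/finite-overlap machinery already used in Lemmas~\ref{structurelem1}, \ref{disjlem}, \ref{comp}, and~\ref{emptylem}.
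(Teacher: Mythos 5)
Your proposal is correct and follows essentially the same route as the paper: a Taylor-expansion comparison of Frenet frames in the style of Lemma~\ref{structurelem1}, where the only genuinely new input is the mixed case of a thick source index $i\le k-1$ against a thin target index $j\ge k$, which reduces to $R\rho_{i-1}\le \si^{i-n-1}$ and is exactly what $\si<\si_{k-1}$ (equivalently $R\rho_{i-1}=\si_i^{i-n-1}$ with $\si_i\ge\si_{k-1}$) provides. The paper's proof is terser, delegating the remaining cases to Lemma~\ref{structurelem1}, whereas you package all four thick/thin combinations into a single numerical inequality $\phi(i)\si^{|i-k'|}\lesssim\phi(k')$ and verify each; this is a presentational difference only.
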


\begin{proof}

We follow the same procedure as in the proof of Lemma \ref{structurelem1}. Some cases have been verified in the proof of Lemma \ref{structurelem1}. Let $z\in Q[\vec\de;\le\si](s')$. We just need to verify $|\langle z,\bfe_j(s)\rangle |\lesssim \si^{j-n-1}$ for $j\ge k$. By Taylor expansion, we just need to verify $R\rho_{i-1}\De^{j-i} \lesssim \si^{j-n-1}$ for any $i\le k-1, j\ge k, \De\ll \si$.  In other words, we verify $R\rho_{i-1}\lesssim \si^{i-n-1}$ for $i\le k-1$. This is true since $\si\le \si_{k-1}$.
\end{proof}

\bigskip

For $0<\si\le 1$, let $\I_\si$ be a maximal set of $\si$-separated points in $[0,1]$. We temporarily fix a $\vec\de$ and may assume all components of $\vec\nu$ are $\ge 0$.
To simplify our notation, we will temporarily drop $\vec\de$ and $\vec\nu$ from the notation in the rest of the section. (We may still need to keep $\vec\de$ in the later sections.) Hence, $P[\vec\de](s)$,$ Q[\vec\de]$,$Q[\vec\de;\si](s)$,$Q[\vec\de;\le\si](s)$ are  denoted by $P(s)$,$Q(s)$,$Q[\si](s)$,$ Q[\le\si](s)$. We also use the lowercase letters to denote their $R^{-1}$-dilation (centered at the origin): $p(s),q(s),q[\si](s), q[\le\si](s)$. For a rectangular box $U$, we use $U^*$ to denote the dual rectangular box (which is centered at the origin). For each $\si$ and $s'$, we tile $\R^{n+1}$ by translates $\{U\}$ of $q[\le \si](s')^*$, which we denote by $U\parallel (q[\le \si](s'))^*$. 

We have the following Kakeya-type estimate.

\begin{proposition}\label{wee}
    Suppose that $\{g_s\}_{s\in\I_\de}$ satisfy $\supp\wh g_s\subset q(s)$. Then we have
    \begin{equation}\label{weeineq}
        \int_{\R^{n+1}} \Big|\sum_{s\in\I_\de}g_s \Big|^{2}\lesssim \delta^{-\e} \sum_{\substack{ \de\le \si\le 1: \\ dyadic }  } \sum_{s'\in\I_\si} \sum_{U\parallel (q[\le \si](s'))^*} |U|(\int  \sum_{s\in\I_\de:|s-s'|\le \si}|g_{s}| w_U)^2.
    \end{equation}
\end{proposition}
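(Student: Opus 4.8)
The plan is to prove \eqref{weeineq} by a high/low frequency argument in the tradition of Guth--Wang--Zhang~\cite{MR4151084}, in the form developed for nondegenerate curves in~\cite{MR4794594}. Write $F:=\sum_{s\in\I_\de}g_s$, so the left side is $\int F^2$. Since $\supp\wh g_s=q(s)\subset C\,q[\le\de](s)$ and $q[\le\de](s)=\bigsqcup_{\de\le\si\le1\text{ dyadic}}q[\si](s)$, we may split $g_s=\sum_\si g_s^\si$ with $\supp\wh{g_s^\si}\subset q[\si](s)$, hence $F=\sum_\si F^\si$ with $F^\si:=\sum_{s\in\I_\de} g_s^\si$. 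The argument then has two parts: (i) for each dyadic $\si\in[\de,1]$, a bound for $\int(F^\si)^2$ by the $\si$-summand of the right side of \eqref{weeineq}; and (ii) an almost-orthogonality statement $\int F^2\lesssim(\log R)^{O(1)}\sum_\si\int(F^\si)^2$, which is where the high/low gain lives.

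For (i): fix a maximal $\si$-separated set $\I_\si\subset[0,1]$ and group $F^\si=\sum_{s'\in\I_\si}F^\si_{s'}$ with $F^\si_{s'}:=\sum_{s\in\I_\de,\,|s-s'|\le\si}g_s^\si$. By Lemma~\ref{structurelem2}, $\supp\wh{F^\si_{s'}}\subset C\,q[\le\si](s')$, so $F^\si_{s'}$ is locally constant on translates of $U\parallel(q[\le\si](s'))^*$; subdividing $q[\le\si](s')$ into the $(\log R)^{O(1)}$ sub-boxes appearing in the proof of Lemma~\ref{finiteoverlaplem}, on which the constancy is genuine, yields
\begin{equation*}
  \int|F^\si_{s'}|^2\ \lesssim\ (\log R)^{O(1)}\sum_{U\parallel(q[\le\si](s'))^*}|U|\Big(\int|F^\si_{s'}|\,w_U\Big)^2 .
\end{equation*}
Since $|F^\si_{s'}|\le\sum_{|s-s'|\le\si}|g^\si_s|$ and $\int|g^\si_s|\,w_U\lesssim\int|g_s|\,w_U$ (the kernel of the $\si$-Fourier cutoff being concentrated on an $O(1)$-dilate of $U$), the right side is at most $(\log R)^{O(1)}\sum_U|U|\big(\int\sum_{|s-s'|\le\si}|g_s|\,w_U\big)^2$. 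Summing over $s'$ and invoking the bounded overlap of $\{\supp\wh{F^\si_{s'}}\}_{s'\in\I_\si}$ (again via Lemma~\ref{finiteoverlaplem}) gives $\int(F^\si)^2\lesssim(\log R)^{O(1)}\sum_{s'\in\I_\si}\sum_U|U|\big(\int\sum_{|s-s'|\le\si}|g_s|\,w_U\big)^2$, which is the $\si$-summand of \eqref{weeineq}.

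For (ii): as $F^\si$ is real-valued the cross contributions are $2\,\mathrm{Re}\int F^\si F^{\si'}$, $\si\ne\si'$, and they are controlled by the high/low principle that the Fourier supports $\bigcup_{s}q[\si](s)$ and $\bigcup_{s}q[\si'](s)$ are, for distinct dyadic $\si,\si'$, either disjoint or interact only through a controlled remainder. Establishing this is the point at which the plank geometry of Section~\ref{sec2} (Lemmas~\ref{structurelem1}, \ref{disjlem}, \ref{conv}), the comparability and bounded overlap of the $q[\le\si]$ boxes (Lemmas~\ref{structurelem2}, \ref{finiteoverlaplem}), and the Frenet--Taylor expansions (Lemmas~\ref{0704.lem29}, \ref{Taylorlem}) are all used; it is also where the critical scales $\si_1,\dots,\si_{n-2}$ matter, since they record the thresholds at which successive coordinates of $q[\le\si]$ switch from being governed by $\si$ to being governed by the original plank dimensions, and it is this stratification that makes the dyadic frequency annuli fit together self-consistently across scales. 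The accumulated $(\log R)^{O(1)}$ from (i) and (ii) is $\lesssim_\e\de^{-\e}$ because $\de\ge R^{-1/2}$.

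I expect part (ii) --- making the cross-scale interactions quantitatively precise in the anisotropic setting --- to be the main obstacle; part (i) is a routine combination of local constancy, the subdivision and overlap lemmas of Section~\ref{sec3}, and Cauchy--Schwarz. A further routine matter is propagating the $\rap$ tails coming from the Schwartz and rapid-decay truncations through the argument.
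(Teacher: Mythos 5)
Your part (i) is, modulo presentation, the paper's proof of the fixed-$\si$ estimate: the paper dominates the kernel $|\phi_{q[\si](s)}^\vee|$ by $w_{q[\le\si](s')^*}$ and then discretizes over the tiling $U\parallel (q[\le\si](s'))^*$, while you invoke local constancy of $F^\si_{s'}$ on those tiles; these are interchangeable. The clustering over $s'\in\I_\si$ via Lemma~\ref{structurelem2} and the $(\log R)^{O(1)}$-overlap from Lemma~\ref{finiteoverlaplem} are used in exactly the same way in both arguments.

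The problem is part (ii), which you identify as the main obstacle and leave unresolved. It is not an obstacle: there are only $O(\log R)$ dyadic scales $\si\in[\de,1]$, so $F=\sum_\si F^\si$ is a sum of $O(\log R)$ terms and Cauchy--Schwarz in $\si$ gives $\int F^2\le \#\{\si\}\cdot\sum_\si\int (F^\si)^2\lesssim(\log R)\sum_\si\int(F^\si)^2$, with no orthogonality between scales whatsoever; the logarithmic loss is then absorbed into $\de^{-\e}$ exactly as you note. This is what the paper does. Worse, the route you sketch for (ii) --- that the unions $\bigcup_s q[\si](s)$ and $\bigcup_s q[\si'](s)$ over $s\in\I_\de$ are ``either disjoint or interact only through a controlled remainder'' for $\si\ne\si'$ --- is not available: the annuli $q[\si](s)$ and $q[\si'](s)$ are disjoint for the \emph{same} $s$ (they partition $q(s)$), but for distinct $s,s''\in\I_\de$ the sets $q[\si](s)$ and $q[\si'](s'')$ genuinely overlap, and the lemmas of Sections~\ref{sec2}--\ref{sec3} do not give any cross-scale separation. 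So as written your proof has a gap at the step you yourself flagged; the repair is not to prove orthogonality across scales but to delete that sub-problem and apply Cauchy--Schwarz over the $O(\log R)$ values of $\si$.
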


Recall that $w_U\sim \frac{1}{|U|}$ in $U$ and decays rapidly away from $U$.

\begin{proof}
    Recall that we have the partition
    \begin{equation}\label{partunit}
        q(s)=\bigsqcup_{\si} q[\si](s), 
    \end{equation} 
    where $q[\si](s)=q[\le \si](s)\big\backslash q[\le2\si](s)$ when $\si<1$; $q[1](s)=q[\le1](s)$ is a $R^{-1}$-cube. Let $\phi_{q[\le\si](s)}$ be a bump function adapted to $q[\le\si](s)$. Let $\phi_{q[\si](s)}=\phi_{q[\le\si](s)}-\phi_{q[\le2\si](s)}$, which is a bump function adapted to $q[\si](s)$. $\{\phi_{q[\si](s)}\}_\si$ is a partition of unity subordinate to \eqref{partunit}. We have 
    \[ \wh g_s=\sum_\si \wh g_s \phi_{q[\si](s)}. \]
    Using Plancherel, the LHS of \eqref{weeineq} is
    \[ \lesssim (\log R)^{O(1)}\sum_\si \int_{\R^{n+1}}\Big|\sum_{s\in\I_\de}\wh g_s\phi_{q[\si](s)} \Big|^2. \]
    Using Lemma \ref{finiteoverlaplem}, it is bounded by
    \begin{align}
        &\lesssim (\log R)^{O(1)}\sum_\si \sum_{s'\in\I_\si}\int_{\R^{n+1}}\Big| \sum_{s\in\I_\de:|s-s'|\le \si}\wh g_s\phi_{q[\si](s)} \Big|^2\\
        \label{bound2}&\le(\log R)^{O(1)}\sum_\si \sum_{s'\in\I_\si}\int_{\R^{n+1}}\Big|\sum_{s\in\I_\de:|s-s'|\le \si}|g_s|*|\phi_{q[\si](s)}^\vee| \Big|^2.
    \end{align} 
It is easy to see $q[\le2\si](s)$ and $q[\le \si](s)$ are comparable.
By Lemma \ref{structurelem2}, $q[\le\si](s)$ is comparable to $q[\le\si](s')$ if $|s-s'|\ll \si$. It is harmless to assume $q[\le\si](s)$ is comparable to $q[\le\si](s')$ if $|s-s'|\le \si$ by losing a constant factor. We see that
\[ |\phi_{q[\si](s)}^\vee|\lesssim w_{q[\le\si](s')^*}. \]
Here, $w_{q[\le\si](s')^*}$ $\sim \frac{1}{|q[\le\si](s')^*|}$ on $q[\le \si](s')^*$ and decays rapidly away from $q[\le\si](s')^*$.

Therefore, 
\begin{align}
    \eqref{bound2}&\lesssim  (\log R)^{O(1)}\sum_\si \sum_{s'\in\I_\si}\int_{\R^{n+1}}(\sum_{s\in\I_\de:|s-s'|\le \si}|g_s|*w_{q[\le\si](s')^*})^2 \\
    &\lesssim  (\log R)^{O(1)}\sum_\si \sum_{s'\in\I_\si}\sum_{U\parallel q[\le\si](s')^*} |U|  (\int \sum_{s\in\I_\de:|s-s'|\le \si}|g_s|w_U)^2.
\end{align} 
Since $\delta \lesssim R^{-\frac1n}$,
    this completes the proof. 
\end{proof} 

We will use the following form of the Kakeya-type estimates. The first estimate is related to a nondegenerate curve in $\mathbb{R}^n$, and the second estimate is related to a cone generated by a nondegenerate curve. Recall the notation from Section \ref{0701.sec2}. We postpone the proofs of the propositions to Section \ref{0702.53}, where we prove some standard interpolation lemmas.  

\begin{proposition}\label{0616.thm33}
     For $2 \leq q < \infty$, $R \geq 1$, and $\e>0$, we have
    \begin{equation*}
        \int_{B_R} (\sum_{s\in\I_\de}|f_{s}|^q*w_{p(s)^*})^{2}\lesssim R^{\e} \sum_{\substack{ \de\le \si\le 1: \\ dyadic } } \sum_{s'\in\I_\si} \sum_{\substack{ U\parallel (q[ \le \si](s'))^*: \\ U \subset B_R }   } |U| \Big(\int \sum_{s:|s-s'|\le \si}|f_{s}|^qw_U \Big)^2
    \end{equation*}
    for any functions $\{f_s\}_{s\in\I_{\de}}$ satisfying $\supp \wh f_s\subset p(s)$. 
\end{proposition}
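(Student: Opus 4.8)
The plan is to derive Proposition \ref{0616.thm33} from Proposition \ref{wee}. Set
\[ g_s:=|f_s|^q*w_{p(s)^*},\qquad s\in\I_\de, \]
which are nonnegative functions. Choose each $w_{p(s)^*}$ to be a nonnegative Schwartz weight, $\sim|p(s)^*|^{-1}$ on $p(s)^*$ with rapidly decaying tail, and with $\widehat{w_{p(s)^*}}$ supported in a fixed dilate of the recentered plank $p(s)-p(s)$; such weights exist (take $w_{p(s)^*}=|h|^2$ with $\widehat h$ a suitable bump). Then $\widehat{g_s}=\widehat{|f_s|^q}\,\widehat{w_{p(s)^*}}$ is supported in a bounded dilate of $q(s)$, regardless of whether $q$ is an even integer. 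The proof of Proposition \ref{wee} only uses a partition of unity adapted to the decomposition $q(s)=\bigsqcup_\si q[\si](s)$ together with the overlap Lemmas \ref{finiteoverlaplem} and \ref{structurelem2}, all of which survive replacing $q(s)$ and its pieces by bounded dilates (with larger implicit constants). Hence, applying Proposition \ref{wee} to the $g_s$ and using that the integrand is nonnegative,
\[ \int_{B_R}\Big(\sum_{s\in\I_\de}g_s\Big)^2\le\int_{\R^{n+1}}\Big(\sum_{s\in\I_\de}g_s\Big)^2\lesssim\de^{-\e}\sum_{\substack{\de\le\si\le1:\\ dyadic}}\sum_{s'\in\I_\si}\sum_{U\parallel(q[\le\si](s'))^*}|U|\Big(\int\sum_{s\in\I_\de:\,|s-s'|\le\si}g_s\,w_U\Big)^2. \]

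Next I would remove the convolution inside the weighted averages on the right. Fix $\si$, $s'\in\I_\si$, $s\in\I_\de$ with $|s-s'|\le\si$, and $U\parallel(q[\le\si](s'))^*$. With $\widetilde w_U(y):=w_U(-y)$ one has $\int g_s\,w_U=\int|f_s|^q\,(w_{p(s)^*}*\widetilde w_U)$, so it suffices to show $w_{p(s)^*}*\widetilde w_U\lesssim\sum_{U'\sim U}w_{U'}$ up to rapidly decaying tails, where $U'\sim U$ denotes the $O(1)$ translates of $(q[\le\si](s'))^*$ at distance $\lesssim\operatorname{diam}(U)$ from $U$. This reduces to the geometric containment
\[ p(s)^*\subset C\,(q[\le\si](s'))^*\qquad\text{whenever }|s-s'|\le\si. \]
To verify it, unwind the definitions: using $\rho_n=R^{-1}$ and $\rho_j/\rho_{j-1}=\de_1\cdots\de_j$ one checks $\min\{\de^{\,i-n-1},R\rho_{i-1}\}=R\rho_{i-1}$ for every $i$, so $q[\le\de](s)$ is, after the $R^{-1}$-dilation, the recentered copy of $p(s)$; since each coordinate of $q[\le\si]$ is non-increasing in $\si$, we get $q[\le\si](s)\subset C\cdot(\text{recentered }p(s))$; and $q[\le\si](s)$ is comparable to $q[\le\si](s')$ for $|s-s'|\le\si$ by the argument of Lemma \ref{structurelem2}. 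Dualizing the inclusion yields the display. Plugging $\int g_s\,w_U\lesssim\sum_{U'\sim U}\int|f_s|^q\,w_{U'}$ into the bound above, applying Cauchy--Schwarz to the $O(1)$-term sum over $U'$, and reindexing (each $U'$ is hit $O(1)$ times and $|U|=|U'|$) turns the right-hand side into
\[ \lesssim R^\e\sum_{\substack{\de\le\si\le1:\\ dyadic}}\sum_{s'\in\I_\si}\sum_{U'\parallel(q[\le\si](s'))^*}|U'|\Big(\int\sum_{s:\,|s-s'|\le\si}|f_s|^q\,w_{U'}\Big)^2. \]

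It remains to match $\int_{B_R}$ on the left with the restriction $U\subset B_R$ on the right. Since every side length of $p(s)^*$ is $\le R$, the values of $\sum_s g_s$ on $B_R$ depend only on $|f_s|^q$ on a bounded dilate of $B_R$; after a standard localization we may assume each $f_s$ is supported on such a dilate, so that $\sum_s g_s$ is, up to $\rap$, supported near $B_R$ and only boxes $U$ meeting a bounded dilate of $B_R$ carry non-negligible mass. As each such $U$ has diameter $\lesssim R$, these are contained in $B_R$ after adjusting the constant, which is the claimed restriction.

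The argument is essentially routine given Proposition \ref{wee}; the points that need genuine care are the existence of nonnegative weights with compactly supported Fourier transform (standard), the fact that the proof of Proposition \ref{wee} tolerates bounded dilations of $q(s)$ (clear from that proof), and the geometric inclusion $p(s)^*\subset C\,(q[\le\si](s'))^*$. The last is the one place where the explicit bookkeeping with the $\rho$-sequence and the critical scales $\si_k$ must actually be carried out, and is the most likely source of slips in the implicit constants; I do not anticipate a more serious obstacle.
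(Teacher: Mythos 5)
Your proof is correct, but the key reduction is handled differently from the paper. The paper first invokes the wave-packet pigeonholing of Proposition \ref{wpd} (so that every wave packet has height $0$ or $1$, whence $\int|f_s|^qw_U\sim\int|f_s|^2w_U$ as in \eqref{0621.514}), applies Proposition \ref{wee} to $g_s=|f_s|^2*w_{p(s)^*}$ --- for which the Fourier support condition $\supp\wh{g_s}\subset Cq(s)$ is automatic since $\wh{|f_s|^2}$ lives in $p(s)-p(s)$ --- and then converts $|f_s|^2$ back to $|f_s|^q$ at the end by the same height normalization. You instead feed $g_s=|f_s|^q*w_{p(s)^*}$ directly into Proposition \ref{wee} for general $q$, observing that $\wh{w_{p(s)^*}}$ is compactly supported in a dilate of the recentered $p(s)$ (true for the paper's weights $W^{n,d}$, which are built from $|\widecheck{\phi}|^2$), so that $\wh{g_s}$ is supported in a bounded dilate of $q(s)$ regardless of whether $q$ is an even integer. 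This is a clean way to bypass the wave-packet normalization and its $\log R$ losses, at the cost of checking that the proof of Proposition \ref{wee} (the partition of unity and Lemmas \ref{finiteoverlaplem}, \ref{structurelem2}) tolerates bounded dilations of $q(s)$, which it does. The remaining steps coincide: both arguments remove the convolution via $w_{p(s)^*}*w_U\lesssim w_U$ (the paper asserts this; you correctly reduce it to the inclusion $p(s)^*\subset C(q[\le\si](s'))^*$ for $|s-s'|\le\si$, i.e.\ $q[\le\si](s')\subset Cq(s)$, and your verification that $\min\{\de^{i-n-1},R\rho_{i-1}\}=R\rho_{i-1}$, so that $q[\le\de]$ is the recentered plank, checks out), and both treat the localization $\int_{B_R}$ versus $U\subset B_R$ by a routine tail estimate.
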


\begin{proposition}\label{0501.prop22} For any $2 \leq q < \infty$, $R \geq 1$, and $\epsilon>0$, we have
\begin{equation*}
    \int_{B_R} (\sum_{\theta \in \Xi_n(R) }|f_{\theta}|^{q} *w_{\theta^*} )^2 \lesssim R^{\e} \sum_{ \substack{ R^{-\frac1n} \leq s \leq 1: \\ dyadic } } \sum_{\tau \in \Xi_n(s^{-n}) } \sum_{ \substack{ U \| U_{\tau,R} \\ U \subset B_R  }  }|U|
    \Big( \int \sum_{\theta \subset \tau} |f_{\theta}|^q w_U \Big)^2. 
\end{equation*}    
\end{proposition}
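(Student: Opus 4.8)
The plan is to prove Proposition~\ref{0501.prop22} by transcribing, for the canonical moment curve blocks $\Theta^n(R)$ in $\R^n$, the high/low frequency argument of Section~\ref{sec3} that yields Propositions~\ref{wee} and~\ref{0616.thm33} for the $(\vec\de,\vec\nu)$-planks of the cone in $\R^{n+1}$. The configuration of moment curve blocks is in fact the simplest instance of the general plank configuration: it corresponds to the extremal parameter $\vec\de=(R^{-1/n},1,\dots,1)$, for which $\de=\prod_i\de_i=R^{-1/n}$ and all the critical scales $\si_k$ of Section~\ref{sec3} collapse to $R^{-1/n}$, so there is a single family of scales and no regime switching. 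To set up the high/low decomposition: fix $\theta\in\Theta^n(R)$ attached to a parameter interval centered at some $s$, let $\mathbf{f}_1(s),\dots,\mathbf{f}_n(s)$ be the Frenet frame of $\gamma_n$ at $s$, and for each dyadic $\sigma\in[R^{-1/n},1]$ let $\theta[\le\sigma](s)$ be the origin-centered box whose $k$-th side (in this frame) has length $\min\{R^{-k/n},\sigma^{k-n}R^{-1}\}$, $1\le k\le n$; set $\theta[\sigma](s):=\theta[\le\sigma](s)\setminus\theta[\le2\sigma](s)$ (and $\theta[1](s):=\theta[\le1](s)$), so that the annular pieces $\theta[\sigma](s)$ decompose the origin-centered box $\theta[\le R^{-1/n}](s)$ with $O(\log R)$ overlap. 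A one-line computation shows the minimum above is always attained by $\sigma^{k-n}R^{-1}$ when $R^{-1/n}\le\sigma\le1$, so $\theta[\le\sigma](s)$ has dimensions $\sigma^{1-n}R^{-1}\times\dots\times\sigma^{-1}R^{-1}\times R^{-1}$ and its dual $\theta[\le\sigma](s)^*$ is comparable to the wave envelope $U_{\tau,R}$ of the block $\tau\in\Theta^n(\sigma^{-n})$ containing $\theta$; this matches the objects on the right side of the claim.

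The second step is the pair of geometric lemmas underlying the Plancherel estimate, which are the moment curve analogs of Lemmas~\ref{structurelem1}, \ref{finiteoverlaplem}, \ref{emptylem} and~\ref{structurelem2}. First, a comparability lemma: if $|s-s'|\ll\sigma$ then $\theta[\le\sigma](s)$ and $\theta[\le\sigma](s')$ are comparable; this follows from the Taylor expansion $\langle\mathbf{f}_i(s+\Delta),\mathbf{f}_k(s)\rangle=O(|\Delta|^{|k-i|})$ (the analog of Lemma~\ref{Taylorlem} for $\gamma_n$), once one checks that the contribution $\sigma^{i-n}R^{-1}|\Delta|^{|k-i|}$ of the $i$-th coordinate to the $k$-th is $\ll\sigma^{k-n}R^{-1}$ whenever $|\Delta|\ll\sigma$. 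Second, a finite-overlap lemma: for a $\sigma$-separated set $\I_\sigma\subset[0,1]$ the family $\{\theta[\sigma](s)\}_{s\in\I_\sigma}$ is $(\log R)^{O(1)}$-overlapping; this is proved by subdividing $\theta[\sigma]$ into $O((\log R)^n)$ boxes and running the disjointness argument of Lemma~\ref{emptylem} in the Frenet frame of $\gamma_n$.

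With these two lemmas the rest is a line-by-line repetition of the proof of Proposition~\ref{wee}, whose right side already has the $\ell^1$-sum structure $\int\sum_\theta|g_\theta|w_U$: for $g_\theta$ with $\widehat{g_\theta}$ essentially supported (up to $\rap$) in the origin-centered box $\theta[\le R^{-1/n}](s)$, decompose $\widehat{g_\theta}=\sum_\sigma\widehat{g_\theta}\,\phi_{\theta[\sigma](s)}$, apply Plancherel together with the $O(\log R)$ overlap in $\sigma$, use the finite-overlap lemma to restrict the $\sigma$-sum to those $\theta$ with $|s_\theta-s'|\le\sigma$ for $s'\in\I_\sigma$, invoke comparability so that the resulting partial sum has Fourier support in the single box $\theta[\le\sigma](s')$ and is therefore locally constant on translates $U\parallel U_{\tau,R}$, and bound the $L^2(B_R)$ norm by $\sum_U|U|(\int\sum_\theta|g_\theta|w_U)^2$. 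Then one specializes $g_\theta:=|f_\theta|^q*w_{\theta^*}$ exactly as in the passage from Proposition~\ref{wee} to Proposition~\ref{0616.thm33}: since $\theta^*\subset U$ one has $w_{\theta^*}*w_U\lesssim w_{2U}$, hence $\int(|f_\theta|^q*w_{\theta^*})w_U\lesssim\int|f_\theta|^q w_{2U}$, and $\widehat{|f_\theta|^q*w_{\theta^*}}$ lies (up to $\rap$) in $\theta-\theta$, which is all the Plancherel step uses. The main obstacle is thus purely geometric: confirming that $\theta[\le\sigma](s)^*$ is comparable to the prescribed $U_{\tau,R}$ and proving the comparability and finite-overlap lemmas with the sharp dependence on $\sigma$; everything after that transcribes Section~\ref{sec3}. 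One could instead try to read the statement off Proposition~\ref{0616.thm33} by identifying moment curve blocks with the $(\vec\de,\vec\nu)$-planks for $\vec\de=(R^{-1/n},1,\dots,1)$, but the ambient dimension drop from $n+1$ to $n$ makes the direct adaptation cleaner.
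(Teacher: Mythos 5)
Your proposal is correct and follows essentially the same route as the paper: the paper declares that Proposition \ref{0501.prop22} "can be proved identically by following the proof of Proposition \ref{0720.thm57}", i.e., by transcribing the Section \ref{sec3} high/low machinery (the analogues of Lemmas \ref{structurelem2} and \ref{finiteoverlaplem} and of Proposition \ref{wee}) to the moment curve blocks in $\R^n$, which is exactly what you carry out, and your identification of $\Theta^n(R)$ with the extremal parameter $\vec\de=(R^{-1/n},1,\dots,1)$ and of $\theta[\le\sigma](s)^*$ with $U_{\tau,R}$ is the right bookkeeping. The one deviation is at the final specialization: you feed $g_\theta=|f_\theta|^q*w_{\theta^*}$ directly into the $L^2$ estimate, using that convolution with $w_{\theta^*}$ forces $\supp\widehat{g_\theta}$ into the origin-centered copy of $\theta$ (which holds exactly by the construction of the weights in Subsection \ref{sec41}, not merely up to $\rap$), whereas the paper's written argument for Proposition \ref{0720.thm57} first pigeonholes wave packets to unit height so as to pass through $q=2$ via \eqref{0621.514}; both are valid, and yours is marginally cleaner because the right-hand side of the $L^2$ proposition already carries $\int\sum_\theta|g_\theta|\,w_U$ rather than any power of $g_\theta$.
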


Let us summarize the role of this section. Starting from the intermediate-scale planks arising in the reduction to Theorem \ref{05.29.thm28}, we decomposed them into smaller pieces and formulated the Kakeya-type estimates that control their overlap. These estimates provide the geometric input needed to pass from localized frequency pieces to wave-envelope type quantities. In the next sections, we develop the complementary analytic input: Section \ref{section4} reduces the wave envelope estimates to weighted $\ell^{p/2}$-function estimates, and Sections \ref{0627.sec6}–\ref{sec7} prove these estimates by the high/low method. These ingredients will then be combined in Section \ref{0624.sec8} to prove Theorem \ref{05.29.thm28}.

\section{\texorpdfstring{$\ell^{p/2}$}{}-function estimates}\label{section4}

In Section 3, we reduced the proof of the local smoothing estimate to Theorem \ref{05.29.thm28}. After establishing the Kakeya-type estimates for intermediate-scale planks in Section 4, we now prepare the other main analytic input for the proof. In this section, we reduce the wave envelope estimates for curves and cones (Theorem \ref{0501.thm12} and Theorem \ref{05.03.thm13}) to suitable $\ell^{p/2}$-function estimates (Theorem \ref{25.01.30.thm51} and Theorem \ref{25.04.26.thm13}), and then to their weighted versions. These weighted estimates will be proved in Sections \ref{0627.sec6}–\ref{sec7} by the high/low method and will be used in Section \ref{0624.sec8} to prove Theorem \ref{05.29.thm28}.
Before we begin the discussion, let us first give a precise definition of weight functions.

\subsection{Weight functions}\label{sec41}

Let $\phi:\R^{n}\to[0,\infty)$ be a radial, smooth bump function supported in $|\xi|\le 1/4$ and satisfying $|\widecheck{\phi}(x)|>0$ for all $|x|\le 1$. For any $d\in\mathbb{N}$, define $W^{n,d}:\R^{n}\to[0,\infty)$ by 
\[ W^{n,d}(x):=\sum_{j=0}^\infty \frac{1}{2^{100n^2jd}}|\widecheck{\phi}|^2(2^{-j}x). \]

Let $B_0$ be the unit ball centered at the origin in $\R^n$. For any set $U=T(B_0)$ where $T$ is an affine transformation $T:\R^n\to\R^n$, define
\[ W_{U}^{n,d}(x):=W^{n,d}(T^{-1}(x)). \]
If $U\subset\R^n$ is a convex set, then we write $W_U^{n,d}$ to mean $W_{\tilde{U}}^{n,d}$, where $\tilde{U}=T(B_0)$ for some affine transformation $T$ and $\tilde{U}$ is comparable to $U$. This defines $W_U^{n,d}$ up to a bounded constant, which is sufficient for our arguments.

\begin{definition} \label{M3ballweight} Let $d\in \mathbb{N}$ and let $W^{n,d}$ be the weight function.  Let $B_0\subset\R^n$ denote the unit ball centered at the origin. For any set $U=T(B_0)$ where $T$ is an affine transformation $T:\R^n\to\R^n$, define
\[ w_{U,d}(x):=|U|^{-1}W^{n,d}(T^{-1}(x)). \]
\end{definition}

When it is clear in the context, we denote  $w_{U,d}$ by $w_U$.
The function $w_U$ is essentially supported on  $U$ and its Fourier transform is supported in a constant multiple of $U^*$.

\subsection{\texorpdfstring{$\ell^{p/2}$}{}-function estimates without weights} Let us state the $\ell^{p/2}$-function estimate for the nondegenerate curve $\gamma_n$.

\begin{theorem}[$\ell^{p/2}$-estimate for the nondegenerate curve]\label{25.01.30.thm51} Let $n \geq 2$, $R \geq 1$, and $\epsilon>0$. For any $4 \leq p \leq  n^2+n-2$, we have
       \begin{equation*}
        \|f\|_{L^p(\R^n)} \leq C_{\gamma_n,\e}R^{\e}  R^{\frac1n(\frac12-\frac2p)}  \Big\|(\sum_{\theta \in \Theta^n(R) }|f_{\theta}|^{\frac{p}{2}})^{\frac2p} \Big\|_{L^p(\R^n)}
    \end{equation*}
    for any Schwartz functions $f: \R^n \rightarrow \mathbb{C}$ whose Fourier supports are in $\mcM^n(\gamma_n;R)$. The constant $C_{\gamma_n,\e}$ depends only on  $\e$ and the bounds of the determinant of Wronskian matrix of the curve $\gamma_n$.
\end{theorem}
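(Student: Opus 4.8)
The plan is to prove Theorem~\ref{25.01.30.thm51} by induction on the dimension $n$, with the high/low frequency method of Guth--Wang--Zhang (as in \cite{MR4151084} and refined in \cite{MR4794594}) as the engine at each step. The base case is $n=2$: there $n^2+n-2=4$, the statement is the reverse square function estimate for the parabola from \cite{MR4151084}, and the loss factor $R^{\frac1n(\frac12-\frac2p)}$ is trivial at $p=4$. For the inductive step I would take as given the curve $l^{p/2}$-estimate in dimension $n-1$, which by the discussion in Section~\ref{0703.sec6} yields the cone $l^{p/2}$-estimate in dimension $n$ (Theorem~\ref{05.03.thm13}), and deduce the curve estimate in dimension $n$ from it, using also the Kakeya-type estimate Proposition~\ref{0501.prop22}.

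Fix $f$ with $\wh f$ supported in $\mcM^n(\ga_n;R)$. First I would dispose of the trivial endpoint $p=2$ (Plancherel) and, after a standard amplitude pigeonholing, reduce to the case where all nonzero $f_\theta$ have comparable size; I would also set up an induction on $R$, the base case $R\sim1$ being immediate. The core step is a high/low decomposition over the intermediate scales $R^{-1/n}\le\si\le1$. Grouping the caps $\theta\in\Theta^n(R)$ into intermediate caps $\tau\in\Theta^n(\si^{-n})$ and writing $f=\sum_\tau f_\tau$, one decomposes
\[
   |f|^2=\sum_{R^{-1/n}\le\si\le1}G_\si+G_{\mathrm{low}},
\]
where $G_\si$ collects the products $f_\tau\bar f_{\tau'}$ with $\tau,\tau'$ at angular separation $\sim\si$. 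The key structural point is that the frequency support of $G_\si$ lies in a conical neighbourhood of $\ga_n$ generated by the difference set $\tau-\tau'$; after the anisotropic rescaling that sends $\tau$ to the unit cube this is exactly a frequency region of the type $\Gamma_n(\ga_n;\cdot)$ governed by Theorem~\ref{05.03.thm13}. Moreover $G_\si$ is locally constant at the scale of the dual wave envelopes $U\|U_{\tau,R}$, and $G_{\mathrm{low}}\lesssim\sum_\theta|f_\theta|^2*w_{\theta^*}$.

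With this in place, write $\|f\|_{L^p}^p=\int|f|^{p-2}|f|^2=\sum_\si\int|f|^{p-2}G_\si+\int|f|^{p-2}G_{\mathrm{low}}$. The low term contributes the $\si\sim1$ wave-envelope piece directly, using $\ell^2\subset\ell^{p/2}$. For each high term I would (i) use local constancy of $G_\si$ to replace it, on each wave envelope, by a constant; (ii) apply the cone $l^{p/2}$-estimate (Theorem~\ref{05.03.thm13}) in the rescaled coordinates to control the $G_\si$ factor; (iii) bound the leftover $|f|^{p-2}$ factor by restricting to the wave envelopes $\tau$ by $\tau$ and invoking the inductive hypothesis at the smaller effective scale; and (iv) reassemble the contributions of the different $\tau$ via the Kakeya-type estimate Proposition~\ref{0501.prop22} with $q=p/2$, which is shaped precisely to output the curve wave-envelope quantity on the right-hand side. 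Summing over the $O(\log R)$ dyadic scales $\si$ and closing the $R$-induction gives the claim; at $p=4$ the rescaling step (iii) is not needed and the argument is loss-free.

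The main obstacle is steps (ii)--(iii): making the reduction of the ``high part at scale $\si$'' to a genuine instance of the cone estimate rigorous. This requires pinning down the conical structure of $\supp\wh{G_\si}$ after the anisotropic rescaling of $\tau$, tracking how the wave envelopes $U\|U_{\tau,R}$ and the weights $w_U$ transform under it, and handling the low-frequency tails together with the $|f|^{p-2}$ factor — which must be estimated via local constancy plus the inductive hypothesis rather than crudely. A secondary technical point is matching the output geometry exactly to the collections $\Theta^n(s^{-n})$ and the tilings in Proposition~\ref{0501.prop22}, and propagating Schwartz tails through the finitely-overlapping covers. Finally, the exponent bookkeeping must be done with care: the recursion on the constant reproduces the power $R^{\frac1n(\frac12-\frac2p)}$ and closes in exactly the range $4\le p\le n^2+n-2$, with $p=n^2+n-2$ the threshold at which the bush example of Section~\ref{sec10} obstructs any improvement.
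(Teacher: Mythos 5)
Your top-level architecture matches the paper's: induction on dimension, with the curve estimate in $\R^{n-1}$ feeding the cone estimate in $\R^n$ (Theorem \ref{05.03.thm13}, proved in Section \ref{sec7} by a Pramanik--Seeger bootstrap), which in turn feeds the curve estimate in $\R^n$; interpolation between $p=4$ and $p=n^2+n-2$; and the Kakeya-type estimate Proposition \ref{0501.prop22} to organize wave envelopes. But the core mechanism you propose is different from the paper's, and as stated it has a genuine gap. You decompose $|f|^2=\sum_{\tau,\tau'}f_\tau\bar f_{\tau'}$ by angular separation and try to run $\|f\|_p^p=\int|f|^{p-2}|f|^2$, controlling each $G_\si$ by the cone estimate and the leftover $|f|^{p-2}$ by induction. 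The paper instead performs a broad--narrow reduction (Proposition \ref{05.25.prop84}) and, on the broad set intersected with each high set $\Omega_k$, applies the $2n$-linear restriction estimate to convert the pointwise lower bound $\alpha$ into the $n$-th power of the square function $g_k=\sum_{\tau_k}|f_{\tau_k}^{k+1}|^2*w_{\tau_k^*}$; only then is the cone estimate applied to the high part $g_k^h$. The critical exponent $n^2+n-2$ emerges precisely from the final inequality $p-\tilde p\le 2n$ with $\tilde p=n^2-n-2$ the cone exponent and $2n$ the degree of multilinearity (see the end of Subsection \ref{sec61}). Your scheme contains no mechanism that produces the factor $2n$: a plain H\"older step $\int|f|^{p-2}G_\si\le\|f\|_p^{p-2}\|G_\si\|_{p/2}$ followed by the cone estimate and absorption into an $R$-induction does not close at $p=n^2+n-2$, and your assertion that ``the recursion reproduces the power and closes in exactly this range'' is exactly the point that needs a proof and, I believe, fails without the multilinear input.

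Two further omissions would also block execution. First, the pruning step: before the high/low analysis one must remove wave packets of $f$ with $\|\psi_{T_{\tau_k}}^{1/2}f_{\tau_k}^{k+1}\|_\infty$ too large relative to $\beta/\alpha$ (Lemmas \ref{0622.lem58}--\ref{ftofk}); this is what makes the $(\alpha,\beta)$ bookkeeping and the factors $\|f_{\tau_k}^{k+1}\|_\infty^{2\tilde p-p}$ close, and it has no counterpart in your plan. Second, after applying the cone estimate at an intermediate scale one is left with an $\ell^{p/2}$ quantity over caps $\tau_k$, not over $\theta$; the paper needs Proposition \ref{05.24.prop53}, which combines parabolic rescaling of each $\tau_j$ with the lower-dimensional decoupling of \cite{MR3548534} applied on the long wave envelopes, to descend to the final scale. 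Your step (iii), ``bound $|f|^{p-2}$ by restricting to the wave envelopes $\tau$ by $\tau$,'' is not meaningful as written: $|f|$ on a wave envelope attached to $\tau$ is not controlled by $f_\tau$ alone. Finally, note that the paper must treat $n=3$ separately (Proposition \ref{05.26.prop82}), since there $q=p/2$ can exceed the cone exponent $\tilde p=4$, an issue invisible in your outline.
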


Theorem \ref{0501.thm12} follows immediately from Theorem \ref{25.01.30.thm51} and Proposition \ref{0616.thm33}. Namely, since the Fourier support of $f_{\theta}$ is contained in $(10n)\theta$, we have  
\begin{equation}\label{0623.41}
    |f_{\theta}|^{\frac{p}{2}} \lesssim \big||f_{\theta}|^2 * w_{\theta^*} \big|^{\frac{p}{4}} \lesssim |f_{\theta}|^{\frac{p}{2}}*w_{\theta^*}.
\end{equation}
So we have
\begin{equation*}
    \begin{split}
        \|f\|_{L^p(\R^n)} &\lesssim_{\e} R^{\e}  R^{\frac1n(\frac12-\frac2p)}  \Big\|(\sum_{\theta \in \Theta^n(R) }|f_{\theta}|^{\frac{p}{2}})^{\frac2p} \Big\|_{L^p(\R^n)}
        \\& \lesssim_{\e} 
        R^{\e}  R^{\frac1n(\frac12-\frac2p)}  \Big\|(\sum_{\theta \in \Theta^n(R) }||f_{\theta}|^2* w_{\theta^*}|^{\frac{p}{4}})^{\frac2p} \Big\|_{L^p(\R^n)}
        \\&
        \lesssim_{\e}
        R^{\e}  R^{\frac1n(\frac12-\frac2p)}  \Big\|(\sum_{\theta \in \Theta^n(R) }|f_{\theta}|^{\frac{p}{2}}*w_{\theta^*} )^{\frac2p} \Big\|_{L^p(\R^n)}
        \\& \lesssim_{\e} 
        R^{2\e}   R^{\frac1n(\frac12-\frac2p)}  \Big(\sum_{ \substack{ R^{-\frac1n} \leq s \leq 1: \\ dyadic } } \sum_{\tau \in \Theta^n(s^{-n}) } \sum_{U \| U_{\tau,R}  } |U| \|S_{U,{\frac{p}{2}}}^{curve}f\|_{L^{\frac{p}{2}}(\R^n)}^p \Big)^{\frac1p}.
    \end{split}
\end{equation*}
So to prove Theorem \ref{0501.thm12}, it suffices to prove Theorem \ref{25.01.30.thm51} and Proposition \ref{0616.thm33}. 
 We next state the $\ell^{p/2}$-function estimate for a cone generated by a nondegenerate curve.

\begin{theorem}[$\ell^{p/2}$-estimate for the cone]\label{25.04.26.thm13} Let $n \geq 3$, $R \geq 1$, and $\epsilon>0$.
    For any $4 \leq p \leq n^2-n-2$,
       \begin{equation*}
        \|f\|_{L^p(\R^n)} \leq C_{\gamma_n,\e}R^{\e}R^{\frac1n(\frac12-\frac2p)}  \Big\|(\sum_{\theta \in \Xi_n(R) }|f_{\theta}|^{{\frac{p}{2}}})^{\frac2p} \Big\|_{L^p(\R^n)}
    \end{equation*}
    for any Schwartz functions $f: \R^n \rightarrow \mathbb{C}$ whose Fourier supports are in $\Gamma_n(\gamma_n; R)$.
\end{theorem}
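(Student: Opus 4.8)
The plan is to reduce Theorem~\ref{25.04.26.thm13} to the curve $l^{p/2}$ estimate Theorem~\ref{25.01.30.thm51} in dimension $n-1$, applied at scale $\rho:=R^{\frac{n-1}{n}}$. Two numerical coincidences point to this being the right target: the admissible range for the $(n-1)$-dimensional estimate is $4\le p\le (n-1)^2+(n-1)-2=n^2-n-2$, exactly the range in the statement; and the gain it provides, $\rho^{\frac{1}{n-1}(\frac12-\frac2p)}=R^{\frac1n(\frac12-\frac2p)}$, is exactly the factor in the statement. The geometric principle behind the reduction is the classical fact that a cone over a nondegenerate curve in $\R^n$, after the radial direction is removed, is the anisotropic neighborhood of a nondegenerate curve in $\R^{n-1}$; the extra point that makes this usable is that the radial variable ranges over a \emph{bounded} interval, so the cone actually sits inside a genuine \emph{cylinder} over that lower-dimensional neighborhood.

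\emph{Step 1 (geometry).} After splitting $\Gamma_n(\gamma_n;R)$ into $O(1)$ pieces and rotating the coordinates, I would assume $\lam_1\ge\tfrac12$ and $(\gamma_n'(t))_1\sim1$ uniformly on $[0,1]$, and write $\xi=(\xi_1,\xi')\in\R\times\R^{n-1}$. Since $|\lam_j|\le R^{-(j-1)/n}\ll1$ for $j\ge2$, every $\xi$ in the cone has $\xi_1\in[c,C]$ with $0<c<C<\infty$, and $\xi'/\xi_1$ is, after a triangular change of variables, of the form $\sigma(t)+\sum_{j=2}^{n}\mu_j\sigma^{(j-1)}(t)+\cdots$, where $\sigma(t):=(\gamma_n'(t))'/(\gamma_n'(t))_1\in\R^{n-1}$ is the dehomogenization of $\gamma_n'$ and $|\mu_j|\lesssim R^{-(j-1)/n}$. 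The curve $\sigma$ is nondegenerate in $\R^{n-1}$, with constants uniform in the piece (this follows from the nondegeneracy of $\gamma_n$ by a Wronskian computation), and the displayed expression lies, up to an $O(1)$ anisotropic dilation, in $\mcM^{n-1}(\sigma;\rho)$ with $\rho=R^{\frac{n-1}{n}}$ --- the exponent being forced by a dimension count: a cone cap has dimensions $1\times R^{-1/n}\times\cdots\times R^{-(n-1)/n}$ relative to the frame $\gamma_n',\dots,\gamma_n^{(n)}$, hence dimensions $R^{-1/n}\times\cdots\times R^{-(n-1)/n}$ after deleting its long direction $\approx\gamma_n'(t_\theta)$, which are the dimensions $\rho^{-1/(n-1)}\times\cdots\times\rho^{-1}$ of a block of $\Theta^{n-1}(\rho)$. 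Because $\xi_1\in[c,C]$, multiplying the bracket by $\xi_1$ is an $O(1)$ dilation, so $\Gamma_n(\gamma_n;R)\subset[c,C]\times C'\mcM^{n-1}(\sigma;\rho)$, a cylinder over a bounded dilate of $\mcM^{n-1}(\sigma;\rho)$; and each cone cap $\theta\in\Xi_n(R)$ lies in $[c,C]\times C'\theta'(\theta)$ for a block $\theta'(\theta)\in\Theta^{n-1}(\rho)$, this correspondence being a bijection of $O(1)$ multiplicity. I would make all of this precise either by direct Taylor expansion or with the Frenet/$(\vec\de,\vec\nu)$-plank calculus of Section~\ref{sec2}.

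\emph{Step 2 (Fubini).} For each fixed $x_1\in\R$, the slice $f(x_1,\cdot)$ has $x'$-Fourier transform $\int e^{ix_1\xi_1}\wh f(\xi_1,\xi')\,d\xi_1$, which by Step~1 is supported in $C'\mcM^{n-1}(\sigma;\rho)$, with $f_\theta(x_1,\cdot)$ supported in $C'\theta'(\theta)$ and $f(x_1,\cdot)=\sum_\theta f_\theta(x_1,\cdot)$. Applying Theorem~\ref{25.01.30.thm51} for $\sigma$ at scale $\rho$ (its affine invariance and the $O(1)$-dilations being harmless) gives
\[ \|f(x_1,\cdot)\|_{L^p(\R^{n-1})}\lesssim_\e R^{\e}R^{\frac1n(\frac12-\frac2p)}\Big\|\big(\sum_\theta|f_\theta(x_1,\cdot)|^{p/2}\big)^{2/p}\Big\|_{L^p(\R^{n-1})}. \]
Raising to the $p$-th power and integrating in $x_1$ turns the left side into $\|f\|_{L^p(\R^n)}^p$ and the right side into $R^{p\e}R^{\frac pn(\frac12-\frac2p)}\int_{\R^n}\big(\sum_\theta|f_\theta|^{p/2}\big)^2$, i.e. $R^{p\e}R^{\frac pn(\frac12-\frac2p)}\|(\sum_\theta|f_\theta|^{p/2})^{2/p}\|_{L^p(\R^n)}^p$, which is Theorem~\ref{25.04.26.thm13}.

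The Fubini step is immediate once Step~1 is in hand, so the main obstacle is Step~1: showing that a bounded slab of the cone is contained in a cylinder over an honest copy of $\mcM^{n-1}$ at scale $R^{(n-1)/n}$, with the cone caps matching the blocks of $\Theta^{n-1}(R^{(n-1)/n})$ up to bounded dilation and uniformly across the $O(1)$ pieces. This is where the nondegeneracy of $\gamma_n$ enters --- via nondegeneracy of the dehomogenized curve $\sigma$ --- and where care is needed to track the $(\vec\de,\vec\nu)$-normalizations so that the genuine canonical family $\Theta^{n-1}(\rho)$, not a distorted one, appears. Combined with the cone analogue of Proposition~\ref{0501.prop22}, Theorem~\ref{25.04.26.thm13} then yields the cone wave envelope estimate Theorem~\ref{05.03.thm13}, exactly as Theorem~\ref{0501.thm12} is obtained from Theorem~\ref{25.01.30.thm51}.
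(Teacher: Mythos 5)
Your Step 1 contains a fatal error, and it is exactly the error that forces the paper to use a much longer argument. The claimed containment $\Gamma_n(\gamma_n;R)\subset[c,C]\times C'\mcM^{n-1}(\sigma;\rho)$ is false. What your computation actually shows is that the \emph{projectivized} point $\xi'/\xi_1$ lies in (a bounded anisotropic dilate of) $\mcM^{n-1}(\sigma;\rho)$; to pass from this to a statement about $\xi'$ itself you must multiply back by $\xi_1\in[c,C]$, and an isotropic dilation by a factor in $[c,C]$ displaces a point of $\sigma$ by a distance $\sim 1$, which is enormous compared with the thickness $\rho^{-1}=R^{-(n-1)/n}$ of the neighborhood. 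Concretely, for $\gamma_3(t)=(t,t^2/2,t^3/6)$ one has $\sigma(t)=(t,t^2/2)$, and the slices of the cone at height $\xi_1=\lambda_1$ are the parabolas $v=u^2/(2\lambda_1)$; their union over $\lambda_1\in[\frac12,1]$ is an open set of measure $\sim 1$, not a thin neighborhood of any single curve. The cone is genuinely a two-parameter surface, and the radial degree of freedom cannot be factored out by Fubini: the $\xi'$-support of the slice $f(x_1,\cdot)$ is the full projection of $\Gamma_n$, so Theorem \ref{25.01.30.thm51} in $\R^{n-1}$ simply does not apply to it. This is the same fallacy as deducing light-cone decoupling in $\R^3$ from parabola decoupling in $\R^2$ by freezing one variable.

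Your instinct about the correct input is right --- the paper does derive the cone estimate from the curve estimate in dimension $n-1$, with exactly the exponent range and gain you identified --- but the mechanism is the Pramanik--Seeger bootstrapping argument of Section \ref{sec7}, not a one-shot slicing. There one first truncates the radial variable to an interval of length $1/K$; only on such a truncated piece is the cone contained in a $K^{-1}$-neighborhood of a single nondegenerate curve (this is the containment \eqref{0625.712} in Lemma \ref{0616lem73}), and applying the lower-dimensional curve theorem there decouples only down to angular scale $K^{-1/n}$, not $R^{-1/n}$. Reaching scale $R$ requires iterating this step across $\log_K R$ scales (Proposition \ref{0611.prop73}), which in turn needs the rescaling of Section \ref{sec7} and the Kakeya-type estimate of Proposition \ref{0616.thm33} to control the errors between consecutive scales. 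Any repair of your argument would have to reproduce this iteration; there is no way to avoid it by a single application of Fubini.
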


Theorem \ref{05.03.thm13} follows immediately from Theorem \ref{25.04.26.thm13} and Proposition \ref{0122.prop32}. 

\begin{proposition}\label{0122.prop32}
    For any $4 \leq p < \infty$, $R \geq 1$, and $\e>0$, we have
    \begin{equation*}
        \Big\|(\sum_{\theta \in \Xi_n(R) }|f_{\theta}|^{{\frac{p}{2}}} * w_{\theta^*} )^{\frac2p} \Big\|_{L^p}^p \leq C_{\gamma_n,\e}R^{\e}   \sum_{ \substack{ R^{-\frac1n} \leq s \leq 1: \\ dyadic } } \sum_{\tau \in \Xi_n(s^{-n}) } \sum_{U \| U_{\tau,R}  } |U| \|S_{U,\frac{p}{2}}^{cone}f\|_{L^{\frac{p}{2}}(\R^n)}^p. 
    \end{equation*}
\end{proposition}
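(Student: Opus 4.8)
The plan is to read Proposition \ref{0122.prop32} as the cone counterpart of Proposition \ref{0501.prop22}. Writing $q:=p/2\ge2$ and $F_\theta:=|f_\theta|^q$, the inequality to be proved is the purely Kakeya-geometric bound
\[ \int_{\R^n}\Big(\sum_{\theta\in\Xi_n(R)}F_\theta*w_{\theta^*}\Big)^2\lesssim_\e R^\e\sum_{\substack{R^{-1/n}\le s\le1:\\ \mathrm{dyadic}}}\sum_{\tau\in\Xi_n(s^{-n})}\sum_{U\|U_{\tau,R}}|U|\Big(\int\sum_{\theta\subset\tau}F_\theta\,w_U\Big)^2, \]
for all $F_\theta\ge0$ of the form $|f_\theta|^q$ with $\widehat{f_\theta}$ supported in $\theta$. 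There is no decoupling ingredient here, which is why the range is the full $4\le p<\infty$; the entire content is multiscale transversality, and I would prove it by the same two-step scheme used for Proposition \ref{0501.prop22} in Section \ref{0702.53}, run natively for the cone $\Gamma_n(\gamma_n;R)$.

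First I would establish the $q=2$ case, i.e.\ an $L^2$ wave-envelope estimate for $\Gamma_n(\gamma_n;R)$, the cone analogue of Proposition \ref{wee}. The mechanism is the Guth--Wang--Zhang high/low frequency argument: expand $\big(\sum_\theta g_\theta\big)^2=\sum_\theta g_\theta^2+\sum_{\theta\ne\theta'}g_\theta\overline{g_{\theta'}}$; for caps $\theta,\theta'$ at angular separation $\sim s$ the cross term $g_\theta\overline{g_{\theta'}}$ has Fourier support in a translate of the $s$-scale cone plank, so the ``high'' part of the output at scale $s$ is controlled by a square function over the sectors $\tau\in\Xi_n(s^{-n})$ and their dual boxes $U_{\tau,R}$, while the ``low'' part is pushed into the next dyadic scale and the recursion closes after $O(\log R)$ steps on $s\in[R^{-1/n},1]$. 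The geometric inputs are the cone analogues of the overlap and comparability lemmas of Sections \ref{sec2}--\ref{sec3} (Lemmas \ref{structurelem1}, \ref{disjlem}, \ref{finoverlem}, \ref{finiteoverlaplem}): that the sectors $\tau\in\Xi_n(s^{-n})$, and their dilates $U_{\tau,R}$, are $(\log R)^{O(1)}$-overlapping at every intermediate scale, and that nearby sectors are comparable; these follow, exactly as in Sections \ref{sec2}--\ref{sec3}, from Taylor expansion of the Frenet frame of $\gamma_n$.

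Next I would pass from $q=2$ to general $q\ge2$ using the interpolation lemmas of Section \ref{0702.53} --- the same step that yields Proposition \ref{0616.thm33} and Proposition \ref{0501.prop22}. The point to watch is that $|f_\theta|^q$ is not literally band-limited to $\theta$; this is absorbed by the Bernstein-type bound $|f_\theta|^q\lesssim|f_\theta|^q*w_{\theta^*}$ (as in \eqref{0623.41}) together with a tensor-power/interpolation argument, after which one converts $\int\sum_{\theta\subset\tau}(|f_\theta|^q*w_{\theta^*})\,w_U$ into $\int\sum_{\theta\subset\tau}|f_\theta|^q\,w_U=\|S^{cone}_{U,q}f\|_{L^q}^q$ using $U_{\tau,R}\supset\theta^*$. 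An alternative route, consistent with the architecture in Section \ref{0703.sec6} where Theorem \ref{0501.thm12} in dimension $n-1$ implies Theorem \ref{05.03.thm13} in dimension $n$, is to deduce Proposition \ref{0122.prop32} from Proposition \ref{0501.prop22} in dimension $n-1$: each radial slice of $\Gamma_n(\gamma_n;R)$, after normalizing the radial coordinate and projecting off the $\gamma_n'$--direction, is an anisotropic neighborhood of a nondegenerate curve in $\R^{n-1}$, and the caps, sectors, and wave envelopes restrict accordingly --- but matching the wave envelopes $U_{\tau,R}$ on the nose requires a parabolic rescaling over the sectors and is more delicate.

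The hard part will be the multiscale geometry: defining the cone sectors $\tau$ and wave envelopes $U_{\tau,R}$ at all dyadic $s\in[R^{-1/n},1]$ and verifying their bounded overlap and comparability precisely enough that the high/low recursion loses only $(\log R)^{O(1)}$, together with the care in the $q=2\to q$ interpolation needed to handle the non-band-limited factors $|f_\theta|^q$. Both are close transcriptions of arguments already carried out in Sections \ref{sec2}--\ref{sec3} for the two-dimensional cone $\Gamma$ and its $(\vec\de,\vec\nu)$-planks, so I do not expect essentially new difficulties.
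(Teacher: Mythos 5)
Your proposal is correct and follows essentially the same route as the paper: the paper disposes of this proposition by declaring it equivalent to Proposition \ref{0501.prop22}, which in turn is proved exactly by your two-step scheme --- an $L^2$ Kakeya/wave-envelope estimate obtained from the hollow-box (high/low) decomposition of the planks together with the finite-overlap lemmas (this is Proposition \ref{wee}, whose proof is a one-shot Plancherel-plus-overlap argument rather than a literal recursion), followed by the wave-packet height normalization of Proposition \ref{wpd} to pass from exponent $2$ to $p/2$ as in the proof of Proposition \ref{0720.thm57}. The only caution is in your cross-term heuristic: for origin-centered Fourier supports the products $g_\theta g_{\theta'}$ at separation $s$ are not purely high-frequency at scale $s$, which is exactly why the low part must be recursed to coarser scales as you indicate.
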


Note that
Proposition \ref{0122.prop32} follows from Proposition \ref{0501.prop22}.

\subsection{\texorpdfstring{$\ell^{p/2}$}{}-function estimates with weights}\label{sec31}

We prove that Theorem \ref{25.01.30.thm51} follows from the $\ell^{p/2}$-function estimate with a weight. Let us first state the $\ell^{p/2}$-function estimate with weights (Proposition \ref{05.03.prop32}).

\begin{proposition}\label{05.03.prop32} For any $4 \leq p \leq n^2+n-2$, we have
       \begin{equation*}
        \|f\|_{L^p(\R^n)} \leq C_{\e,\gamma_n}R^{\e}  R^{\frac1n(\frac12-\frac2p)}  \Big\|(\sum_{\theta \in \Theta^n(R) }|f_{\theta}|^{{\frac{p}{2}}}*w_{\theta^*})^{\frac2p} \Big\|_{L^p(\R^n)}
    \end{equation*}
    for any Schwartz functions $f: \R^n \rightarrow \mathbb{C}$ whose Fourier supports are in $\mcM^n(\gamma_n;R)$.
\end{proposition}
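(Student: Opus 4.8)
The plan is to prove Proposition~\ref{05.03.prop32} by a multi-scale high/low frequency iteration of the type introduced by Guth--Wang--Zhang \cite{MR4151084} and developed for the moment curve in \cite{MR4794594}, combined with the Kakeya-type estimates of Section~\ref{sec3} and an induction on the dimension $n$. I keep the weights $w_{\theta^*}$ on the right-hand side throughout, because they are exactly what the locally constant property produces at each step of such an iteration; the cleaner unweighted statement (Theorem~\ref{25.01.30.thm51}) is then recovered by the de-weighting argument of this subsection. It suffices to treat the two endpoint exponents $p=4$ and $p=n^2+n-2$: the function $p\mapsto \frac1n(\frac12-\frac2p)$ is linear in $1/p$, and the right-hand side interpolates between these exponents via the interpolation lemmas of Section~\ref{0702.53} (the same ones used to deduce Propositions~\ref{0616.thm33} and \ref{0501.prop22}), using H\"older on $B_R$ to pass downward in $p$ and the flat bound $\#\Theta^n(R)\sim R^{1/n}$ to pass upward.

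For $p=4$ I would run the Guth--Wang--Zhang argument adapted to the anisotropic planks of Section~\ref{sec3}. Writing $\|f\|_4^4=\|f^2\|_2^2$ and expanding $|f|^2$ into a diagonal term plus cross terms grouped by the dyadic angular separation scale $\si\in[R^{-1/n},1]$, one performs a high/low split of each $\si$-piece. The low parts are reabsorbed through the iteration (telescoping over $\si$), while the high parts are essentially $L^2$-orthogonal thanks to the finite-overlap Lemmas~\ref{finiteoverlaplem} and \ref{structurelem2}, so the Kakeya-type estimate Proposition~\ref{0501.prop22} with $q=2$ bounds them by $\sum_\si\sum_{s'\in\I_\si}\sum_{U\parallel U_{\tau,R}}|U|\big(\int\sum_{|s-s'|\le\si}|f_s|^2w_U\big)^2$. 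The weights appear automatically from $|\phi^\vee_{q[\si](s)}|\lesssim w_{q[\le\si](s')^*}$ and $|f_\theta|^2\lesssim|f_\theta|^2*w_{\theta^*}$. Summing the resulting geometric series costs only $R^\e$ and no power of $R$, which is the stated constant at $p=4$.

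For the endpoint $p=n^2+n-2$ the iteration is the same in spirit but more delicate. The high parts at each scale are again controlled by the Kakeya estimate, now in the form of Proposition~\ref{0616.thm33} with a larger exponent $q$, while the low parts, after the appropriate pigeonholing and rescaling, are governed by a lower-dimensional object: the wave envelope estimate for the cone in $\R^n$, Theorem~\ref{05.03.thm13} (equivalently Theorem~\ref{25.04.26.thm13}), which serves as the base of the induction and which, by induction on $n$, follows from Theorem~\ref{0501.thm12} in $\R^{n-1}$. Parabolic rescaling of the moment curve (a sub-arc of length $\si$ at scale $R$ becomes the full curve at scale $R\si^n$) is used to feed the narrow contributions back into the inductive hypothesis, and the gain is summed over the choices of sub-arc. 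That $n^2+n-2$ is exactly the cutoff is forced by the bush example of Section~\ref{sec10}; the estimate is sharp there and fails beyond it. Interpolating the $p=4$ and $p=n^2+n-2$ estimates then gives Proposition~\ref{05.03.prop32} for all $4\le p\le n^2+n-2$.

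The main obstacle, as the introduction already flags, is the $p>4$ range --- concretely, organizing the high/low iteration so that the low parts reduce cleanly to the cone estimate with the correct power $R^{\frac1n(\frac12-\frac2p)}$, and so that the induction on $R$ closes after summing the rescaled narrow pieces (one cannot afford more than a total loss of $R^\e$). A secondary, purely technical, nuisance is the bookkeeping of the $L^1$-normalized weights $w_{\theta^*}$, $w_U$ and of the rapidly decaying tails generated by the many partitions of unity of Section~\ref{sec2}: one must check throughout that convolving against such weights, rather than against sharp cutoffs, is harmless --- exactly as in the shifted Guth--Wang--Zhang setting of the introduction, where $F_T(\cdot-\bfn_\theta)$ stays essentially supported in $2T$.
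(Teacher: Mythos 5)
Your high-level toolbox (reduction to the endpoints $p=4$ and $p=n^2+n-2$ via the interpolation of Proposition \ref{06.01.prop52}, a multiscale high/low analysis, the Kakeya-type estimates, induction on dimension through a cone estimate, parabolic rescaling of sub-arcs) matches the paper's, but the engine of the endpoint argument is absent from your plan, and the one structural step you do describe in detail has the roles of high and low inverted. The paper does not run a bare high/low iteration on $\|f\|_p$: it first performs a broad--narrow decomposition (Proposition \ref{05.25.prop84}) and, on the broad set, combines the \emph{multilinear restriction estimate} for $n$ transverse arcs with the wave-packet \emph{pruning} hierarchy $f^k$ (Lemmas \ref{highdom} and \ref{ftofk}). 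Transversality is what yields $\alpha^{2n}|\mathrm{Br}_{\alpha,\beta}^K\cap\Omega_k\cap B|\lesssim |B|\,|g_k(x)|^n$, and pruning is what supplies the amplitude bound $\|f_{\tau_k}^{k+1}\|_\infty\lesssim\beta/\alpha$; together these make the exponent count close exactly at $p=n^2+n-2$ (see \eqref{05.22.515} and the final reduction to $p-\tilde p\le 2n$). Your sketch offers no substitute mechanism for converting $|f|\ge\alpha$ into a power of the square function with the correct powers of $\alpha$ and $\beta$; Cauchy--Schwarz alone loses a full power of $\#\Theta^n(R_k)$, and this is precisely why the sharp range is $p\le n^2+n-2$ rather than something dictated only by Kakeya geometry.

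Second, the inversion: you assert that "the low parts \dots are governed by \dots the wave envelope estimate for the cone in $\R^n$." In the paper it is the opposite. The low set $L$ is dispatched trivially by H\"older and Lemma \ref{ftofk}; on $\Omega_k$ one invokes high-dominance $g_k\le 2|g_k^h|$, and it is the \emph{high}-frequency part $g_k^h$ whose Fourier support (after removing $|\xi|\lesssim R_{k+1}^{-1/n}$) lies in the cone $\Gamma_n$, so that Proposition \ref{1020.prop313} (curve in $\R^{n-1}$ implies cone in $\R^n$, proved by the Pramanik--Seeger bootstrap of Section \ref{sec7}) is applied to $\sum_{\tau_k}G_{\tau_k}$. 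The Kakeya-type estimate and the BDG decoupling in $\R^{n-1}$ enter in the separate rescaling step (Proposition \ref{05.24.prop53}) that relates $S_p(R_k,R)$ to $S_p(1,X)$, not as the bound for the high parts of the main term. Finally, your $p=4$ route via biorthogonality of the products $f_\theta\overline{f_{\theta'}}$ is not the paper's (which treats $p=4$ by the same broad--narrow/multilinear machinery) and is not obviously available for the anisotropic blocks of $\mcM^n(\gamma_n;R)$ when $n\ge3$, where finite overlap of the difference sets $\theta-\theta'$ over all pairs is exactly the kind of statement that must be proved, not assumed. As written, the proposal has a genuine gap at the main endpoint.
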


Let us prove that Proposition \ref{05.03.prop32} implies Theorem \ref{25.01.30.thm51}. It suffices to prove
\begin{equation}
    \Big\|(\sum_{\theta \in \Theta^n(R) }|f_{\theta}|^{{\frac{p}{2}}}*w_{\theta^*})^{\frac2p} \Big\|_{L^p(\R^n)} \lesssim R^{\epsilon} \Big\|(\sum_{\theta \in \Theta^n(R) }|f_{\theta}|^{{\frac{p}{2}}})^{\frac2p} \Big\|_{L^p(\R^n)}.
\end{equation}
We apply Proposition \ref{0501.prop22} to the left hand side. Then it suffices to prove
\begin{equation*}
    \sum_{ \substack{ R^{-\frac1n} \leq s \leq 1: \\ dyadic } } \sum_{\tau \in \Theta^n(s^{-n}) } \sum_{U \| U_{\tau,R}  } |U| \|S_{U,\frac{p}{2}}^{curve}f\|_{L^{\frac{p}{2}}(\R^n)}^p \lesssim R^{\epsilon} \Big\|(\sum_{\theta \in \Theta^n(R) }|f_{\theta}|^{{\frac{p}{2}}})^{\frac2p} \Big\|_{L^p(\R^n)}^p. 
\end{equation*}    
To prove this, since there are only $O(\log R)$ many choices of $s$, it suffices to  check 
\begin{equation}
    \sum_{\tau} \sum_U |U| \Big(\int |S_{U,\frac{p}{2}}^{curve}f|^{\frac{p}{2}} \Big)^2 \lesssim R^{\epsilon} \int (\sum_{\theta } |f_{\theta}|^{\frac{p}{2}} )^2. 
\end{equation}
By definition, the left hand side is equal to
\begin{equation}
    \sum_{\tau}\sum_U |U| \Big(\int \sum_{\theta \subset \tau}|f_{\theta}(x)|^{\frac{p}{2}}w_U(x) \Big)^2.
\end{equation}
By H\"{o}lder's inequality, this is bounded by
\begin{equation}
\begin{split}
    &\lesssim R^{\epsilon} \sum_{\tau}\sum_U  \|w_U\|_1 |U| \int \big( \sum_{\theta \subset \tau} |f_{\theta}(x)|^{\frac{p}{2}}  \big)^2 w_U(x)\,dx \\&\lesssim R^{\epsilon}  \sum_{\tau}\int (\sum_{\theta \subset \tau} |f_{\theta}|^{\frac{p}{2}} )^2
     \lesssim R^{\epsilon} \int (\sum_{\theta} |f_{\theta}|^{\frac{p}{2}} )^2.
\end{split}
\end{equation}
This completes the proof.
Therefore, to prove Theorem \ref{0501.thm12}, it suffices to prove Proposition \ref{05.03.prop32}.
\\

We can prove an analogous result for the cone. By following the same argument, one can show that Proposition \ref{05.04.prop33} implies Theorem \ref{25.04.26.thm13}. We omit the proof here.

\begin{proposition}\label{05.04.prop33}
    For any $4 \leq p \leq n^2-n-2$,
       \begin{equation*}
        \|f\|_{L^p(\R^n)} \leq C_{\e,\gamma_n}R^{\e} R^{\frac1n(\frac12-\frac2p)} \Big\|(\sum_{\theta \in \Xi_n(R) }|f_{\theta}|^{{\frac{p}{2}}}*w_{\theta^*} )^{\frac2p} \Big\|_{L^p(\R^n)}
    \end{equation*}
    for any Schwartz functions $f: \R^n \rightarrow \mathbb{C}$ whose Fourier supports are in $\Gamma_n(\gamma_n;R)$.
\end{proposition}

 We have reduced the wave envelope estimates for curves and cones, Theorems \ref{0501.thm12} and \ref{05.03.thm13}, to the weighted $\ell^{p/2}$ estimates, Propositions \ref{05.03.prop32} and \ref{05.04.prop33}. Thus the remaining task is to prove these weighted estimates. In the next section, we set up the high/low framework needed for this purpose. We then prove the weighted estimate for curves in Section \ref{0703.sec6} by induction on the dimension, using the corresponding cone estimate as an auxiliary input; the cone estimate is established in Section \ref{sec7} by a bootstrapping argument. Together with the Kakeya-type estimates from Section \ref{sec3}, these results will be used in Section \ref{0624.sec8} to prove the main estimate, Theorem \ref{05.29.thm28}.

\section{Set-up for the high/low method}\label{0627.sec6}

We introduce some definitions and lemmas for the high/low method. Most of the lemmas below are already contained in \cite{guth23}. In their paper, they considered the model curve $\gamma(s)=(s,s^2,\ldots,s^n)$. In our paper, we consider a nondegenerate curve $\gamma_n$, but the same proof works for the lemmas we are going to state below. 

\subsection{Reduction to polynomial curves}\label{51red}

For given $0 < A < \infty$, we define a collection $\mathcal{C}_n(A,R,\e)$ of polynomial curves
\begin{equation}\label{0623.51def}
    \gamma_n(R;s)=\big(\frac{s^n}{n!},\frac{s^{n-1}}{(n-1)!},\ldots,\frac{s}{1!} \big) + R^{-\e}s^{n+1}E_{\e}(s)
\end{equation}
where $E_{\e}(s)$ is an $n$-tuple of polynomials of degree at most $\e^{-1}$ and the coefficients of monomials of $E_{\e}(s)$ are all bounded by $A$.

\begin{proposition}\label{0621.prop51}
Let $n \geq 2$.
Fix $\e, A>0$. For any $\gamma_n \in \mathcal{C}_n(A,R,\e)$,
for any $4 \leq p \leq  n^2+n-2$, we have
       \begin{equation*}
        \|f\|_{L^p(\R^n)} \leq C_{A,\e}R^{200^n\e^{\frac12}}  R^{\frac1n(\frac12-\frac2p)}  \Big\|(\sum_{\theta \in \Theta^n(R) }|f_{\theta}|^{\frac{p}{2}} * w_{\theta^*} )^{\frac2p} \Big\|_{L^p(\R^n)}
    \end{equation*}
    for any Schwartz functions $f: \R^n \rightarrow \mathbb{C}$ whose Fourier supports are in $\mcM^n(\gamma_n;R)$. Here the constant $C_{A,\e}$ depends only on the parameter $A$ and $\e$.
\end{proposition}

\begin{proposition}\label{0621.prop52}
Let $n \geq 3$.
Fix $\e, A>0$. For any $\gamma_n \in \mathcal{C}_n(A,R,\e)$,
for any $4 \leq p \leq  n^2-n-2$, we have
       \begin{equation*}
        \|f\|_{L^p(\R^n)} \leq C_{A,\e}R^{200^n\e^{\frac12}}  R^{\frac1n(\frac12-\frac2p)}  \Big\|(\sum_{\theta \in \Xi_n(R) }|f_{\theta}|^{\frac{p}{2}}* w_{\theta^*} )^{\frac2p} \Big\|_{L^p(\R^n)}
    \end{equation*}
    for any Schwartz functions $f: \R^n \rightarrow \mathbb{C}$ whose Fourier supports are in $\Gamma_n(\gamma_n;R)$. Here the constant $C_{A,\e}$ depends only on the parameter $A$ and $\e$.
\end{proposition}

To prove Proposition \ref{05.03.prop32}, it suffices to consider polynomial curves in the collection $\mathcal{C}_n(A,R,\e)$. The reduction to a polynomial curve is fairly standard, for example, see Section 2 of \cite{MR4711363} or Section 7.2 of \cite{MR4047925}.

\begin{proposition}\label{0621.prop53}
      Proposition \ref{0621.prop51} implies 
    Proposition \ref{05.03.prop32}. 
    
    Proposition \ref{0621.prop52} implies Proposition \ref{05.04.prop33}.
\end{proposition}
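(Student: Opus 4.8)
\textbf{Proposal for the proof of Proposition \ref{0621.prop53}.}

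The plan is to run the standard reduction of a general $C^\infty$ nondegenerate curve to a polynomial curve in the collection $\mathcal{C}_n(A,R,\e)$ of \eqref{0623.51def}, carried out locally on a partition of the parameter interval $I=[0,1]$. First I would fix $\e>0$ and, since the estimate is at a fixed scale $R$ with an allowed loss $R^\e$, choose a small parameter (say of size $R^{-c\e}$ for a suitable $c$) and partition $[0,1]$ into $\lesssim R^{c\e}$ intervals $J$ of that length; there are only $R^{O(\e)}$ such intervals, so losing a factor equal to their number is acceptable. On each $J$, after an affine reparametrization $s\mapsto s_J + |J|s$ sending $J$ to $[0,1]$, Taylor-expand $\gamma_n$ to order $n$ around the left endpoint and apply the (invertible, since $\gamma_n$ is nondegenerate) linear map built from $\gamma_n'(s_J),\dots,\gamma_n^{(n)}(s_J)$ to normalize the leading terms. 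The $n$ leading monomials become $(s^n/n!,\dots,s/1!)$ up to the rescaling, and the nondegeneracy together with smoothness lets one write the remainder in the form $R^{-\e}s^{n+1}E_\e(s)$ with $E_\e$ a polynomial of degree $\le \e^{-1}$ and coefficients bounded by some $A=A(\gamma_n,\e)$: one truncates the Taylor series of the (smooth) higher-order part at degree $\e^{-1}$, absorbing the genuine tail into $\mathrm{RapDec}(R)$-type errors using that on the $R^{-c\e}$-scale interval the tail is of size $\ll R^{-\e}$, and rescaling coefficients so the $R^{-\e}$ prefactor appears. This is exactly the reduction written out in Section 2 of \cite{MR4711363} or Section 7.2 of \cite{MR4047925}, so I would cite those for the routine bookkeeping.

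Next I would check that the geometric objects match up under this normalization. The anisotropic neighborhood $\mcM^n(\gamma_n;R)$ and the canonical blocks $\Theta^n(R)$ (resp.\ the cone $\Gamma_n(\gamma_n;R)$ and $\Xi_n(R)$) are defined intrinsically from the jets $\gamma_n',\dots,\gamma_n^{(n)}$, so the linear normalization map sends the blocks for the original curve on $J$ to (comparable dilates of) the blocks for the normalized polynomial curve $\gamma_n(R;\cdot)\in\mathcal{C}_n(A,R,\e)$; the weight functions $w_{\theta^*}$ transform correspondingly, and the $l^{p/2}$ square-function quantity on the left and right of the inequality are preserved up to bounded constants. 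One also needs that a Schwartz function with Fourier support in $\mcM^n(\gamma_n;R)$ restricts, via the partition-of-unity in frequency adapted to the pieces coming from each $J$, to a sum of $\lesssim R^{O(\e)}$ pieces each with Fourier support in the (normalized) neighborhood of one polynomial curve; here I would use a smooth partition of unity subordinate to the cover of $\mcM^n(\gamma_n;R)$ by the $\mcM^n$ of the pieces, exactly as in the cited references. Summing the resulting estimates over the $R^{O(\e)}$ pieces by the triangle inequality in $L^p$ and in the $l^{p/2}$-norm (using $p\ge 4\ge 2$ and quasi-triangle inequalities where needed) yields Proposition \ref{05.03.prop32} from Proposition \ref{0621.prop51}; the exponent $200^n\e^{1/2}$ in Proposition \ref{0621.prop51} is comfortably absorbed into $R^\e$ after renaming $\e$, which is why the statement of Proposition \ref{05.03.prop32} has the clean $R^\e$ loss. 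The argument for the cone, deducing Proposition \ref{05.04.prop33} from Proposition \ref{0621.prop52}, is identical with $\Xi_n$ and $\Gamma_n$ in place of $\Theta^n$ and $\mcM^n$.

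The only genuinely delicate point — and the step I would flag as the main obstacle, though it is standard — is controlling the error term when replacing the smooth curve by a polynomial of bounded degree: one must ensure that (i) the degree-$\e^{-1}$ truncation of the higher-order Taylor data really does leave a remainder of size $\lesssim R^{-\e}$ after rescaling to $[0,1]$, which forces the choice of the partition scale $R^{-c\e}$ and the relation between $c$, $n$, and the bounds on $\gamma_n$ and its derivatives up to order $\e^{-1}$; and (ii) that this remainder, together with the frequency-localization errors from the partition of unity, can be absorbed into $\mathrm{RapDec}(R)$ terms that do not affect the $L^p$ estimate. Everything else is bookkeeping: affine changes of variables in physical and frequency space, comparability of rectangular boxes and their dual boxes, and summing $R^{O(\e)}$ estimates. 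Hence I would present the proof as: fix $\e$, partition, normalize each piece to land in $\mathcal{C}_n(A,R,\e)$, invoke Proposition \ref{0621.prop51} (resp.\ \ref{0621.prop52}) on each piece, and sum, citing \cite{MR4711363, MR4047925} for the details of the normalization and error control.
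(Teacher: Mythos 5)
Your proposal is correct and follows essentially the same route as the paper: partition $[0,1]$ into intervals of length $R^{-O(\e)}$, Taylor-expand to degree $O(\e^{-1})$ on each piece, use nondegeneracy to normalize the leading jet by a linear map of bounded determinant, and apply the anisotropic rescaling $L(\xi)=(R^{n\e}\xi_1,\dots,R^\e\xi_n)$ so that the resulting curve lands in $\mathcal{C}_n(A,R,\e)$ with the remainder carrying the $R^{-\e}$ prefactor. The paper's version is slightly more streamlined (it bounds the Taylor tail $\Delta_W$ directly by $CR^{-2}$ on the short interval rather than invoking a frequency partition of unity at this stage), but the substance is identical.
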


\begin{proof}[Proof of Proposition \ref{0621.prop53}]
    The proofs of the two statements are the same. We prove only the first statement. Let us fix a smooth nondegenerate curve $\gamma_n:[0,1] \rightarrow \mathbb{R}^n$. Divide the interval $[0,1]$ into smaller subintervals of  length $R^{-\e}$. Since we are allowed to lose $R^{O(\e)}$ in the estimate, after translation, it suffices to consider $\gamma_n(s)$ on the domain $s \in [0,R^{-\e}]$. We write
    \begin{equation}
        \gamma_n(s)=\gamma_{n,W}(s)+\Delta_{W}(s)
    \end{equation}
    where $\gamma_{n,W}$ is the Taylor expansion of $\gamma_n$ up to the order $W$. Take $W=10/\e$. Then it is easy to see that
    \begin{equation}
        |\partial_s^{j} \Delta_W(s)| \leq C R^{-2}
    \end{equation}
for all $0 \leq j \leq n$ and $s \in [0,R^{-\e}]$. Here, $C$ depends on the choice of the curve $\gamma_n$. Since the size of $\Delta_W$ is small, to prove Proposition \ref{05.03.prop32}, it suffices to prove the estimate for $\gamma_{n,W}(s)$ for $s \in [0,R^{-\e}]$. By the nondegeneracy assumption \eqref{12}, if $R$ is sufficiently large, by a linear transformation with the determinant comparable to one, we may write
\begin{equation}
    \gamma_{n,W}(s)=\big(\frac{s^n}{n!},\frac{s^{n-1}}{(n-1)!},\ldots,\frac{s}{1!} \big) + s^{n+1}E_{\e/10}(s)
\end{equation}
where $E_{\e/10}(s)$ is a polynomial curve of degree at most $10\e^{-1}$ defined on $s \in [0,R^{-\e}]$.  Define the rescaling map 
\begin{equation}
    L(\xi)=(R^{n\e}\xi_1,\ldots,R^{\e}\xi_n).
\end{equation}
Define the curve
\begin{equation}
    \widetilde{\gamma}_n(s):= L \big(\gamma_{n,W}(R^{-\e}s) \big).
\end{equation}
Then $\widetilde{\gamma}_n$ is a curve in $\mathbb{R}^n$ defined on the domain $[0,1]$, and we have the expression 
\begin{equation}
 \widetilde{\gamma}_n(s)=\big(\frac{s^n}{n!},\frac{s^{n-1}}{(n-1)!},\ldots,\frac{s}{1!} \big) +R^{-\e}s^{n+1}E_{R,\e/10}(s)   
\end{equation}
where $E_{R,\e/10}(s)=R^{-n\e}  L \big(E_{\e/10}(R^{-\e}s) \big)$. Note that $E_{R,\e/10}(s)$ is a polynomial of degree at most $10\e^{-1}$, and every coefficient of a monomial of $E_{R,\e/10}$ is bounded if $R$ is sufficiently large. Therefore,  by applying scaling to $\gamma_{n,W}$, it is straightforward to see that Proposition \ref{05.03.prop32} for $\widetilde{\gamma}_n$ implies the desired result. It suffices now to note that $\widetilde{\gamma}_n \in \mathcal{C}_n(A,R,\e)$ for some $A$. 
\end{proof}

\subsection{Wave packet decomposition}\label{sec52}
Let $\e$ be a small number such that $\e^{-1}$ is an integer.
Introduce scales 
\begin{equation}
    1 \leq R^{\e} \leq R^{2\e} \leq R^{3\e} \leq \cdots \leq R^{\e^{-1}\e} = R.
\end{equation}
For simplicity, we use the notation $R_k=R^{k \epsilon}$ and $R_N=R$ with $N=\e^{-1}$. Whenever we use the notation $\theta$, it always means that $\theta \in \Theta^n(R)$. When we use the notation $\tau_k$, it usually means that $\tau_k$ is an element of $\Theta^n(R_k)$.
For parameters $\alpha,\beta>0$, define 
\begin{equation}\label{0622.59}
\begin{split}
    & U_{\alpha}:= \{ x \in \R^n: |f(x)| \geq \alpha \},
    \\&U_{\alpha,\beta}:=\{ x \in \R^n: |f(x)| \geq \alpha, \; \frac{\beta}{2} \leq \sum_{\theta}|f_{\theta}|^2 * w_{\theta^*}(x) \leq \beta \}.
\end{split}
\end{equation}

For each $\tau_k \in\Theta^n(R_k)$, for $k=1,2,\ldots,N$, let $\T_{\tau_k}$ be a tiling of $\R^n$ by  translates $T_{\tau_k}$ of $\tau_k^*$. Recall that the function $\phi$ is defined in Subsection \ref{sec41}. For each $m\in\mathbb{Z}^n$, let 
\[ \psi_m(x)=c\int_{[-\frac{1}{2},\frac{1}{2}]^n}|\widecheck{\phi}|^2(x-y-m)dy, \]
where $c$ is chosen so that $\sum_{m\in\mathbb{Z}^n}\psi_m(x)=c\int_{\R^n}|\widecheck{\phi}|^2=1$. Since $|\widecheck{\phi}|$ is a rapidly decaying function, for any $k\in\mathbb{N}$, there exists $C_k>0$ such that
\[ \psi_m(x)\le c\int_{[-\frac{1}{2},\frac{1}{2}]^n}\frac{C_k}{(1+|x-y-m|^2)^{100kn}}dy \le \frac{\tilde{C}_k}{(1+|x-m|^2)^k}. \]
Define the partition of unity $\{\psi_{T_{\tau_k}}\}$ associated to ${\tau_k}$ to be $\psi_{T_{\tau_k}}(x)=\psi_m\circ A_{\tau_k}$, where $A_{\tau_k}$ is a linear transformation taking $\tau_k^*$ to $[-\frac{1}{2},\frac{1}{2}]^n$ and $A_{\tau_k}(T_{\tau_k})=m+[-\frac{1}{2},\frac{1}{2}]^n$. The important properties of $\psi_{T_{\tau_k}}$ are 
\begin{enumerate}
    \item rapid decay off of $T_{\tau_k}$

    \item Fourier support contained in a translate of $\tau_k$ centered at  the origin.
\end{enumerate}
We next do a pigeonholing on the wave packets at the last scale $R$.

\begin{proposition}[Proposition 6.6 of \cite{guth23}] \label{wpd} Fix $\e>0$. Let ${\alpha}>C_\e R^{-100n}\max_\theta\|f_\theta\|_{L^\infty(\R^n)}$. There exist subsets $\tilde{\T}_\theta\subset\T_\theta$ and  a constant $\mathcal{X}>0$ depending on $\alpha$ with the following properties:
\begin{align} 
|U_{\a}|\lesssim (\log R)|\{x\in U_{\a}:\,\,{{\a}}&\lesssim |\sum_{\theta\in{\Theta^n(R)}}\sum_{T\in\tilde{\T}_\theta}\s_T(x)f_\theta (x)|\,\,\}|, \\
R^\e T\cap U_{\a}\neq \emptyset\qquad&\text{for all}\quad\theta\in \Theta^n(R),\quad T\in\tilde{\T}_\theta\\
\mathcal{X}\lesssim \|\sum_{T\in\tilde{\T}_\theta}\s_Tf_\theta\|_{L^\infty(\R^n)}&\lesssim R^{3\e} \mathcal{X} \qquad\text{for all}\quad  \theta\in \Theta^n(R)\label{propM}\\
\|\s_Tf_\theta\|_{L^\infty(\R^n)}&\sim  \mathcal{X} \qquad\text{for all}\quad  \theta\in 
 \Theta^n(R),\quad T\in\tilde{\T}_\theta. \label{prop'M}
\end{align}

\end{proposition}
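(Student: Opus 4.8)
The plan is to prove Proposition~\ref{wpd} by the routine dyadic pigeonholing on the wave packets at the last scale $R$ used in \cite{guth23} (this is their Proposition~6.6); replacing the model moment curve by a nondegenerate curve $\gamma_n$ changes nothing. First I would fix the wave packet decomposition $f_\theta=\sum_{T\in\T_\theta}\psi_T f_\theta$ and set $M:=\max_\theta\|f_\theta\|_{L^\infty(\R^n)}$, so the hypothesis reads $\a>C_\e R^{-100n}M$; this lower bound on $\a$ is exactly what makes every error term collected below negligible.

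The first step is to throw away the irrelevant wave packets. Set $\T_\theta^{(1)}:=\{T\in\T_\theta : R^\e T\cap U_\a\neq\emptyset\}$. For $x\in U_\a$ and $T\notin\T_\theta^{(1)}$, the point $x$ lies outside $R^\e T$, so by the rapid decay of $\psi_T$ off $T$ one has $|\sum_\theta\sum_{T\notin\T_\theta^{(1)}}\psi_T(x)f_\theta(x)|=\mathrm{RapDec}(R)\cdot M\ll\a$ on $U_\a$ (the only inputs are the rapid decay, $|f_\theta|\le M$, and $\#\Theta^n(R)\lesssim R^{1/n}$). A similar routine estimate discards all packets of negligible amplitude $\|\psi_T f_\theta\|_{L^\infty}\le R^{-C(n)}M$, with $C(n)$ a suitably large constant. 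After these reductions, $\a\lesssim|\sum_\theta\sum_T\psi_T(x)f_\theta(x)|$ on $U_\a$, the inner sum over the surviving packets of $\T_\theta$, whose amplitudes $\|\psi_T f_\theta\|_{L^\infty}$ now all lie in $(R^{-C(n)}M, M]$ --- a range of only $O(\log R)$ dyadic scales.

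Then comes the pigeonholing. For dyadic $\lambda$ in that range let $\T_\theta(\lambda)$ collect the surviving packets of $\T_\theta$ with $\|\psi_T f_\theta\|_{L^\infty}\in(\lambda/2,\lambda]$, and let $E_\lambda$ be the set of $x\in U_\a$ where $|\sum_\theta\sum_{T\in\T_\theta(\lambda)}\psi_T(x)f_\theta(x)|\gtrsim\a/\log R$. Since there are only $O(\log R)$ scales $\lambda$, every $x\in U_\a$ lies in some $E_\lambda$, so $|U_\a|\lesssim(\log R)\max_\lambda|E_\lambda|$. I take $\mathcal X$ to realize the maximum, set $\tilde\T_\theta:=\T_\theta(\mathcal X)$, and discard the $\theta$ with $\tilde\T_\theta=\emptyset$ (they contribute nothing to $E_{\mathcal X}$). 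With these choices: the first assertion of the proposition is $|U_\a|\lesssim(\log R)|E_{\mathcal X}|$ (up to the usual absorption of $\log R$ factors into $\lesssim$); the second is immediate from $\tilde\T_\theta\subset\T_\theta^{(1)}$; \eqref{prop'M} holds because every $T\in\tilde\T_\theta$ has amplitude in $(\mathcal X/2,\mathcal X]$; and for \eqref{propM} the upper bound follows from the $O(1)$ overlap of the tiles $T$ (whose translates tile $\R^n$) together with the rapidly decaying tails, while the lower bound comes from evaluating $\sum_{T\in\tilde\T_\theta}\psi_T f_\theta$ near a maximizer of $|\psi_{T_0}f_\theta|$ for some $T_0\in\tilde\T_\theta$ (the stated $R^{3\e}$ is slack).

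I do not anticipate any genuine obstacle: the argument is standard, and every step is an elementary pigeonhole or rapid-decay estimate. The only things requiring care are the bookkeeping of the $\mathrm{RapDec}(R)$ and $\log R$ errors against the threshold $\a>C_\e R^{-100n}M$ --- which is precisely what dictates the auxiliary exponents $R^\e$ and $R^{-C(n)}$ --- and the mild abuse in the statement, where ``for all $\theta\in\Theta^n(R)$'' should be read as ``for all $\theta$ in the pigeonholed subcollection'', after which $f$ may be replaced by $\sum_\theta\sum_{T\in\tilde\T_\theta}\psi_T f_\theta$. All of this is carried out in \cite{guth23}.
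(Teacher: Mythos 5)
Your proposal is correct and is essentially the standard dyadic-pigeonholing argument that the paper itself does not reproduce but simply cites as Proposition 6.6 of \cite{guth23}; discarding far-away and low-amplitude packets via rapid decay (using $\a>C_\e R^{-100n}M$), pigeonholing over the $O(\log R)$ remaining dyadic amplitude levels, and restricting to the surviving $\theta$'s is exactly that argument. The only points needing the care you already flag are the $\log R$ loss in the first display and the $R^{O(\e)}$ slack in \eqref{propM} (which comes from the $\lesssim R^{O(\e)}$ tiles contributing non-negligibly at a given point, each with amplitude $\le\mathcal{X}$), so no gap.
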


We define the function
\begin{equation}\label{06.01.57}
    f^N(x):= \sum_{\tau_N} f_{\tau_N}^N(x) :=\sum_{\theta\in 
\Theta^n(R) }\sum_{T\in\tilde{\T}_\theta}\s_T(x)f_\theta (x).
\end{equation}
By replacing $\alpha$ by $\alpha/\mathcal{X}$, we may assume that $\mathcal{X}=1$. In the language of wave packets, this says that all the wave packets at the last scale have the height either comparable to one or zero. Throughout the paper, $f^N$ will play the role of $f$ frequently. We modify the definitions of $U_{\alpha,\beta}$ in \eqref{0622.59}, and use
\begin{equation}\label{0706.515}
    U_{\alpha,\beta}:=\{ x \in U_{\alpha}:  \frac{\beta}{2} \leq \sum_{\tau_N}|f_{\tau_N}^N|^2 * w_{\tau_N^*}(x) \leq \beta \}.
\end{equation}
This is the definition we use for the rest of the paper.
\medskip

\subsection{Interpolation}\label{0702.53}

The advantage of Proposition \ref{wpd} is to enable us to interpolate the $\ell^{q/2}$-function estimates. 
Here, we prove some elementary interpolation lemmas. The interpolation argument is somewhat standard and appears in several places, for example, Lemma 3.4 of \cite{MR3374964} or the proof of Proposition 4.4 of \cite{MR4794594}. Recall that Theorem \ref{0501.thm12} was reduced to Proposition \ref{0621.prop51}. 

\begin{proposition}\label{06.01.prop52}
   Proposition \ref{0621.prop51} for $4 \leq p \leq n^2+n-2$ follows from the endpoint cases $p=4$ and $n^2+n-2$.
\end{proposition}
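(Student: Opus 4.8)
The plan is to run the standard $l^{q}$-interpolation scheme of \cite[Lemma 3.4]{MR3374964} and \cite[Proposition 4.4]{MR4794594}: the pigeonholed wave packet decomposition of Proposition \ref{wpd} flattens the heights of the wave packets at the top scale, and this flattening makes the right-hand side of Proposition \ref{05.03.prop32} essentially independent of the exponent, so that the endpoint bounds at $p=4$ and $p=p_1:=n^2+n-2$ can be interpolated numerically. First I would reduce to the pigeonholed function $f^N$ from \eqref{06.01.57}. Since $\tilde{\T}_\theta\subset\T_\theta$ and the latter is a tiling, $|f^N_\theta|\le|f_\theta|$ pointwise, so replacing $f$ by $f^N$ only decreases the right-hand side of Proposition \ref{05.03.prop32}; meanwhile the first conclusion of Proposition \ref{wpd} gives $\|f\|_{L^p}\lessapprox\|f^N\|_{L^p}+\rap\,\|f\|_{L^p}$ (the $\rap$ term covering the regime $\alpha\le C_\e R^{-100n}\max_\theta\|f_\theta\|_\infty$, and the Fourier support of $f^N$ lying in an $O(1)$-dilate of $\mcM^n(\gamma_n;R)$). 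Hence it suffices to prove Proposition \ref{05.03.prop32} for $f^N$.

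Next, decompose $\|f^N\|_{L^p}^p\lessapprox \max_{\alpha,\beta}\alpha^p|U_{\alpha,\beta}|$, the maximum over the $(\log R)^{O(1)}$ relevant dyadic values of $\alpha,\beta$, with $U_{\alpha,\beta}$ as in \eqref{0706.515}. The crucial observation is the flattening: because each wave packet $\psi_Tf_\theta$ with $T\in\tilde{\T}_\theta$ has height $\sim 1$ by \eqref{prop'M}, is locally constant at the scale of $\theta^*$, and the tubes in $\tilde{\T}_\theta$ have bounded overlap, one gets $|f^N_\theta|\lesssim 1$ and, away from the rapidly decaying tails, $|f^N_\theta|^{q/2}*w_{\theta^*}\approx|f^N_\theta|^{2}*w_{\theta^*}$ up to a factor $R^{O(\e)}$, for every $q\in[4,p_1]$. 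Consequently, setting $I:=\int_{\R^n}(\sum_\theta|f^N_\theta|^{p/2}*w_{\theta^*})^2$, we have
\[ \int_{\R^n}\Big(\sum_\theta|f^N_\theta|^{q/2}*w_{\theta^*}\Big)^2\;\approx\; I \qquad\text{for all }q\in[4,p_1], \]
and likewise, on $U_{\alpha,\beta}$ one has $\sum_\theta|f^N_\theta|^{q/2}*w_{\theta^*}\approx\beta$ for every such $q$. Now apply the hypothesis (Proposition \ref{05.03.prop32} at the two endpoints) to $f^N$. Since $\tfrac12-\tfrac24=0$,
\[ \alpha^4|U_{\alpha,\beta}|\le\|f^N\|_{L^4}^4\lessapprox I,\qquad \alpha^{p_1}|U_{\alpha,\beta}|\le\|f^N\|_{L^{p_1}}^{p_1}\lessapprox R^{\frac{p_1}{n}(\frac12-\frac{2}{p_1})}I. \]

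For the interpolation, write $b:=\frac{p-4}{p_1-4}\in[0,1]$, $a:=1-b$, so that $4a+p_1b=p$ and $a,b\ge 0$. Then
\[ \alpha^p|U_{\alpha,\beta}|=(\alpha^4|U_{\alpha,\beta}|)^a(\alpha^{p_1}|U_{\alpha,\beta}|)^b\lessapprox I^a\big(R^{\frac{p_1}{n}(\frac12-\frac{2}{p_1})}I\big)^b=R^{\frac{b}{n}(\frac{p_1}{2}-2)}I=R^{\frac{p-4}{2n}}I=R^{\frac{p}{n}(\frac12-\frac2p)}I. \]
Summing over the $(\log R)^{O(1)}$ pairs $(\alpha,\beta)$ and using $I\approx\int(\sum_\theta|f^N_\theta|^{p/2}*w_{\theta^*})^2$ gives $\|f^N\|_{L^p}^p\lessapprox R^{\frac{p}{n}(\frac12-\frac2p)}\|(\sum_\theta|f^N_\theta|^{p/2}*w_{\theta^*})^{2/p}\|_{L^p}^p$, and transferring back from $f^N$ to $f$ (using $|f^N_\theta|\le|f_\theta|$ on the right and the first step on the left) completes the proof.

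The main obstacle is the flattening step, i.e. showing that $\sum_\theta|f^N_\theta|^{q/2}*w_{\theta^*}$ is comparable up to $R^{O(\e)}$ for all $q\in[4,n^2+n-2]$. The bound $|f^N_\theta|^{q/2}\lesssim|f^N_\theta|^{2}$ is immediate from $|f^N_\theta|\lesssim 1$, which follows from \eqref{prop'M}, the bounded overlap of $\tilde{\T}_\theta\subset\T_\theta$, and the locally constant property; the reverse inequality requires restricting to the region where $|f^N_\theta|$ is comparable to its local supremum over a $\theta^*$-tube (there it is $\gtrsim 1$, so $|f^N_\theta|^{2}\lesssim|f^N_\theta|^{q/2}$) and checking that the contributions of the rapidly decaying tails to the $L^2$ integral are negligible. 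This is routine but is the one place where the output of Proposition \ref{wpd}, namely the normalization $\mathcal{X}\sim 1$ in \eqref{propM}--\eqref{prop'M}, is genuinely used.
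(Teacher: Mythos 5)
Your proposal is correct and follows essentially the same route as the paper: reduce to the pigeonholed function $f^N$ via Proposition \ref{wpd}, observe that the unit-height normalization $\mathcal{X}\sim 1$ makes the quantities $\int(\sum_\theta|f^N_\theta|^{q/2}*w_{\theta^*})^2$ comparable for all $q\in[4,n^2+n-2]$ (the paper's \eqref{0621.514}), and then interpolate the left-hand side numerically between the two endpoints. The only cosmetic differences are that you run the interpolation on level sets via $\alpha^p|U|=(\alpha^4|U|)^a(\alpha^{p_1}|U|)^b$ while the paper uses H\"older's inequality $\|f\|_p\le\|f\|_4^{\theta}\|f\|_{p_1}^{1-\theta}$ and routes the exponent-independence through the wave envelope quantities $X,Y$; both yield the same power $R^{\frac1n(\frac12-\frac2p)}$.
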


Here is one more technical interpolation lemma, which is used in the proof of Proposition \ref{05.01.thm61} for the case $n=3$. 

\begin{proposition}\label{05.26.prop82} 
Let $n=3$.
For $8 \leq p \leq 10$,
\begin{equation}
    \sup_{\tau_N}\|f_{\tau_N}^{N}\|_{\infty}^{p-8} \int (\sum_{\tau_N}|f_{\tau_N}^N|^4 *w_{\tau_N^*} )^{2}  \lesssim \int(\sum_{\tau_N}|f_{\tau_N}^N|^{\frac{p}{2}} *w_{\tau_N^*} )^2.
\end{equation}
\end{proposition}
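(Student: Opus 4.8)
The plan is to reduce the inequality to the \emph{flatness} of the pigeonholed wave packets and then cite local constancy.

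First recall the set-up of Subsection~\ref{sec52}: we work with $f^N=\sum_{\theta\in\Theta^n(R)}\sum_{T\in\tilde\T_\theta}\s_T f_\theta$, renormalized so that $\mathcal X=1$, i.e. $\|\s_T f_\theta\|_{L^\infty}\sim 1$ for every $\theta$ and every $T\in\tilde\T_\theta$ (property \eqref{prop'M}), where $T$ is a translate of $\theta^*$, the tiles $\{T\}_{T\in\T_\theta}$ tile $\R^n$, and $\s_T f_\theta$ is essentially supported on $T$. Summing essentially disjoint bumps of height $\sim1$ gives $\|f_\theta^N\|_{L^\infty}\lesssim1$ for every $\theta$, and together with \eqref{propM} this yields $\|f_\theta^N\|_{L^\infty}\sim1$; hence $\sup_{\tau_N}\|f_{\tau_N}^N\|_{L^\infty}^{p-8}\sim1$ (as $0\le p-8\le2$), so it suffices to prove
\[
\int_{B_R^n}\Big(\sum_{\tau_N}|f_{\tau_N}^N|^4*w_{\tau_N^*}\Big)^2\ \lesssim\ \int_{B_R^n}\Big(\sum_{\tau_N}|f_{\tau_N}^N|^{p/2}*w_{\tau_N^*}\Big)^2+\rap,
\]
the $\rap$ being harmless since, by \eqref{propM}, the right-hand side is $\gtrsim R^{-O(1)}$ whenever $U_\alpha\neq\emptyset$ (and both sides vanish otherwise).

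The key point, and the place where the normalization is really used, is the following flatness statement: for every $\theta\in\Theta^n(R)$ and every $2\le q\le 5$,
\[
|f_\theta^N|^q*w_{\theta^*}(x)\ \sim_q\ |f_\theta^N|^4*w_{\theta^*}(x)\qquad\text{up to a }\rap\text{ error, uniformly in }x;
\]
more precisely, for all such $q$ the left side is comparable, up to $q$-dependent constants and a rapidly decaying tail, to the indicator $\sum_{T\in\tilde\T_\theta}\mathbf 1_T$ of the union of the selected tiles. For $q\ge4$ the upper bound $|f_\theta^N|^q*w_{\theta^*}\le\|f_\theta^N\|_{L^\infty}^{q-4}\,|f_\theta^N|^4*w_{\theta^*}\lesssim|f_\theta^N|^4*w_{\theta^*}$ is immediate from $\|f_\theta^N\|_{L^\infty}\lesssim1$, and $|f_\theta^N|^4*w_{\theta^*}\lesssim1$ everywhere, with $\lesssim\rap$ off a polynomial dilate of $\bigcup_{T\in\tilde\T_\theta}T$ (rapid decay of $w_{\theta^*}$, the weight parameter $d$ taken large). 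For the lower bound one uses that $\s_T f_\theta$ has Fourier support in a translate of $O(1)\theta$, hence is locally constant at scale $\theta^*$; combined with $\|\s_T f_\theta\|_{L^\infty}\sim1$ and the separation of the tiles, this forces $|f_\theta^N|^q*w_{\theta^*}(x)\gtrsim_q1$ for $x$ in a fixed dilate of each $T\in\tilde\T_\theta$, using $w_{\theta^*}(x-\cdot)\gtrsim|\theta^*|^{-1}\mathbf 1_{x+O(1)\theta^*}$. This is essentially the content of the discussion around Proposition~6.6 of \cite{guth23}.

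Granting the flatness statement, the proof finishes at once: applying it with $q=p/2\in[4,5]$ and with $q=4$ and summing over the $\sim R^{1/n}$ sectors $\tau_N$ gives $\sum_{\tau_N}|f_{\tau_N}^N|^4*w_{\tau_N^*}\sim\sum_{\tau_N}|f_{\tau_N}^N|^{p/2}*w_{\tau_N^*}+\rap$ pointwise, and squaring and integrating over $B_R^n$ (absorbing the $\rap$ as in the first paragraph) yields the displayed inequality, hence Proposition~\ref{05.26.prop82}. The main obstacle is the lower bound in the flatness statement: one must show that the pigeonholed wave-packet sum $|f_\theta^N|^q*w_{\theta^*}$ stays bounded below by a fixed positive constant wherever it is not negligible, uniformly in $q\in[2,5]$ — this is false without the height normalization $\|\s_T f_\theta\|_{L^\infty}\sim\mathcal X$, and it requires some care near $\partial\big(\bigcup_{T\in\tilde\T_\theta}T\big)$, in the $O(R^\e)$-wide transition layer, where one argues by a further $O(1)$-fold subdivision of the boundary tiles and uses that the leftover contribution is itself $\lesssim\rap$ once $d$ is large.
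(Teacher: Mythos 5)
Your proof is essentially correct but takes a genuinely more direct route than the paper. The paper first applies the Kakeya-type estimate (Proposition \ref{0501.prop22}) to the left-hand side, reducing matters to the comparison $\|S_{U,4}f^N\|_4^8\lesssim\|S_{U,p/2}f^N\|_{p/2}^p$ on each wave envelope $U$, which it then verifies by the flatness identity $\int|f_{\tau_N}^N|^4w_U\sim\int|f_{\tau_N}^N|^{p/2}w_U\sim|T|\cdot\#\{T\in\tilde{\T}_\theta:T\subset 2U\}$, and finally reassembles via H\"older and embedding. You skip the wave-envelope detour entirely and assert the same flatness fact at the finest scale, as a pointwise comparison of $|f_\theta^N|^q*w_{\theta^*}$ for $q\in\{4,p/2\}$, then square and integrate. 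Both arguments rest on exactly the same ingredient --- height-normalized wave packets make the $L^q$ mass on any union of whole tiles essentially $q$-independent --- so your version is a legitimate simplification: it needs one fewer black box (no Proposition \ref{0501.prop22}), at the price of having to justify the comparison for the weight $w_{\theta^*}(x-\cdot)$ centered at an arbitrary $x$, including points far from the selected tiles where both sides are pure tail.

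Two inaccuracies, neither fatal. First, \eqref{propM} only gives $\|f_\theta^N\|_\infty\lesssim R^{3\e}\mathcal{X}$, not $\lesssim\mathcal{X}$, so your argument yields the inequality with an $R^{O(\e)}$ loss; this is harmless where the proposition is used (the proof of Proposition \ref{05.01.thm61} has $R^{1000\e}$ of slack), and the paper's own sketch incurs the same loss, but the clean ``$\sim 1$'' for the sup factor is not literally what \eqref{propM} provides. Second, the two-sided pointwise comparability ``up to $\rap$'' is too strong as stated: in the transition layer the quantities $|f_\theta^N|^q*w_{\theta^*}$ interpolate between $1$ and $\rap$, and their ratio for different $q$ can be a power of $R$, so the leftover there is \emph{not} itself $\rap$ as you suggest. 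What you actually need is only the one-sided bound $|f_\theta^N|^4*w_{\theta^*}(x)\lesssim R^{O(\e)}\,|f_\theta^N|^{p/2}*w_{\theta^*}(x)+\rap$, and the right way to absorb the transition layer is domination rather than smallness: its $|f_\theta^N|^4$-mass is at most $R^{O(\e)}|T|$ per tile by the $L^\infty$ bound, while the core of the same (or an adjacent selected) tile carries $|f_\theta^N|^{p/2}$-mass $\gtrsim|T|$ by local constancy together with $|f_\theta^N|\ge|\s_Tf_\theta|$ (the $\psi_{T'}$ are nonnegative, so there is no cancellation), and $w_{\theta^*}(x-\cdot)$ varies only by $R^{O(\e d)}$ across an $R^\e$-dilate of a single tile. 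With that correction the argument closes.
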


Lastly, we prove the Kakeya-type estimate (Proposition \ref{0616.thm33}) here. Let us recall the statement.

\begin{proposition}\label{0720.thm57}
    Let $2\le q<\infty$. Suppose  $\{f_s\}_{s\in\I_\de}$ satisfies $\supp \wh f_s\subset p(s)$. Then
    \begin{equation*}
        \int_{B_R} (\sum_{s\in\I_\de}|f_{s}|^q*w_{p(s)^* })^{2}\lessapprox \sum_{\de\le \si\le 1} \sum_{s'\in\I_\si} \sum_{\substack{ U\parallel (q[ \vec\de;\le \si](s'))^*: \\ U \subset B_R }   }|U|(\int \sum_{s:|s-s'|\le \si}|f_{s}|^qw_U )^2.
    \end{equation*}
\end{proposition}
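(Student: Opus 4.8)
The plan is to deduce this $l^q$ Kakeya estimate from the $L^2$ orthogonality estimate of Proposition \ref{wee} by substituting a suitable family of nonnegative functions. Set $g_s:=|f_s|^q * w_{p(s)^*}$ for $s\in\I_\de$. The key observation is that, although $|f_s|^q$ has frequency support spread over a $q$-fold dilate of the plank $p(s)$ — and is not compactly frequency supported at all when $q$ is not an even integer — convolving with $w_{p(s)^*}$ re-localizes the frequencies: because $\wh{w_{p(s)^*}}$ is concentrated, with rapidly decaying tails, on the origin-centred box whose side lengths match those of $p(s)$, the product $\wh{g_s}=\widehat{|f_s|^q}\,\wh{w_{p(s)^*}}$ is supported, up to a $\rap$ error, in a bounded dilate of $q(s)$. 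Replacing $\wh{g_s}$ by its smooth truncation to that region changes $g_s$ only by a function whose contribution to both sides is $\rap$, so we may assume $\supp\wh{g_s}\subset q(s)$ and apply Proposition \ref{wee} to the family $\{g_s\}_{s\in\I_\de}$.

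Proposition \ref{wee}, together with $\de^{-\e}\le R^{\e}$, then gives
\begin{equation*}
    \int_{\R^{n+1}}\Big(\sum_{s\in\I_\de}|f_s|^q * w_{p(s)^*}\Big)^2\lessapprox \sum_{\substack{\de\le\si\le1:\\ \mathrm{dyadic}}}\ \sum_{s'\in\I_\si}\ \sum_{U\parallel (q[\le\si](s'))^*}|U|\Big(\int \sum_{\substack{s\in\I_\de:\\ |s-s'|\le\si}}\big(|f_s|^q*w_{p(s)^*}\big)\,w_U\Big)^2 .
\end{equation*}
The remaining task is to absorb the inner convolution into the envelope weight. Using the adjoint identity $\int(h*w_{p(s)^*})\,w_U=\int h\,(w_{p(s)^*}*w_U)$ and the anisotropic comparability of $(\vec\de,\vec\nu)$-planks at nearby angular parameters (Lemma \ref{structurelem2}, which gives that $q[\le\si](s')$ is comparable to $q[\le\si](s)\subset q(s)$ whenever $|s-s'|\le\si$), the dimensions and orientation of $p(s)^*$ are dominated, up to a universal constant $C$, by those of $U$, so that $w_{p(s)^*}*w_U\lesssim w_{CU}$ and hence
\begin{equation*}
    \int\big(|f_s|^q*w_{p(s)^*}\big)\,w_U\ \lesssim\ \int |f_s|^q\,w_{CU}.
\end{equation*}
Since the dilates $\{CU\}$ are $O_C(1)$-overlapping as $U$ ranges over the tiling by translates of $(q[\le\si](s'))^*$, summing in $U$ costs only a constant, giving the right-hand side of the proposition. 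Finally, restricting the left-hand side to $B_R$ and using that each $g_s$ is slowly varying at unit scale (its frequency support has diameter $O(1)$), a routine localization lets us discard all tiles $U\not\subset B_R$ up to an acceptable error.

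The step I expect to require the most care is the first one: checking that the convolution with $w_{p(s)^*}$ genuinely confines the frequency support of $|f_s|^q$ to $q(s)$ modulo rapidly decaying tails, so that Proposition \ref{wee} applies. This is exactly why the weighted $l^q$ formulation — with $w_{p(s)^*}$ appearing inside the sum — is the natural one to prove. The other ingredients (the smooth truncation and its $\rap$ error, the absorption of $w_{p(s)^*}$ into $w_U$ via the comparability lemmas of Section \ref{sec3}, and the unit-scale localization to $B_R$) are standard, but they need the bookkeeping of Section \ref{sec3} to keep all implicit constants uniform in $R$.
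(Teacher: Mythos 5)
Your proof is correct, but it takes a different route from the paper's. The paper first reduces the exponent $q$ to $2$: it invokes the unit-height wave-packet normalization (Proposition \ref{wpd} and the argument around \eqref{0621.514}) to replace $\sum_s|f_s|^q*w_{p(s)^*}$ by $\sum_s|f_s|^2*w_{p(s)^*}$, applies Proposition \ref{wee} to $g_s=|f_s|^2*w_{p(s)^*}$ (whose Fourier support lies in $q(s)$ for the elementary reason that $\widehat{|f_s|^2}$ is supported in the difference set $p(s)-p(s)$), absorbs $w_{p(s)^*}*w_U\lesssim w_U$, and then converts the power $2$ back to $q$ using the $0/1$-height structure again. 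You instead keep the exponent $q$ throughout and observe that the frequency localization comes entirely from the weight: by the construction in Subsection \ref{sec41}, $\wh{w_{p(s)^*}}$ is \emph{exactly} compactly supported in $(p(s)^*)^*$, i.e.\ in a bounded dilate of $q(s)$, so $g_s=|f_s|^q*w_{p(s)^*}$ satisfies the hypothesis of Proposition \ref{wee} for every $2\le q<\infty$ with no pigeonholing at all. This is a legitimate and arguably cleaner argument for the proposition as stated, since it does not lean (even implicitly) on the wave-packet height reduction; the paper's route has the advantage of matching the setting in which the proposition is actually applied (to the pigeonholed $f^N$). Two small remarks. First, your smooth-truncation step and the attendant $\rap$ error are unnecessary: the weights are built from $|\widecheck{\phi}|^2$ with $\phi$ compactly supported, so $\wh{w_U}$ has genuinely compact support in $U^*$ (as the paper notes after Definition \ref{M3ballweight}); had the weight only had rapid Fourier decay, controlling the error's contribution to the left-hand side would not be entirely routine, so it is worth saying explicitly that no truncation is needed. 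Second, Proposition \ref{wee} is stated for $\supp\wh{g_s}\subset q(s)$ while you obtain $\supp\wh{g_s}\subset Cq(s)$; this is harmless because the finite-overlap lemmas of Section \ref{sec3} are uniform under bounded anisotropic dilation, but it deserves a sentence. The remaining steps (the adjoint identity, $w_{p(s)^*}*w_U\lesssim w_U$ via Lemma \ref{structurelem2}, and the localization to $B_R$) coincide with the paper's.
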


Proposition \ref{0501.prop22} can be proved identically by following the proof of Proposition \ref{0720.thm57}. So we omit the proof here.

\begin{proof}[Proof of Proposition \ref{05.26.prop82}]
    Recall that  $\|f_{\tau_N}^{N}\|_{\infty} \sim 1$. We apply the Kakeya-type estimate (Proposition \ref{0616.thm33}) to the left hand side. It suffices to prove that
    \begin{equation*}
        \sum_s  \sum_{\tau \in \Theta^n(s^{-n}) } \sum_{U \| U_{\tau,R}  }|U| \|S_{U,4}f^N\|_{L^{4}(\R^n)}^8 \lesssim \sum_s \sum_{\tau \in \Theta^n(s^{-n}) } \sum_{U \| U_{\tau,R}  } |U| \|S_{U,\frac{p}{2}}f^N\|_{L^{\frac{p}{2}}(\R^n)}^p.
    \end{equation*}
    This is because
    \begin{equation}
        \sum_s \sum_{\tau \in \Theta^n(s^{-n}) } \sum_{U \| U_{\tau,R}  } |U| \|S_{U,\frac{p}{2}}f^N\|_{L^{\frac{p}{2}}(\R^n)}^p \lesssim \int(\sum_{\tau_N}|f_{\tau_N}^N|^{\frac{p}{2}} *w_{\tau_N^*} )^2.
    \end{equation}
    Once we have this, the desired bound follows from H\"{o}lder's inequality and embedding. The above inequality follows from
    \begin{equation}
        \|S_{U,4}f^N\|_4^8 \lesssim \|S_{U,\frac{p}{2}}f^N\|_{\frac{p}{2}}^p.
    \end{equation}
    By definition, this is equivalent to 
    \begin{equation}
        \int \sum_{\tau_N \subset \tau}|f_{\tau_N}^N(x)|^4 w_U(x)\,dx 
        \lesssim \int \sum_{\tau_N \subset \tau}|f_{\tau_N}^N(x)|^{\frac{p}{2}} w_U(x)\,dx.
    \end{equation}
    This follows from
    \begin{equation}\label{0621.514}
        \int |f_{\tau_N}^N(x)|^4 w_U(x)\,dx 
        \sim \int |f_{\tau_N}^N(x)|^{\frac{p}{2}} w_U(x)\,dx
    \end{equation}
    for all $\tau_N$. This is true since  both sides are essentially $\sim |T|\cdot \#\{T\in \tilde{ \mathbb{T}}_\theta:T\subset 2U\}$ as we have assumed the height $\mathcal{X}=1$. This is a standard application of a wave packet decomposition, so we omit the details.
\end{proof}

\begin{proof}[Proof of Proposition \ref{06.01.prop52}]  We need to prove 
 \begin{equation*}
        \|f\|_{L^p(\R^n)} \leq C_{\gamma_n,\e}R^{\e}  R^{\frac1n(\frac12-\frac2p)}  \Big\|(\sum_{\theta \in \Theta^n(R) }|f_{\theta}|^{{\frac{p}{2}}}*w_{\theta^*})^{\frac2p} \Big\|_{L^p(\R^n)}
    \end{equation*}
    for $4 \leq p \leq n^2+n-2$. By pigeonholing, it suffices to prove 
    \begin{equation}\label{0621.515}
        \big(\alpha^{p}|U_{\alpha}|\big)^{\frac1p} \leq C_{\gamma_n,\e}R^{\e}  R^{\frac1n(\frac12-\frac2p)}  \Big\|(\sum_{\theta \in \Theta^n(R) }|f_{\theta}|^{{\frac{p}{2}}}*w_{\theta^*})^{\frac2p} \Big\|_{L^p(\R^n)}
    \end{equation}
    for all $\alpha>0$. Let us fix $p$ and $\alpha$. Now,
    by Proposition \ref{wpd}, we may replace $f$ by $\sum_{\theta\in \Theta^n(R) }\sum_{T\in\tilde{\T}_\theta}\s_T(x)f_\theta (x)$. Abusing  notation, let us use the same notation $f$ for the new function. Define $\vartheta$ to be the number satisfying
\begin{equation}
    \frac1p=\frac{\vartheta}{4}+\frac{1-\vartheta}{n^2+n-2}.
\end{equation}
    By H\"{o}lder's inequality and the assumption that Proposition \ref{05.03.prop32} is true for $p=4$ and $n^2+n-2$, we have 
\begin{equation}
\begin{split}
    (\alpha^p |U_{\alpha}|)^{\frac1p}\lesssim  \|f\|_4^{\vartheta}\|f\|_{n^2+n-2}^{1-\vartheta}
     \lesssim R^{\frac1n(\frac12-\frac24)\vartheta}R^{\frac1n(\frac12-\frac{2}{n^2+n-2})(1-\vartheta)} X^{\vartheta}Y^{1-\vartheta}
\end{split}
\end{equation}
where
\begin{equation}
    \begin{split}
        &X:= \Big( \sum_{ \substack{ R^{-\frac1n} \leq s \leq 1: \\ dyadic } } \sum_{\tau \in \Theta^n(s^{-n}) } \sum_{U \| U_{\tau,R}  }|U| \|S_{U,2}f\|_{L^{2}(\R^n)}^4 \Big)^{\frac14}
        \\& Y:= \Big(\sum_{ \substack{ R^{-\frac1n} \leq s \leq 1: \\ dyadic } } \sum_{\tau \in \Theta^n(s^{-n}) } \sum_{U \| U_{\tau,R}  }|U| \|S_{U,\frac{n^2+n-2}{2}}f\|_{L^{\frac{n^2+n-2}{2}}(\R^n)}^{n^2+n-2}\Big)^{\frac{1}{n^2+n-2}}
    \end{split}
\end{equation}
Since $R^{\frac1n(\frac12-\frac24)\vartheta}R^{\frac1n(\frac12-\frac{2}{n^2+n-2})(1-\vartheta)} =R^{\frac1n(\frac12-\frac2p)}$, it suffices to check
\begin{equation}
    X^{p\vartheta}Y^{p(1-\vartheta)} \lesssim  \sum_s \sum_{\tau \in \Theta^n(s^{-n}) } \sum_{U \| U_{\tau,R}  }|U| \|S_{U,\frac{p}{2}}f\|_{L^{\frac{p}{2}}(\R^n)}^p.
\end{equation}
Once we have this, \eqref{0621.515} follows by embedding.
As we already saw in the proof of Proposition \ref{05.26.prop82} (see \eqref{0621.514}), we have
\begin{equation}
    \|S_{U,2}f\|_2^2 \sim \|S_{U,\frac{p}{2}}f\|_{\frac{p}{2}}^{\frac{p}{2}} \sim \|S_{U,\frac{n^2+n-2}{2}}f\|_{\frac{n^2+n-2}{2}}^{\frac{n^2+n-2}{2}}.
\end{equation}
Therefore, $X^4 \sim Y^{n^2+n-2}$, and furthermore, 
\begin{equation}
     X^{p\vartheta}Y^{p(1-\vartheta)} \sim \sum_s\sum_{\tau \in \Theta^n(s^{-n}) } \sum_{U \| U_{\tau,R}  } |U| \|S_{U,\frac{p}{2}}f\|_{L^{\frac{p}{2}}(\R^n)}^p.
\end{equation}
This completes the proof.
\end{proof}

\begin{proof}[Proof of Proposition \ref{0720.thm57}]
Use an  argument similar to the one used in \eqref{0621.514}, and  assume that each wave packet has height either one or zero. So we have
\begin{equation}
  \int_{B_R} (\sum_{s\in\I_\de}|f_{s}|^q*w_{p(s)^*})^{2}
  \lesssim  \int_{B_R} (\sum_{s\in\I_\de}|f_{s}|^2*w_{p(s)^*})^{2}.
\end{equation}
Take $g_s:=|f_s|^2 * w_{p(s)^*}$. By Proposition \ref{wee}, this is bounded by
\begin{equation}
\lesssim
     \delta^{-\e} \sum_{\substack{ \de\le \si\le 1: \\ dyadic }  } \sum_{s'\in\I_\si} \sum_{U\parallel (q[\le \si](s'))^*} |U| \Big(\int  \sum_{s\in\I_\de:|s-s'|\le \si} \big( |f_s|^2 * w_{p(s)^*} \big) w_U \Big)^2.
\end{equation}
Since $w_{p(s)^*}* w_U \lesssim w_U$, this is bounded by
\begin{equation}
\lesssim
     \delta^{-\e} \sum_{\substack{ \de\le \si\le 1: \\ dyadic }  } \sum_{s'\in\I_\si} \sum_{U\parallel (q[\le \si](s'))^*} |U| \Big(\int  \sum_{s\in\I_\de:|s-s'|\le \si}  |f_s|^2  w_U \Big)^2.
\end{equation}
As we have shown, since each wave packet has height either one or zero, by \eqref{0621.514}, this is bounded by
\begin{equation}
\lesssim
     \delta^{-\e} \sum_{\substack{ \de\le \si\le 1: \\ dyadic }  } \sum_{s'\in\I_\si} \sum_{U\parallel (q[\le \si](s'))^*} |U| \Big(\int  \sum_{s\in\I_\de:|s-s'|\le \si}  |f_s|^q  w_U \Big)^2.
\end{equation}
This completes the proof.
\end{proof}

\subsection{Pruning and high/low lemma}\label{sec64}

In this subsection, we state several definitions and lemmas for the high/low method. This is the setup in \cite{MR4721026, MR4794594}.
Recall that we have defined $f_{\tau_N}^{N}$ (see \eqref{06.01.57}) and introduced $\tau_k$ (see the beginning of Subsection \ref{sec52}). We next define $f_{\tau_{k}}^{k+1}$ in an inductive way. Define $f_{\tau_{N-1}}^{N}:= \sum_{\tau_{N} \subset \tau_{N-1}} f_{\tau_N}^{N}$. Let us fix $\alpha, \beta>0$. Let $C,K$ be sufficiently large constants, which will be determined later.

\begin{definition}[pruning with respect to $\tau_k$]
    For $1 \leq k \leq N-1$, define
    \begin{equation}\begin{split}
    &\mathbb{T}_{\tau_k}^g:= \Big\{ T_{\tau_k} \in \mathbb{T}_{\tau_k}: \; \|\psi_{T_{\tau_k}}^{\frac12}f_{\tau_k}^{k+1} \|_{\infty} \leq K^3 C^{N-k+1}\frac{\beta}{\alpha} \Big\}
    \\& f_{\tau_k}^k:= \sum_{ T_{\tau_k} \in \mathbb{T}_{\tau_k}^g } \psi_{T_{\tau_k}} f_{\tau_k}^{k+1}, \;\;\;\;\; f_{\tau_{k-1}}^k:= \sum_{\tau_k \subset \tau_{k-1}} f_{\tau_k}^k.
    \end{split}
    \end{equation}
\end{definition}

Recall that by pigeonholing argument (see \eqref{06.01.57}), each wave packet has height either comparable to one or zero. So we have the following lemma, which is Lemma 4.1 of \cite{guth23}.

\begin{lemma}[Lemma 4.1 of \cite{guth23}]\label{0622.lem58}
The following properties hold.
\begin{enumerate}
    \item $|f_{\tau_k}^k(x)| \leq |f_{\tau_{k}}^{k+1}(x)| \lesssim \#\{ \theta \in \Theta^n(R) : \theta\subset \tau_k \}$

    \item $\|f_{\tau_k}^k\|_{\infty} \leq K^3 C^{N-k+1}\frac{\beta}{\alpha}$.

    \item For $R$ sufficiently large depending on $\epsilon$, $\mathrm{supp}(\widehat{f_{\tau_k}^k}) \subset 3\tau_k$.
    
\end{enumerate}
\end{lemma}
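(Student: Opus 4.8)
The plan is to prove the three items by a single downward induction on $k$ (from the base scale $k=N$ down to $k=1$), with the base scale controlled by the pigeonholing in Proposition~\ref{wpd}; the three statements are of different types and I would treat them by slightly different mechanisms within the induction.

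For the first inequality in item~(1) I would argue purely formally. By the definition of $f_{\tau_k}^k$ the sum over tiles pulls out as a scalar, so pointwise $f_{\tau_k}^k(x)=f_{\tau_k}^{k+1}(x)\sum_{T_{\tau_k}\in\mathbb{T}_{\tau_k}^g}\psi_{T_{\tau_k}}(x)$; since $\{\psi_{T_{\tau_k}}\}_{T_{\tau_k}\in\mathbb{T}_{\tau_k}}$ is a nonnegative partition of unity and $\mathbb{T}_{\tau_k}^g\subseteq\mathbb{T}_{\tau_k}$, the parenthesised factor lies in $[0,1]$, giving $|f_{\tau_k}^k(x)|\le|f_{\tau_k}^{k+1}(x)|$. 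The bound $|f_{\tau_k}^{k+1}(x)|\lesssim\#\{\theta\subset\tau_k\}$ then follows by writing $f_{\tau_k}^{k+1}=\sum_{\tau_{k+1}\subset\tau_k}f_{\tau_{k+1}}^{k+1}$, applying the triangle inequality, invoking item~(1) at scale $k+1$ on each summand, and using $\sum_{\tau_{k+1}\subset\tau_k}\#\{\theta\subset\tau_{k+1}\}=\#\{\theta\subset\tau_k\}$. The genuine base case $k=N$ is supplied by Proposition~\ref{wpd}: a standard wave-packet estimate using the height normalization $\mathcal X=1$ in \eqref{prop'M}, the rapid decay of the partition of unity $\{\psi_{T_{\tau_N}}\}$, and the hypothesis $\alpha>C_\e R^{-100n}\max_\theta\|f_\theta\|_\infty$ (which makes the off-tile tails $\rap$) yields $|f_{\tau_N}^N(x)|\lesssim 1=\#\{\theta\subset\tau_N\}$.

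Item~(2) I would obtain directly from the definition of $\mathbb{T}_{\tau_k}^g$, with no recourse to the induction: splitting $\psi_{T_{\tau_k}}=\psi_{T_{\tau_k}}^{1/2}\cdot\psi_{T_{\tau_k}}^{1/2}$,
\[
 |f_{\tau_k}^k(x)|\le\sum_{T_{\tau_k}\in\mathbb{T}_{\tau_k}^g}\psi_{T_{\tau_k}}(x)^{1/2}\,\big\|\psi_{T_{\tau_k}}^{1/2}f_{\tau_k}^{k+1}\big\|_\infty\le K^3C^{N-k+1}\frac{\beta}{\alpha}\sum_{T_{\tau_k}\in\mathbb{T}_{\tau_k}}\psi_{T_{\tau_k}}(x)^{1/2},
\]
and the last sum is $O(1)$ by the bounded overlap and rapid decay of the tiling partition of unity, so the claimed inequality holds once the absolute constant is absorbed into the (freely chosen, large) constant $C$. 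For item~(3) I would again use Fourier supports together with the induction: from $f_{\tau_k}^k=\sum_{T_{\tau_k}\in\mathbb{T}_{\tau_k}^g}\psi_{T_{\tau_k}}f_{\tau_k}^{k+1}$ one gets $\supp\widehat{f_{\tau_k}^k}\subseteq\big(\supp\widehat{\psi_{T_{\tau_k}}}\big)+\supp\widehat{f_{\tau_k}^{k+1}}$; by property~(2) of $\psi_{T_{\tau_k}}$ the first set is a translate of $\tau_k$ to the origin, and by the inductive hypothesis $\supp\widehat{f_{\tau_k}^{k+1}}\subseteq\bigcup_{\tau_{k+1}\subset\tau_k}3\tau_{k+1}$, so the standard nesting and approximate-additivity properties of the canonical moment-curve blocks place this sumset inside $3\tau_k$; the base case $k=N$ is the same computation with $\supp\widehat{f_\theta}$ in place of the inductive input.

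The step requiring real care is item~(3): one must check that adding the origin-translated block $\tau_k$ to $\bigcup_{\tau_{k+1}\subset\tau_k}3\tau_{k+1}$ does not push the Fourier support past $3\tau_k$, and this is where the hypothesis ``$R$ large depending on $\e$'' enters — via the blocks satisfying $R_k=R^{k\e}\ge R^\e$, one needs the curvature of $\gamma_n$ across a single block to be negligible against the block's own dimensions for the sumset inclusion to close. Since this computation is already carried out for the moment curve in \cite{guth23} and only the (harmless) replacement of the model curve by a nondegenerate $\gamma_n$ is new, I would keep the verification brief; items~(1) and~(2) then reduce to bookkeeping once Proposition~\ref{wpd} is available.
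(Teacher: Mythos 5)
Your proof is correct and is essentially the standard argument: the paper itself gives no proof of Lemma \ref{0622.lem58}, deferring to Lemma 4.1 of \cite{guth23} with the remark that the same proof carries over to nondegenerate curves, and your three steps (the partition-of-unity factor in $[0,1]$ for the first inequality of (1) plus unwinding to scale $N$ and Proposition \ref{wpd}, the $\psi_T=\psi_T^{1/2}\psi_T^{1/2}$ splitting with $\sum_T\psi_T^{1/2}\lesssim 1$ for (2), and the Fourier sumset computation for (3)) reproduce exactly that argument. The only points worth flagging are cosmetic: in (1) it is cleaner to iterate only the first (exact) inequality together with the triangle inequality down to scale $N$, so that the sole constant comes from $\|f^N_{\tau_N}\|_\infty\lesssim 1$ rather than accumulating over $N$ steps, and in (2) the absolute constant from $\sum_T\psi_T^{1/2}$ is indeed harmless because the pruning lemma only uses the bound with the slack factor $C^{1/2}$.
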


Abusing notation, define $f_{\tau_k}:=\sum_{\tau_N\subset \tau_k} f^N_{\tau_N}
$. We have discarded the wave packets whose height is not comparable to one

\begin{lemma}[Locally constant property, Lemma 4.2 of \cite{guth23}]\label{0622.lem59} For each $\tau_k \in \Theta^n(R_k)$ and $T_{\tau_k}\in\T_{\tau_k}$, 
\begin{align*} 
\|f_{\tau_k}\|_{L^\infty(T_{\tau_k})}^2\lesssim |f_{\tau_k}|^2*w_{\tau_k^*}(x)\qquad\text{for any}\quad x\in T_{\tau_k} .\end{align*}
Also, for any $R_k^{1/n}$-ball $B_{R_k^{1/n}}$, 
\begin{align*} 
\Big\|\sum_{\tau_k \in \Theta^n(R_k) }|f_{\tau_k}|^2 \Big\|_{L^\infty(B_{R_k^{1/n}})}\lesssim \sum_{\tau_k \in \Theta^n(R_k)}|f_{\tau_k}|^2*w_{B_{R_k^{1/n}}}(x)\qquad\text{for any}\quad x\in B_{R_k^{1/n}} .\end{align*}
\end{lemma}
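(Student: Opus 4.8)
The plan is to establish both inequalities by the standard \emph{locally constant} mechanism: a function whose Fourier transform is supported in a convex body $K$ has modulus that is essentially constant on translates of the dual body $K^*$, and this is quantified by convolution with the weight $w_{K^*}$. Throughout I would use that the weight $W^{n,d}$ from Subsection~\ref{sec41} is, after unwinding the geometric sum defining it, comparable to $(1+|x|)^{-100n^2 d}$ (the lower bound comes from keeping the single term with $2^j\sim 1+|x|$ and the positivity of $|\widecheck\phi|$ on $\{|y|\le 1\}$; the upper bound from the Schwartz decay of $\widecheck\phi$ and summing the geometric series, provided $d$ is large). In particular $W^{n,d}$ is slowly varying: $W^{n,d}(x+e)\sim W^{n,d}(x)$ whenever $|e|\lesssim 1$, and hence $w_U(x'-y)\sim w_U(x-y)$ uniformly in $y$ whenever $x-x'$ lies in a bounded dilate of $U$.

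For the first inequality: $\widehat{f_{\tau_k}}$ is supported in a bounded dilate of $\tau_k$ (this is how $f_{\tau_k}$ is defined, and for the pruned pieces it is Lemma~\ref{0622.lem58}(3)). I would choose a bump $\phi$ adapted to a slightly larger dilate of $\tau_k$ with $\phi\equiv 1$ on the Fourier support of $f_{\tau_k}$, so that $f_{\tau_k}=f_{\tau_k}*\widecheck\phi$. Then $\widecheck\phi$ is $L^1$-normalized, $\|\widecheck\phi\|_1\sim 1$, and $|\widecheck\phi|\lesssim w_{\tau_k^*}$ pointwise once $d$ is taken large enough relative to the decay of $\widecheck\phi$. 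Cauchy--Schwarz gives
\[
|f_{\tau_k}(x')|^2\le \big(|f_{\tau_k}|*|\widecheck\phi|\big)(x')^2\le \|\widecheck\phi\|_1\,\big(|f_{\tau_k}|^2*|\widecheck\phi|\big)(x')\lesssim \big(|f_{\tau_k}|^2*w_{\tau_k^*}\big)(x').
\]
Finally, for $x,x'\in T_{\tau_k}$ the difference $x-x'$ lies in a bounded dilate of $\tau_k^*$, so by slow variation $(|f_{\tau_k}|^2*w_{\tau_k^*})(x')\lesssim (|f_{\tau_k}|^2*w_{\tau_k^*})(x)$; taking the supremum over $x'\in T_{\tau_k}$ yields $\|f_{\tau_k}\|_{L^\infty(T_{\tau_k})}^2\lesssim |f_{\tau_k}|^2*w_{\tau_k^*}(x)$ for every $x\in T_{\tau_k}$.

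For the second inequality the only new point is a Fourier-support computation: each $\tau_k\in\Theta^n(R_k)$ has $\xi_1$-width $\sim R_k^{-1/n}$, which pins the curve parameter $t$ to an interval of that length and forces all remaining directions of $\tau_k$ to be $\lesssim R_k^{-1/n}$ as well, so $\tau_k$ is contained in a ball of radius $\lesssim R_k^{-1/n}$. Consequently $|f_{\tau_k}|^2$, and hence $\sum_{\tau_k}|f_{\tau_k}|^2$, has Fourier support in a ball of radius $\lesssim R_k^{-1/n}$ about the origin. Writing $\sum_{\tau_k}|f_{\tau_k}|^2=\big(\sum_{\tau_k}|f_{\tau_k}|^2\big)*\widecheck\phi$ with $\phi\equiv 1$ on that ball and $|\widecheck\phi|\lesssim w_{B_{R_k^{1/n}}}$, then using slow variation of $w_{B_{R_k^{1/n}}}$ at scale $R_k^{1/n}$ exactly as above, one gets $\big\|\sum_{\tau_k}|f_{\tau_k}|^2\big\|_{L^\infty(B_{R_k^{1/n}})}\lesssim \sum_{\tau_k}|f_{\tau_k}|^2*w_{B_{R_k^{1/n}}}(x)$ for any $x$ in the ball. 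The argument is essentially routine; the only places needing care are confirming the slow-variation/pointwise-domination properties of the specific weight $W^{n,d}$ and the elementary geometric fact that the caps $\tau_k$ fit inside $R_k^{-1/n}$-balls, and indeed this lemma is quoted from \cite{guth23} where these checks are carried out.
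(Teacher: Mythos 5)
Your proof is correct and is exactly the standard locally constant argument (Fourier support in a convex body, reproducing convolution with an $L^1$-normalized bump dominated by the weight, Cauchy--Schwarz, and slow variation of $W^{n,d}$ at the dual scale) that the paper invokes by citing Lemma 4.2 of \cite{guth23}. The two checks you flag — the two-sided bound $W^{n,d}(x)\sim(1+|x|)^{-100n^2d}$ and the fact that each $\tau_k\in\Theta^n(R_k)$ sits in a ball of radius $\lesssim R_k^{-1/n}$ so that $\widehat{|f_{\tau_k}|^2}$ is supported in $B_{CR_k^{-1/n}}$ — are handled correctly, so nothing is missing.
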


\begin{definition} Let $\eta:\R^n\to[0,\infty)$ be a radial, smooth bump function satisfying $\eta(x)=1$ on $B_{1/2}$ and $\supp\eta\subset B_1$. Then for each $s>0$, let 
\[ \eta_{\le s}(\xi) :=\eta(s^{-1}\xi) .\]
We will sometimes abuse notation by denoting $h*\widecheck{\eta}_{>s}:=h-h*\widecheck{\eta}_{\le s}$, where $h$ is some Schwartz function. Also define $\eta_{s}:=\eta_{\le s}-\eta_{\le s/2}$. 
\end{definition}

Let $N_0$ be a large integer such that $N_0 \leq N-1$, which will be determined later. Let us briefly explain why we introduce this number $N_0$. When we apply the high/low method, we make a dichotomy according to whether $k$ is smaller than $N_0$ or not. When $k$ is smaller than $N_0$, it is the ``low case'' and when $k$ is larger than $N_0$, it is the ``high case''. We refer to Subsection \ref{sec61} for this dichotomy.

\begin{definition} For $N_0\le k\le N-1$, let 
\[ g_k:=\sum_{\tau_k \in \Theta^n(R_k) }|f_{\tau_k}^{k+1}|^2*w_{\tau_k^*}, \;\;\; g_k^{\ell}:=g_k*\widecheck{\eta}_{\le R_{k+1}^{-1/n}}, \;\;\;\text{and}\;\;\; g_k^h:=g_k-g_k^{\ell}. \]
\end{definition}

In the following definition, $C \gg 1$ is the same constant that goes into the pruning definition of $f^k$. 
\begin{definition} \label{impsets}Define the high set by 
\[ \Omega_{N-1}:= \{x\in U_{\alpha,\beta}: C \beta \leq g_{N-1}(x)\}. \]
For each $k=N_0,\ldots,N-2$, let
\[ \Omega_k=\{x\in U_{\alpha,\beta}\setminus \cup_{l=k+1}^{N-1}\Omega_{l}: C^{N-k}\beta\le g_k(x) \}. \] 
Define the low set to be
\[ L:=U_{\alpha,\beta}\setminus[\cup_{k=N_0}^{N-1}\Omega_k]. \]
\end{definition}

\begin{lemma}[High-dominance on $\Omega_k$, Corollary 4.5 of \cite{guth23}]\label{highdom} For $R$ large enough depending on $\e$, $g_k(x)\le 2|g_k^h(x)|$ for all $x\in\Omega_k$. 
\end{lemma}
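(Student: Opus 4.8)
The plan is to recast the claim as an upper bound for the low part $g_k^\ell$ and then plug in the defining inequality of $\Omega_k$. Note first that $g_k=\sum_{\tau_k}|f_{\tau_k}^{k+1}|^2*w_{\tau_k^*}\ge 0$ and $g_k=g_k^\ell+g_k^h$, so the estimate $|g_k^\ell(x)|\le\tfrac12 g_k(x)$ already implies $g_k^h(x)=g_k(x)-g_k^\ell(x)\ge\tfrac12 g_k(x)\ge 0$, whence $|g_k^h(x)|\ge\tfrac12 g_k(x)$, i.e.\ $g_k(x)\le 2|g_k^h(x)|$. (The absolute value on $g_k^\ell$ is harmless: $\widecheck{\eta}_{\le R_{k+1}^{-1/n}}$ is $L^1$-normalized but not necessarily nonnegative.) Thus it suffices to prove
\[ |g_k^\ell(x)|\le\tfrac12 g_k(x)\qquad\text{for every }x\in\Omega_k. \]

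The essential input is a pointwise \emph{low lemma}: for every $x\in U_{\alpha,\beta}$ one has, with an absolute implied constant,
\[ |g_k^\ell(x)|\lesssim \beta+\max_{k<l\le N-1}g_l(x) \]
(up to harmless convolutions of the $g_l$ with rapidly decaying weights). This is the analogue, for a general nondegenerate $\gamma_n$, of the lemma from which Corollary 4.5 of \cite{guth23} is deduced; as noted at the beginning of this section, its proof uses only the pruning (Lemma \ref{0622.lem58}), the locally constant property (Lemma \ref{0622.lem59}) and the Fourier support information for $f_{\tau_k}^{k+1}$, all of which hold in our setting, so the argument goes through unchanged. I would recall it briefly: $g_k^\ell=g_k*\widecheck{\eta}_{\le R_{k+1}^{-1/n}}$ is, up to rapidly decaying tails, an average of $g_k$ over a ball of radius $R_{k+1}^{1/n}$; writing $f_{\tau_k}^{k+1}=\sum_{\tau_{k+1}\subset\tau_k}f_{\tau_{k+1}}^{k+1}$ and using local $L^2$-orthogonality together with Lemma \ref{0622.lem59} to replace $|f_{\tau_k}^{k+1}|^2$ by $\sum_{\tau_{k+1}\subset\tau_k}|f_{\tau_{k+1}}^{k+1}|^2$ on such a ball, and then feeding in the height bound $\|f_{\tau_{k+1}}^{k+1}\|_\infty\lesssim C^{N-k}\beta/\alpha$ coming from the pruning, one bounds the resulting average either by a coarser square function $\lesssim g_l(x)$ with $l>k$ or by the top-scale square function, which is $\le\beta$ on $U_{\alpha,\beta}$.

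Granting the low lemma, the conclusion is bookkeeping with the thresholds. If $x\in\Omega_k$ then by Definition \ref{impsets} we have $x\in U_{\alpha,\beta}$ and $x\notin\Omega_l$ for all $l$ with $k<l\le N-1$; reading off the definition of $\Omega_l$ this forces $g_l(x)<C^{N-l}\beta\le C^{N-k-1}\beta$ for each such $l$, and trivially $\beta\le C^{N-k-1}\beta$, so the low lemma gives $|g_k^\ell(x)|\lesssim C^{N-k-1}\beta$. On the other hand $x\in\Omega_k$ forces $g_k(x)\ge C^{N-k}\beta$. Choosing the constant $C$ (the same one used in the pruning) large enough in terms of the implied constant of the low lemma, and $R$ large enough that the rapidly decaying errors are negligible, we obtain $|g_k^\ell(x)|\le\tfrac12 C^{N-k}\beta\le\tfrac12 g_k(x)$, which is what we wanted.

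The step I expect to be the genuine obstacle is the low lemma: it is the one place where the pruning construction is used in an essential way, and its proof requires carefully controlling the interaction of the frequency cutoff at scale $R_{k+1}^{-1/n}$ with the overlap of the caps $\tau_{k+1}\subset\tau_k$ and with the tails of the weights $w_{\tau_k^*}$. Everything afterwards—the reduction to $|g_k^\ell|\le\tfrac12 g_k$ and the geometric-series bookkeeping with the thresholds $C^{N-l}\beta$—is routine.
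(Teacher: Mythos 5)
Your proof is correct and follows essentially the same route as the proof of Corollary 4.5 in \cite{guth23}, which this paper cites rather than reproves: reduce the claim to $|g_k^{\ell}(x)|\le\tfrac12 g_k(x)$, invoke the low lemma bounding $|g_k^{\ell}|$ by the next-scale square function (plus $\beta$ at the top scale and rapidly decaying errors), and then use the nested thresholds $g_l(x)<C^{N-l}\beta$ for $l>k$ versus $g_k(x)\ge C^{N-k}\beta$ on $\Omega_k$, choosing $C$ large. One minor point: the low lemma needs only the Fourier-support separation of the off-diagonal products $f_{\tau_{k+1}}^{k+1}\overline{f_{\tau_{k+1}'}^{k+1}}$ and the monotonicity $|f_{\tau_{k+1}}^{k+1}|\le|f_{\tau_{k+1}}^{k+2}|$ from the pruning, not the $L^\infty$ height bound you invoke, but this does not affect the validity of the argument.
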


\begin{lemma}[Pruning lemma, Lemma 4.6 of \cite{guth23}]\label{ftofk} Let $0 < s \leq 1$.\footnote{In Lemma 4.6 of \cite{guth23}, they stated the range of $s$: $s>R^{-\e/n}$. But by the same argument, one can prove the same statement without a restriction on the range of $s$.} For any $\tau \in  \Theta^n(s^{-n})$, for any $N_0 \leq k \leq N-1$,
\begin{align*} 
\Big|\sum_{\tau_k\subset\tau}f_{\tau_k}(x)-\sum_{\tau_k\subset\tau}f_{\tau_k}^{k+1}(x) \Big|&\leq \frac{\alpha}{C^{1/2}K^3} \qquad\text{for all $x\in \Omega_k$}\\ \Big|\sum_{\tau_{N_0}\subset\tau}f_{\tau_{N_0} }(x)-\sum_{\tau_{N_0}\subset\tau}f_{\tau_{N_0} }^{{N_0}}(x) \Big|&\leq \frac{\alpha}{C^{1/2}K^3}\qquad \text{ for all $x\in L$}. 
\end{align*}
\end{lemma}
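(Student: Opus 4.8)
The plan is to write each of the two differences in the lemma as a telescoping sum over the intermediate scales and to bound the contribution of each scale separately on the set in question. For a cap $\tau_j\in\Theta^n(R_j)$, let $B_{\tau_j}:=\sum_{T_{\tau_j}\in\mathbb T_{\tau_j}\setminus\mathbb T_{\tau_j}^g}\psi_{T_{\tau_j}}f_{\tau_j}^{j+1}$ be the part of $f_{\tau_j}^{j+1}$ thrown away when pruning at scale $j$, so that $f_{\tau_j}^{j}=f_{\tau_j}^{j+1}-B_{\tau_j}$ (because $\sum_{T_{\tau_j}}\psi_{T_{\tau_j}}=1$). Summing this identity over the caps $\tau_j$ inside a fixed coarser cap, iterating over $j$, and using $f_{\tau_k}=f_{\tau_k}^{N}$ after the pigeonholing of Proposition~\ref{wpd}, one obtains
\[
\sum_{\tau_k\subset\tau}\bigl(f_{\tau_k}-f_{\tau_k}^{k+1}\bigr)=\sum_{j=k+1}^{N-1}\sum_{\tau_j\subset\tau}B_{\tau_j},\qquad \sum_{\tau_{N_0}\subset\tau}\bigl(f_{\tau_{N_0}}-f_{\tau_{N_0}}^{N_0}\bigr)=\sum_{j=N_0}^{N-1}\sum_{\tau_j\subset\tau}B_{\tau_j}.
\]
Hence it is enough to prove that $\bigl|\sum_{\tau_j\subset\tau}B_{\tau_j}(x)\bigr|\lesssim\alpha K^{-3}C^{-1}$ for every scale $j$ in the relevant range and every $x\in\Omega_k$ (resp.\ $x\in L$): summing over the at most $N=\e^{-1}$ scales and taking $C$ sufficiently large in terms of $\e$ so that $N/C\le C^{-1/2}$ yields the claimed bound $\alpha C^{-1/2}K^{-3}$. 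The rapidly decaying tails of the $\psi_{T_{\tau_j}}$ only contribute an error of size $\rap\cdot\max_\theta\|f_\theta\|_{L^\infty(\R^n)}$, which is harmless since $\alpha>C_\e R^{-100n}\max_\theta\|f_\theta\|_{L^\infty(\R^n)}$.

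Fix a scale $j$ and a point $x$, and let $M_j(x)$ be the number of caps $\tau_j\subset\tau$ for which the tube $T_{\tau_j}\in\mathbb T_{\tau_j}$ through $x$ has been pruned. By Cauchy--Schwarz, $\sum_{\tau_j\subset\tau}|B_{\tau_j}(x)|\le M_j(x)^{1/2}\bigl(\sum_{\tau_j\subset\tau}|B_{\tau_j}(x)|^2\bigr)^{1/2}$, and both factors are controlled by $g_j(x)=\sum_{\tau_j}|f_{\tau_j}^{j+1}|^2*w_{\tau_j^*}(x)$. Indeed $|B_{\tau_j}|\le|f_{\tau_j}^{j+1}|$ and $f_{\tau_j}^{j+1}$ has Fourier support in a bounded dilate of $\tau_j$ by Lemma~\ref{0622.lem58}, so the locally constant property (Lemma~\ref{0622.lem59}) gives $\sum_{\tau_j\subset\tau}|B_{\tau_j}(x)|^2\lesssim g_j(x)$; and a pruned tube $T_{\tau_j}\ni x$ satisfies $\|\psi_{T_{\tau_j}}^{1/2}f_{\tau_j}^{j+1}\|_\infty>K^3C^{N-j+1}\beta/\alpha$, so again by the locally constant property $|f_{\tau_j}^{j+1}|^2*w_{\tau_j^*}(x)\gtrsim(K^3C^{N-j+1}\beta/\alpha)^2$, and summing these distinct nonnegative terms of $g_j(x)$ gives $M_j(x)\lesssim g_j(x)\,\alpha^2K^{-6}C^{-2(N-j+1)}\beta^{-2}$. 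Multiplying, $\sum_{\tau_j\subset\tau}|B_{\tau_j}(x)|\lesssim g_j(x)\,\alpha K^{-3}C^{-(N-j+1)}\beta^{-1}$. Now if $x\in\Omega_k$ then $x\notin\Omega_l$ for $k<l\le N-1$, which together with $x\in U_{\alpha,\beta}$ and Definition~\ref{impsets} forces $g_l(x)<C^{N-l}\beta$ for all such $l$; likewise $x\in L$ forces $g_l(x)<C^{N-l}\beta$ for all $N_0\le l\le N-1$. In both cases $g_j(x)<C^{N-j}\beta$ for every $j$ occurring in the telescoping sum, and plugging this in gives $\sum_{\tau_j\subset\tau}|B_{\tau_j}(x)|\lesssim\alpha K^{-3}C^{-1}$, as needed.

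The main obstacle is this single-scale estimate with its decisive gain of a factor $C^{-1}$. A naive tube-by-tube bound on $B_{\tau_j}$ is far too lossy; the point is that both the $\ell^2$-sum $\sum_{\tau_j}|B_{\tau_j}(x)|^2$ and the \emph{number} $M_j(x)$ of caps contributing at $x$ can be bounded by the same quantity $g_j(x)$ — the first through the locally constant property, the second through the very inequality defining a pruned tube — so that Cauchy--Schwarz produces $g_j(x)$ times a small power of $C$, and the pointwise upper bound $g_j(x)<C^{N-j}\beta$ valid exactly on $\Omega_k$ (resp.\ $L$) then eats the remaining power. Everything else — the telescoping identity, the negligibility of the $\psi$-tails, and the final choice of $C$ depending only on $\e$ and the ambient dimension — is routine.
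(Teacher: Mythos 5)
Your proof is correct. The paper itself does not prove this lemma (it cites Lemma 4.6 of \cite{guth23}), and your argument follows the same overall route as that reference: telescope over the intermediate scales, extract a factor $C^{-(N-j+1)}$ at scale $j$ from the pruning threshold, and then use that $x\in\Omega_k$ (resp.\ $x\in L$) forces $g_j(x)<C^{N-j}\beta$ for every $j$ appearing in the telescoping sum, so that each scale contributes $\lesssim \alpha K^{-3}C^{-1}$ and the sum over the $\le\e^{-1}$ scales is absorbed by choosing $C$ large in terms of $\e$. The one place you deviate is the single-scale estimate: the reference bounds each bad tube's contribution pointwise by writing $\psi_T(x)|f^{j+1}_{\tau_j}(x)|\le\psi_T(x)^{1/2}\|\psi_T^{1/2}f^{j+1}_{\tau_j}\|_\infty\le\frac{\alpha}{K^3C^{N-j+1}\beta}\psi_T(x)^{1/2}\|\psi_T^{1/2}f^{j+1}_{\tau_j}\|_\infty^2$ and then sums to get $g_j(x)$ directly, whereas you apply Cauchy--Schwarz over the caps $\tau_j$ and bound the cap count $M_j(x)$ and the $\ell^2$ sum separately by $g_j(x)$ times the appropriate powers of the threshold. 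Both mechanisms yield the same bound $\frac{\alpha}{K^3C^{N-j+1}\beta}g_j(x)$; yours trades the pointwise ``upgrade by a factor $\ge 1$'' trick for a counting argument, at the cost of having to be slightly more careful that $B_{\tau_j}(x)\ne 0$ only ``essentially'' forces a pruned tube near $x$ --- but you correctly dispose of the distant-tube tails using $\alpha>C_\e R^{-100n}\max_\theta\|f_\theta\|_\infty$, which is exactly the standard resolution.
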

This finishes the high/low setup. 
\medskip

\section{Proof of Proposition \ref{0621.prop51}}\label{0703.sec6}

Recall from Sections \ref{section4} and \ref{0627.sec6} that Theorem
\ref{0501.thm12} has been reduced to Proposition \ref{0621.prop51}, and
that the corresponding cone estimate, Theorem \ref{05.03.thm13}, has
been reduced to Proposition \ref{0621.prop52}. In this section, we prove
Proposition \ref{0621.prop51} by induction on the dimension, assuming the
auxiliary cone estimate stated below as Proposition \ref{1020.prop313}.
This cone estimate is proved in Section \ref{sec7} and yields Proposition
\ref{0621.prop52}. Thus, in view of the reductions in Sections
\ref{section4} and \ref{0627.sec6}, Sections \ref{0703.sec6} and
\ref{sec7} complete the proofs of Theorems \ref{0501.thm12} and
\ref{05.03.thm13}.

  To prove Proposition \ref{0621.prop51},
we use an induction on the dimension $n$. The base case is $n=2$, and this case is already proved by \cite{MR320624} (see also \cite{MR688026}). Throughout the section, let us fix $A$ and $\e$ (see the definition of $\mathcal{C}_n(A,R,\e)$ in Subsection \ref{51red}), and assume the following induction hypothesis.

\subsection*{Induction hypothesis} Assume that Proposition \ref{0621.prop51} holds in all dimensions up to \(n-1\).

\begin{definition}\label{1003.def61}
Let $S_p(1,R)$  be the smallest constant such that 
 \begin{equation}\label{0621.61}
        \big( \alpha^p |U_{\alpha}| \big)^{\frac1p} \leq S_{p}(1,R) \Big\|(\sum_{\tau_N \in \Theta^n(R) }|f_{\tau_N}^N|^{\frac{p}{2} }*w_{\tau_N^*})^{\frac2p} \Big\|_{L^p(\R^n)}
    \end{equation}
    for all curves $\gamma_n \in \mathcal{C}_n(A,R,\e)$, all $\alpha>0$, and all  Schwartz functions $f$ whose Fourier supports are in $\mathcal{M}^n(\gamma_n;R)$.
\end{definition}

Here, recall that $\{\tau_N\}=\{\theta\}$ and \eqref{06.01.57} for the definition of $f_{\tau_N}^N$.

Let us give one remark on the definition. Since $|f_{\tau_N}^{N}| \leq |f_{\theta}|$, one may see that
\begin{equation}
    \|f\|_{L^p} \leq C_{\e}R^{\e} \big(1+ S_p(1,R) \big)
    \Big\|(\sum_{\theta \in \Theta^n(R) }|f_{\theta}|^{\frac{p}{2} }*w_{\theta^*})^{\frac2p} \Big\|_{L^p(\R^n)}.
\end{equation}
This inequality is more directly related to Proposition \ref{0621.prop51}. However,
by some technical reasons, we define the constant $S_p(1,R)$ by using the inequality \eqref{0621.61}.

$S_p(1,R)$ is the constant we lose when we ``decouple" the function from scale $1$ to scale $R$.
Recall that we have introduced intermediate scales $R_k$ (see the beginning of Subsection \ref{sec52}). Our proof relies on a multiscale analysis. In the following, we also define the constant from the scale $R_k$ to the last scale $R$. 

\begin{definition}\label{0621.def62}
For $1  \leq k \leq N-1$, let $S_p(R_k,R)$  be the smallest constant such that 
\begin{equation}\label{0621.63}
        \Big\|(\sum_{\tau_k \in \Theta^n(R_k) }|f_{\tau_k}^{k+1}|^{\frac{p}{2}} *w_{\tau_k^*} )^{\frac2p} 
 \Big\|_{L^p(\R^n)} \leq S_{p}(R_k,R) \Big\|(\sum_{\tau_N \in \Theta^n(R) }|f_{\tau_N}^{N}|^{\frac{p}{2}} *w_{\tau_N^*} )^{\frac2p} \Big\|_{L^p(\R^n)}
    \end{equation}
for all curves $\gamma_n \in \mathcal{C}_n(A,R,\e)$, and all Schwartz functions $f$ whose Fourier supports are in $\mathcal{M}^n(\gamma_n;R)$.
\end{definition}

By definition, Proposition \ref{0621.prop51} follows from
\begin{equation}
    S_{p}(1,R) \leq C_{\e}R^{200^n\e^{\frac12}}R^{\frac1n(\frac12-\frac2p)}
\end{equation}
for $4 \leq p \leq n^2+n-2$. To prove the estimate, we use a broad-narrow analysis originated from \cite{MR2860188}, where they applied the broad-narrow analysis to a hypersurface. Later, \cite{MR3161099} applied the broad-narrow analysis to a nondegenerate curve in $\mathbb{R}^n$. 
\medskip

We say $\tau^1,\ldots,\tau^{n} \in \Theta^n(K)$ are $K$-transverse if $d(\tau^{j_1},\tau^{j_2}) > K^{-\frac1n}$ for any $j_1 \neq j_2$. Recall \eqref{0706.515}.
For a $K$-transverse tuple $\{\tau^j\}_{j=1}^{n} \subset \Theta^n(K)$, define the broad set
\begin{equation}\label{0702.75}
    \BR_{\alpha,\beta}^{K}:= \big\{ x \in U_{\alpha,\beta}: \alpha \leq K \avprod_{j=1}^n |f_{\tau^j}(x)|, \;\; \max_{\tau^j}|f_{\tau^j}(x)| \leq \alpha \big\}.
\end{equation}
Let $S_{\BR,p,K}(1,R)$ be the smallest constant such that
\begin{equation}
    \alpha^p |\BR_{\alpha,\beta}^K| \leq S_{\BR,p,K}(1,R)^p \Big\|(\sum_{\tau_N \in \Theta^n(R) }|f_{\tau_N}^N|^{\frac{p}{2}}*w_{\tau_N^*})^{\frac2p} \Big\|_{L^p(\R^n)}^p
\end{equation}
for all $\alpha,\beta$, any choice of a $K$-transverse tuple, any functions $f$ whose Fourier supports are in $\mathcal{M}^n(\gamma_n;R)$ and any curves $\gamma_n \in \mathcal{C}_n(A,R,\e)$.

By a standard broad-narrow analysis, one can show that bounding $S_p(1,R)$ is reduced to bounding $S_{\mathrm{Br},p,K}(1,R)$.

\begin{proposition}\label{05.25.prop84} For any $\e>0$, there exists $K>0$ depending on $\e$ so that 
\begin{equation}
    S_p(1,R) \leq C_{\e}R^{\e} \sup_{1 \leq X \leq R} S_{\BR,p,K}(1,X)+ C_{\e}R^{\frac1n(\frac12-\frac2p)+ \e}
\end{equation}
for all $R \geq 1$.
\end{proposition}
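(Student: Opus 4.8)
This is the standard Bourgain--Guth broad--narrow reduction, and I would prove it by running the dichotomy at a single scale $K=K(\e)$ and then iterating the resulting recursion down to a bounded scale.

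Fix $\alpha>0$ (we may assume $\alpha>C_\e R^{-100n}\max_\theta\|f_\theta\|_{L^\infty(\R^n)}$, the remaining range being a routine tail estimate), and, after the wave-packet reduction of Proposition \ref{wpd}, write $f=\sum_{\tau\in\Theta^n(K)}f_\tau$ for the relevant pruned pieces. For $x\in U_\alpha$ there are two cases. Either there is a $K$-transverse tuple $\tau^1,\dots,\tau^n\in\Theta^n(K)$ with $\alpha\lesssim K\avprod_{j=1}^n|f_{\tau^j}(x)|$ and $\max_j|f_{\tau^j}(x)|\le\alpha$, or no such tuple exists; in the latter ``narrow'' case the standard combinatorial fact---a family of length-$K^{-1/n}$ intervals no $n$ of which are pairwise $>K^{-1/n}$-separated is contained in $\le n-1$ windows of length $\lesssim_nK^{-1/n}$, hence meets $\lesssim_n1$ caps---together with the trivial bound on the remaining caps forces $|f(x)|\lesssim_n\max_{\tau\in\Theta^n(K)}|f_\tau(x)|$. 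For the broad part, decomposing over the $O(\log R)$ dyadic values of $\beta$ and the $\lesssim K$ transverse tuples and invoking the definition of $S_{\BR,p,K}$ gives $\alpha^p|U_\alpha^{\mathrm{br}}|\lesssim_K(\log R)\,S_{\BR,p,K}(1,R)^p\,\big\|\big(\sum_{\tau_N}|f_{\tau_N}^N|^{p/2}*w_{\tau_N^*}\big)^{2/p}\big\|_{L^p}^p$.

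For the narrow part I would use that $S_p$ is defined through the superlevel-set inequality \eqref{0621.61}: since $U_\alpha^{\mathrm{nr}}\subset\bigcup_{\tau\in\Theta^n(K)}\{x:|f_\tau(x)|\gtrsim_n\alpha\}$, we get $\alpha^p|U_\alpha^{\mathrm{nr}}|\lesssim_n\sum_{\tau\in\Theta^n(K)}\alpha^p|\{x:|f_\tau(x)|\gtrsim_n\alpha\}|$, i.e. a single superlevel set per $\tau$ and hence no logarithmic loss. Now parabolically rescale each $\tau$: there is an affine map sending $\mcM^n(\gamma_n;R)\cap\tau$ onto $\mcM^n(\widetilde\gamma_{n,\tau};R/K)$ for some $\widetilde\gamma_{n,\tau}\in\mathcal{C}_n(A',R/K,\e)$---the class $\mathcal{C}_n$ having been set up precisely to be essentially stable under this operation---and this rescaling is compatible with the wave-packet pruning as in \cite{guth23}. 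Applying the defining inequality of $S_p(1,R/K)$ to the rescaled functions at height $\sim\alpha$, undoing the rescaling, and recombining via $\sum_\tau\big(\sum_{\tau_N\subset\tau}a_{\tau_N}\big)^2\le\big(\sum_{\tau_N}a_{\tau_N}\big)^2$ yields $\alpha^p|U_\alpha^{\mathrm{nr}}|\lesssim_nS_p(1,R/K)^p\,\big\|\big(\sum_{\tau_N}|f_{\tau_N}^N|^{p/2}*w_{\tau_N^*}\big)^{2/p}\big\|_{L^p}^p$. Combining the two parts gives the recursion
\begin{equation*}
S_p(1,R)\le C_\e R^\e\,S_{\BR,p,K}(1,R)+C_n\,S_p(1,R/K),
\end{equation*}
in which the coefficient $C_n$ of the narrow term does not depend on $K$.

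Finally I would iterate this $O(\log_K R)$ times, stopping once the scale falls below $K$, where $S_p(1,X)\lesssim_K1$ holds trivially (only $\lesssim_K1$ caps, so the triangle inequality and the embedding $\ell^{p/2}\hookrightarrow\ell^\infty$ suffice). This produces $S_p(1,R)\lesssim_\e R^\e\,C_n^{O(\log_K R)}\big(\sup_{1\le X\le R}S_{\BR,p,K}(1,X)+1\big)$; choosing $K=K(\e)$ large enough---of order $C_n^{1/\e}$, which is legitimate exactly because $C_n$ is $K$-independent---forces $C_n^{O(\log_K R)}\le R^\e$, and relabelling $\e$ gives the asserted bound. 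The genuine obstacle is not any single inequality but the verification that the parabolic rescaling in the narrow case interacts correctly with the pruned wave-packet quantities $f_{\tau_N}^N$ (so that the narrow loss is a $K$-independent constant with no logarithmic factor); the Bourgain--Guth combinatorics and the $R^\e$ bookkeeping are routine.
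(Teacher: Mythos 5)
Your proposal is correct and follows essentially the same route as the paper: a Bourgain--Guth broad/narrow dichotomy at scale $K$, with the broad part absorbed into $S_{\BR,p,K}(1,R)$ and the narrow part handled by rescaling each $\tau\in\Theta^n(K)$ into the class $\mathcal{C}_n$ and invoking $S_p(1,R/K)$, followed by iterating the recursion with a $K$-independent narrow constant and taking $K=K(\e)$ large. The only cosmetic difference is that the paper phrases the dichotomy via the count of ``significant caps'' $|\mathcal{C}|>K^{\e}$ rather than your interval-separation lemma, which is equivalent.
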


To bound $S_{\mathrm{Br},p,K}(1,R)$, we use a multiscale argument. Here is the key estimate. For technical reasons (see \eqref{0621.624} and the discussion following it), we need to distinguish $n=3$ from $n \geq 4$. 

\begin{proposition}\label{05.01.thm61} Assume that Proposition \ref{0621.prop51} is true for $n-1$ for any $A,\epsilon>0$.

Let $n=3$. For $8 \leq p \leq 10$, we have
\begin{equation}
\begin{split}
    S_{\BR,p,K}(1,R) &\lesssim R^{10^n N_0\e}R^{\frac13(\frac12-\frac2p)}
    \\&
    + R^{1000 \e}R^{\frac13(\frac12-\frac2p)}  \sum_{j=N_0}^{N} \Big(\frac{(R_j)^{\frac13(\frac12-\frac28) }}{R^{\frac13(\frac12-\frac28)}} \Big)^{\frac{8}{p}} S_8(R_j, R )^{\frac{8}{p}}.
\end{split}
\end{equation}
Let $n \geq 4$. For $ p = n^2+n-2$, we have
    \begin{equation}
        S_{\BR,p,K}(1,R) \lesssim R^{10^n N_0\e}R^{\frac1n(\frac12-\frac2p)}+ R^{100^n\e}\sum_{j=N_0}^{N} (R_j)^{\frac1n(\frac12-\frac2p) }S_p(R_j, R).
    \end{equation}
    For $p=4$, we have
    \begin{equation}
        S_p(1,R) \lesssim R^{10^n N_0 \e}+ R^{100^n\e}.
    \end{equation}
\end{proposition}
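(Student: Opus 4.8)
The plan is to prove Proposition~\ref{05.01.thm61} by a multiscale induction-on-scales argument, combining the broad-narrow reduction (Proposition~\ref{05.25.prop84}) with the high/low decomposition set up in Section~5. I begin with the broad set $\BR_{\alpha,\beta}^K$ and run the pruning process: replace $f$ by $f^N$ as in \eqref{06.01.57}, so that every wave packet at the finest scale has height comparable to $1$, and then prune at each intermediate scale $R_k$ to obtain $f^k$. Using Lemma~\ref{ftofk}, on each high set $\Omega_k$ and on the low set $L$ the pruned function $f^k$ differs from $f$ by at most $\alpha/(C^{1/2}K^3)$, which is negligible against the broad hypothesis $\alpha\le K\avprod_{j}|f_{\tau^j}(x)|$; so on $\BR_{\alpha,\beta}^K$ we may work with $f^k$ in place of $f$. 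We then split $\BR_{\alpha,\beta}^K=\big(\BR_{\alpha,\beta}^K\cap L\big)\cup\bigcup_{k=N_0}^{N-1}\big(\BR_{\alpha,\beta}^K\cap\Omega_k\big)$ and estimate each piece.

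For the low contribution $\BR_{\alpha,\beta}^K\cap L$, the key point is that $g_{N_0}^\ell$ is essentially constant at scale $R_{N_0+1}^{1/n}$ and, by the definition of $L$, the full square function $g_{N_0}$ is comparable to its low part there; combining this locally-constant property (Lemma~\ref{0622.lem59}) with the flat-decoupling/base-scale bound yields a loss of only $R^{O(N_0\e)}R^{\frac1n(\frac12-\frac2p)}$, which is the first term. For the high contribution at scale $R_k$ with $k\ge N_0$, we use high-dominance on $\Omega_k$ (Lemma~\ref{highdom}), which converts a pointwise bound for $g_k$ into a bound for $|g_k^h|$; the Fourier support of $g_k^h$ is an annulus of radius $\sim R_{k+1}^{-1/n}$ in each $\tau_k$-block, so a refined $L^2$ (or $L^p$ for $p=n^2+n-2$) orthogonality argument at scale $R_{k+1}$ lets us pass from scale $R_k$ to the curve-block structure at scale $R$, contributing the factor $(R_j)^{\frac1n(\frac12-\frac2p)}S_p(R_j,R)$. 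At this stage one also uses the lower-dimensional induction: the transversality in the broad set lets one project onto an $(n-1)$-dimensional problem, apply Proposition~\ref{0621.prop51} in dimension $n-1$, and feed in the square function/wave-envelope estimates from Section~3 (Proposition~\ref{0501.prop22}) to reassemble.

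The dimensional split in the statement reflects a genuine obstruction: for $n=3$ the exponent range $8\le p\le 10$ is needed because the $L^4$ wave envelope estimate does not directly interpolate up to $p=n^2+n-2=10$ in a way that keeps the weights under control, so one must use the technical interpolation lemma (Proposition~\ref{05.26.prop82}) to trade $\|f^N_{\tau_N}\|_\infty^{p-8}\sim 1$ for the passage from $S_8$ to $S_p$, which explains the $8/p$-powers in the $n=3$ bound; for $n\ge4$ one works at the single critical exponent $p=n^2+n-2$ and the clean $S_p$ recursion suffices, while $p=4$ is handled separately and more cheaply since there the wave envelope estimate is essentially sharp with no scale loss beyond $R^{O(N_0\e)}$. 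The main obstacle I anticipate is bookkeeping the constants $C,K,N_0$ consistently through the pruning so that the errors in Lemma~\ref{ftofk} really are absorbed by the broad hypothesis on \emph{every} $\Omega_k$ simultaneously, and ensuring that the high-frequency orthogonality at scale $R_{k+1}$ does not lose more than the claimed $(R_j)^{\frac1n(\frac12-\frac2p)}$; once the high/low machinery of \cite{guth23} is correctly transplanted to the nondegenerate curve $\gamma_n\in\mathcal{C}_n(A,R,\e)$, the recursion closes and summing the geometric series in $j$ (using $\sum_j (R_j)^{\frac1n(\frac12-\frac2p)}S_p(R_j,R)\lesssim R^\e R^{\frac1n(\frac12-\frac2p)}$ after the induction hypothesis $S_p(R_j,R)\lesssim (R/R_j)^{\frac1n(\frac12-\frac2p)+O(\e)}$) gives the desired bound on $S_p(1,R)$.
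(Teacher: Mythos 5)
Your skeleton matches the paper's: broad--narrow reduction, pruning to $f^k$, splitting $\BR_{\alpha,\beta}^K$ over $L$ and the $\Omega_k$, handling $L$ via Lemma~\ref{ftofk} plus H\"older to produce the first term, and diagnosing the $n=3$ versus $n\geq 4$ split as the point where Proposition~\ref{05.26.prop82} must be invoked to trade $\sup_{\tau_N}\|f^N_{\tau_N}\|_\infty^{p-8}$ for the passage from $S_8$ to the $L^p$ bound. That part is essentially the paper's argument.

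However, the core of the high case is not correctly described, and as written it has a genuine gap. First, the role of the transversality of the broad set is not to ``project onto an $(n-1)$-dimensional problem''; it is to run the \emph{multilinear restriction estimate} on balls $B_{R_k^{1/n}}$, which is what converts the pointwise lower bound $\alpha\lesssim K\avprod_j|f_{\tau^j}|$ into $\alpha^{2n}|\BR_{\alpha,\beta}^K\cap\Omega_k\cap B_{R_k^{1/n}}|\lesssim |B_{R_k^{1/n}}|\,g_k(x)^n$. You never invoke this step, and without it there is no mechanism producing a power of $\alpha$ on the left that can later be matched against $\beta$ via $\alpha^2\lesssim R_k^{1/n}R^\e\beta$. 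Second, the factor $(R_j)^{\frac1n(\frac12-\frac2p)}S_p(R_j,R)$ cannot come from ``a refined $L^2$ (or $L^p$) orthogonality argument at scale $R_{k+1}$.'' In the paper it arises from two distinct inputs: (i) the lower bound $g_k\gtrsim\beta$ together with high-dominance lets one write $g_k^n\lesssim\beta^{n-\tilde p}|g_k^h|^{\tilde p}$ at the \emph{cone} exponent $\tilde p=n^2-n-2$ (not $p=n^2+n-2$), and then the Fourier support of $g_k^h$ — after a dyadic decomposition in the distance $sR_k^{-1/n}$ from the flat direction and an anisotropic rescaling — lies in a cone region to which Proposition~\ref{1020.prop313} (available by the induction hypothesis in dimension $n-1$) applies, yielding the scale factor $R_k^{\frac1n(\frac12-\frac2{\tilde p})\tilde p}$; and (ii) the definition of $S_p(R_k,R)$ applied to $\int(\sum_{\tau_k}|f_{\tau_k}^{k+1}|^{p/2}*w_{\tau_k^*})^2$, with the remaining exponents closed using $\|f_{\tau_k}^{k+1}\|_\infty\lesssim\beta/\alpha$. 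The dichotomy $n=3$ versus $n\geq4$ is then precisely the question of whether $p/2\leq\tilde p$, i.e.\ \eqref{0621.624}, which is the concrete reason the $n=3$ case routes through $S_8$ with the $8/p$ powers. Your last paragraph on summing $\sum_j(R_j)^{\frac1n(\frac12-\frac2p)}S_p(R_j,R)$ belongs to the deduction of Proposition~\ref{0621.prop51} from this proposition, not to its proof, so it is harmless but out of place.
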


Here \(N_0\) is the cutoff index introduced in Section \ref{sec64}; it will be chosen as
\[
N_0=\lfloor \epsilon^{-1/2}\rfloor
\]
at the end of the bootstrap argument.
By the above proposition, bounding $S_{\mathrm{Br},p,K}(1,R)$ is reduced to bounding $S_p(R_j,R)$. We next relate $S_p(R_j,R)$ to $S_p(1,X)$. 

\begin{proposition}\label{05.24.prop53} Let \(4\le p\le n^2+n-2\) and \(N_0\le k\le N-1\). Then
    \begin{equation}
        S_p(R_k,R) \lesssim R^{\e} \Big( \sup_{1 \leq X \leq R^{\e}}S_p(1,X) \big(\frac{R^{\e}}{X} \big)^{\frac1n(\frac12-\frac2p)}  \Big)^{N-k}.
    \end{equation}
\end{proposition}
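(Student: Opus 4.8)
The plan is to prove Proposition~\ref{05.24.prop53} by a multiscale (telescoping) argument, splitting the passage from scale $R_k$ to scale $R=R_N$ into the $N-k$ single steps $R_j\to R_{j+1}=R^{\e}R_j$ and handling each single step by a parabolic rescaling. The two structural facts that make the chain close are: (i) $f_{\tau_j}^{j+1}=\sum_{\tau_{j+1}\subset\tau_j}f_{\tau_{j+1}}^{j+1}$, which is immediate from the pruning definition; and (ii) the pointwise bound $|f_{\tau_{j+1}}^{j+1}|\le|f_{\tau_{j+1}}^{j+2}|$, which holds because $f_{\tau_{j+1}}^{j+1}=\big(\sum_{T\in\mathbb{T}_{\tau_{j+1}}^g}\psi_{T}\big)f_{\tau_{j+1}}^{j+2}$ with $0\le\sum_{T\in\mathbb{T}_{\tau_{j+1}}^g}\psi_{T}\le1$. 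Fact (ii) is used after each step to restore the pruning level expected at the next scale (and, a fortiori, to discard the extra pruning produced by the rescaled problem).

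The heart of the matter is the single-step inequality: for $k\le j\le N-1$,
\begin{equation*}
  \Big\|\big(\sum_{\tau_j}|f_{\tau_j}^{j+1}|^{p/2}*w_{\tau_j^*}\big)^{2/p}\Big\|_{L^p(\R^n)}
  \lesssim R^{C\e^2}\Big(\sup_{1\le X\le R^{\e}}S_p(1,X)\,(R^{\e}/X)^{\frac1n(\frac12-\frac2p)}\Big)
  \Big\|\big(\sum_{\tau_{j+1}}|f_{\tau_{j+1}}^{j+2}|^{p/2}*w_{\tau_{j+1}^*}\big)^{2/p}\Big\|_{L^p(\R^n)}.
\end{equation*}
To prove it I would first use the locally constant property (Lemma~\ref{0622.lem59}) and the finite overlap of the Fourier supports of the $|f_{\tau_j}^{j+1}|^{p/2}*w_{\tau_j^*}$ to localize the left-hand quantity, up to $(\log R)^{O(1)}$, to $\big(\sum_{\tau_j}\|f_{\tau_j}^{j+1}\|_{L^p(\R^n)}^p\big)^{1/p}$. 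For each fixed $\tau_j$ apply the affine map $L_{\tau_j}$ carrying $\tau_j$ to the unit cube; because we have reduced to the polynomial class $\mathcal{C}_n(A,R,\e)$, which is (up to an acceptable reparametrization) preserved under these rescalings, the rescaled function $\widetilde f=f_{\tau_j}^{j+1}\circ L_{\tau_j}^{-1}$ has Fourier support in $\mcM^n(\widetilde\gamma_n;X)$ for an admissible $\widetilde\gamma_n$ and some $X\le R^{\e}$, with its sub-pieces matching the rescaled $f_{\tau_{j+1}}^{j+1}$ ($\tau_{j+1}\subset\tau_j$). Pigeonholing over the dyadic height reduces $\|\widetilde f\|_{L^p}$ to $\sup_\alpha(\alpha^p|U_\alpha|)^{1/p}$, so Definition~\ref{1003.def61} bounds it by $S_p(1,X)$ times the rescaled $l^{p/2}$ quantity; the factor $(R^{\e}/X)^{\frac1n(\frac12-\frac2p)}$ is the natural normalization (the expected ratio $S_p(1,R^{\e})/S_p(1,X)$) that makes this uniform in $X$. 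Undoing the rescaling, summing over $\tau_j$ via $\sum_{\tau_j}\big(\sum_{\tau_{j+1}\subset\tau_j}h_{\tau_{j+1}}\big)^2\le\big(\sum_{\tau_{j+1}}h_{\tau_{j+1}}\big)^2$, and invoking fact (ii) yields the single-step inequality.

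Iterating the single step for $j=k,\dots,N-1$ (the final step $R_{N-1}\to R$ needs no pruning-level correction, since $f_{\tau_N}^N$ is already the finest function) telescopes to
\begin{equation*}
  \Big\|\big(\sum_{\tau_k}|f_{\tau_k}^{k+1}|^{p/2}*w_{\tau_k^*}\big)^{2/p}\Big\|_{L^p(\R^n)}
  \lesssim R^{C\e^2(N-k)}\Big(\sup_{1\le X\le R^{\e}}S_p(1,X)\,(R^{\e}/X)^{\frac1n(\frac12-\frac2p)}\Big)^{N-k}\Big\|\big(\sum_{\tau_N}|f_{\tau_N}^{N}|^{p/2}*w_{\tau_N^*}\big)^{2/p}\Big\|_{L^p(\R^n)};
\end{equation*}
since $N=\e^{-1}$ we have $N-k\le\e^{-1}$, so $R^{C\e^2(N-k)}\le R^{C\e}$ and the accumulated $(\log R)^{O(N-k)}$ is $\lesssim R^{\e}$ for $R$ large depending on $\e$, which is precisely the asserted bound on $S_p(R_k,R)$. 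The main obstacle is the single step, and within it the bookkeeping across the parabolic rescaling: keeping track of how the pruning (with its global thresholds $K^3C^{N-k+1}\beta/\alpha$) and the $L^1$-normalized weights transform, so that the per-step loss is genuinely no worse than $\sup_{1\le X\le R^{\e}}S_p(1,X)\,(R^{\e}/X)^{\frac1n(\frac12-\frac2p)}$. This is exactly where the reduction to polynomial curves (to keep $\widetilde\gamma_n$ admissible) and the supremum over $X\le R^{\e}$ (to absorb the parametrization/threshold mismatch) are essential.
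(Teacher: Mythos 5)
Your global strategy (reduce to a single step $R_j\to R_{j+1}$ and telescope $N-k$ times, using $|f_{\tau_{j+1}}^{j+1}|\le|f_{\tau_{j+1}}^{j+2}|$ to restore the pruning level) matches the paper's. But the way you prove the single step has a genuine gap at its very first move. You claim that the locally constant property and "finite overlap of the Fourier supports" let you localize
\[
\Big\|\big(\textstyle\sum_{\tau_j}|f_{\tau_j}^{j+1}|^{p/2}*w_{\tau_j^*}\big)^{2/p}\Big\|_{L^p(\R^n)}^{p}
=\int\Big(\sum_{\tau_j}|f_{\tau_j}^{j+1}|^{p/2}*w_{\tau_j^*}\Big)^{2}
\]
to $\sum_{\tau_j}\|f_{\tau_j}^{j+1}\|_{L^p}^p$ up to $(\log R)^{O(1)}$. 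This is false: the pointwise inequality $\big(\sum_\tau g_\tau\big)^2\ge\sum_\tau g_\tau^2$ goes the wrong way, and the off-diagonal terms $\int g_{\tau}g_{\tau'}$ can cost a factor as large as the number of caps $\sim R_j^{1/n}$ (a bush of wave packets through a common point saturates this). Finite Fourier overlap is of no help because the Fourier supports of the nonnegative functions $|f_{\tau_j}^{j+1}|^{p/2}*w_{\tau_j^*}$ all contain a neighborhood of the origin. With this reduction the argument collapses to "decoupling into $\ell^p$ of the caps," which is strictly stronger than the stated $\ell^{p/2}$ estimate and not what is being proved.

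The paper's proof exists precisely to handle these cross terms: it applies the Kakeya-type wave envelope estimate (Proposition \ref{0616.thm33}) to the left-hand side, producing a sum over intermediate scales $\sigma$ and wave envelopes $U$, and then runs a dichotomy. For small $\sigma$ (only $R^{O(\e^5)}$ caps interact) H\"older reduces to $\sum_{\tau_j}\int|f_{\tau_j}^{j+1}|^p$, and there your rescaling-plus-definition-of-$S_p(1,X)$ step is exactly what is done. For large $\sigma$ the cross terms are genuinely present and are controlled by freezing one variable and invoking the decoupling theorem for nondegenerate curves in $\R^{n-1}$ of \cite{MR3548534} to pass to finer caps $\tau_j'\in\Theta^n(rR_jR^{\e^5})$, closed by a backward induction on the auxiliary scale $r$; this is also where the supremum over $X=R^{\e}/r$ and the normalizing factor $(R^{\e}/X)^{\frac1n(\frac12-\frac2p)}$ actually originate. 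Your proposal contains neither the wave-envelope decomposition nor the lower-dimensional decoupling input, so the single-step inequality is not established.
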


The left hand side of \eqref{0621.63} contains the $L^p$ norm of the $l^{p/2}$ expression, whereas the left hand side of \eqref{0621.61} contains the $L^p$ norm of $f$. To relate these two, we use Proposition \ref{0616.thm33}. After applying the proposition, in the high case, we use rescaling together with the definition of $S_p(1,R)$. In the low case, we use the decoupling theorem for a nondegenerate curve in $\mathbb{R}^{n-1}$ by \cite{MR3548534}. This gives the proof of Proposition \ref{05.24.prop53}.

Let us state the proposition for a cone. Using a bootstrapping argument originating in \cite{MR2288738}, one can prove the following proposition.

\begin{proposition}\label{1020.prop313}
Assume that Proposition \ref{0621.prop51} is true for $n-1$ for any $A,\epsilon>0$.
    Then for $4 \leq p \leq n^2-n-2$ and $\e'>0$,
       \begin{equation*}
        \|g\|_{L^p(\R^n)} \leq C_{p,\e'} R^{\frac1n(\frac{1}{2}-\frac2p)+\e' }  \Big\|(\sum_{\theta \in \Xi_n(R) }|g_{\theta}|^{\frac{p}{2}} * w_{\theta^*} )^{\frac2p}\Big\|_{L^p(\R^n)}
    \end{equation*}
       for all curves $\gamma_n \in \mathcal{C}_n(A,R,\e)$, and all Schwartz functions $g$ whose Fourier supports are in $\Gamma_n(\gamma_n;R)$.
\end{proposition}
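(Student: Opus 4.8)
The plan is a scale-induction (bootstrapping) on $R$, in the spirit of Pramanik--Seeger \cite{MR2288738}, whose only external input is the curve estimate Proposition \ref{0621.prop51} in dimension $n-1$ (available by the induction hypothesis). After the usual reduction to polynomial curves (as in the proof of Proposition \ref{0621.prop53}) we work throughout with $\gamma_n\in\mathcal{C}_n(A,R,\e)$; let $D_p(R)$ be the best constant, i.e.\ the least number with
\[ \|g\|_{L^p(\R^n)}\le D_p(R)\,\Big\|\big(\textstyle\sum_{\theta\in\Xi_n(R)}|g_\theta|^{p/2}*w_{\theta^*}\big)^{2/p}\Big\|_{L^p(\R^n)} \]
for all admissible $\gamma_n$ and all $g$ with $\supp\wh g\subset\Gamma_n(\gamma_n;R)$. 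The goal is $D_p(R)\lesssim_{\e'}R^{\frac1n(\frac12-\frac2p)+\e'}$ for $4\le p\le n^2-n-2$; the base cases $R=O(1)$ are trivial.

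First I would record the \emph{cylinder structure} of the cone. The flat direction of $\Gamma_n(\gamma_n;R)$ is $\gamma_n'(t)$, and quotienting $\R^n$ by the line $\mathrm{span}\,\gamma_n'(t_0)$ sends $t\mapsto\gamma_n'(t)$, for $t$ near $t_0$, to a curve in $\R^{n-1}$ whose first $n-1$ derivatives at $t_0$ are $\gamma_n''(t_0),\dots,\gamma_n^{(n)}(t_0)$ read modulo $\gamma_n'(t_0)$, hence linearly independent by \eqref{12}; so it is (an affine image of) a nondegenerate curve $\gamma_{n-1}^{(t_0)}$ in $\R^{n-1}$. Consequently, on a physical ball of the appropriate radius and after an affine change of variables, a sector $\Gamma_n(\gamma_n;R)\cap\{|t-t_0|\lesssim\rho^{-1/n}\}$ is comparable to a piece of $\R\times\mathcal{M}^{n-1}(\gamma_{n-1}^{(t_0)};\rho^{(n-1)/n})$ (note $\rho^{-1/n}=(\rho^{(n-1)/n})^{-1/(n-1)}$, so the $\rho^{-1/n}$-arcs of the cone match the scale-$\rho^{(n-1)/n}$ caps of the $(n-1)$-dimensional curve). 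Tiling physical space by such balls, invoking Proposition \ref{0621.prop51} for $n-1$ at scale $\rho^{(n-1)/n}$, tensoring trivially (Fubini) in the flat variable for the weighted $l^{p/2}$ quantity, and using the weight lemmas of Section \ref{section4} to pass between caps and convolved weights, yields for every dyadic $1\le\rho\le R$ and $\e>0$
\[ \|g\|_{L^p(\R^n)}\ \lesssim_{\e}\ \rho^{\frac1n(\frac12-\frac2p)+\e}\,\Big\|\big(\textstyle\sum_{\tau}|g_\tau|^{p/2}*w_{\tau^*}\big)^{2/p}\Big\|_{L^p(\R^n)}, \]
$\tau$ ranging over the scale-$\rho$ sectors of $\Gamma_n(\gamma_n;R)$ (flat decoupling absorbs the mild discrepancy between these and $\Xi_n(\rho)$-caps into $\rho^{\e}$).

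The second ingredient is \emph{parabolic rescaling}: for each sector $\tau$ there is an affine map $L_\tau$ carrying $\tau$ (after stretching its $t$-arc to unit length) onto a set comparable to $\Gamma_n(\tilde\gamma_n;R/\rho)$ with $\tilde\gamma_n\in\mathcal{C}_n(A',R/\rho,\e)$ and $A'$ controlled, under which the scale-$R$ caps $\theta\subset\tau$ become the scale-$R/\rho$ caps of $\tilde\gamma_n$ and the weights $w_{\theta^*},w_{\tau^*}$ transform into the corresponding ones. Applying the definition of $D_p(R/\rho)$ sector-by-sector and summing turns the previous display into $D_p(R)\lesssim_{\e}\rho^{\frac1n(\frac12-\frac2p)+\e}D_p(R/\rho)$. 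Iterating this $m$ times with intermediate scale $\rho=R^{1/m}$ at each step (so one reaches scale $O(1)$), the exponents telescope, $\prod_j\rho^{\frac1n(\frac12-\frac2p)}=R^{\frac1n(\frac12-\frac2p)}$, while the accumulated $\e$-losses are $\prod_j\rho^{O(\e)}=R^{O(\e)}$; since $\e>0$ is ours to choose, $D_p(R)\lesssim_{\e}R^{\frac1n(\frac12-\frac2p)+O(\e)}$, which gives the proposition after taking $\e$ small in terms of $\e'$.

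The main obstacle is the cylinder step: making the reduction genuinely rigorous requires choosing the physical-space localization scale so that the discrepancy between the curved cone and its osculating cylinder is harmless, matching the anisotropic cap geometry of the cone with that of the $(n-1)$-dimensional curve (the $\lambda_i$-thicknesses must line up after rescaling), and propagating the convolved weights $w_{\theta^*}$ correctly through both the Fubini tensoring and the affine maps $L_\tau$. The rescaling step is routine but needs the verification that the image curves stay in $\mathcal{C}_n(A',\cdot,\e)$ with a uniformly bounded $A'$, which is precisely why one reduces to polynomial curves at the outset.
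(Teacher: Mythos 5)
Your overall architecture (a Pramanik--Seeger bootstrap in which the $(n-1)$-dimensional curve estimate enters through the cylinder structure of the cone) is the right family of ideas, and the cylinder observation is indeed the content of the paper's Lemma \ref{0616lem73}. However, the step you treat as the engine of the induction fails as stated. The one-step decoupling
\[ \|g\|_{L^p}\ \lesssim_{\e}\ \rho^{\frac1n(\frac12-\frac2p)+\e}\,\Big\|\big(\textstyle\sum_{\tau\in\Xi_n(\rho)}|g_\tau|^{p/2}*w_{\tau^*}\big)^{2/p}\Big\|_{L^p} \]
does not follow from quotienting by $\mathrm{span}\,\gamma_n'(t_0)$, because the cone is not globally a cylinder: the generator $\gamma_n'(t)$ rotates with $t$ while the radial parameter $\lambda_1$ ranges over all of $[\frac12,1]$. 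After projecting out $\gamma_n'(t_0)$, the set $\{\lambda_1\,\pi(\gamma_n'(t)):\lambda_1\in[\frac12,1]\}$ has diameter comparable to $|t-t_0|$ in a direction transverse to the curve $\pi\circ\gamma_n'$, so for $|t-t_0|\sim 1$ the projected cone is nowhere near contained in $\mathcal{M}^{n-1}(\gamma_{n-1}^{(t_0)};\rho^{(n-1)/n})$, whose transverse thicknesses are $\rho^{-2/n}$ and smaller. Your (correct) observation that a single $\rho^{-1/n}$-sector sits inside one cap times $\R$ only lets you apply Proposition \ref{0621.prop51} \emph{after} localizing to that sector, which is circular: the displayed inequality is precisely the proposition at scale $\rho$. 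This is exactly why the paper introduces the truncated cones $\Gamma_{n,k}(\gamma,K,\vec h;R)$ with $\lambda_1$ confined to an interval of length $1/K$ — only then is the whole cone within $O(1/K)$ of a fixed nondegenerate curve, and the curve result then decouples only down to the fixed scale $K$ (Lemma \ref{0616lem73}), forcing a genuine iteration rather than a single application.

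The second gap is the phrase ``applying the definition of $D_p(R/\rho)$ sector-by-sector and summing.'' The definition of $D_p$ bounds $\|g_\tau\|_{L^p}$, whereas what must be bounded is $\|(\sum_\tau|g_\tau|^{p/2}*w_{\tau^*})^{2/p}\|_{L^p}$ in terms of the same expression over $\theta\in\Xi_n(R)$ — the quantity the paper isolates as $D_{p,K}(R_1,R)$. Passing between these is not a triangle-inequality manipulation: Minkowski and Young convert the weighted $l^{p/2}$ sum into an $l^1$ sum of $\|g_\tau\|_p^{p/2}$ and lose a power of the number of sectors. The paper controls $D_{p,K}(R_1,R)$ through the two-term iterative formula of Proposition \ref{0611.prop73}, whose proof requires the Kakeya-type wave-envelope estimate (Proposition \ref{0616.thm33}), a high/low dichotomy in the envelope scale $\sigma$, the wave-packet pigeonholing that makes $l^{p/2}$ and $l^2$ quantities comparable, and the decoupling theorem of \cite{MR3548534} in dimension $n-1$ for the low term. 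None of this appears in your proposal, and without it the recursion $D_p(R)\lesssim\rho^{\frac1n(\frac12-\frac2p)+\e}D_p(R/\rho)$ does not close.
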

For the rest of this section, we prove Propositions \ref{05.25.prop84},
\ref{05.01.thm61}, and \ref{05.24.prop53}. Assuming also Proposition
\ref{1020.prop313}, we then deduce Proposition \ref{0621.prop51}.
Proposition \ref{1020.prop313} will be proved in Section \ref{sec7}.

\subsection{Proof of Proposition 
\ref{05.01.thm61}}\label{sec61}

Let us begin with the multilinear restriction estimate. The proof of the estimate is well-known, for example, see the proof of Theorem 5.2 of \cite{MR4794594} or Lemma 2.5 of \cite{MR3161099}.

\begin{proposition}[Multilinear restriction estimate]
    Let $\{\tau^j\}_{j=1}^{n} \subset \Theta^n(K)$ be a transverse tuple. Then for any $M \geq K$, we have
    \begin{equation}
        \int_{B_M} \avprod_{j=1}^{n} |f_{\tau^j}(x)|^{2n}\,dx \leq C_{A,K} \avprod_{j=1}^{n} \Big( \int_{\R^n} |f_{\tau^j}(x)|^2 W_{B_M}(x)\,dx \Big)^{n}
    \end{equation}
     for all Schwartz functions $f$ whose Fourier supports are contained in the $M^{-1}$-neighborhood of a nondegenerate curve $\gamma_n \in \mathcal{C}_n(A,R,\e)$.
\end{proposition}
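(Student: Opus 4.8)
The plan is to treat this as the standard $n$-linear restriction inequality for $n$ transverse pieces of a nondegenerate curve in $\R^n$; at the exponent $2$ on each factor it follows just from Plancherel and Young's convolution inequality, so the argument will be short (cf.\ the proof of \cite[Theorem 5.2]{MR4794594} and \cite[Lemma 2.5]{MR3161099}). Recall that $\avprod_{j=1}^n|f_{\tau^j}|^{2n}=\prod_{j=1}^n|f_{\tau^j}|^2$ and likewise $\avprod_{j=1}^n\big(\int|f_{\tau^j}|^2W_{B_M}\big)^n=\prod_{j=1}^n\int|f_{\tau^j}|^2W_{B_M}$, so the claim is $\int_{B_M}\prod_j|f_{\tau^j}|^2\lesssim_{A,K}\prod_j\int|f_{\tau^j}|^2W_{B_M}$.

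First I would remove the $B_M$-localization. I will choose a Schwartz function $\eta_{B_M}$ with $|\eta_{B_M}|\ge 1$ on $B_M$, with $\widehat{\eta_{B_M}}$ supported in $B_{(2M)^{-1}}$, and with $|\eta_{B_M}|^2\lesssim W_{B_M}$; the last property is available because the weight $W^{n,d}$ of Subsection~\ref{sec41} has a polynomial lower bound of a fixed degree on every scale. Setting $h_j:=f_{\tau^j}\eta_{B_M}$, the function $\widehat{h_j}$ is supported in $\tau^j+B_{(2M)^{-1}}$, which is contained in a bounded dilate $C\tau^j$ because $M\ge K$ and the shortest side of any plank in $\Theta^n(K)$ has length $\gtrsim K^{-1}\ge M^{-1}$. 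Then on $B_M$ we have $|h_j|\ge|f_{\tau^j}|$, so $\int_{B_M}\prod_j|f_{\tau^j}|^2\le\int_{\R^n}\prod_j|h_j|^2$, while $\prod_j\|h_j\|_{L^2}^2=\prod_j\int|f_{\tau^j}|^2|\eta_{B_M}|^2\lesssim\prod_j\int|f_{\tau^j}|^2W_{B_M}$. Hence it suffices to prove the global bound $\int_{\R^n}\prod_{j=1}^n|h_j|^2\lesssim_{A,K}\prod_{j=1}^n\|h_j\|_{L^2}^2$ for functions $h_j$ with $\widehat{h_j}$ supported in $C\tau^j$.

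For the global bound I would put $G_j:=\widehat{h_j}$, note $\widehat{h_1\cdots h_n}=G_1*\cdots*G_n$, and use Plancherel to write $\int_{\R^n}\prod_j|h_j|^2=\|G_1*\cdots*G_n\|_{L^2}^2$. Iterating Young's inequality $\|u*v\|_2\le\|u\|_2\|v\|_1$ gives $\|G_1*\cdots*G_n\|_2\le\|G_n\|_2\prod_{j=1}^{n-1}\|G_j\|_1$, and by Cauchy--Schwarz $\|G_j\|_1\le|C\tau^j|^{1/2}\|G_j\|_2\lesssim_n\|G_j\|_2$ since every plank of $\Theta^n(K)$ has volume $\lesssim1$ for $K\ge1$. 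Combining, $\|G_1*\cdots*G_n\|_2\lesssim_n\prod_j\|G_j\|_2=\prod_j\|h_j\|_2$, which is exactly the global bound. (Transversality of the tuple $\{\tau^j\}$, while part of the hypothesis and exploited in the cited references via a Loomis--Whitney argument for other exponents, is not actually needed at the exponent $2$; here it would only improve the implicit constant.)

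I do not expect a genuine obstacle: the heart of the matter is a two-line Plancherel--Young computation. The only point requiring a little care is the first step — choosing $\eta_{B_M}$ compatibly with the precise weight $W_{B_M}$ and checking $\tau^j+B_{M^{-1}}\subset C\tau^j$ — and both reduce to the hypothesis $M\ge K$ together with the elementary geometry (shortest side $\gtrsim K^{-1}$, volume $\lesssim1$) of the planks in $\Theta^n(K)$.
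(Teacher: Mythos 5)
Your proof is correct, but it is not the route the paper has in mind: the paper simply cites the standard $n$-linear restriction estimate for transverse arcs of a nondegenerate curve (the arguments in \cite{MR4794594} and \cite{MR3161099} run through a change of variables $(t_1,\dots,t_n)\mapsto\sum_j\gamma_n(t_j)$, or a Loomis--Whitney-type input, whose Jacobian is a Vandermonde-type determinant bounded below precisely by the $K$-transversality). Your observation is that the inequality \emph{as stated} is much weaker than that theorem: the right-hand side carries the unnormalized weight $W_{B_M}$, there is no gain in $M$, and the constant may depend on $K$, so the Bernstein--Young chain $\int\prod_j|h_j|^2=\|G_1*\cdots*G_n\|_2^2\le\big(\prod_{j<n}\|G_j\|_1\big)^2\|G_n\|_2^2\le\prod_{j<n}|C\tau^j|\cdot\prod_j\|G_j\|_2^2$ already closes it, using only that $|\tau^j|\sim K^{-(n+1)/2}\le1$. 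I checked the two auxiliary points: the weight $W^{n,d}$ does satisfy the polynomial lower bound $W_{B_M}(x)\gtrsim(1+|x|/M)^{-100n^2d}$ (take the $j$-th term of the defining sum with $2^j\sim|x|/M$ and use $|\widecheck{\phi}|>0$ on $\overline{B_1}$), so a Schwartz $\eta_{B_M}$ with $|\eta_{B_M}|^2\lesssim W_{B_M}$ and $\supp\widehat{\eta_{B_M}}\subset B_{(2M)^{-1}}$ exists; and $\tau^j+B_{(2M)^{-1}}\subset C\tau^j$ follows from $M\ge K$ since the shortest side of $\tau^j$ is $K^{-1}$. So the argument is complete. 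What the two approaches buy: the transversality-based proof yields the sharp estimate with a power gain in $M$, which is the actual content of ``multilinear restriction'' and is indispensable in other applications; your elementary argument yields only the lossy form, which happens to be exactly what the Proposition asserts and exactly what is invoked in the first inequality of \eqref{25.03.11.28}. It is worth flagging (as you do) that transversality is genuinely unused at this exponent and normalization --- a reader should not mistake the stated Proposition for the sharp multilinear theorem.
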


We write
\begin{equation}\label{0621.612}
    \alpha^p |\mathrm{Br}_{\alpha,\beta}^K| \lesssim   \alpha^p |\mathrm{Br}_{\alpha,\beta}^K \cap L|+\sum_{k=N_0}^{N-1} \alpha^p| \mathrm{Br}_{\alpha,\beta}^K \cap \Omega_k|.
\end{equation}
 Let us first consider the term $\alpha^p |\mathrm{Br}_{\alpha,\beta}^K \cap L|$. By Lemma \ref{ftofk},  the definition of $L$, and H\"{o}lder's inequality, we have 
\begin{equation}
\begin{split}
    \alpha^p|\BR_{\alpha,\beta}^K \cap L| \lesssim \int_{L}| \sum_{\tau_{N_0}} f_{\tau_{N_0}}^{N_0}(x)|^p &\lesssim R^{10^n N_0\e} \int_{L} \Big(\sum_{\tau_N}|f_{\tau_N}^N|^2 * w_{\tau_N^*} \Big)^{\frac{p}{2}}
    \\& \lesssim R^{10^n N_0\e }R^{\frac1n(\frac12-\frac2p)}  \int_{\R^n} \Big(\sum_{\tau_N}|f_{\tau_N}^N|^{\frac{p}{2}}*w_{\tau_N^*} \Big)^2.
\end{split}
\end{equation}
Hence, it suffices to consider the second term. Let $N_0 \leq k \leq N-1$. For simplicity, let us use the notation $q:=p/2$. We cover $\mathbb{R}^n$ by finitely overlapping balls $B_{R_k^{1/n}}$, and obtain
\begin{equation}
    |\BR_{\alpha,\beta}^K \cap \Omega_k|\lesssim \sum_{B_{R_k^{1/n}}} |\BR_{\alpha,\beta}^K \cap \Omega_k \cap B_{R_k^{1/n}}|. 
\end{equation}
Fix $B_{R_k^{1/n}}$.
By Lemma \ref{ftofk}, the multilinear restriction estimate and H\"{o}lder's inequality, we have 
\begin{equation}\label{25.03.11.28}
\begin{split}
    \alpha^{2n}|\BR_{\alpha,\beta}^K \cap \Omega_k \cap B_{R_k^{1/n}}|\lesssim 
    \big(\int_{\R^n}(\sum_{\tau_k}|f^{k+1}_{\tau_k}(y)|^2)W_{B_{R_k}^{1/n}(y)}\mathrm{d}y\big)^n  \lesssim |B_{R_k^{1/n}}||g_{k}(x)|^n
\end{split}
\end{equation}
for any $x \in \BR_{\alpha,\beta}^K \cap \Omega_k \cap B_{R_k^{1/n}}$. Take $(\tilde{p},\tilde{q}):=(n^2-n-2,\frac{n^2-n-2}{2})$ so that 
\begin{equation}
    \frac{ \tilde{p}}{ \tilde{q} } = \frac{p}{q} =2. 
\end{equation}
By the inequalities $|g_{k}(x)| \gtrsim \beta$ on $x \in \Omega_k$ and the high-dominance on $\Omega_k$ (Lemma \ref{highdom}) and the locally constant property (Lemma \ref{0622.lem59}), the RHS of \eqref{25.03.11.28} is bounded by  
\begin{equation}
\lesssim
    \beta^{-\tilde{p}+n} |B_{R_k^{1/n}}| |g_k^h(x)|^{\tilde{p}} \lesssim \beta^{-\tilde{p}+n} \int_{\R^n}|g_k^h(y)|^{\tilde{p}} W_{B_{R_k^{1/n}}}(y)\,dy.
\end{equation}
By summing over $B_{R_k^{1/n}}$, we obtain
\begin{equation}
    \alpha^{2n}|\BR_{\alpha,\beta}^K \cap \Omega_k| \lesssim \beta^{-\tilde{p}+n} \int_{\R^n}|g_k^h(y)|^{\tilde{p}}\,dy.
\end{equation}
We next analyze the Fourier support of $g_k^h$. Write   
\begin{equation}
\widehat{g_k^h}= (1- \eta_{ \leq R_{k+1}^{-1/n}}) \sum_{\tau_k} \Big(\widehat{f_{\tau_k}^{k+1}} *\widehat{\overline{f_{\tau_k}^{k+1}}}  \Big) \widehat{w_{\tau_k^*}} =: \sum_{\tau_k} \widehat{G_{\tau_k}}.    
\end{equation}
 Write
\begin{equation}
    \widehat{g_k^h} = \sum_{ R^{-\e/n} \lesssim s' \lesssim 1:\, \mathrm{dyadic} } \sum_{\tau_k}\widehat{G_{\tau_k}}  \eta_{s' R_k^{-1/n}}.
\end{equation}
Since there are $O(\log R_k)$ many $s'$', we have
\begin{equation}\label{06.01.617}
    \alpha^{2n}|\BR_{\alpha,\beta}^K \cap \Omega_k| \lesssim \beta^{-\tilde{p}+n} R_k^{O(\e)} \int_{\R^n}|\sum_{\tau_k} G_{\tau_k} * (\eta_{s' R_k^{-1/n}})^{\vee}|^{\tilde{p}}
\end{equation}
for some $s'$.
The Fourier support of $G_{\tau_k} * (\eta_{s' R_k^{-1/n}})^{\vee}$ is contained in  
\begin{equation*}
    \Big\{  \sum_{j=1}^{n} \lambda_j \gamma_n^{(j)}(s): \; s \in [s_{\tau_k}, s_{\tau_k}+R_k^{-\frac1n}], \; |\lambda_i| \leq 2R_k^{-\frac{i}{n}}\; \mathrm{for \; all}\; 2 \leq i \leq n, \; |\lambda_1| \sim s'R_k^{-\frac1n}   \Big\}.
\end{equation*}
Let us do an isotropic rescaling so that the length of $\lambda_1$ becomes one.
Define $H_{\tau_k}$ to be a smooth function so that $\widehat{H_{\tau_k}}(\xi)= \widehat{G}_{\tau_k}(2s'R_k^{-\frac1n} \xi) \eta_{s' R_k^{-1/n}}(2s'R_k^{-\frac1n}\xi)$.
Then the Fourier support of $H_{\tau_k}$ is contained in
\begin{equation*}
    \Big\{  \sum_{j=1}^{n} \lambda_j \gamma_n^{(j)}(s): s \in [s_{k},s_{k}+ R_k^{-\frac1n} ], \;\; |\lambda_i| \leq (s')^{-1}R_k^{-\frac{i-1}{n}}\; \mathrm{for \; all}\; 2 \leq i \leq n, \; |\lambda_1| \sim 1   \Big\}.
\end{equation*}
Since we have assumed Proposition \ref{0621.prop51} holds for $n-1$ as induction hypothesis, by
applying  Proposition \ref{1020.prop313}, noting that $(s')^{-1}=O(R^{\e})$, and applying H\"{o}lder's inequality, we obtain  
\begin{equation}
    \Big\| \sum_{\tau_k} H_{\tau_k} \Big\|_{L^{\tilde{p}}}^{\tilde{p}} \lesssim R^{9^n \e} R_{k}^{\frac1n(\frac12-\frac{2}{\tilde{p}})} \Big\| \big( \sum_{\tau_k}|H_{\tau_k}|^{\frac{\tilde{p}}{2}} * w_{(\tau_k')^*} \big)^{\frac{2}{\tilde{p}}} \Big\|_{L^{\tilde{p}}}^{\tilde{p}}
\end{equation} 
where $\tau_k'$ is defined by $(2s')^{-1}R_k^{\frac1n}$-dilation of the Fourier support of  $G_{\tau_k} * (\eta_{s' R_k^{-1/n}})^{\vee}$.
After rescaling back, the right hand side of \eqref{06.01.617} is bounded by
\begin{equation}\label{0201.19}
\begin{split}
&\lesssim R^{10^n \e} \big(R_k^{\frac1n(\frac{1}{2}-\frac{2}{\tilde{p}}) }\big)^{\tilde{p}} 
    \beta^{-\tilde{p}+n} \int \Big( \sum_{\tau_k}|G_{\tau_k} * (\eta_{s' R_k^{-1/n}})^{\vee}|^{\frac{\tilde{p}}{2} } *w_{\tau_k^*} \Big)^{2}
\\&
\lesssim R^{10^n \e} \big(R_k^{\frac1n(\frac{1}{2}-\frac{2}{\tilde{p}}) }\big)^{\tilde{p}} 
    \beta^{-\tilde{p}+n} \int \Big( \sum_{\tau_k}|G_{\tau_k}|^{ \frac{\tilde{p}}{2} } *w_{\tau_k^*} \Big)^{2 }
    \\&
\lesssim R^{10^n \e} \big(R_k^{\frac1n(\frac{1}{2}-\frac{2}{\tilde{p}}) }\big)^{\tilde{p}} 
    \beta^{-\tilde{p}+n} \int \Big( \sum_{\tau_k}|f_{\tau_k}^{k+1}|^{\tilde{p}}*w_{\tau_k^*} 
  \Big)^{2 }.
\end{split}
\end{equation}
In summary, we have proved
\begin{equation}\label{05.22.515}
     \alpha^{2n}|\BR_{\alpha,\beta}^K \cap \Omega_k|  \lesssim R^{10^n \e} \big(R_k^{\frac1n(\frac{\tilde{p}}{2}-2) }\big) 
    \beta^{-\tilde{p}+n} \int \Big( \sum_{\tau_k}|f_{\tau_k}^{k+1}|^{{\tilde{p}}}*w_{\tau_k^*} \Big)^{2 }.
\end{equation}
By routine calculations, we have
\begin{equation}\label{0621.624}
   \frac{p}{2}=q \leq \tilde{p}=n^2-n-2 \;\;\; \mathrm{if \; and \; only \; if \; } \;\;  p \leq 2n^2-2n-4.
\end{equation}
Recall that we are considering the range $4 \leq p \leq n^2+n-2$. So we have $q \leq \tilde{p}$ for $n \geq 4$. On the other hand, it may happen that $q \geq \tilde{p}$ for $n=3$. So we distinguish two ranges of $n$.

\subsubsection{The case  $n=3$}
Let us state $\eqref{05.22.515}$. Recall  that $(\tilde{p},\tilde{q})=(4,2)$.
\begin{equation}
      \alpha^{6}|\BR_{\alpha,\beta}^K \cap \Omega_k| \lesssim  R^{10^n \e} 
    \beta^{-1} \int \Big( \sum_{\tau_k}|f_{\tau_k}^{k+1}|^{4} * w_{\tau_k^*} \Big)^{2 }.
\end{equation}
To prove the proposition,
 we need to show that
\begin{equation}\label{0622.630}
\begin{split}
    \alpha^{p-6} 
     \int \Big( \sum_{\tau_k}|f_{\tau_k}^{k+1}|^{4}*w_{\tau_k^*} \Big)^{2} 
    \lesssim  \beta R^{\frac{p}{3}(\frac12-\frac2p)+\e} \Big(\frac{R_k}{R}\Big)^{{\frac83(\frac12-\frac28)}} S_8(R_k,R)^{8} \int (\sum_{\tau_N}|f_{\tau_N}^N|^q *w_{\tau_N^*} )^{2}.
\end{split}
\end{equation}
By Lemma \ref{ftofk} and the definition of $\Omega_k$, for $x \in \Omega_k$, we have
\begin{equation}\label{0622.6301}
    \alpha^2 \lesssim |\sum_{\tau_k} f_{\tau_k}^{k+1}(x)|^2 \lesssim R_{k}^{\frac13} \sum_{\tau_k}|f_{\tau_k}^{k+1}(x)|^2 \lesssim R_k^{\frac13} g_k(x) \lesssim  R_k^{\frac13} R^{\e} g_{k+1}(x) \lesssim R_k^{\frac13}R^{\e} \beta.
\end{equation}
By the inequality, \eqref{0622.630} follows from
\begin{equation}\label{0622.6310}
\begin{split}
    \alpha^{p-8} 
    R_k^{\frac13} \int \Big( \sum_{\tau_k}|f_{\tau_k}^{k+1}|^{4}*w_{\tau_k^*} \Big)^{2} 
    \lesssim R^{\frac{p}{3}(\frac12-\frac2p)} \Big(\frac{R_k}{R}\Big)^{\frac23} S_8(R_k,R)^{8} \int (\sum_{\tau_N}|f_{\tau_N}^{N}|^q *w_{\tau_N^*} )^{2}.
\end{split}
\end{equation}
By  \eqref{0622.59} and \eqref{0622.6301}, we have $\alpha^2 \lesssim R_{k}^{\frac13}R^{\e} R^{\frac13} \sup_{\tau_N}\|f_{\tau_N}^N\|^2_{\infty}$. So, \eqref{0622.6310} follows from 
\begin{equation*}
\sup_{\tau_N}\|f_{\tau_N}^N\|_{\infty}^{p-8}
    \int \Big( \sum_{\tau_k}|f_{\tau_k}^{k+1}|^{4}*w_{\tau_k^*} \Big)^{2} \lesssim  (R_k)^{-\frac{p}{6}+\frac53}  S_8(R_k,R)^{8}
    \int (\sum_{\tau_N}|f_{\tau_N}^N
    |^q *w_{\tau_N^*} )^{2}.
\end{equation*}
Note that the power of $R_k$ is non-negative (since $8\le p\le 10$), we just need to prove
\begin{equation*}
\sup_{\tau_N}\|f_{\tau_N}^N\|_{\infty}^{p-8}
    \int \Big( \sum_{\tau_k}|f_{\tau_k}^{k+1}|^{4}*w_{\tau_k^*} \Big)^{2} \lesssim   S_8(R_k,R)^{8}
    \int (\sum_{\tau_N}|f_{\tau_N}^N
    |^q *w_{\tau_N^*} )^{2}.
\end{equation*}
By Definition \ref{0621.def62}, we have
\begin{equation}
    \int \Big( \sum_{\tau_k}|f_{\tau_k}^{k+1}|^{4}*w_{\tau_k^*} \Big)^{2}  \leq S_8(R_k,R)^{8}\int (\sum_{\tau_N}|f_{\tau_N}^N|^{4} *w_{\tau_N^*} )^{2}.
\end{equation}
Now, the interpolation lemma (Proposition \ref{05.26.prop82})  gives the desired result. This completes the proof. \\ 

\subsubsection{The case  $n \geq 4$}

Our goal is to prove 
\begin{equation} 
    \alpha^{p}|\BR_{\alpha,\beta}^K \cap \Omega_k|  \lesssim R^{100^n \e} (R_k)^{\frac1n(\frac12-\frac2p)p} S_p(R_k,R)^p \int (\sum_{\tau_N}|f_{\tau_N}^N|^{\frac{p}{2}} *w_{\tau_N^*} )^{2}. 
\end{equation}
Let us continue from \eqref{05.22.515}. 
Since $q \leq \tilde{p}$, we have 
\begin{equation*}
\begin{split}
      \alpha^{2n}|\BR_{\alpha,\beta}^K \cap \Omega_k| \lesssim  R^{10^n \e}R_k^{\frac1n(\frac{\tilde{p} }{2}-2)}  
    \beta^{-\tilde{p}+n}  \sup_{\tau_k} \|f_{\tau_k}^{k+1}\|_{\infty}^{(\tilde{p}-q)2 }  \int \Big( \sum_{\tau_k}|f_{\tau_k}^{k+1}|^{q} *w_{\tau_k^*}\Big)^{2 }.
\end{split}
\end{equation*}
Hence, it suffices to prove 
\begin{equation}
\begin{split}
    &\alpha^{p-2n} R_k^{\frac1n(\frac{\tilde{p} }{2}-2) } 
    \beta^{-\tilde{p}+n}  \sup_{\tau_k} \|f_{\tau_k}^{k+1}\|_{\infty}^{(\tilde{p}-q)2 }  \int \Big( \sum_{\tau_k}|f_{\tau_k}^{k+1}|^{q} *w_{\tau_k^*}\Big)^{2} \\&
    \lesssim R^{10^n \e} (R_k)^{\frac1n(\frac12-\frac2p)p} S_p(R_k,R)^p \int (\sum_{\tau_N}|f_{\tau_N}^N|^{\frac{p}{2}} *w_{\tau_N^*} )^{2}.
    \end{split}
\end{equation}
By Definition \ref{0621.def62} and $p/q=2$,
it suffices to verify 
\begin{equation}
    \alpha^{p-2n} R_k^{\frac1n({\frac{ \tilde{p}}{2} }-2) } 
    \beta^{-\tilde{p}+n}  \sup_{\tau_k} \|f_{\tau_k}^{k+1}\|_{\infty}^{2\tilde{p}-p } \lesssim R^{10^n \e}R_k^{\frac1n(\frac{p}{2}-2)}.
\end{equation}
By the inequality $\|f_{\tau_k}^{k+1}\|_{\infty} \lesssim \beta \alpha^{-1}$ (see Lemma \ref{0622.lem58}), it becomes
\begin{equation}
    \alpha^{p-2n} R_k^{\frac1n({\frac{ \tilde{p}}{2} }-2) } 
    \beta^{-\tilde{p}+n}  (\beta \alpha^{-1})^{2\tilde{p}-p } \lesssim  R^{10^n \e} R_k^{\frac1n(\frac{p}{2}-2)}.
\end{equation}
After calculations, this is equivalent to
\begin{equation}
    \alpha^{2p-2n-2\tilde{p}} \beta^{\tilde{p}+n-p} \lesssim R^{10^n \e} (R_k)^{\frac1n(\frac{p-\tilde{p}}{2})}.
\end{equation}
By following the same argument as in the proof of \eqref{0622.6301},
we have $\alpha \lesssim R^{\e} R_k^{\frac{1}{2n}} \beta^{\frac12}$. Using this bound, it remains to check that $p-\tilde{p} \leq 2n$. This is true because $4 \leq p \leq n^2+n-2$.
This completes the proof.

\subsection{Proof of Proposition \ref{05.24.prop53}}
 To simplify the notation, we introduce 
\begin{equation}
    S_p^*(1,R):=\sup_{1 \leq X \leq R}S_p(1,X) \big(\frac{R}{X} \big)^{\frac1n(\frac12-\frac2p)}.
\end{equation}
To prove the proposition, we need to show 
\begin{equation}
\begin{split}
     \Big\|(\sum_{\tau_k \in \Theta^n(R_k) }|f_{\tau_k}^{k+1}|^\frac{p}{2}*w_{\tau_k^*} )^{\frac2p} \Big\|_{L^p(\R^n)} 
     \lesssim \big( R^{2\e^3} S_p^*(1,R^{\e}) \big)^{N-k} \Big\|(\sum_{\tau_N \in \Theta^n(R) }|f_{\tau_N}^N|^{\frac{p}{2}} *w_{\tau_N^*} )^{\frac2p} \Big\|_{L^p(\R^n)}.
    \end{split}
\end{equation}
Recall that $R/R_k=R^{\e(N-k)}$. To prove the above inequality, it suffices to prove the inequality below for all $k \leq j \leq N-1$. 
\begin{equation}\label{0702.739}
\begin{split}
     \Big\|(\sum_{\tau_j \in \Theta^n(R_{j}) }|f_{\tau_j}^{j+1}|^{\frac{p}{2}} *w_{\tau_j^*} )^{\frac2p} \Big\|_{L^p(\R^n)} 
     \lesssim   \big( R^{2\e^3} S_p^*(1,{R^{\e}} )  \big) \Big\|(\sum_{\tau_{j+1} \in \Theta^n(R_{j+1}) }|f_{\tau_{j+1}}^{j+2}|^\frac{p}{2} *w_{\tau_{j+1}^*} )^{\frac2p} \Big\|_{L^p(\R^n)}.
     \end{split}
\end{equation}
In the case $j=N-1$, we interpret $f^{N+1}_{\tau_N}$ as $f^N_{\tau_N}$.
Let us fix $j$. Recall that $|f_{\tau_{j+1}}^{j+1}| \lesssim |f_{\tau_{j+1}}^{j+2}|$. Since the proof uses an induction on $r$, we prove the following  general inequality; for every $1 \leq r \leq R^{\e}$,
\begin{equation}\label{0622.643}
\begin{split}
     \Big\|(\sum_{\tau_j \in \Theta^n(rR_{j}) }|f_{\tau_j}^{j+1}|^{\frac{p}{2}} *w_{\tau_j^*} )^{\frac2p} \Big\|_{L^p(\R^n)} 
     \lesssim    \big( \frac{R^{\e}}{r} \big)^{\e^2} R^{2\e^3} S_p^*(1,\frac{R^{\e}}{r} )   \Big\|(\sum_{\tau_{j+1} \in \Theta^n(R_{j+1}) }|f_{\tau_{j+1}}^{j+1}|^\frac{p}{2} *w_{\tau_{j+1}^*} )^{\frac2p} \Big\|_{L^p(\R^n)}.
\end{split}
\end{equation}
 Taking $r=1$ in \eqref{0622.643}, and then using $|f_{\tau_{j+1}}^{j+1}| \lesssim |f_{\tau_{j+1}}^{j+2}|$, gives \eqref{0702.739}. We prove \eqref{0622.643} by descending induction on $r$. The factor $(R^{\e}/r)$ is artificially added to close the backward induction. \medskip

The estimate is trivial for $r \in [R^{\e-\e^3}, R^{\e}]$.  Suppose that \eqref{0622.643} is true for $r \in [R^{\e-(X-1) \e^5}, R^{\e}]$ (for some $X\in\mathbb N$). We will show the estimate for $ r \in [ R^{\e-X\e^5},R^{\e}]$. Apply Proposition \ref{0616.thm33} to the left hand side of \eqref{0622.643}. Then it is bounded by
\begin{equation}\label{0623.644}
    \sum_{(rR_j)^{-1} \leq \sigma \leq 1 }\sum_{ \tau \in \Theta^n(\sigma^{-1})} \sum_{U\parallel U_{\tau}}
    |U| \Big( \int_{\R^{n}} \sum_{\tau_j: \tau_j \subset \tau } |f_{\tau_j}^{j+1}(x)|^{\frac{p}{2}}   w_U(x)\,dx \Big)^2
\end{equation}
Here $U_{\tau}$ is a union of the dual boxes of $\tau_j$, centered at the origin, for all $\tau_j \subset \tau$. 
We write \eqref{0623.644} as follows. 
\begin{equation}\label{0623.6450}
\begin{split}
    &\sum_{(rR_j)^{-1} \leq \sigma \leq R^{(n-1)\e^5}(r R_j)^{-1} }\sum_{ \tau \in \Theta^n(\sigma^{-1})} \sum_{U\parallel U_{\tau}}
    |U| \Big( \int_{\R^{n}} \sum_{\tau_j: \tau_j \subset \tau } |f_{\tau_j}^{j+1}|^{\frac{p}{2}}   w_U \Big)^2
    \\&
    +
    \sum_{R^{(n-1)\e^5} (rR_j)^{-1} \leq \sigma \le 1 }\sum_{ \tau \in \Theta^n(\sigma^{-1})} \sum_{U\parallel U_{\tau}}
    |U| \Big( \int_{\R^{n}} \sum_{\tau_j: \tau_j \subset \tau } |f_{\tau_j}^{j+1}|^{\frac{p}{2}}   w_U \Big)^2.
\end{split}
\end{equation}
We need to bound these two terms.
\\

Let us first consider the first term. By H\"{o}lder's inequality, it is bounded by 
\begin{equation}
    R^{C\e^5} \sum_{\tau_j} \int |f_{\tau_j}^{j+1}|^p.
\end{equation}
Fix $\tau_j$. We claim that
\begin{equation}\label{0623.6480}
\begin{split}
    \int |f_{\tau_j}^{j+1}|^p \lesssim  S_p^* \big(1,\frac{R^{\e}}{r} \big)^p \int   \big( \sum_{\tau_{j+1} \subset \tau_j } |f_{\tau_{j+1}}^{j+1}|^{\frac{p}{2}}* w_{\tau_{j+1}^*} \big)^2.
\end{split}
\end{equation}
To prove the claim, we rescale and apply Definition \ref{1003.def61}. Since we are going to use the same rescaling argument several times, let us give the proof in detail. By \eqref{caps1} and \eqref{0623.16}, we can write $\tau_j$ as follows.
\begin{equation}\label{0623.648}
     \Big\{ \gamma_n(s) + \sum_{j=1}^n \lambda_j \gamma_n^{(j)}(s): s \in [s_{\tau_j},s_{\tau_j}+(r R_j)^{-\frac1n} ], \;\;\; |\lambda_j| \leq (rR_j)^{-\frac{j}{n}}  \Big\}.
\end{equation}
Recall that $\gamma_n$ belongs to $\mathcal{C}_n(A,R,\e)$ (see \eqref{0623.51def}).  Take a linear transformation $L$ and $\vec{v} \in \mathbb{R}^{n}$ such that $L \big( \gamma_n(s+s_{\tau_j}) -\vec{v} \big)$ belongs to $\mathcal{C}_n(10nA,R,\e)$, the norm of $\vec{v}$ is smaller than ten, and the determinant of $L$ is comparable to one. By dividing the domain of the curve into smaller intervals, and abusing the notation, we may assume that $L \big( \gamma_n(s+s_{\tau_j}) -\vec{v} \big)$ belongs to $\mathcal{C}_n(A,R,\e)$. Define the curve $\widetilde{\gamma}_n(s):=L \big( \gamma_n(s+s_{\tau_j}) -\vec{v} \big)\in \mathcal{C}_n(A,R,\e)$. Note that $\widetilde{\gamma}_n^{(j)}(s) =L \big( \gamma_n^{(j)}(s+s_{\tau_j}) \big)$. So we have $L(\eqref{0623.648} - \vec{v})$ is equal to
\begin{equation}\label{0623.6490}
     \Big\{ \widetilde{\gamma}_n(s) + \sum_{j=1}^n \lambda_j  \widetilde{\gamma}_n^{(j)}(s): s \in [0,(r R_j)^{-\frac1n} ], \;\;\; |\lambda_j| \leq (rR_j)^{-\frac{j}{n}}  \Big\}.
\end{equation}
Note that the range of $\lambda_j$ does not change.
Hence, to prove \eqref{0623.6480}, by applying the linear transformation to both sides of \eqref{0623.6480}, we may assume that $\tau_j$ contains the origin. Let us continue to use the notation $\widetilde{\gamma_n}$. Next, we rescale. Define
\begin{equation}\label{0623.5300}
    \widetilde{\widetilde{ \gamma } }_n(s):= L_1 \Big( \widetilde{\gamma}_n \big( (r R_j)^{-\frac1n} s \big) \Big)
\end{equation}
where 
\begin{equation}\label{0625.6510}
L_1(\xi):=\big( (rR_j)^{\frac{n}{n}} \xi_1,(rR_j)^{\frac{n-1}{n}} \xi_2 ,\ldots, (rR_j)^{\frac1n} \xi_n \big).    
\end{equation}
 Then this curve is defined on $[0,1]$, and it belongs to $\mathcal{C}_n(A,R,\e)$. Note that the image of \eqref{0623.6490} under the map $L_1$ is equal to
\begin{equation}
    \Big\{ \widetilde{\widetilde{\gamma}}_n(s) + \sum_{j=1}^n \lambda_j  \widetilde{\widetilde{\gamma}}_n^{(j)}(s): s \in [0,1 ], \;\;\; |\lambda_j| \leq 1  \Big\}.
\end{equation}
Moreover, the image of $\tau_{j+1}$ under the map $L_1$ becomes an element of $\Theta^n( r^{-1}R_{j}^{-1}  R_{j+1})$ associated to the curve $\widetilde{\widetilde{ \gamma_n }}$. So we can apply the definition of $S_p^*(1,r^{-1}R^{\e})$ (see Definition \ref{1003.def61} and below). After rescaling back, we obtain the claim. So the first term of \eqref{0623.6450} is bounded by the right hand side of \eqref{0622.643}. \\

Let us next bound the second term of \eqref{0623.6450}. We claim that 
\begin{equation}\label{0623.652}
    \int_{\R^{n}} |f_{\tau_j}^{j+1}|^{\frac{p}{2}}w_U \lesssim R^{\e^9}( R^{\frac{\e^5}{n}} )^{(\frac12-\frac2p)\frac{p}{2}} \sum_{\tau_j' \in \Theta^n(r R_j R^{\e^5 } ): \tau_j' \subset \tau_j} \int_{\R^{n}} |(f_{\tau_j}^{j+1})_{\tau_j'}|^{\frac{p}{2}} w_U
\end{equation}
for every $\tau_j \in \Theta^n(rR_j)$.

Let us prove the claim.
By using the same linear transformation (see the discussion below $\eqref{0623.648}$), define the curve $\widetilde{\gamma}_n(s):=L \big(\gamma_n(s+s_{\tau_j})-\vec{v} \big)$. Then, as before, by applying the linear transformation to both sides of \eqref{0623.652}, we may assume that $\widetilde{\gamma}_n \in \mathcal{C}_n(A,R,\e)$. It suffices to prove \eqref{0623.652} for the case that $\tau_j$ contains the origin. 
Note that $U \| U_{\tau}$, and $\tau$ also contains the origin. By the definition in Section \ref{sec3}, $U$ is comparable to 
\begin{equation}\label{0625.654}
    10r R_j\Big([- 1, 1] \times [-\sigma^{\frac1n}, \sigma^{\frac1n} ] \times \cdots \times [-\sigma^{\frac{n-1}{n}}, \sigma^{\frac{n-1}{n}} ] \Big).
\end{equation} 
The next step is to rescale.
We use the rescaling map $L_1$ in \eqref{0625.6510}, and define the curve $\widetilde{\widetilde{\gamma}}_n(t)$ by \eqref{0623.5300}. Note that after applying the linear transformations, $\tau_j'$ becomes an element of $\Theta^n(R^{\e^5})$. By definition, the dual box of an element of $\Theta^n(R^{\e^5})$ has dimensions
\begin{equation}\label{06256550}
    R^{\frac{\e^5}{n}} \times R^{\frac{2\e^5}{n}} \times \cdots \times R^{\frac{n\e^5}{n}}.
\end{equation}
On the other hand, the image of \eqref{0625.654} under the map $(L_1)^{-1}$ is as follows. 
\begin{equation}
    10 \Big( [-1,1] \times [-( \sigma rR_j)^{\frac1n},(\sigma rR_j)^{\frac1n}] \times \cdots \times [-(\sigma r R_j )^{\frac{n-1}{n}}, (\sigma r R_j)^{\frac{n-1}{n}} ] \Big).
\end{equation}
Since $\sigma \geq R^{(n-1)\e^5}(rR_j)^{-1}$, the above set contains
\begin{equation}
    10 \Big( [-1,1] \times [-R^{\frac{(n-1)\e^5}{n}} ,R^{\frac{(n-1)\e^5}{n}}]^{n-1} \Big).
\end{equation}
To apply the decoupling for a nondegenerate curve in $\R^{n-1}$ by \cite{MR3548534}, let us check the range. Since $4 \leq p \leq n^2+n-2$, we have $p/2 \leq (n-1)^2+(n-1)$. Hence, after freezing the first variable, the decoupling theorem gives the following inequality: for $4 \leq p \leq n^2+n-2$,
\begin{equation}
\begin{split}
    \int_{\R^{n}}|g|^{\frac{p}{2}} w_{L_1^{-1}(U)} &\lesssim  R^{\e^9}\Big(\sum_{\tilde{\tau} \in \Theta^{n-1}( R^{{(n-1)\e^5}/{n}} )} \big( \int_{\R^{n}} |g_{\tilde{\tau}}|^{\frac{p}{2}} w_{L_1^{-1}(U)} \big)^{\frac{2}{p} \cdot 2 } \Big)^{\frac12 \cdot \frac{p}{2}} 
    \\&
    \lesssim R^{\e^9} R^{\frac{\e^5}{n}(\frac12-\frac2p)\frac{p}{2} } \sum_{\tilde{\tau} \in \Theta^{n-1}( R^{{(n-1)\e^5}/{n}} )}  \int_{\R^{n}} |g_{\tilde{\tau}}|^{\frac{p}{2}} w_{L_1^{-1}(U)}  
\end{split}
\end{equation}
for all Schwartz functions $g$ whose Fourier supports are in $\mathcal{M}^n(\gamma_n;R^{(n-1)\e^5/n})$.
Here $g_{\tilde{\tau}}$ is a Fourier restriction in the last $(n-1)$ variables to $\tilde{\tau}$. The dual box $\tilde{\tau}^*$ has dimensions
\begin{equation}
    R^{\frac{\e^5}{n}} \times R^{\frac{2\e^5}{n}} \times \cdots \times R^{\frac{(n-1)\e^5}{n}}.
\end{equation}
Recall \eqref{06256550}.
 Hence, by applying the decoupling and rescaling back, we prove the claim \eqref{0623.652}. \\ 
 
 Let us now finish the proof. By \eqref{0623.652}, the second term of $\eqref{0623.6450}$ is bounded by
\begin{equation*}
R^{\e^9}
    R^{\frac{\e^5}{n}(\frac12-\frac2p){p}}
    \sum_{R^{(n-1)\e^5} (rR_j)^{-1} \leq \sigma \le 1 }\sum_{ \tau \in \Theta^n(\sigma^{-1})} \sum_{U\parallel U_{\tau}}
    |U| \Big( \int_{\R^{n}} \sum_{\tau_j': \tau_j' \subset \tau } |f_{\tau_j'}^{j+1}|^{\frac{p}{2}}   w_U \Big)^2.
\end{equation*}
By H\"{o}lder's inequality and embedding, this is bounded by
\begin{equation}
      R^{2\e^{9} } R^{\frac{\e^5}{n}(\frac12-\frac2p){p}}
     \Big\| \big( \sum_{\tau_j' \in \Theta^n(rR_j R^{\e^5})} |f_{\tau_j'}^{j+1}|^{\frac{p}{2}} \big)^{\frac2p}     \Big\|_{L^p(\R^n)}^p.
\end{equation}
By the uncertainty principle (see \eqref{0623.41}) and the induction hypothesis on $r$, this is bounded by
\begin{equation*} R^{2\e^{9}}
\big( \frac{R^{\e}}{r R^{\e^5}} \big)^{p\e^2} R^{2p \e^3} R^{\frac{\e^5}{n}(\frac12-\frac2p){p}} S_p^*(1,\frac{R^{\e}}{r R^{\e^5}} )^p   \Big\|(\sum_{\tau_{j+1} \in \Theta^n(R_{j+1}) }|f_{\tau_{j+1}}^{j+1}|^\frac{p}{2} *w_{\tau_{j+1}^*} )^{\frac2p} \Big\|_{L^p}^p.
\end{equation*}
By the definition of $S_p^*$, we have
\begin{equation}
    S_p^*(1,\frac{R^{\e}}{r R^{\e^5}} )  \lesssim  R^{-\frac{\e^5}{n}(\frac12-\frac2p)} S_p^*(1, \frac{R^{\e}}{r}),
\end{equation}
and hence,
\begin{equation}
     R^{2\e^9} \big( \frac{R^{\e}}{r R^{\e^5}} \big)^{\e^2} R^{2\e^3} R^{\e'} R^{\frac{\e^5}{n}(\frac12-\frac2p)} S_p^*(1,\frac{R^{\e}}{r R^{\e^5}} ) \lesssim \big( \frac{R^{\e}}{r} \big)^{\e^2} R^{2 \e^3} S_p^*(1, \frac{R^{\e}}{r}).
\end{equation}
This closes the induction on $r$ and completes the proof.

\subsection{Proof of Proposition \ref{05.25.prop84}}

Let us fix $\alpha>0$.
To do a broad-narrow analysis, 
fix $x \in U_{\alpha}$. Let us introduce sufficiently large $K$, which will be determined later. Define a collection of significant caps.
\begin{equation}
    \mathcal{C}:=\{ \tau \in \Theta^n(K) :  |(f^N)_\tau(x)| \geq K^{-100n}|f^N(x)|   \}.
\end{equation} 
We say $x$ is broad if $|\mathcal{C}|> K^{\e}$. Otherwise, we say $x$ is narrow. If $x$ is broad, then we can find $n$ many $\tau_j \in \mathcal{C}$ such that any two of them are not adjacent. Denote by $\mathrm{Nar}$ a collection of narrow points in $U_{\alpha}$. We are in the narrow case if $|U_{\alpha}| \lesssim |\mathrm{Nar}|$. Otherwise, we are in the broad case.

If we are in the narrow case, then for some $\widetilde{\alpha}$, we have  

\begin{equation}
    \alpha^p |U_{\alpha}| \lesssim \sum_{\tau \in \Theta^n(K) } \widetilde{\alpha}^p | \{x : |\sum_{\tau_N} (f^N_{\tau_N})_{\tau}| > \widetilde{\alpha} \} |.
\end{equation}
We now fix $\tau$ and repeat the rescaling argument in the proof of \eqref{0622.643}. By using a linear transformation (see the discussion below \eqref{0623.648}), we may assume that $\tau$ contains the origin. Next, by using scaling (see \eqref{0623.5300} and the discussion there) and the definition of $S_p(1,R/K)$, we have 
\begin{equation*}
    \widetilde{\alpha}^p \big|\{x : |(f^N)_{\tau}(x)|>\widetilde{\alpha} \}\big| \lesssim  S_p(1,R/K)^p  \Big\|(\sum_{\tau_N \in \Theta^n(R): \tau_N \subset \tau }|f_{\tau_N}^N|^{\frac{p}{2}}*w_{\tau_N^*})^{\frac2p} \Big\|_{L^p(\R^n)}^p.
\end{equation*}
Hence, by embedding, we have
\begin{equation*}
\begin{split}
    \alpha^p |U_{\alpha}| &\lesssim S_p(1,R/K)^p \Big\|(\sum_{\tau_N \in \Theta^n(R) }|f_{\tau_N}^{N}|^{\frac{p}{2}}*w_{\tau_N^*})^{\frac2p} \Big\|_{L^p(\R^n)}^p.
\end{split}
\end{equation*}

Suppose that we are in a broad case.
By pigeonholing, we can find $\beta>0$ so that
\begin{equation}
\begin{split}
    \alpha^p|U_{\alpha}| &\lesssim \alpha^p|U_{\alpha} \setminus \mathrm{Nar}|
    \\&
    \lesssim (\log R) \alpha^p |U_{\alpha,\beta} \setminus \mathrm{Nar}|+ R^{\frac1n(\frac12-\frac2p)p} \Big\|(\sum_{\tau_N \in \Theta^n(R) }|f_{\tau_N}^N|^{\frac{p}{2}}*w_{\tau_N^*})^{\frac2p} \Big\|_{L^p(\R^n)}^p.
\end{split}
\end{equation}
  We have
\begin{equation}
    |U_{\alpha,\beta} \setminus \mathrm{Nar}| \lesssim K^{100n} |\BR_{\alpha',\beta}^K  |,
\end{equation}
where the broad set is defined in \eqref{0702.75} and $\alpha'=K^{O(n)}\alpha$. For simplicity, abusing notation, let us still use $\alpha$ for $\alpha'$.
We have
\begin{equation}
\begin{split}
     (\log R)\alpha^p K^{100n} |\BR_{\alpha,\beta}^K| 
    \lesssim (\log R) K^{100n} S_{\BR,p,K}(1,R)^p \Big\|(\sum_{\tau_N \in \Theta^n(R) }|f_{\tau_N}^N|^{\frac{p}{2}}*w_{\tau_N^*})^{\frac2p} \Big\|_{L^p(\R^n)}^p.
\end{split}
\end{equation}

In summary, we have
\begin{equation*}
    \big( \alpha^p |U_{\alpha}| \big)^{\frac1p} \lesssim \big( S_p(1,\frac{R}{K}) +(\log R)^{\frac1p} K^{100n} S_{Br,p,K}(1,R) + R^{\frac1n(\frac12-\frac2p)+\epsilon} \big) \Big\|(\sum_{\tau_N \in \Theta^n(R) }|f_{\tau_N}^N|^{\frac{p}{2}}*w_{\tau_N^*})^{\frac2p} \Big\|_{L^p(\R^n)}.
\end{equation*}
This implies
\begin{equation}
    S_p(1,R) \leq C_p S_p(1,R/K) + C_p (\log R)^{\frac1p} K^{100n} S_{Br,p,K}(1,R)+R^{\frac1n(\frac12-\frac2p)+\epsilon}
\end{equation}
for any $R \geq 1$.
After applying the inequality repeatedly, we obtain
\begin{equation}
    S_{p}(1,R) \leq C_{\e} R^{\e}
    \sup_{1 \leq X \leq R} S_{Br,p,K}(1,X) + R^{\frac1n(\frac12-\frac2p)+\epsilon}
\end{equation}
provided that $K$ is sufficiently large.
This gives the desired result.

\subsection{Proof of Proposition \ref{0621.prop51} assuming Proposition \ref{1020.prop313}}

We now complete the proof of Proposition \ref{0621.prop51}, assuming Proposition \ref{1020.prop313}. By the interpolation result from Subsection \ref{0702.53}, it suffices to prove
the endpoint estimates. The intermediate range of \(p\) then follows by
interpolation. By replacing $S_p(1,R)$ by $\sup_{1 \leq X \leq R}S_p(1,X)$, we may assume that $S_p(1,R)$ is non-decreasing. For convenience, we introduce the notations
\begin{equation}
\begin{split}
    &\widetilde{S}_p(1,R) := R^{-\frac1n(\frac12-\frac2p)} S_p(1,R)
    \\& \widetilde{S_p}(R_k,R):=(\frac{R}{R_k})^{-\frac1n(\frac12-\frac2p)} S_p(R_k,R).
\end{split}
\end{equation}
We need to show that
\begin{equation}
    \begin{split}
\widetilde{S}_p(1,R) \lesssim R^{\e^{1/2}}.
    \end{split}
\end{equation}
As in Proposition \ref{05.01.thm61}, we will consider two cases: $n=3$ and $n \geq 4$.
Let us first consider $n \geq 4$.
With the new notations, Proposition \ref{05.25.prop84} and  \ref{05.01.thm61} give the following inequality.
\begin{equation}
    \widetilde{S_p}(1,R) \lesssim R^{10^n N_0\e} + R^{100^n \e} \sum_{k=N_0}^{N-1} \widetilde{S_p}(R_k,R).
\end{equation}
Recall that $R^{\e (N-k)} = R R_k^{-1}$.
Proposition \ref{05.24.prop53} becomes
\begin{equation}
\begin{split}
    \widetilde{S_p}(R_k,R) &= (\frac{R}{R_k})^{-\frac1n(\frac12-\frac2p)} S_p(R_k,R)
    \\& \lesssim R^{\e} (\frac{R}{R_k})^{-\frac1n(\frac12-\frac2p)} \big(\frac{R}{R_k}\big)^{\frac1n(\frac12-\frac2p)} \sup_{1 \leq X \leq R^{\e}}
 \widetilde{S_p}(1,X)^{N-k}
 \\& \lesssim R^{\e} \sup_{1 \leq X \leq R^{\e}}
 \widetilde{S_p}(1,X)^{N-k}.
\end{split}
\end{equation}
Combining these two, we obtain
\begin{equation}
    \widetilde{S_p}(1,R) \lesssim R^{10^n N_0\e} + R^{101^n \e}\sup_{1 \leq X \leq R^{\e}}
 \widetilde{S_p}(1,X)^{N-N_0}.
\end{equation}
Take $\eta$ so that
\begin{equation}
    \limsup_{R \rightarrow \infty} \frac{\widetilde{S_p}(1,R)}{R^{\eta}}>0, \;\;\; \limsup_{R \rightarrow \infty} \frac{\widetilde{S_p}(1,R)}{R^{\eta+\e'}}=0
\end{equation}
for every $\e'>0$.
Then we have
\begin{equation}
    \eta \leq \max \big( 10^n N_0 \e, 101^n\e + \eta \e(N-N_0) \big).
\end{equation}
We take $N_0=\e^{-1/2}$.
If the first term dominates, then the desired bound follows immediately. Suppose that the second term dominates.
Since $N\e = 1$, this implies
\begin{equation}
    \eta \e N_0 \leq 101^n \e.
\end{equation}
This gives $ \eta \leq 101^n/N_0$. It suffices to recall that $N_0=\e^{-1/2}$.
\medskip

Let us next consider $n=3$. With the new notations, Proposition \ref{05.25.prop84} and \ref{05.01.thm61} give the following inequality. 
\begin{equation}
    \widetilde{S_p}(1,R) \lesssim R^{100 N_0\e} + R^{100^3 \e} \sum_{k=N_0}^{N-1} \widetilde{S_8}(R_k,R)
\end{equation}
for any $4 \leq p \leq 10$. By Proposition \ref{05.24.prop53}, we have
\begin{equation}
\begin{split}
    \widetilde{S_8}(R_k,R)  \lesssim R^{\e}\sup_{1 \leq X \leq R^{\e}}
 \widetilde{S_8}(1,X)^{N-k}.
\end{split}
\end{equation}
Combining these two, we obtain
\begin{equation}\label{06.01.659}
    \widetilde{S_p}(1,R) \lesssim R^{100 N_0\e} + R^{101^3\e}\sup_{1 \leq X \leq R^{\e}}
 \widetilde{S_8}(1,X)^{N-N_0}.
\end{equation}
By the above inequality, it suffices to prove
\begin{equation}
    \widetilde{S}_8(1,R) \lesssim R^{100\e^{1/2}}.
\end{equation}
The rest of the proof is identical to that for the case $n \geq 4$. We omit the details.

\subsection*{Summary of the section}

In this section, we proved Proposition~\ref{0621.prop51} modulo  Proposition~\ref{1020.prop313}. The proof used the broad-narrow reduction, Proposition~\ref{05.25.prop84}, the key multiscale estimate, Proposition~\ref{05.01.thm61}, and the rescaling estimate, Proposition~\ref{05.24.prop53}. Combining these estimates closes the bootstrap after choosing $(N_0=\lfloor \epsilon^{-1/2}\rfloor)$. It remains to prove Proposition~\ref{1020.prop313}, which is the goal of the next section.

\section{Proof of Proposition \ref{1020.prop313}}\label{sec7}
We prove a generalization of Proposition \ref{1020.prop313}, which will
also be used in Section \ref{0624.sec8}.  Let $\gamma_{n+k} $ be an element of $\mathcal{C}_{n+k}(A,R,\e)$. For $k \geq 1$, define the $k$-flat cone $\Gamma_{n,k}(\gamma_{n+k};R) \subset \mathbb{R}^{n+k}$ to be 
\begin{equation}
  \Big\{  \sum_{j=1}^{n+k} \lambda_j \gamma_{n+k}^{(j)}(s): s \in [0,1], \; \lambda_k \geq \frac12,\; |\lambda_i| \leq  \min(1, R^{\frac{k-i}{n}}), \; 1 \leq i \leq n+k \Big\}.
\end{equation}
 Write $\Gamma_{n,k}(\gamma_{n+k};R) \subset \bigcup_{\theta \in \Xi_{n,k}(R)} \theta$ where $\Xi_{n,k}(R)$ is the collection of  $\theta$ of the form
\begin{equation}
     \Big\{ \sum_{j=1}^{n+k} \lambda_j \gamma_{n+k}^{(j)}(s):  s \in [s_{\theta},s_{\theta}+R^{-\frac1n} ], \, \lambda_k \geq \frac12,\, |\lambda_i| \leq  \min(1, R^{\frac{k-i}{n}}), \, 1 \leq i \leq n+k \Big\}
\end{equation}
where $s_{\theta} \in R^{-\frac1n}\mathbb{Z}$. Note that Proposition \ref{1020.prop313} is a special case of Proposition \ref{0618.prop71}.  Here the parameter $n$ in Proposition \ref{0618.prop71} is not the ambient dimension in Proposition \ref{1020.prop313}. If \(d\) is the ambient dimension in Proposition \ref{1020.prop313}, then
a one-sheet piece of \(\Gamma_d(\gamma_d;R)\) is
\[
\Gamma_{d-1,1}(\gamma_d;R_0),
\qquad R_0:=R^{(d-1)/d}.
\]
Indeed, \(R_0^{-1/(d-1)}=R^{-1/d}\). Applying Proposition
\ref{0618.prop71} with \(n=d-1\), \(k=1\), and \(R_0\), we obtain
\[
(R_0^{1/(d-1)})^{1/2-2/p+\epsilon'}
=
R^{\frac1d(\frac12-\frac2p)+O(\epsilon')},
\]
which gives Proposition \ref{1020.prop313}. Recall that $p_{n}=n^2+n-2$.

\begin{proposition}\label{0618.prop71}
Let $ n \geq 2$ and $k \geq 1$.
 Assume that Proposition \ref{0621.prop51} is true for $n$.
Then for $4 \leq p \leq p_{n}$ and $\e'>0$, we have
    \begin{equation*}
        \Big\|\sum_{\theta \in \Xi_{n,k}(R)  }f_{\theta}\Big\|_{L^p(\R^{n+k})} \leq C_{A,p,\e,\e'}(R^{\frac1n})^{(\frac12-\frac2p)+\e'} \Big\|(\sum_{\theta \in \Xi_{n,k}(R) }|f_{\theta}|^{\frac{p}{2}}*w_{\theta^*})^{\frac2p} \Big\|_{L^p(\R^{n+k})}
    \end{equation*}
    for all curves $\gamma_{n+k} \in \mathcal{C}_{n+k}(A,R,\e)$, and all Schwartz functions $f$ whose Fourier supports are in $\Gamma_{n,k}(\gamma_{n+k};R)$.
\end{proposition}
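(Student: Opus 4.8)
The plan is to prove Proposition~\ref{0618.prop71} by a Pramanik--Seeger type bootstrapping over angular scales (cf.\ \cite{MR2288738}), in which a single scale step is powered by Proposition~\ref{0621.prop51} for $n$ applied to an auxiliary nondegenerate curve in $\R^n$, and the $l^{p/2}$ bookkeeping is carried by the Kakeya-type estimate of Proposition~\ref{0501.prop22}, exactly as in the proof of Proposition~\ref{05.24.prop53}. First I would fix intermediate scales $1=\rho_0<\rho_1<\dots<\rho_M=R^{1/n}$ with $\rho_{m+1}/\rho_m=R^{\e'/n}$, so $M\sim(\e')^{-1}$, and for each dyadic $1\le\rho\le R^{1/n}$ call a \emph{$\rho$-sector} of $\Gamma_{n,k}(\gamma_{n+k};R)$ a set $\tau=\{\sum_{j=1}^{n+k}\lambda_j\gamma_{n+k}^{(j)}(t):t\in[t_\tau,t_\tau+\rho^{-1}],\ \lambda_k\ge\tfrac12,\ |\lambda_i|\le\min(1,R^{(k-i)/n})\}$ with $t_\tau\in\rho^{-1}\mathbb{Z}$. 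At $\rho=R^{1/n}$ these are the caps of $\Xi_{n,k}(R)$; at $\rho=1$ the unique $\rho$-sector is the whole cone, and the uncertainty principle (as in \eqref{0623.41}) gives $\|f\|_{L^p}\sim\big\|(|f|^{p/2}*w)^{2/p}\big\|_{L^p}$ for that one ``cap''. Writing $S^\rho f:=\big(\sum_{\tau\ \rho\text{-sector}}|f_\tau|^{p/2}*w_{\tau^*}\big)^{2/p}$, the proposition reduces, after $M$ iterations, to the one-step estimate
\[ \|S^{\rho_m}f\|_{L^p(\R^{n+k})}\ \lessapprox\ \Big(\tfrac{\rho_{m+1}}{\rho_m}\Big)^{\frac12-\frac2p}\|S^{\rho_{m+1}}f\|_{L^p(\R^{n+k})},\qquad 0\le m\le M-1, \]
the accumulated $R^{O(M\e)}$ being absorbed into $R^{\e'}$ provided $\e\ll(\e')^2$.

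For the one-step estimate, fix $m$, put $\rho=\rho_m$ and $\sigma=\rho_{m+1}/\rho_m=R^{\e'/n}$, and work sector by sector: for a fixed $\rho$-sector $\tau$ I decouple $f_\tau$ into its $\rho$-subsectors. As in the rescaling arguments after \eqref{0623.648} and in the proof of Proposition~\ref{05.24.prop53}, translate $t_\tau$ to $0$ and apply the anisotropic dilation $L(\xi)_i=\rho^{\,n+k+1-i}\xi_i$, which sends $\gamma_{n+k}^{(j)}(\rho^{-1}t)$ to $\rho^{\,j}\gamma_{n+k}^{(j)}(t)$ up to acceptable errors; after subdividing to return the rescaled curve to $\mathcal C_{n+k}(A,R,\e)$ the sector $\tau$ becomes $\Gamma_{n,k}(\widetilde\gamma_{n+k};R/\rho^n)\cap\{|\lambda_i|\le\rho^{\,i-k}\ (i<k)\}$, with its $\sigma^{-1}$-sectors corresponding to the $\rho$-subsectors of $\tau$. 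It remains to decouple this slightly thinned flat cone into its $\sigma^{-1}$-sectors with loss $\sigma^{\frac12-\frac2p}$. Over a $\sigma^{-1}$-sector the moving flat directions $\gamma_{n+k}^{(1)}(t),\dots,\gamma_{n+k}^{(k)}(t)$ are essentially constant, so modulo these (and after projecting out the remaining, fixed, coordinate directions) the $\sigma^{-1}$-sectors are contained in the $\Theta^n(\sigma^n)$-caps of an anisotropic neighborhood $\mcM^n(\,\cdot\,;\sigma^n)$ of the nondegenerate curve $t\mapsto t\,\gamma_{n+k}^{(k+1)}(0)+\tfrac{t^2}{2}\,\gamma_{n+k}^{(k+2)}(0)+\cdots$ in $\R^n$. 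Applying Proposition~\ref{0621.prop51} for $n$ with scale parameter $\sigma^n$ --- whose admissible range $4\le p\le p_n=n^2+n-2$ is exactly the range asserted here --- and fibering back over the projected-out directions by Fubini, with the weights tracked through the rescaling as in \eqref{0623.41}, produces the gain $(\sigma^n)^{\frac1n(\frac12-\frac2p)}=\sigma^{\frac12-\frac2p}$ for each $\tau$. Finally, since $\|S^\rho f\|_{L^p}^p=\int(\sum_\tau|f_\tau|^{p/2}*w_{\tau^*})^2$ is an $L^2$ of a sum rather than a sum of $L^p$ norms, assembling the per-$\tau$ estimates into the displayed one-step inequality is carried out by inserting the Kakeya-type estimate of Proposition~\ref{0501.prop22} before rescaling, precisely as in the proof of Proposition~\ref{05.24.prop53}.

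Telescoping the one-step estimate from $m=0$ to $m=M-1$ and using $\|f\|_{L^p}\sim\|S^1 f\|_{L^p}$ gives $\|f\|_{L^p}\lessapprox\prod_m(\rho_{m+1}/\rho_m)^{\frac12-\frac2p}\cdot\|S^{R^{1/n}}f\|_{L^p}=(R^{1/n})^{\frac12-\frac2p}\big\|(\sum_{\theta\in\Xi_{n,k}(R)}|f_\theta|^{p/2}*w_{\theta^*})^{2/p}\big\|_{L^p}$, which is Proposition~\ref{0618.prop71} once $\lessapprox$ is converted into the asserted $(R^{1/n})^{\e'}$ loss; the case $R=O(1)$ is trivial. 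I expect the main obstacle to be the bookkeeping when $k\ge 2$: under the rescaling the flat cone does not reproduce itself but picks up the thin slabs $\{|\lambda_i|\le\rho^{\,i-k}\}$, $i<k$, so the bootstrapping must be run for this wider family of thinned flat cones --- the thinning only restricts the Fourier support and forces the relevant caps to be correspondingly thin, which costs nothing --- or, alternatively, one sets up an auxiliary induction on $k$ in which projecting out a fixed coordinate direction reduces $\Gamma_{n,k}$ in $\R^{n+k}$ to a $(k-1)$-flat object in $\R^{n+k-1}$. Equally delicate, though routine in spirit, is making precise the statement that over a $\sigma^{-1}$-sector the flat directions are essentially constant, with all errors absorbed into the weights $w_{\tau^*}$ and compatibly with the polynomial-curve reductions of Subsection~\ref{51red}; the case $k=1$ (arbitrary $n$), which already yields Proposition~\ref{1020.prop313}, avoids the thinning and is the cleanest instance of the scheme.
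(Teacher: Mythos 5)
Your overall architecture---a Pramanik--Seeger bootstrapping over angular scales, each step powered by Proposition \ref{0621.prop51} for $n$ applied to an auxiliary nondegenerate curve in $\R^n$, with the $l^{p/2}$ bookkeeping carried by the Kakeya-type estimate---is the same as the paper's, and the telescoping and exponent count at the end are fine. The gap is in the central one-step claim, which you yourself flag as ``delicate, though routine in spirit'': that over a $\sigma^{-1}$-sector the flat directions are essentially constant, so that ``modulo these'' the sectors sit inside $\Theta^n(\sigma^n)$-caps of a curve in $\R^n$ times a fixed $\R^k$. For the \emph{untruncated} cone this is false at the scale required. The flat slab has unit length in each direction $\gamma_{n+k}^{(j)}(t)$, $j\le k$, and the $k$-plane $\mathrm{span}(\gamma_{n+k}^{(1)}(t),\dots,\gamma_{n+k}^{(k)}(t))$ rotates by $O(1)$ as $t$ runs over the full angular range, so there is no fixed $k$-dimensional subspace to integrate out by Fubini; under any fixed projection $\R^{n+k}\to\R^n$ a single sector already contains unit-length segments (from $\lambda_j\gamma_{n+k}^{(j)}(t_0)$ with $j<k$, and from $\lambda_k$ ranging over $[\tfrac12,1]$) whose direction varies with $t_0$ and which cannot fit inside a curve cap whose largest dimension is $\sigma^{-1}$. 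This is precisely the obstruction the paper's truncation device removes: in Lemma \ref{0616lem73} the flat coefficients are restricted to intervals $[h_j,h_j+\tfrac1K]$, so the truncated cone lies in an $O(K^{-1})$-neighborhood of the \emph{single} curve $t\mapsto\sum_{j\le k}h_j\gamma_{n+k}^{(j)}(t)$, whose projection to $n$ fixed coordinates is nondegenerate; the flat directions disappear into the neighborhood instead of having to be projected out, and the computation around \eqref{0628.7270} shows the truncation is preserved (indeed refined) under the parabolic rescaling, so it is paid for only once, with a harmless $K^{O(1)}$ loss.

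Your scheme omits this truncation entirely, and the remark that the thinning acquired under rescaling ``costs nothing'' does not address the first step, where there is no thinning at all. The argument could be repaired by inserting an initial decomposition of each $\lambda_j$, $j\le k$, into intervals of length $\sigma^{-n}$ (an $R^{O(k\e')}$ loss, incurred once), or more cleanly by adopting the paper's constant-$K$ truncation and running the whole iteration on the wider family $\Gamma_{n,k}(\gamma_{n+k},K,\vec h;R)$ via the quantities $D_{p,K}(R_1,R)$; but as written the reduction of each step to Proposition \ref{0621.prop51} is not justified. A secondary point: your one-step inequality also needs the ``low'' branch of the paper's iteration---the decoupling for a nondegenerate curve in $\R^{n-1}$ applied on the coarse wave envelopes, as in the second term of \eqref{0627.718}. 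You invoke it only implicitly by pointing to the proof of Proposition \ref{05.24.prop53}; it should be stated, since without it the Kakeya-type estimate alone does not assemble the per-sector bounds.
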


Our proof relies on a bootstrapping argument by \cite{MR2288738}. To do that, let us first define a truncated $k$-flat cone.
For $K \geq 100^{n+k}$, and $\vec{h}=(h_1,\ldots,h_k)$ with $|h_i| \leq 1$ and $|h_k| \geq \frac12$, define the truncated $k$-flat cone $\Gamma_{n,k}(\gamma_{n+k},K,\vec{h},R) \subset \mathbb{R}^{n+k}$ to be
\begin{equation}\label{0624.73}
\begin{split}
  \Big\{  \sum_{j=1}^{n+k} \lambda_j \gamma_{n+k}^{(j)}(s): \; s \in [0,1], \;\; h_j \leq \lambda_j \leq h_j+\frac1K,\; 1 \leq j \leq k,
  |\lambda_j| \leq   R^{\frac{k-j}{n}}, \; k+1 \leq j \leq n+k \Big\}.
  \end{split}
\end{equation}
 Write $\Gamma_{n,k}(\gamma_{n+k},K,\vec{h};R) \subset \bigcup_{\theta \in \Xi_{n,k,K,\vec{h}}(R)} \theta$ where $\Xi_{n,k,K,\vec{h}}(R)$ is the collection of  $\theta$ of the form
 \begin{equation*}
\begin{split}
  \Big\{  \sum_{j=1}^{n+k} \lambda_j \gamma_{n+k}^{(j)}(s): \; s \in [s_{\theta},s_{\theta}+R^{-\frac1n}], \;\; h_j \leq \lambda_j \leq h_j+\frac1K,\; 1 \leq j \leq k,
  |\lambda_j| \leq   R^{\frac{k-j}{n}}, \; k+1 \leq j \leq n+k \Big\}.
  \end{split}
\end{equation*}
where $s_{\theta} \in R^{-\frac1n}\mathbb{Z}$.
Define $D_{p,K}(R)$ to be the smallest constant such that
 \begin{equation*}
        \Big\|\sum_{\theta \in \Xi_{n,k,K,\vec{h}} (R) }f_{\theta  }\Big\|_{L^p(\R^{n+k})} \leq D_{p,K}(R) \Big\|(\sum_{\theta \in \Xi_{n,k,K,\vec{h}} (R) }|f_{\theta}|^{\frac{p}{2}}*w_{\theta^*})^{\frac2p} \Big\|_{L^p(\R^{n+k})}
    \end{equation*}
    for all curves $\gamma_{n+k} \in \mathcal{C}_{n+k}(A,R,\e)$, for all $\vec{h}$ with $|h_i| \leq 1$ and $|h_k| \geq \frac12$, and all Schwartz functions $f$ whose Fourier supports are in $\Gamma_{n,k}(\gamma_{n+k},K,\vec{h};R)$. To prove Proposition \ref{0618.prop71}, it suffices to prove that for $4 \leq p \leq p_n$
\begin{equation}\label{0624.75}
    D_{p,K}(R) \lesssim (R^\frac1n)^{\frac12-\frac2p+\e'}.
\end{equation}
We use an iterative argument. To do that, let us define $D_{p,K}(R_1,R)$ to be the smallest constant such that
 \begin{equation*}
 \begin{split}
        \Big\|(\sum_{\tau\in \Xi_{n,k,K,\vec{h}}(R_1)} \big| \sum_{ \theta \in \Xi_{n,k,K,\vec{h}}(R): \theta \subset \tau} f_{\theta} \big|^{\frac{p}{2}}*w_{\tau^*})^{\frac2p} \Big\|_{L^p(\R^{n+k})} 
        \leq D_{p,K}(R_1,R) \Big\|(\sum_{\theta \in \Xi_{n,k,K,\vec{h}}(R) }|f_{\theta}|^{\frac{p}{2}}*w_{\theta^*})^{\frac2p} \Big\|_{L^p(\R^{n+k})}
        \end{split}
    \end{equation*}
for all curves $\gamma_{n+k} \in \mathcal{C}_{n+k}(A,R,\e)$, for all $\vec{h}$ with $|h_i| \leq 1$ and $|h_k| \geq \frac12$, and all Schwartz functions $f$ whose Fourier supports are in $\Gamma_{n,k}(\gamma_{n+k},K,\vec{h};R)$. Note that 
\begin{equation}\label{0624.76}
    D_{p,K}(R) \lesssim K^{O(1)}D_{p,K}(K,R), \;\;\; D_{p,K}(K^{-\e}R,R) \lesssim K^{O(1)}.
\end{equation}
\begin{proposition}[Iterative formula]\label{0611.prop73}  Let $4 \leq p \leq p_n$. For every $R \geq K$ and $R \geq R_1K^{\frac1n}$, we have
    \begin{equation}
    \begin{split}
        D_{p,K}(R_1,R) \lesssim  (K^{\frac{\e}{n}})^{\frac12-\frac2p+2\epsilon^5} D_{p,K}(   K^{{\e } } R_1,R)
        +(K^{\frac1n})^{\frac12-\frac2p+\e } D_{p,K}(   K R_1,R).
        \end{split}
    \end{equation}
\end{proposition}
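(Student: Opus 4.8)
The plan is to run the Pramanik--Seeger bootstrapping of \cite{MR2288738}, now in the setting of the $k$-flat cone: for each of the two auxiliary scales $s\in\{K^\e,K\}$ I would prove the one-step inequality $D_{p,K}(R_1,R)\lesssim_\e s^{\frac1n(\frac12-\frac2p)+\e}\,D_{p,K}(sR_1,R)$. Each of these is already stronger than the asserted two-term bound (the other term being nonnegative), so it suffices to prove the one-step inequality at each scale; both are recorded because both are used in the subsequent iteration, the choice $s=K^\e$ producing the first displayed term (the leftover power of $K^{\e^2}$ being absorbed) and $s=K$ the second.

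Starting from the quantity defining $D_{p,K}(R_1,R)$, namely $\|(\sum_{\tau\in\Xi_{n,k,K,\vec h}(R_1)}|\sum_{\theta\subset\tau}f_\theta|^{p/2}*w_{\tau^*})^{2/p}\|_{L^p}$, I would fix a scale-$R_1$ cap $\tau$ and decouple it into the scale-$sR_1$ caps $\tau'\subset\tau$, keeping the scale-$R$ pieces $f_\theta$ as atoms. The first ingredient is a rescaling lemma: there is an affine map $L_\tau$ — a translation by $-\gamma_{n+k}(t_\tau)$ followed by an anisotropic block dilation in the Frenet-type frame at $t_\tau$ — carrying $\tau$ to a set comparable to a truncated $k$-flat cone $\Gamma_{n,k}(\tilde\gamma_{n+k},K,\tilde{\vec h};R/R_1)$ with $\tilde\gamma_{n+k}\in\mathcal C_{n+k}(A,R,\e)$ (after the usual subdivision of the parameter interval) and $\tilde{\vec h}$ a valid tuple, under which the scale-$R$ caps $\theta\subset\tau$ become scale-$(R/R_1)$ caps and the scale-$sR_1$ caps $\tau'$ become scale-$s$ caps. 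The verification is the same anisotropic Taylor-expansion bookkeeping as in Lemmas \ref{structurelem1} and \ref{conv}; the delicate point is that the $\tfrac1K$-windows in the flat directions $\gamma^{(1)},\dots,\gamma^{(k)}$ are preserved by $L_\tau$ up to a bounded factor, which is exactly why the truncation parameter stays equal to $K$.

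In the rescaled picture $\tau$ has become a truncated $k$-flat cone at scale $R/R_1$, which I must decouple in one step into its scale-$s$ caps. For this I would use the cone-over-a-curve structure: quotienting by the $k$ flat directions, the $k$-flat cone is essentially a cylinder over the anisotropic neighbourhood $\mcM^n(\cdot;s)$ of a nondegenerate curve in $\R^n$ with controlled constants, so the scale-$s$ $\ell^{p/2}$-decoupling for it reduces to the $\ell^{p/2}$-function estimate for that curve in $\R^n$, which is available at scale $s$ by Proposition \ref{0621.prop51} for $n$ — the standing induction hypothesis of Proposition \ref{0618.prop71}. This produces the factor $s^{\frac1n(\frac12-\frac2p)+\e}$. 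Undoing $L_\tau$ and summing over the $O(1)$-overlapping family $\{\tau\}$, the weights compose as in the estimates following Theorem \ref{25.01.30.thm51}: $w_{\tau^*}$ convolved with the rescaled weight of a scale-$s$ cap collapses to a weight adapted to the scale-$sR_1$ cap, and the nesting $\theta\subset\tau'\subset\tau$ lets the whole sum be bounded by the quantity defining $D_{p,K}(sR_1,R)$. The range $2\le p\le 4$ is handled separately and more cheaply: $p=2$ is elementary and one interpolates with $p=4$.

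I expect the main obstacle to be the rescaling lemma together with the lifting step: one has to verify that a single scale-$R_1$ cap of the truncated $k$-flat cone — an anisotropic box in the Frenet frame at $t_\tau$ — is genuinely affinely equivalent to a full truncated $k$-flat cone at the smaller scale $R/R_1$, with the flat-direction windows, the curved-direction thicknesses $R^{(k-j)/n}$, the extra thin radial direction, and the arc length all transforming consistently, and that the curve in $\R^n$ left after quotienting by the flat directions is again nondegenerate with constants uniform over $\tau$, so that Proposition \ref{0621.prop51} for $n$ applies. Once this geometry is in place, the rest — tracking the power of $s$ and reassembling the $\ell^{p/2}$-function over the caps — is routine and parallels Sections \ref{section4} and \ref{0703.sec6}.
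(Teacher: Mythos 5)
Your rescaling lemma (a single scale-$R_1$ cap of the truncated $k$-flat cone is affinely equivalent to a truncated $k$-flat cone at scale $R/R_1$, with the $\tfrac1K$-windows preserved) is correct and matches the computation around \eqref{0628.7270} in the paper. The gap is in the logical reduction that precedes it: you assert that it suffices to prove, separately for $s=K^\e$ and $s=K$, the one-step inequality $D_{p,K}(R_1,R)\lesssim s^{\frac1n(\frac12-\frac2p)+\e}D_{p,K}(sR_1,R)$. Neither of these one-step inequalities is accessible (and if either were, the two-term statement would be redundant: iterating the $s=K$ version alone already closes the bootstrap). The two terms in the proposition are not two independently provable bounds; they are the images of two \emph{complementary} pieces of a single decomposition of the left-hand side. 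The paper first applies the Kakeya-type wave envelope estimate (Proposition \ref{0616.thm33}) and then splits the resulting sum over envelope scales $\sigma$ at the threshold $\sigma = K^{\e}R_1^{-1/n}$, as in \eqref{0627.718}; this high/low dichotomy is entirely absent from your plan.

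Concretely, each regime defeats the other's method. For small envelopes ($\sigma\le K^{\e}R_1^{-1/n}$, the focused wave packets), the only available step is H\"older over the $\le K^{\e}$ caps inside an envelope, costing $K^{\e(p/2-1)}$, followed by the full decoupling of Lemma \ref{0616lem73} down to scale $KR_1$ with constant $(K^{1/n})^{\frac12-\frac2p+\e'}$; this contribution cannot be made to land on $D_{p,K}(K^{\e}R_1,R)$ with constant $(K^{\e/n})^{\frac12-\frac2p+O(\e^2)}$, because the H\"older loss $K^{O(\e)}$ alone already exceeds that constant, and when the small step is iterated $\approx \e^{-1}\log R/\log K$ times such a loss compounds to a power of $R$. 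It is affordable only because it lands on $D_{p,K}(KR_1,R)$, which is invoked merely $\log R/\log K$ times. Conversely, for large envelopes ($\sigma\ge K^{\e}R_1^{-1/n}$) H\"older over the $\sim\sigma R_1^{1/n}$ caps in an envelope is far too lossy, and the paper instead runs the lower-dimensional decoupling of \cite{MR3548534} on $L^{-1}(U)$; but for $\sigma\in[K^{\e}R_1^{-1/n},KR_1^{-1/n})$ the rescaled envelope only contains a box of side $K^{\e}$ in the curved directions, so one can only decouple down to scale $K^{\e}R_1$ — not to $KR_1$. Thus the $s=K$ one-step inequality fails on the low part and the $s=K^{\e}$ one-step inequality fails on the high part; you need both mechanisms acting on disjoint pieces, which is exactly what the two-term conclusion encodes. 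A correct proof must therefore begin with the wave-envelope decomposition and the scale dichotomy; your rescaling and the reduction to Proposition \ref{0621.prop51} for $n$ then enter as the treatment of the high piece (via Lemma \ref{0616lem73}), while the low piece requires the separate $(n-1)$-dimensional decoupling step you have not accounted for.
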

The desired bound \eqref{0624.75} follows by applying \eqref{0624.76} and Proposition \ref{0611.prop73} repeatedly. So it suffices to prove the above proposition. The proof of Proposition \ref{0611.prop73} is analogous to that of Proposition \ref{05.24.prop53}. To do that, we need the following lemma.

\begin{lemma}\label{0616lem73} Assume that Proposition \ref{0621.prop51} is true for $n$. For every $R \geq K^{\frac{100}{\e}}$, $4 \leq p \leq p_{n}$, and $\epsilon'>0$, we have
    \begin{equation*}
        \Big\| \sum_{\theta \in \Xi_{n,k,K,\vec{h} }(R) }f_{\theta} \Big\|_{L^p(\R^{n+k})} \leq C_{p,\e'} (K^{\frac1n})^{\frac12-\frac2p+\e'} \Big\| \big(\sum_{\tau \in \Xi_{n,k,K,\vec{h} }(K) } \big| \sum_{\theta \subset \tau}f_{\theta} \big|^{\frac{p}{2}}*w_{\tau^*} \big)^{\frac2p} \Big\|_{L^p(\R^{n+k})}. 
    \end{equation*}
    for all curves $\gamma_{n+k} \in \mathcal{C}_{n+k}(A,R,\e)$, for all $\vec{h}$ with $|h_i| \leq 1$ and $|h_k| \geq \frac12$, and all Schwartz functions $f$ whose Fourier supports are in $\Gamma_{n,k}(\gamma_{n+k},K,\vec{h};R)$.
\end{lemma}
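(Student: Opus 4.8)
The plan is to reduce the claimed decoupling (from $R$‑caps to $K$‑caps) to a single‑scale $l^{p/2}$ estimate for the $k$‑flat cone at scale $K$, and then to deduce that single‑scale estimate from the curve estimate (Proposition \ref{0621.prop51} for $n$, applied at scale $K$), exploiting the fact that the $k$‑flat cone is ``genuinely curved'' in only $n$ of its $n+k$ directions.

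First I would organize the caps by grouping. Every $\theta\in\Xi_{n,k,K,\vec h}(R)$ lies over a $t$‑interval of length $R^{-1/n}$, which is contained in a unique $t$‑interval of length $K^{-1/n}$, hence in a unique $\tau\in\Xi_{n,k,K,\vec h}(K)$; moreover the portion of $\Gamma_{n,k}(\gamma_{n+k},K,\vec h;R)$ over that short interval is contained in $\tau$, since the constraints $|\lambda_j|\le\min(1,R^{(k-j)/n})$ only weaken as the scale decreases (recall $R\ge K^{100/\e}\gg K$). Writing $g_\tau:=\sum_{\theta\subset\tau}f_\theta$, each $\widehat{g_\tau}$ is then supported in $\tau$, and it suffices to prove the single‑scale estimate
\[ \Big\|\sum_{\tau\in\Xi_{n,k,K,\vec h}(K)} g_\tau\Big\|_{L^p(\R^{n+k})}\lesssim_{\e'} (K^{1/n})^{\frac12-\frac2p+\e'}\Big\|\big(\sum_\tau |g_\tau|^{\frac p2}\big)^{\frac2p}\Big\|_{L^p(\R^{n+k})} \]
for arbitrary $g_\tau$ with $\widehat{g_\tau}\subset\tau$; the weights $w_{\tau^*}$ in the statement of the lemma are recovered afterwards by the standard local‑constancy inequality $|g_\tau|^{p/2}\lesssim \big||g_\tau|^2*w_{\tau^*}\big|^{p/4}\lesssim |g_\tau|^{p/2}*w_{\tau^*}$ used in \eqref{0623.41}.

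Next comes the cone‑to‑curve reduction. Since $\lambda_k\in[h_k,h_k+\tfrac1K]$ with $|h_k|\ge\tfrac12$ and $\lambda_1,\dots,\lambda_{k-1}\in[h_j,h_j+\tfrac1K]$, the cone $\Gamma_{n,k}(\gamma_{n+k},K,\vec h;K)$ is a thin neighborhood in the $k$ directions $\gamma_{n+k}^{(1)}(t),\dots,\gamma_{n+k}^{(k)}(t)$, while the remaining coefficients $\lambda_{k+1},\dots,\lambda_{n+k}$ carry exactly the moment‑curve scaling $|\lambda_j|\le K^{(k-j)/n}$ that defines $\mcM^n(\,\cdot\,;K)$. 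Concretely, after an affine change of variables of determinant $\sim 1$ adapted to $\vec h$ and to the Frenet frame of $\gamma_{n+k}$ (and after the usual reduction to polynomial curves — legitimate because at scale $K\le R^{\e/100}$ the perturbation $R^{-\e}s^{n+1}E_\e(s)$ is negligible), the cone over each $K^{-1/n}$‑length $t$‑interval becomes, up to a bounded dilation, a product of a flat slab in $k$ directions with the anisotropic neighborhood $\mcM^n(\gamma_n;K)$ of a nondegenerate curve $\gamma_n\in\mathcal{C}_n(A',K,\e)$ in $\R^n$, and the caps $\tau$ correspond to the canonical moment‑curve blocks $\theta'\in\Theta^n(K)$ on the slices. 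Applying Proposition \ref{0621.prop51} for $n$ slice‑by‑slice and summing over the flat directions (a Fubini‑type argument), followed by the upgrade $\|(\sum_{\theta'}|f_{\theta'}|^{p/2})^{2/p}\|\to\|(\sum_{\theta'}|f_{\theta'}|^{p/2}*w_{\theta'^*})^{2/p}\|$ and undoing the change of variables, yields the constant $C_{A,\e}K^{200^n\e^{1/2}}(K^{1/n})^{\frac12-\frac2p}$; taking $\e$ small enough relative to $\e'$ absorbs $K^{200^n\e^{1/2}}$ into $(K^{1/n})^{\e'}$.

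The main obstacle is making the cone‑to‑curve reduction precise: the Frenet frame $\{\gamma_{n+k}^{(j)}(t)\}$ rotates as $t$ ranges over $[0,1]$, so $\Gamma_{n,k}(\gamma_{n+k},K,\vec h;K)$ is only approximately a slab$\times$curve‑neighborhood product. Handling this requires either a secondary decomposition of $[0,1]$ into intervals on which the frame is essentially constant, together with a scale induction in the spirit of the proof of Proposition \ref{05.24.prop53}, or a direct bookkeeping of how the plank structure of the $k$‑flat cone at scale $K$ embeds into the product. One must also keep all constants uniform in $\vec h$ (this is absorbed into the constant $A'$ of the reference curve, since shifting $\lambda_j$ by $h_j$ amounts to recentering along the nondegenerate curve $t\mapsto\sum_j h_j\gamma_{n+k}^{(j)}(t)$) and check that the weights $w_{\tau^*}$ on $\R^{n+k}$ factor correctly into $w_{\theta'^*}$ on $\R^n$ times duals of the flat directions under the affine rescalings — routine but the most technical piece. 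The hypothesis $R\ge K^{100/\e}$ is precisely what guarantees that all Taylor remainders, and the $\tfrac1K$‑versus‑$K^{-1/n}$ discrepancies in the flat directions, are negligible throughout.
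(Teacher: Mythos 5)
Your overall strategy is the paper's: group the $\theta$'s into $\tau\in\Xi_{n,k,K,\vec h}(K)$, reduce the $k$-flat cone at scale $K$ to a nondegenerate curve in $\R^n$ built from $t\mapsto\sum_{j\le k}h_j\gamma_{n+k}^{(j)}(t)$, freeze the $k$ flat variables, and apply Proposition \ref{0621.prop51} for $n$ at scale $K$. But you leave the central step --- justifying the ``slab $\times$ $\mcM^n(\gamma_n;K)$'' structure --- as an acknowledged obstacle caused by the rotating Frenet frame, and you propose to fix it with a secondary decomposition of $[0,1]$ plus a scale induction. That is a genuine gap as written, and the proposed fix is not what is needed. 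The paper closes it with a much softer observation: writing $\lambda_j=h_j+\mu_j$ with $0\le\mu_j\le K^{-1}$ for $j\le k$, and noting $|\lambda_j|\le R^{(k-j)/n}\le R^{-1/n}\ll K^{-1}$ for $j>k$ (this is exactly where $R\ge K^{100/\e}$ enters), the \emph{entire} truncated cone lies in the isotropic $O(K^{-1})$-neighborhood of the single curve $t\mapsto\sum_{j\le k}h_j\gamma_{n+k}^{(j)}(t)$, and the piece over a $K^{-1/n}$-interval of $t$ lies in the $O(K^{-1})$-neighborhood of the corresponding arc. Since the smallest dimension of a canonical block in $\Theta^n(K)$ is $K^{-1}$, an isotropic $CK^{-1}$-neighborhood of such an arc sits inside $O(1)$ blocks of $\mcM^n(\cdot;K)$ automatically; no exact product structure, frame-freezing, or induction on scales is required. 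The anisotropic information at scales between $K^{-1/n}$ and $K^{-1}$ is simply not needed for a decoupling that only goes down to scale $K$.

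A second, smaller gap: you assert that uniformity in $\vec h$ and the nondegeneracy of the resulting $\R^n$-curve are ``absorbed into the constant $A'$,'' but you never exhibit why the terms $h_j\gamma_{n+k}^{(j)}$ with $j<k$ are harmless perturbations of the $j=k$ term. In the paper this is arranged by first splitting into $O(1)$ pieces $\tau_0\in\Xi_{n,k,K,\vec h}(100^{n(n+k)})$ and applying the anisotropic rescaling $L_{k,100^{n+k}}$ of \eqref{0620.71}: after that rescaling the first $n$ coordinates of $\gamma_{n+k}^{(j)}$ acquire a factor $100^{(n+k)(j-k)}\le 100^{-(n+k)}$ for $j<k$, so the projected curve is a small perturbation of $h_k\cdot(\tfrac{t^n}{n!},\ldots,t)$ with $|h_k|\ge\tfrac12$, hence uniformly nondegenerate. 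Without this (or an equivalent) normalization, ``recentering along $\sum_j h_j\gamma^{(j)}$'' does not by itself give a curve in $\mathcal{C}_n(A',K,\e)$ uniformly over admissible $\vec h$.
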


Lemma \ref{0616lem73} can be viewed as the base case in the Pramanik-Seeger's bootstrapping argument. Since we have truncated the $k$-flat directions to length $K^{-1}$, these directions become harmless. The problem is reduced to that for a nondegenerate curve $\ga_n$ in $\R^n$.

In this section, we use rescaling frequently. So let us introduce a rescaling map here.  For any $1 \leq J \leq n+k$ and $X  >0$, we define the linear transformation $L_{J,X}:\mathbb{R}^{n+k} \rightarrow \mathbb{R}^{n+k}$ by 
\begin{equation}\label{0620.71}
    L_{J,X}(\xi):=\Big(X^{n+k-J}\xi_1,  \ldots, X\xi_{n+k-J}, \xi_{n+k+1-J},\ldots, X^{-(J-1)}\xi_{n+k} \Big).
\end{equation}
Let us begin the proof of Lemma \ref{0616lem73}. 

\begin{proof}[Proof of Lemma \ref{0616lem73}] The first step is to decompose the frequency by the triangle inequality.
\begin{equation}\label{0625.79}
    \Big\| \sum_{\theta \in \Xi_{n,k,K,\vec{h} }(R) }f_{\theta} \Big\|_{L^p(\R^{n+k})} \lesssim
    \Big(\sum_{\tau_0 \in \Xi_{n,k,K,\vec{h}}(100^{n(n+k)}) }\Big\|  \sum_{\theta \subset \tau_0 }f_{\theta}  \Big\|_{L^p(\R^{n+k})}^p \Big)^{\frac1p}.
\end{equation}
Let us fix $\tau_0$. By translation (see the discussion around \ref{0623.648}), we may assume that $\tau_0$ contains the origin.
By definition (see \eqref{0624.73}), the intersection of the $k$-flat cone with $\tau_0$ is as follows. 
\begin{equation*}
\begin{split}
  \Big\{  \sum_{j=1}^{n+k} \lambda_j \gamma_{n+k}^{(j)}(s): \; s \in [0,100^{-(n+k)}], \;\; h_j \leq \lambda_j \leq h_j+\frac1K,\; 1 \leq j \leq k,
  |\lambda_j| \leq   R^{\frac{k-j}{n}}, \; k+1 \leq j \leq n+k \Big\}.
  \end{split}
\end{equation*}
This set is equal to
\begin{equation*}
\begin{split}
  \Big\{  \sum_{j=1}^k h_j \gamma_{n+k}^{(j)}(s)+ \sum_{j=1}^{n+k} \lambda_j \gamma_{n+k}^{(j)}(s):  s \in  [0, 100^{-(n+k)}], \; 0 \leq \lambda_j \leq \frac1K,\; 1 \leq j \leq k,
  |\lambda_j| \leq   R^{\frac{k-j}{n}}, \; k+1 \leq j \leq n+k \Big\}.
  \end{split}
\end{equation*}
Since $\gamma_{n+k} \in \mathcal{C}_{n+k}(A,R,\e)$ and $R \geq K^{\frac{100}{\e}}$, we can ignore the perturbation term in the definition of $\gamma_n$ (see \eqref{0623.51def}), and pretend that $\gamma_{n+k}(s)=(\frac{s^{n+k}}{(n+k)!},\frac{s^{n+k-1}}{(n+k-1)!},\ldots,\frac{s}{1!})$. Then the above set is contained in
\begin{equation}\label{0625.712}
\begin{split}
  \mathcal{N}_{2K^{-1}} \Big\{  \sum_{j=1}^k h_j \gamma_{n+k}^{(j)}(s): s \in [0,100^{-(n+k)}]   \Big\}.
  \end{split}
\end{equation}
Recall that $\frac12 \leq h_k \leq 1$.
Next, we rescale using the map $L_{k,100^{n+k} }(\xi)$ (see \eqref{0620.71}). Define the curve $\widetilde{\gamma}_{n+k}(s):= L_{k,100^{n+k}} \big( {\gamma}_{n+k} \big( s/100^{n+k} \big) \big)$ for $s \in [0,1]$.
 Note that
\begin{equation}
    \widetilde{\gamma}_{n+k}^{(j)}(s)= 100^{-(n+k)j} L_{k,100^{n+k} } \big( {\gamma}_{n+k}^{(j)} \big( s/100^{n+k} \big) \big).
\end{equation}
If we pretend that $\gamma_{n+k}(s)=(\frac{s^{n+k}}{(n+k)!},\frac{s^{n+k-1}}{(n+k-1)!},\ldots,\frac{s}{1!})$, then we have
\begin{equation}
    100^{(n+k)k}  \widetilde{\gamma}_{n+k}^{(k)}(s)= \big( \frac{s^n}{n!},\ldots,\frac{s}{1},1,0,\ldots,0 \big).
\end{equation}
Then the image of \eqref{0625.712} under the rescaling map $L_{k,100^{n+k} }$ is given by
\begin{equation}\label{0625.7132}
    \mathcal{N}_{2K^{-1}} \Big\{  \sum_{j=1}^k 100^{(n+k)j} h_j  \widetilde{\gamma}_{n+k}^{(j)}(s): s \in [0,1]   \Big\}.
\end{equation}
We now see the reason why we applied the inequality \eqref{0625.79}. Define $\widetilde{\widetilde{\gamma}}_{n}(s)$ to be the projection of $\sum_{j=1}^k 100^{(n+k)j} h_j \widetilde{\gamma}_{n+k}^{(j)}(s)$ to the  first $n$ coordinates. We write
\begin{equation}\label{0706.714}
    \sum_{j=1}^k 100^{(n+k)j} h_j \widetilde{\gamma}_{n+k}^{(j)}(s) = 100^{(n+k)k} h_k \widetilde{\gamma}_{n+k}^{(k)}(s)+ \sum_{j=1}^{k-1} 100^{(n+k)j} h_j \widetilde{\gamma}_{n+k}^{(j)}(s).
\end{equation}
Recall that $|h_j| \lesssim 1$ and $|h_k| \sim 1$.
The first term on the right hand side is ``essentially'' equal to
$
     (\frac{s^n}{n!}, \ldots, \frac{s}{1!},1,0,\ldots,0).$
The second term on the right hand side of \eqref{0706.714} can be viewed as a small perturbation of the first term. Hence, one may see that $\widetilde{\widetilde{\gamma_{n}}}$ is a nondegenerate curve in $\mathbb{R}^n$.
  After getting rid of  the last $k$ coordinates,  \eqref{0625.7132} is contained in 
\begin{equation}
     \mathcal{N}_{CK^{-1}}\Big\{ \widetilde{\widetilde{\gamma}}_{n}(s) : 0 \leq s \leq 1 \Big\} \times \mathbb{R}^k.
\end{equation} 
By following the same argument, the Fourier support of $\sum_{\theta \subset \tau}f_{\theta}$ is contained in 
\begin{equation}
    \mathcal{N}_{CK^{-1}}\Big\{ \widetilde{\widetilde{\gamma}}_{n}(s): s\in [s_{\tau},s_{\tau}+K^{-\frac1n}] \Big\} \times \mathbb{R}^k.
\end{equation}
Therefore, after freezing the last $k$ variables, we can apply the result  for a nondegenerate curve (Proposition \ref{0621.prop51}), and obtain the 
desired result. 
\end{proof}

Let us next prove Proposition \ref{0611.prop73}. The proof is very similar to that for Proposition \ref{05.24.prop53}. Compared to Proposition \ref{05.24.prop53}, the main difference is that we are dealing with the $k$-flat cone here. This makes the rescaling more involved.

\begin{proof}[Proof of Proposition \ref{0611.prop73}] 

Let us first state our goal. To simplify the notation, we use the notation $\Xi(R)$ for $\Xi_{n,k,K,\vec{h}}(R)$. By the definition of $D_{p,K}$, what we need to prove is as follows.
\begin{equation}\label{0627.7188}
    \begin{split}
        &\Big\|(  \sum_{\tau\in \Xi(R_1)} \big| \sum_{ \theta \in \Xi(R): \theta \subset \tau} f_{\theta} \big|^{\frac{p}{2}}*w_{\tau^*})^{\frac2p} \Big\|_{L^p(\R^{n+k})} 
        \\&
        \lesssim 
        \Big((K^{\frac{\e}{n}})^{\frac12-\frac2p+ 2\e^5} D_{p,K}(   K^{{\e} } R_1,R)
        \\ & \;\;\;\;\;\;\;\;\;\;\;+(K^{\frac1n})^{\frac12-\frac2p + \e} D_{p,K}(   K R_1,R) \Big) \Big\|(\sum_{\theta \in \Xi(R) }|f_{\theta}|^{\frac{p}{2}}*w_{\theta^*})^{\frac2p} \Big\|_{L^p(\R^{n+k})}
    \end{split}
\end{equation}
for all curves $\gamma_{n+k} \in \mathcal{C}_{n+k}(A,R,\e)$, for all $\vec{h}$ with $|h_i| \leq 1$ and $|h_k| \geq \frac12$, and all Schwartz functions $f$ whose Fourier supports are in $\Gamma_{n,k}(\gamma_{n+k},K,\vec{h};R)$.

To use Proposition \ref{0616.thm33}, let us introduce some notations. Let  $q=\frac{p}{2}$. Define 
\begin{equation}\label{0628.718}
    \vec{\delta}:=( \underbrace{1, \ldots, 1}_{k-1}, R_1^{-\frac{1}{n}}, \underbrace{1, \ldots, 1}_{n-2} ), \;\; \vec{\delta''}:=( \underbrace{1, \ldots, 1}_{k-1}, R^{-\frac{1}{n}}, \underbrace{1, \ldots, 1}_{n-2} ).
\end{equation}
Denote
\[ \vec\nu_{\pm}:=( \underbrace{0, \ldots, 0}_{k-1}, \pm 1, \underbrace{0, \ldots, 0}_{n-2} ). \]
Note that for given $\theta \in \Xi(R)$ there exists $s''$ so that $f_{\theta}=f_{p[\vec{\delta}'',\vec\nu_+ ](s'')}+f_{p[\vec{\delta}'',\vec\nu_- ](s'')}$.
By the triangle inequality, we just consider one branch $f_{\theta}=f_{p[\vec{\delta}'',\vec\nu_+](s'')}$. We may omit the $\vec\nu$-notation and write $f_{\theta}=f_{p[\vec{\delta}''](s'')}$.
So we may say that $\Xi(R)$ and $\{p[\vec{\delta}''](s'')\}_{s'' \in \I_{R^{-1/n}} }$ are the same.
For simplification, use the notation 
\begin{equation}
    g_{s}:=\sum_{s'' \in \I_{ R^{-1/n} }: |s-s''| \leq R_1^{-1/n}  } f_{p[\vec{\delta''}](s'') }.
\end{equation}
By Proposition \ref{0616.thm33} and dilation, we have 
 \begin{equation*}
 \begin{split}
        \int_{ \mathbb{R}^{n+k} } \Big(\sum_{s\in\I_{R_1^{-1/n}} } |g_s|^{\frac{p}{2}} \Big)^{2}
        \lessapprox \sum_{R_1^{-1/n} \le \si\le 1} \sum_{s'\in\I_\si} \sum_{U\parallel (q[\vec{\delta};\le\si](s'))^*} \frac{1}{|U|} \Big(\int_U \sum_{s:|s-s'|\le \si}|g_s|^{ \frac{p}{2} } \Big)^2.
    \end{split}
    \end{equation*}
    We consider two cases depending on the size of $\sigma$: $\sigma \leq K^{\e}R_1^{-1/n}$ or $\sigma \geq K^{\e}R_1^{-1/n}$.
    In other words,
    we write the right hand side as  two terms, and bound each term individually.
    \begin{equation}\label{0627.718}
        \begin{split}
            &\sum_{R_1^{-1/n} \le \si\le  K^{\e} R_1^{-1/n} } \sum_{s'\in\I_\si} \sum_{U\parallel (q[\vec{\delta};\le\si](s'))^*} \frac{1}{|U|} \Big(\int_U \sum_{s \in \I_{R_1^{-1/n}} :|s-s'|\le \si}|g_s|^{ \frac{p}{2} } \Big)^2
            \\&
            +\sum_{K^{\e} R_1^{-1/n} \leq \sigma \leq 1} \sum_{s'\in\I_\si} \sum_{U\parallel (q[\vec{\delta};\le\si](s'))^*} \frac{1}{|U|} \Big(\int_U \sum_{s \in \I_{R_1^{-1/n}} :|s-s'|\le \si}|g_s|^{ \frac{p}{2} } \Big)^2.
        \end{split}
    \end{equation}
    In the language of the high/low method, the former term corresponds to the high term and the latter term corresponds to the low term. As we mentioned before, we may say that $\{\theta \}_{\theta \in \Xi(R)}$ and $\{ p[\vec{\delta''}](s'')  \}_{s'' \in \I_{R^{-1/n}}}$ are the same. Hence, it suffices to bound \eqref{0627.718} by the right hand side of \eqref{0627.7188}. \\
    
    Let us bound the first term of \eqref{0627.718}.
    Suppose that $\sigma \leq K^{\e} R_1^{-1/n}$. For given $s' \in \I_{\sigma}$, the number of $s \in \I_{R_1^{-1/n}}$ such that $|s-s'| \leq \sigma$ is bounded by $K^{\e}$. So by H\"{o}lder's inequality, the first term of \eqref{0627.718} is bounded by
    \begin{equation}\label{0618.714}
    \begin{split}
     &\lesssim  K^{O(\e)} \sum_{ s \in \I_{R_1^{-1/n}}}    \int_{\R^{n+k}} |g_s|^p 
     \\&
     \lesssim K^{O(\e)} \sum_{ s \in \I_{R_1^{-1/n}}} \int_{\R^{n+k}} \Big| \sum_{s'' \in \I_{ R^{-1/n} }: |s-s''| \leq R_1^{-1/n}  } f_{p[\vec{\delta''}](s'' )} \Big|^p
     \\&
     \lesssim K^{O(\e)} \sum_{  \tau \in \Xi(R_1) } \int_{\R^{n+k}}  \Big| \sum_{ \theta \in \Xi(R): \theta \subset \tau} f_{\theta} \Big|^p.
     \end{split}
    \end{equation}
    We next do rescaling and apply Lemma \ref{0616lem73}. Let us first fix $\tau \in \Xi(R_1)$. By using the linear transformation in the discussion below \eqref{0623.648}, we may assume that $\tau$ contains the origin. Hence, by definition,
    \begin{equation}
\begin{split}
\tau=
       \Big\{ \sum_{j=1}^{n+k} a_j \gamma_{n+k}^{(j)}(0) :\;  h_j \leq  a_j \leq h_j+\frac{1}{K}, \; 1 \leq j \leq k, \;\;  
       0\le a_j \le (R_1
       )^{\frac{k-j}{n}}, \;\; k+1\le j\le n+k  \Big\}.
    \end{split}
    \end{equation}
    Also, $\theta \subset \tau$ is expressed by
    \begin{equation}
\begin{split}
       \Big\{ \sum_{j=1}^{n+k} a_j \gamma_{n+k}^{(j)}(s_{\theta}) :\;  h_j \leq  a_j \leq h_j+\frac{1}{K}, \; 1 \leq j \leq k, \;\;  
       0\le a_j \le (R
       )^{\frac{k-j}{n}}, \;\; k+1\le j\le n+k  \Big\}
    \end{split}
    \end{equation}
    where $|s_{\theta}| \leq R_1^{-1/n}$.
    We use the rescaling map $L_{k,R_1^{1/n}}$ (see \eqref{0620.71} for the definition). 
    Since $\gamma_{n+k} \in \mathcal{C}_{n+k}(A,R,\e)$, we have the expression
    \begin{equation}
        \gamma_{n+k}(s)= \big(\frac{s^{n+k}}{(n+k)!},\frac{s^{n+k-1}}{(n+k-1)!},\ldots,\frac{s}{1!} \big) + R^{-\e}s^{n+k+1}E_{\e}(s)
    \end{equation}
    where $E_{\e}(s)$ is a polynomial of degree at most $\e^{-1}$ and coefficients  are bounded by $A$.
    Let us see how $\theta \subset \tau$ changes under the rescaling. Define 
    \begin{equation}\label{0630.7250}
        \widetilde{\gamma}_{n+k}(s):=R_1^{k/n} L_{k,R_1^{1/n}} \big( \gamma_{n+k}(R_1^{-1/n}s) \big).
    \end{equation}
    Note that $\widetilde{\gamma}_{n+k} \in \mathcal{C}_{n+k}(A,R,\e)$. We have 
    \begin{equation}
        \widetilde{\gamma}_{n+k}^{(j)}(s)=R_1^{-j/n} R_1^{k/n} L_{k,R_1^{1/n}} \big( \gamma_{n+k}^{(j)}(R_1^{-1/n}s) \big).
    \end{equation}
    Let $\widetilde{s_{\theta}}=R_1^{1/n}s_{\theta}$ and $\tilde{h}_j=R_1^{-\frac{k-j}{n}} h_j$.
    The image of $\theta$ under the map $L_{k,R_1^{1/n}}$ is 
    \begin{equation}\label{0628.7270}
\begin{split}
       \Big\{ \sum_{j=1}^{n+k} a_j \widetilde{\gamma}_{n+k}^{(j)}(\widetilde{s}_{\theta} ) :\;   \widetilde{h}_j \leq  a_j \leq \widetilde{h}_j+\frac{ R_1^{-\frac{k-j}{n}} }{K}, \; 1 \leq j \leq k, \;\;  
       0\le a_j \le   (R_1^{-1}R)^{\frac{k-j }{n}}, \;\; k+1\le j\le n+k  \Big\}.
    \end{split}
    \end{equation}
So we can apply Lemma \ref{0616lem73}. After rescaling back, \eqref{0618.714} is bounded by 
\begin{equation}
\begin{split}
    & \lesssim K^{O(\e)} K^{\frac1n(\frac12-\frac2p)p} \sum_{\tau} \Big\| \big(\sum_{ \tau' \in \Xi(KR_1) : \tau' \subset \tau} \big| \sum_{\theta \subset \tau'}f_{\theta}|^{\frac{p}{2}}*w_{(\tau')^*} \big)^{\frac2p} \Big\|_{L^p(\R^{n+k})}^p 
    \\&
    \lesssim K^{O(\e)} K^{\frac1n(\frac12-\frac2p)p}  \Big\| \big(\sum_{\tau' \in \Xi(KR_1) } \big| \sum_{\theta \subset \tau'}f_{\theta}|^{\frac{p}{2}}*w_{(\tau')^*} \big)^{\frac2p} \Big\|_{L^p(\R^{n+k})}^p.
\end{split}
\end{equation}
After applying the definition of $D_{p,K}(K R_1,R)$ to the right hand side, this is further bounded by 
\begin{equation}
    K^{O(\e)} K^{\frac1n(\frac12-\frac2p)p} D_{p,K}(KR_1,R)^p  \Big\| \big(\sum_{\theta} \big| f_{\theta}|^{\frac{p}{2}}*w_{\theta^*} \big)^{\frac2p} \Big\|_{L^p(\R^{n+k})}^p.
\end{equation}
This is the second term on the right hand side of \eqref{0627.7188}. This takes care of the first term of \eqref{0627.718}.
\\

Let us bound the second term of \eqref{0627.718}. We claim that for every $\tau \in \Xi(R_1)$ we have
\begin{equation*}
    \int_{\R^{n+k}} \Big|\sum_{\theta \in \Xi(R): \theta \subset \tau}f_{\theta} \Big|^{\frac{p}{2}} w_U \lesssim  (K^{\frac{\e}{n} })^{(\frac12-\frac{2}{p})\frac{p}{2}+\e^5}  \sum_{ \tau' \in \Xi(K^{\e} R_1): \tau' \subset \tau} \int_{\R^{n+k}}  \Big| \sum_{\theta \subset \tau'} f_{\theta} \Big|^{\frac{p}{2}} w_U
\end{equation*}
for any $U\| (q[\vec{\delta};\leq \si](s'))^*$. Here, we recall that $\vec{\delta}$ is defined in $\eqref{0628.718}$, $K^{\e} R_1^{-1/n} \leq \sigma \leq 1$, and $s'$ corresponds to $\tau$.

Once we have the claim, by the claim, the second line of \eqref{0627.718} is bounded by 
\begin{equation}
    (K^{ \frac{\e}{n} })^{(\frac12-\frac{2}{p})p+2\e^5}  \int_{\R^{n+k}} \Big(\sum_{\tau'} \big| \sum_{\theta \subset \tau'} f_{\theta} \big|^{\frac{p}{2}}*w_{(\tau')^*} \Big)^2.
\end{equation}
Apply the definition of $D_{p,K}$, and the above inequality is bounded by
\begin{equation}
    (K^{\frac{\e}{n} })^{(\frac12-\frac{2}{p})p+2\e^5} D_{p,K}( K^{\e} R_1 ,R)^p 
    \int_{\R^{n+k}} \Big(\sum_{\theta} \big|  f_{\theta} \big|^{\frac{p}{2}}*w_{\theta^*} \Big)^2.
\end{equation}
This is the first term on the right hand side of \eqref{0627.7188}.

Let us prove the claim. Fix $\tau \in \Xi(R_1)$. As we did before, by using the linear transformation (see the discussion around \eqref{0623.648}), we may assume that $\tau$ contains the origin (in other words, $s'=0$). Let us compute the size of $U$. Recall that 
    \[ q[\vec\de;\le\si]=\{ (a_i): |a_i|\le \frac{1}{R_1} \min\{ \si^{i-n-k},R_1\rho_{i-1}\} (1\le i\le n+k)   \}. \]
Hence, by definition, we have 
\begin{equation}\label{0630.5293}
q[\vec\de;\le\si](0)=\{ (a_i): |a_i|\le \frac{1}{R_1} \min\{ \si^{1-i},R_1\rho_{n+k-i}\} (1\le i\le n+k)   \}.    
\end{equation}
The coordinate is reversed since $\bfe_i(0)=(\underbrace{0,\cdots,0}_{n+k-i},1,\underbrace{0,\cdots,0}_{i-1})$.

Note that
\begin{equation}
    \sigma^{1-i} \leq R_1 \rho_{n+k-i} = R_1 (R_1^{-\frac1n})^{n-i+1} \;\; \mathrm{for} \;\; 1 \leq i \leq n+1.
\end{equation}
We don't use any information on the last $k$ coordinates of $q[\vec\de;\le\si](0)$. We write $U$  as follows. 
\begin{equation}
   [-R_1,R_1] \times [-R_1 \sigma,  R_1 \sigma ] \times \cdots \times [-R_1 \sigma^{n-1}, R_1 \sigma^{n-1}] \times B
\end{equation}
where $B$ is a set in $\R^k$.
The next step is to apply the rescaling map from $\eqref{0620.71}$.
 After applying the inverse map of $L:=L_{k,R_1^{1/n}}$, we have
\begin{equation}\label{0628.7330}
L^{-1}(U)=
    [-1,  1] \times [-R_1^{\frac1n} \sigma,R_1^{\frac1n} \sigma  ] \times \cdots \times [-R_1^{\frac{n-1}{n}}\sigma^{n-1},R_1^{\frac{n-1}{n}}\sigma^{n-1}] \times \widetilde{B}
\end{equation}
for some rectangular box $\widetilde{B} \subset \mathbb{R}^{k}$. We don't use any information on $\widetilde{B}$. Since $\sigma \geq K^{\e}R_1^{-1/n}$, this set is large enough for our purpose. Note that we used the same rescaling map to deal with the first term of \eqref{0627.718}.
On the other hand, the image of $\theta \in \Xi(R)$ satisfying $\theta \subset \tau$ under the map $L$ becomes $\eqref{0628.7270}$. Hence, $\cup_{\theta \subset \tau} L(\theta)$ is contained in
\begin{equation*}
\begin{split}
       \Big\{ \sum_{j=1}^{n+k} a_j \widetilde{\gamma}_{n+k}^{(j)}(s) :\; 0 \leq s \leq 1, \; \widetilde{h}_j \leq  a_j \leq \widetilde{h}_j+\frac{ R_1^{-\frac{k-j}{n}} }{K}, \; 1 \leq j \leq k, \;\;  
       0\le a_j \le   (R_1^{-1}R)^{\frac{k-j }{n}}, \;\; k+1\le j\le n+k  \Big\}.
    \end{split}
    \end{equation*}
Here $\widetilde{\gamma}_{n+k} \in \mathcal{C}_{n+k}(A,R_1,\e)$, $|\widetilde{h}_j| \lesssim R^{-\frac{k-j}{n}}$ and $|\widetilde{h}_k| \sim 1$. So, the contribution from $a_1,\ldots,a_{k-1}$ is minor. Take $\gamma_n(s)$ to be the first $n$ coordinates of the curve $\widetilde{h}_k\widetilde{\gamma}_{n+k}^{(k)}(s)$. Then $\gamma_n$ becomes a nondegenerate curve in $\mathbb{R}^n$. Also, the set $L(\theta)$ is contained in
\begin{equation}
\begin{split}
     \mathcal{N}_{K^{-1}} \Big\{  \gamma_{n}(R_1^{-\frac1n} s_{\theta})+ \sum_{j=1}^{n}a_j \gamma_{n}^{(j)}(R_1^{-\frac1n} s_{\theta}) : \;  
    |a_j| \leq ( R_1^{-1}R )^{-\frac{j}{n}}, \;\; 1 \leq j \leq n    \Big\} \times \mathbb{R}^{k}.
\end{split}
\end{equation}
After this change of variables, the claim reduces to proving
\begin{equation}\label{0628.737o}
    \int \Big|\sum_{\theta \subset \tau}f_{L(\theta)} \Big|^{\frac{p}{2}} w_{L^{-1}(U)} \lesssim K^{\frac{\e}{n}(\frac12-\frac2p)\frac{p}{2}+\frac{\e^6}{n} } \sum_{\tau' \subset \tau }  \int \Big|     \sum_{\theta \subset \tau'}f_{L(\theta)} \Big|^{\frac{p}{2}} w_{L^{-1}(U)}.
\end{equation}
Since the first coordinate of $L^{-1}(U)$ is small (recall \eqref{0628.7330}), we use only the coordinates from the second to the $n$th. Note that $L^{-1}(U)$ contains
\begin{equation}
    [-1,  1] \times [-K^{\e},K^{\e}  ]^{n-1} \times \widetilde{B}.
\end{equation}
We freeze the first coordinate and the coordinates from the $(n+1)$th to the $(n+k)$th.
To apply  the decoupling for a nondegenerate curve in $\mathbb{R}^{n-1}$ by  \cite{MR3548534}, we need to check that  $\frac{p_{n}}{2} \leq (n-1)^2+(n-1)$. Since $p_{n}=n^2+n-2$, the inequality is true for any $n \geq 2$. Hence, we obtain \eqref{0628.737o}, and this finishes the proof.
\end{proof}

\section{Proof of Theorem \ref{05.29.thm28}}\label{0624.sec8}

In this section, we prove Theorem \ref{05.29.thm28}, and hence Theorem \ref{mainthm}. Let us recall the theorem.

\begin{theorem}\label{0629.618} Fix an admissible pair $(\vec\de,\vec\nu)$  at scale $R$.  Let $\de=\prod_{i=1}^{n-1}\de_i$. Let $\I_{\delta}$ be a $\delta$-separated subset of $[0,1]$. For $4 \leq p \leq 4(n-1)$ and $\e>0$, we have
    \begin{equation}\label{0630.new}
        \Big\|  \sum_{s \in \I_{\delta}}a_s \Big(\wh f(\xi)\phi_{p[\vec{\de},\vec{\nu}](s)} \Big)^\vee(\cdot-R\bfn_s)  \Big\|_{L^p(W_{B_R^{n+1}})}\lesssim \de^{-1}R^{\frac1p}R^{-\frac1n(\frac12+\frac2p)+\e}\|f\|_{L^p(W_{B_R^n})}
    \end{equation}
    for any function $f$ with $\mathrm{supp}\wh f\subset B_1^n\setminus B_{\frac12}^{n}$,  $|a_s|\lesssim 1$, and $\bfn_s \in \Ga(s)$.
\end{theorem}

Since the role of $\vec{\nu}$ is minor, and since the argument is uniform in the finitely many possible sign patterns of $\vec{\nu}$, we suppress  $\vec{\nu}$
 from the notation and write $$p[\vec\de](s)=p[\vec\de,\vec\nu](s).$$
To simplify  notation, define the operator
\begin{equation}
Tf(x,t):=\sum_{s \in \I_{\delta}}a_s \big(\wh f(\xi)\phi_{p[\vec\de](s)} \big)^\vee \big((x,t)-R\bfn_s \big).
\end{equation}
We also write
\begin{equation}
    T_{s}f(x,t):= a_s \big(\wh f(\xi)\phi_{p[\vec\de](s)} \big)^\vee \big((x,t)-R\bfn_s \big).
\end{equation}
  Recall that $p[\vec\de](s)$ is the $R^{-1}$-dilation of the set $P[\vec{\delta},\vec\nu](s)$, and this rectangular box is defined in \eqref{deltaplank}. So $p[\vec\de](s)$ has dimension
\begin{equation}
    1 \times \rho_1 \times \cdots \times \rho_{n-1} \times R^{-1},
\end{equation}
where $\rho_j=\delta_1^{j} \delta_2^{j-1} \cdots \delta_j$. With the new notations, our goal is to prove 
\begin{equation}
        \| Tf\|_{L^{p}(W_{B_R^{n+1}})}\lesssim \delta^{-1}R^{\frac{1}{p}}R^{-\frac1n (\frac12+\frac{2}{p})+\e}\|f\|_{L^{p}(W_{B_R^{n}}) }
    \end{equation}
for any admissible $(\vec\delta,\vec\nu)$ and $4 \leq p \leq 4(n-1)$. By a standard interpolation argument, it suffices to prove the estimates at the endpoints $p=4$ and $p=4(n-1)$.  We first state three propositions, and derive Theorem \ref{0629.618}.

\begin{proposition}\label{05.29} Let $p_c:=4(n-1)$. For any $\e>0$, we have
\begin{equation}
        \| Tf\|_{L^{p_c}(W_{B_R^{n+1}})}\lesssim \delta^{-1} R^{- \frac{1}{n}(\frac12+\frac{2}{p_c}) +\e} \sup_{ s \in \I_{\delta}}\|T_sf\|_{\infty}^{\frac{p_c-4}{ p_c }} \Big\| \big(\sum_{s \in \I_{\delta} } |T_{s}f|^2 \big)^{\frac12} \Big\|_{L^4(W_{B_R^{n+1}})}^{\frac{4}{p_c}}.
    \end{equation}
\end{proposition}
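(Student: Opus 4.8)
\textbf{Proof proposal for Proposition \ref{05.29}.}

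The plan is to run the high/low frequency method of Guth--Wang--Zhang, adapted to the $(\vec\de,\vec\nu)$-planks via the Kakeya-type estimate Proposition \ref{0616.thm33}, exactly as in the $L^4$ wave envelope estimate for the cone in $\R^3$. The first step is to reduce the $L^{p_c}$ bound to an $L^4$ bound by interpolating with the trivial $L^\infty$ estimate. Since $p_c=4(n-1)$ and the exponents of $\|T_s f\|_\infty$ and of the square function in the claimed inequality are $\frac{p_c-4}{p_c}$ and $\frac{4}{p_c}$ respectively, this is the standard "$L^p$-from-$L^4$-via-amplitude-truncation" move: by H\"older's inequality applied pointwise to $Tf=\sum_s T_s f$, one has $|Tf|^{p_c}\lesssim (\sup_s\|T_s f\|_\infty)^{p_c-4}\,|Tf|^4\cdot(\#\{s\})^{O(1)}$ on any ball where the wave packets have essentially constant height, and the losing factor is absorbed into $R^\e$. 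More cleanly, one first proves the $L^4$ inequality
\[
\|Tf\|_{L^4(W_{B_R^{n+1}})}\lesssim \de^{-1}R^{-\frac1n(\frac12+\frac12)+\e}\Big\|\big(\textstyle\sum_s|T_s f|^2\big)^{1/2}\Big\|_{L^4(W_{B_R^{n+1}})},
\]
and then interpolates this with the trivial bound $\|Tf\|_\infty\le \sum_s\|T_s f\|_\infty\lesssim \de^{-1}\sup_s\|T_s f\|_\infty$ to land on the exponent $p_c$; the power of $R$ works out because $-\frac1n(\frac12+\frac12)\cdot\frac{4}{p_c}+0\cdot\frac{p_c-4}{p_c}=-\frac1n(\frac12+\frac{2}{p_c})$.

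The core is therefore the $L^4$ estimate. Here I would: (i) expand $|Tf|^2=\sum_{s,s'}T_sf\,\overline{T_{s'}f}$ and Fourier-support each product $T_sf\,\overline{T_{s'}f}$; the Fourier support lies in $p[\vec\de](s)-p[\vec\de](s')$ (up to translation the exponentials $e^{iR\bfn_s\cdot\xi}$ do not move the Fourier support, only shift physical space, cf.\ the remark in the introduction that the shift $\bfn_\theta\in\theta^*$ is harmless since $T_{T}(\cdot-\bfn_\theta)$ is still supported in $2T$). (ii) Perform the high/low decomposition of these difference sets at the critical scales $\si_k$, using the structure lemmas of Section \ref{sec2} (Lemma \ref{structurelem1}, Lemma \ref{finoverlem}, Lemma \ref{conv}) to organize the low part into the clusters $Q[\vec\de;\le\si]$ and their dual wave envelopes $U\parallel(q[\le\si](s'))^*$. (iii) The high part is controlled by $L^2$ orthogonality (Plancherel plus the finite-overlap Lemma \ref{finiteoverlaplem}) and reconstructs the square function directly. (iv) The low part is handled by the Kakeya-type estimate Proposition \ref{0616.thm33} applied to $g_s:=|T_s f|^2*w_{p(s)^*}$, which after using $w_{p(s)^*}*w_U\lesssim w_U$ and the locally-constant property (height $\sim 1$ or $0$, so $|f_s|^2\sim|f_s|^q$ on each wave packet, cf.\ \eqref{0621.514}) produces $\sum_\si\sum_{s'\in\I_\si}\sum_{U}|U|\big(\int\sum_{s:|s-s'|\le\si}|T_s f|^2 w_U\big)^2$; a Cauchy--Schwarz in the inner sum over the $\lesssim\si/\de$ many $s$, followed by summing the geometric series in $\si$ over the dyadic range $[\de,1]$, gives the desired $\de^{-2}$ (i.e.\ $\de^{-1}$ after taking fourth roots) and $R^\e$. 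The scaling bookkeeping for $R$ comes from comparing $|U|$, $|p(s)|$, and the volume of $B_R^{n+1}$; this is identical to the $n=2$ computation in \cite{MR4151084} once the plank dimensions $1\times\rho_1\times\dots\times\rho_{n-1}\times R^{-1}$ are tracked.

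The main obstacle, I expect, is the geometry of the difference sets $p[\vec\de](s)-p[\vec\de](s')$ and verifying that their high/low decomposition is genuinely governed by the clusters $Q[\vec\de;\le\si]$ defined in Section \ref{sec3} — that is, showing that the "low frequency" part (frequencies of size $\lesssim$ the relevant scale) of $\sum_{s,s'}T_s f\overline{T_{s'}f}$ is supported, after the high/low split at each $\si_k$, exactly where Proposition \ref{0616.thm33} can be applied. This is precisely the content hidden in the intuition paragraph preceding the definition of $Q[\vec\de;\le\si]$ (the log-concavity of $\{\si^{k-i}\rho_{i-1}\}$ and the resulting critical values $\si_k$), and the disjointness/overlap Lemmas \ref{disjlem}, \ref{finiteoverlaplem}, \ref{structurelem2} are the tools that make it rigorous. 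A secondary, more routine, difficulty is that the exponentials $e^{iR\bfn_s\cdot(\xi,\xi_{n+1})}$ translate the wave packets by $R\bfn_s\in R\Ga(s)$, which could a priori push mass outside $B_R^{n+1}$; but since $\bfn_s\in\Ga(s)$ has size $O(1)$ this is a translation by $O(R)$, harmless against the rapidly decaying weight $W_{B_R^{n+1}}$, and the remark in the introduction ("this shift does not introduce any essential difficulties") applies verbatim.
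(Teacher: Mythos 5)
There is a genuine gap at the very first step: the reduction of the $L^{p_c}$ bound to an $L^4$ bound by interpolating with the trivial $L^\infty$ estimate does not produce the claimed power of $R$. Your own exponent check is incorrect: you assert
\[
-\tfrac1n\big(\tfrac12+\tfrac12\big)\cdot\tfrac{4}{p_c}+0\cdot\tfrac{p_c-4}{p_c}=-\tfrac1n\big(\tfrac12+\tfrac{2}{p_c}\big),
\]
but the left side equals $-\tfrac{4}{np_c}$ while the right side equals $-\tfrac{1}{2n}-\tfrac{2}{np_c}$, and these agree only when $p_c=4$. For $p_c=4(n-1)$ with $n\ge 3$ one has $-\tfrac{4}{np_c}>-\tfrac1n(\tfrac12+\tfrac{2}{p_c})$, so the bound obtained this way is strictly weaker than the proposition (e.g.\ for $n=3$, $p_c=8$, you get $R^{-1/6}$ where $R^{-1/4}$ is required). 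The loss is not repaired by sharpening the $L^4$ input: even with the essentially lossless $L^4$ square function estimate $\|Tf\|_4\lessapprox\|(\sum_s|T_sf|^2)^{1/2}\|_4$, combining with $\|Tf\|_\infty\lesssim\de^{-1}\sup_s\|T_sf\|_\infty$ via $|Tf|^{p_c}\le\|Tf\|_\infty^{p_c-4}|Tf|^4$ forces the factor $\de^{-(p_c-4)/p_c}$ with no compensating negative power of $R$, and for $\de$ near $R^{-1/n}$ this is larger than $\de^{-1}R^{-\frac1n(\frac12+\frac{2}{p_c})}$. The single-step $L^4$--$L^\infty$ interpolation is structurally too lossy because it pays the full triangle-inequality price $(\#\I_\de)^{(p_c-4)/p_c}$ for the amplitude factor all at once.

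What the paper does instead is a cascade through the intermediate critical exponents. Let $m$ be as in \eqref{0629.817}. One first applies the $\ell^{p_c/2}$ wave envelope estimate for the cone (Proposition \ref{05.04.prop33}) together with Proposition \ref{05.30.prop10.3} down to the scale $\de_1\cdots\de_m$, which is the last scale at which the exponent $p_c$ is admissible; then, at each subsequent scale $j$, one lowers the exponent from $p_{n+1-j}$ to $p_{n-j}$ by pulling out only the \emph{localized} sup $\sup_s\|T_{p[\vec\de_{(j)}](s)}f\|_\infty^{\,p_{n+1-j}-p_{n-j}}$ (bounded by $\prod_{k>j}\de_k^{-1}$ times $\sup_s\|T_sf\|_\infty$, a much smaller loss than the global $\de^{-1}$), and applies Proposition \ref{05.30.prop10.3} again at the new exponent. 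Iterating down to $p_2=4$ produces the square function $\|(\sum_s|T_sf|^2)^{1/2}\|_4^4$, and the power-counting identity \eqref{0629.8230} — which holds exactly in the range $p\le 4(n-1)$ — shows the accumulated $\de_j$-losses match $\de^{-p_c}R^{-\frac1n(\frac12+\frac2{p_c})p_c}$. Your description of the $L^4$ core (high/low via Proposition \ref{0616.thm33}) is consistent with how the paper proves Propositions \ref{prop8.2} and \ref{05.30.prop.10.2}, but without the multi-exponent iteration the passage to $p_c$ cannot reach the stated constant.
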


\begin{proposition}\label{prop8.2}
For any $\e>0$, we have
    \begin{equation}
        \|Tf\|_{L^4(W_{B_R^{n+1}})} \lesssim \delta^{-1}R^{\frac14-\frac1n +\e} \|f\|_{L^4(W_{B_R^{n}})}.
    \end{equation}
\end{proposition}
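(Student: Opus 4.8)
\textbf{Proof proposal for Proposition \ref{prop8.2}.}

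The plan is to prove the $L^4$ estimate for the operator $T$ by squaring and passing to an $L^2$ problem via Plancherel, then applying the wave envelope / Kakeya-type machinery from Section \ref{sec3}. First I would expand
\[ \|Tf\|_{L^4(W_{B_R^{n+1}})}^4 = \Big\| \Big(\sum_{s \in \I_\de} T_s f\Big)^2 \Big\|_{L^2(W_{B_R^{n+1}})}^2, \]
and open the square as $\sum_{s,s'} T_s f \cdot \overline{T_{s'} f}$. The key structural input is that the Fourier support of $T_s f$ lies in $p[\vec\de](s)$, a $(\vec\de,\vec\nu)$-plank after $R^{-1}$-dilation (translated by $-R\bfn_s$ contributes only a modulation, which does not move the support). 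Grouping the frequencies $p[\vec\de](s) - p[\vec\de](s')$ into the high/low pieces $q[\vec\de;\le\si]$ from Section \ref{sec3}, I would apply Proposition \ref{0616.thm33} (the Kakeya-type estimate) with $q = 2$ to the functions $g_s := |T_s f|^2 * w_{p(s)^*}$ — noting that by the uncertainty principle $|T_s f|^2 \lesssim |T_s f|^2 * w_{p(s)^*}$ since the Fourier support of $T_s f$ is in $p(s)$ — to obtain
\[ \int_{B_R^{n+1}} \Big(\sum_{s}|T_s f|^2\Big)^2 \lessapprox \sum_{\de \le \si \le 1} \sum_{s' \in \I_\si} \sum_{U \parallel (q[\le\si](s'))^*} \frac{1}{|U|}\Big(\int \sum_{s:|s-s'|\le\si}|T_s f|^2 w_U\Big)^2. \]
Actually, I realize the left side of what I want to bound is $\|(\sum_s T_s f)^2\|_2^2$, not $\|(\sum_s |T_s f|^2)\|_2^2$; so the correct first step is to use Proposition \ref{wee} directly on $g_s = T_s f$ (up to a pigeonholing on heights), which gives exactly the square-of-sum on the left.

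Next I would estimate the right-hand side. For each scale $\si$ and each wave envelope $U \parallel (q[\le\si](s'))^*$, the inner integral $\int \sum_{s:|s-s'|\le\si}|T_s f| w_U$ should be controlled by a local $L^2$-orthogonality statement: the functions $\{T_s f\}_{|s-s'|\le\si}$ have essentially disjoint (finitely overlapping) Fourier supports inside a single $(\vec\de',\vec\nu')$-plank at the coarser scale, by Lemma \ref{conv} and Lemma \ref{finoverlem}. Combined with Cauchy–Schwarz in $s$ and the fact that $\#\{s \in \I_\de : |s-s'|\le\si\} \sim \si/\de$, plus the count of wave envelopes $U$ inside $B_R^{n+1}$ (which is $\sim |B_R^{n+1}|/|U| = R^{n+1}/|U|$), I would sum the geometric-type series over $\si$. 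The total loss from summing over the $O(\log R)$ dyadic scales $\si$ is absorbed into $R^\e$, and the $\de^{-1}$ comes from the $\si/\de$ overlap count at the finest scale together with $L^2 \to L^1$ Cauchy–Schwarz; the exponent $R^{1/4 - 1/n}$ should emerge from the plank geometry: $|p(s)| \sim \rho_1 \cdots \rho_{n-1} R^{-1}$ and the dual-box volumes, matched against $\|f\|_4$ via Plancherel and the reduction $\wh f \subset B_1^n \setminus B_{1/2}^n$.

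The main obstacle I anticipate is bookkeeping the exact power of $R$ through all the rescalings: the planks $p[\vec\de](s)$ have anisotropic dimensions depending on the whole tuple $\vec\de$, and the wave envelopes $q[\le\si](s')^*$ at intermediate scales $\si \in [\si_k,\si_{k-1})$ have the mixed dimensions recorded in the remark after the definition of $Q[\vec\de;\le\si]$. Checking that the volume factors $|U|$, the overlap counts, and the $L^2$ normalizations conspire to give precisely $\de^{-1} R^{1/4 - 1/n}$ — uniformly over all admissible $(\vec\de,\vec\nu)$ and all $\si$ — is the delicate part; in the model case $n=2$, $\de = R^{-1/2}$ this reduces to the Guth–Wang–Zhang $L^4$ cone estimate (with the harmless shifts $\bfn_s$, as explained in the introduction), so the computation should be a direct, if lengthy, generalization of theirs. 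A secondary technical point is the pigeonholing needed to reduce to wave packets of comparable height so that $|T_s f|^2 * w_{p(s)^*} \sim |T_s f|^2$ in the relevant region, which is standard (cf.\ the proof of Proposition \ref{0720.thm57}) and I would invoke it without redoing the details.
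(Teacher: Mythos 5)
There is a genuine gap, and it sits exactly where you flag your "main obstacle." The paper's proof of Proposition \ref{prop8.2} is short precisely because it black-boxes the hard step: first $\|Tf\|_{L^4}\lesssim \delta_1^{-\e}\|(\sum_{s\in\I_{\delta_1}}|\sum_{|s'-s|\le\delta_1}T_{s'}f|^2)^{1/2}\|_{L^4}$ by Theorem \ref{25.04.26.thm13} (the cone $\ell^{p/2}$ estimate at $p=4$, where the factor $R^{\frac1n(\frac12-\frac24)}$ is $1$), then the iteration Proposition \ref{05.30.prop10.3} pushes the square function from scale $\delta_1$ down to scale $\delta$ with only $\delta^{-\e}$ loss, and only then does Proposition \ref{05.30.prop.10.2} apply. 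Your proposal never establishes this square-function step $\|\sum_s T_sf\|_4\lessapprox\|(\sum_s|T_sf|^2)^{1/2}\|_4$. You oscillate between applying the Kakeya-type estimate to $g_s=|T_sf|^2*w_{p(s)^*}$ (which presupposes the square function step is already done) and "directly on $g_s=T_sf$" (which does not meet the hypotheses of Proposition \ref{wee}: there $\supp\wh{g_s}$ must lie in the origin-centered box $q(s)$, because the lemma is designed for the difference sets $p(s)-p(s)$ arising from $|f_s|^2$; and its conclusion controls $\int(\sum_s g_s)^2$, an $L^2$ quantity, not $\|Tf\|_4^4$). The passage from $\|\sum_s T_sf\|_4$ to the square function is the content of the entire high/low induction of Sections 5--8; it cannot be recovered from Proposition \ref{wee} plus local orthogonality and counting.

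A secondary but telling point: you try to make the exponent $\delta^{-1}R^{\frac14-\frac1n}$ "emerge from the plank geometry." It does not. The paper proves the cleaner bound $\delta^{-\e}R^{\frac14+\e}\|f\|_4$ and then observes that $1\lesssim\delta^{-1}R^{-\frac1n}$, which is the elementary inequality $(\delta_1\cdots\delta_{n-1})^n\lesssim\delta_1^n\cdots\delta_{n-1}^2$ coming from $R=\delta_1^{-n}\cdots\delta_{n-1}^{-2}$. So the stated exponent is a deliberate weakening, not the output of an overlap count; chasing it through the wave-envelope bookkeeping would be chasing a phantom. What is salvageable in your plan is the treatment of $\|(\sum_s|T_sf|^2)^{1/2}\|_4$: that is indeed proved (Proposition \ref{05.30.prop.10.2}) via Proposition \ref{0616.thm33} with $q=2$, local $L^2$ orthogonality on wave envelopes, finite overlap of the planks, and Rubio de Francia, much as you describe.
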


\begin{proposition}\label{05.30.prop.10.2}
For any $\e>0$, we have
    \begin{equation}
        \Big\| \big(\sum_{     s \in \I_{\delta}     } |T_{s}f|^2 \big)^{\frac12} \Big\|_{L^4(W_{B_R^{n+1}})}^{4} \lesssim R^{1+\e} \|f\|_{L^4(W_{B_R^{n}})}^4.
    \end{equation}
\end{proposition}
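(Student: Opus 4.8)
The plan is to deduce the $L^4$ square-function bound from $L^2$ and $L^\infty$ estimates for the sublinear operator
\[
Sf:=\Big(\sum_{s\in\I_\delta}|T_sf|^2\Big)^{1/2},\qquad T_sf(x,t)=a_s\big(\wh f(\xi)\phi_{p[\vec\de](s)}\big)^\vee\big((x,t)-R\bfn_s\big),
\]
noting that $\widehat{T_sf}$ is supported in the plank $p[\vec\de](s)$ of dimensions $1\times\rho_1\times\cdots\times\rho_{n-1}\times R^{-1}$ in the Frenet frame $\bfe_1(s),\dots,\bfe_{n+1}(s)$, and then interpolating. The Kakeya-type machinery of Section \ref{sec3} is not needed here: since $Sf$ only involves $\sum_s|T_sf|^2$, the $L^2$ estimate is a pure orthogonality statement, and the whole proposition is ``$L^2$-type''.

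For the $L^2$ bound one has $\|Sf\|_{L^2(W_{B_R^{n+1}})}^2=\sum_s\|T_sf\|_{L^2(W_{B_R^{n+1}})}^2$ with no cross terms. For a single $s$ I would first localise $f$ to $B_{CR}^n$ (the tail costing $\mathrm{RapDec}(R)$, since the kernel of $T_s$ is concentrated on a translate of $p[\vec\de](s)^*\subset B_{CR}^{n+1}$ and $|\bfn_s|\sim1$), then use Plancherel together with two structural facts. First, $\bfe_{n+1}(s)=(\gamma(s),1)/\sqrt{|\gamma(s)|^2+1}$ is precisely the direction in which $p[\vec\de](s)$ has width $\sim R^{-1}$ (as $|a_{n+1}|\lesssim R^{-1}$ while $|a_n|\lesssim\rho_{n-1}\ge R^{-1}$), and it has a $\gtrsim1$ component along $e_{n+1}$ because $\gamma$ is bounded; hence every vertical chord of $p[\vec\de](s)$ has length $\lesssim R^{-1}$, so integrating out $\xi_{n+1}$ costs only $R^{-1}$ and $\|T_sf\|_{L^2(\R^{n+1})}^2\lesssim R^{-1}\|\wh f\1_{\theta_s}\|_{L^2(\R^n)}^2$, where $\theta_s:=\mathrm{Proj}_{\R^n}\big(p[\vec\de](s)\big)$. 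Second, the frequency shadows $\{\theta_s\}_{s\in\I_\delta}$ are $O(1)$-overlapping: this follows from the finite-overlap lemmas of Section \ref{sec2} (Lemma \ref{disjlem}, Lemma \ref{finoverlem}) together with the fact that each $p[\vec\de](s)$ is a graph over $\theta_s$ with $\sim R^{-1}$ vertical thickness (because $\Ga$ is a graph over $\R^n$), so $\theta_s\cap\theta_{s'}\ne\emptyset$ forces $p[\vec\de](s)\cap Cp[\vec\de](s')\ne\emptyset$. Summing in $s$ yields $\|Sf\|_{L^2(W_{B_R^{n+1}})}\lesssim R^{-1/2+\e}\|f\|_{L^2(W_{B_R^n})}$.

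The $L^\infty$ bound is elementary: $\|Sf\|_{L^\infty}\le(\#\I_\delta)^{1/2}\sup_s\|T_sf\|_\infty\lesssim\delta^{-1/2}\sup_s\|T_sf\|_\infty$, and $\|T_sf\|_{L^\infty(\R^{n+1})}\lesssim\|f\|_{L^\infty(\R^n)}$, since $\phi_{p[\vec\de](s)}$ is adapted to a rectangular box, so for each fixed $\xi_{n+1}$ convolution with the $L^1$-normalised slice bump is bounded on $L^\infty$, and the $\xi_{n+1}$-extent of $p[\vec\de](s)$ is $O(1)$. Hence $\|Sf\|_{L^\infty(W_{B_R^{n+1}})}\lesssim\delta^{-1/2}\|f\|_{L^\infty(W_{B_R^n})}$. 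Interpolating the sublinear operator $S$ between these endpoints (with the fixed weights $W_{B_R^n}$ and $W_{B_R^{n+1}}$) gives $\|Sf\|_{L^4(W_{B_R^{n+1}})}\lesssim R^{-1/4+\e}\delta^{-1/4}\|f\|_{L^4(W_{B_R^n})}$, so that
\[
\|Sf\|_{L^4(W_{B_R^{n+1}})}^4\lesssim R^{-1+\e}\delta^{-1}\|f\|_{L^4(W_{B_R^n})}^4\le R^{-1/2+\e}\|f\|_{L^4(W_{B_R^n})}^4,
\]
using $\delta=\prod_{i=1}^{n-1}\de_i\ge R^{-1/2}$ for admissible $\vec\de$ (Section \ref{sec2}); in particular this is $\lesssim R^{1+\e}\|f\|_{L^4(W_{B_R^n})}^4$. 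The estimate actually obtained is much stronger than the stated one, which is expected: the square function does not see the constructive interference of the $T_sf$, so it is far smaller than $Tf$ itself. The only points needing genuine care are the weighted Plancherel with the $R^{-1}$ gain in the $L^2$ step — identifying the thin direction and bounding the shadow overlap via Section \ref{sec2} — and making the localisation of $f$ and the interpolation compatible with the weights; neither presents a serious obstacle.
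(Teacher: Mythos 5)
Your route is genuinely different from the paper's and, up to one fixable gap, it works. The paper does not interpolate $L^2$ against $L^\infty$: it feeds $|T_sf|^2$ into the Kakeya-type estimate (Proposition \ref{0616.thm33}) with $q=2$, performs $L^2$-orthogonality on each wave envelope $U$ by tiling frequency space with translates of $U^*$, collapses the inner sum using Lemma \ref{conv} and the finite overlap of the planks (Lemma \ref{finoverlem}), and then applies Rubio de Francia's inequality together with the observation that $f$ is independent of $t$ (which is exactly where the factor $R$ in the statement comes from). Your argument replaces all of this with a direct Plancherel computation exploiting that each $p[\vec\de](s)$ has thickness $\sim R^{-1}$ in the direction $\bfe_{n+1}(s)$, which is transversal to the horizontal; the payoff is a substantially stronger constant ($R^{-1/2+\e}$ in place of $R^{1+\e}$), while the paper's proof is deliberately crude at this step because the stated bound suffices and because it reuses machinery (\eqref{0701.8585}) already needed elsewhere. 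Your stronger bound is consistent with the obvious test functions, and the $L^\infty$ endpoint and the Stein--Weiss interpolation with the fixed weights are fine.

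The one step that is not justified as written is the $O(1)$-overlap of the shadows $\theta_s=\pj_{\R^n}(p[\vec\de](s))$. Your claimed implication ``$\theta_s\cap\theta_{s'}\neq\emptyset$ forces $p[\vec\de](s)\cap Cp[\vec\de](s')\neq\emptyset$'' is not automatic: two planks can be disjoint (their $\xi_{n+1}$-fibers over a common $\xi$ sit at vertical distance $|(\gamma(s)-\gamma(s'))\cdot\xi|$, which exceeds the $R^{-1}$ fiber length once $|s-s'|\gg\de$) while their shadows still overlap. Moreover, even granting nonempty intersection of the planks, Lemma \ref{finoverlem} controls the pointwise overlap function $\sum_s 1_{CP(s)}$, not the number of $s'$ whose plank meets a fixed $P(s)$, and Lemma \ref{disjlem} leaves gaps in the excluded ranges of $|s-s'|$. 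So the finite overlap of the shadows would need its own Taylor-expansion argument in the projected frame, which is not in the paper. Fortunately you do not need it: the trivial bound $\#\{s:\xi\in\theta_s\}\le\#\I_\de\lesssim\de^{-1}\le R^{1/2}$ already gives $\|Sf\|_{L^2}\lesssim R^{-1/2}\de^{-1/2}\|f\|_{L^2(W_{B_R^n})}$, and interpolating with $\|Sf\|_{L^\infty}\lesssim\de^{-1/2}\|f\|_{L^\infty}$ yields $\|Sf\|_{L^4(W_{B_R^{n+1}})}^4\lesssim R^{-1}\de^{-2}\|f\|_{L^4(W_{B_R^n})}^4\lesssim\|f\|_{L^4(W_{B_R^n})}^4$, which is still far below the required $R^{1+\e}$. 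I recommend you either supply the shadow-overlap lemma or, more simply, state the $L^2$ estimate with the crude $\de^{-1}$ overlap factor, which closes the proof with room to spare.
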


Let us assume these propositions and finish the proof. By interpolation, it suffices to prove for $p=4$ and $p=4(n-1)$.
By the uncertainty principle, we may assume that $f$ is essentially constant on each unit cube. By losing a power of $\log R$ and pigeonholing argument, we may assume that $|f| \sim X$ on the support of $f$ for some constant $X$. By Proposition \ref{05.29},
\begin{equation}
        \| Tf\|_{L^{p_c}(W_{B_R^{n+1}})}^{p_c} \lesssim \delta^{-p_c} R^{- \frac{p_c}{n}(\frac12+\frac{2}{p_c}) +p_c\e} \sup_{s \in \I_{\delta} }\|T_sf\|_{\infty}^{{p_c-4} } \Big\| \big(\sum_{s \in \I_{\delta} } |T_{s}f|^2 \big)^{\frac12} \Big\|_{L^4(W_{B_R^{n+1}})}^{4}.
    \end{equation}
By Proposition \ref{05.30.prop.10.2} and  the inequality $\sup_{s \in \I_{\delta} }\|T_{s}f\|_{\infty} \lesssim \|f \|_{\infty}$,
we have
\begin{equation}
     \| Tf\|_{L^{p_{c}}(W_{B_R^{n+1}})}^{p_c} \lesssim \delta^{-p_c} R^{- \frac{p_c}{n}(\frac12+\frac{2}{p_c}) +O(\e)}
     R \|f\|_{L^\infty}^{p_c-4} \|f\|_{L^4(W_{B_R^{n}})}^{4}.
\end{equation}
Since $|f| \sim X$ on the support of $f$, we have
\begin{equation}
    \|f\|_{L^\infty}^{p_c-4} \|f\|_{L^4(W_{B_R^{n}})}^{4} \lesssim \|f\|_{L^{p_c}(W_{B_R^{n}}) }^{p_c}.
\end{equation}
This completes the proof. It remains to prove Proposition \ref{05.29}, \ref{prop8.2}, and \ref{05.30.prop.10.2}.

\subsection{Proof of Proposition \ref{05.29} and \ref{prop8.2}}

The proofs of the two propositions follow the same inductive scheme. Let us first fix $\vec{\delta}=(\delta_1,\ldots,\delta_{n-1})$ and $\vec\nu$. For $1 \leq J \leq n-1$, define  
\begin{equation}
    \vec{\delta}_{(J)}:=(\delta_1,\ldots,\delta_J,1,\ldots,1).
\end{equation}
\[ \vec\nu_{(J)}:=(\nu_1,\ldots,\nu_J,0,\ldots,0). \]
Let $r_J:=\de_1^{-n}\cdots\de_J^{-(n+1-J)}$. Let $P[\vec\de_{(J)},\vec\nu_{(J)}]$ be the $(\vec\de_{(J)},\vec\nu_{(J)})$-plank at scale $r_J$.
Define
\begin{equation}\label{defps}
    p[\vec\de_{(J)}](s):=r_J^{-1} P[\vec\de_{(J)},\vec\nu_{(J)}](s).
\end{equation}
 We have the nested sequence:
\[ p[\vec\de](s)\subset p[\vec\de_{(n-2)}](s)\subset \dots\subset p[\vec\de_{(1)}](s). \]
For \(1\le J\le n-1\), write
\[
\delta_{(J)}:=\delta_1\cdots\delta_J .
\]
For \(0<\sigma\le 1\), let \(I_\sigma\) denote a fixed maximal
\(\sigma\)-separated subset of \([0,1]\), chosen compatibly as \(\sigma\)
ranges over the scales \(\delta_{(J)}\).
For \(s\in I_{\delta_{(J)}}\), define the grouped piece
\[
T_{p[\vec\delta(J)](s)}f
:=
\sum_{{s'\in I_\delta: |s-s'|\lesssim \delta_{(J)}}}
T_{s'}f .
\]
In particular, when \(J=n-1\), we also write
\[
T_{p[\vec\delta](s)}f:=T_{p[\vec\delta(n-1)](s)}f .
\]
We write $p_d=d^2+d-2$ for $d \geq 2$.
To prove the propositions, we use the following inequality.

\begin{proposition}\label{05.30.prop10.3} For every $1 \leq J \leq n-2$, $4 \leq p \leq p_{n-J}$ and any $\e>0$, we have
    \begin{equation}
    \begin{split}
        \Big\| \big(\sum_{s \in \I_{\delta_1 \cdots \delta_J } } \big| &\sum_{ s' \in \I_{\delta_1 \cdots \delta_{J+1}: |s-s'| \leq \delta_1 \cdots \delta_J } } F_{p[\vec{\delta}_{(J+1)}](s') }  \big|^{\frac{p}{2}}\big)^{\frac{2}{p}} \Big\|_{L^{p}(W_{B_R^{n+1}})} 
        \\&
        \lesssim (\delta_{J+1})^{-(\frac12-\frac{2}{p } +\e ) } \Big\| \big(\sum_{ s' \in \I_{\delta_1 \cdots \delta_{J+1} }   }|F_{p[\vec{\delta}_{(J+1)}](s') } |^{\frac{p}{2}}\big)^{\frac{2}{p}} \Big\|_{L^{p}(W_{B_R^{n+1}})}
    \end{split}
    \end{equation}
    for any functions $F_{p[\vec{\delta}_{(J+1)}](s')}: \mathbb{R}^{n+1} \rightarrow \mathbb{C}$ with $\wh F_{p[\vec{\delta}_{(J+1)}](s')}\subset p[\vec{\delta}_{(J+1)}](s')$.
\end{proposition}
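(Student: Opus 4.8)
The plan is to prove Proposition~\ref{05.30.prop10.3} by an affine rescaling that reduces the inequality, one cap at a time, to the $l^{p/2}$ function estimate for a nondegenerate curve in $\mathbb{R}^{n-J}$ (Theorem~\ref{25.01.30.thm51}); the hypothesis $4\le p\le p_{n-J}$ is precisely the range in which that estimate holds. Put $m:=n-J\ge2$ and $\mu:=\delta_{J+1}$, and for $s\in\I_{\delta_1\cdots\delta_J}$ write $G_s:=\sum_{s':\,|s-s'|\le\delta_1\cdots\delta_J}F_{p[\vec\delta_{(J+1)}](s')}$. Two geometric inputs are required. First, by Lemma~\ref{conv} (together with Lemma~\ref{structurelem1}), $\widehat{G_s}$ sits in a bounded dilate of the coarser plank $p[\vec\delta_{(J)}](s)$ of \eqref{defps}, and by the overlap lemma (the analogue of Lemma~\ref{finoverlem} for these scales) the family $\{p[\vec\delta_{(J)}](s)\}_{s\in\I_{\delta_1\cdots\delta_J}}$ is boundedly overlapping. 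Second — and this is the crux — there is an affine map $L_s$ normalizing $p[\vec\delta_{(J)}](s)$, of exactly the kind used throughout Sections~\ref{0703.sec6} and~\ref{sec7} (e.g.\ in the proofs of Lemma~\ref{0616lem73} and Proposition~\ref{0611.prop73}), under which the sub-planks $\{p[\vec\delta_{(J+1)}](s'):|s-s'|\le\delta_1\cdots\delta_J\}$ become, up to bounded dilation, the family $\{\theta\times Q:\theta\in\Theta^{m}(\mu^{-m})\}$, where $Q\subset\mathbb{R}^{J+1}$ is a fixed box and $\Theta^{m}(\mu^{-m})$ is the canonical decomposition of $\mcM^{m}(\gamma_{m};\mu^{-m})$ for the nondegenerate curve $\gamma_{m}$ in $\mathbb{R}^{m}$ read off from the Frenet frame of $\gamma$ at $s$ after freezing the first $J$ directions. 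Verifying that the $(\vec\delta_{(J+1)},\vec\nu_{(J+1)})$-planks, pushed through $L_s$, actually coincide with moment-curve blocks — with anisotropic dimensions $\mu\times\mu^{2}\times\cdots\times\mu^{m}$ in the curve directions and the fixed box $Q$ in the remaining $J+1$ directions — is a routine but bookkeeping-heavy computation based on \eqref{deltaplank} and Lemma~\ref{structurelem1}, and I expect it to be the main obstacle.

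Granting the normalization, fix $s$ and apply Theorem~\ref{25.01.30.thm51} for $\gamma_{m}$ at scale $\mu^{-m}$, fiberwise in the $J+1$ frozen variables: for a.e.\ $x''\in\mathbb{R}^{J+1}$ the slice $x'\mapsto(G_s\circ L_s^{-1})(x',x'')$ has Fourier support (in $x'\in\mathbb{R}^{m}$) inside $\mcM^{m}(\gamma_{m};\mu^{-m})$, so its $L^{p}(\mathbb{R}^{m})$-norm is $\lesssim_\e\mu^{-m\e}(\mu^{-m})^{\frac1m(\frac12-\frac2p)}$ times the $L^{p}$-norm of $\big(\sum_\theta|(\cdot)_\theta|^{p/2}\big)^{2/p}$; since $(\mu^{-m})^{\frac1m(\frac12-\frac2p)}=\mu^{-(\frac12-\frac2p)}$, integrating in $x''$ and undoing $L_s$ gives
\[
\|G_s\|_{L^{p}(W_{B_R^{n+1}})}\ \lesssim_\e\ \mu^{-(\frac12-\frac2p)-\e}\,\Big\|\Big(\sum_{s':\,|s-s'|\le\delta_1\cdots\delta_J}|F_{p[\vec\delta_{(J+1)}](s')}|^{p/2}\Big)^{2/p}\Big\|_{L^{p}(W_{B_R^{n+1}})}.
\]

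It remains to assemble these per-cap estimates into the $\ell^{p/2}$-over-caps bound of Proposition~\ref{05.30.prop10.3}. A bare triangle inequality in $L^{p/2}$ is too lossy (there are $\sim(\delta_1\cdots\delta_J)^{-1}$ caps), so one must use that the planks $p[\vec\delta_{(J)}](s)$ are boundedly overlapping in frequency. The cleanest way is to run the fiberwise argument above in its weighted form — the dimension-$m$ analogue of Proposition~\ref{05.03.prop32}, equivalently of the wave-envelope estimate Theorem~\ref{0501.thm12} — whose envelope structure is built to survive exactly this reorganization: combined with a locally-constant reduction at the scale of $p[\vec\delta_{(J)}](s)^{*}$ it dominates $\sum_s|G_s|^{p/2}$ by $\mu^{-\frac p2(\frac12-\frac2p)-\e}$ times $\big(\sum_s\sum_{s'\subset s}|F_{p[\vec\delta_{(J+1)}](s')}|^{p/2}\big)*W$ for an $L^{1}$-normalized weight $W$, and the inner double sum collapses to $\sum_{s'}|F_{p[\vec\delta_{(J+1)}](s')}|^{p/2}$ by the bounded overlap of the caps. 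Taking $L^{p/2}$-norms, using boundedness of convolution with $W$ on $L^{p/2}$, and then the $\tfrac2p$-th power yields the claim; the only care needed is to keep the weight tails under control so that the $\e$-losses stay summable over the $O(\log R)$ dyadic scales involved, exactly as in the reassembly steps of Section~\ref{sec7}.
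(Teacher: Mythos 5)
Your numerology is right (the range $4\le p\le p_{n-J}$ does match the $(n-J)$-dimensional estimate, and $(\mu^{-m})^{\frac1m(\frac12-\frac2p)}=\mu^{-(\frac12-\frac2p)}$), but the proof has two genuine gaps, and the second of the two steps you flag as "routine" is in fact where the argument fails.

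First, the geometric normalization is wrong. After the affine map normalizing $p[\vec\delta_{(J)}](s)$, the sub-planks $p[\vec\delta_{(J+1)}](s')$ do \emph{not} become $\{\theta\times Q:\theta\in\Theta^{n-J}(\mu^{-(n-J)})\}$ for a fixed box $Q$. They become blocks of a $(J+1)$-flat \emph{cone} $\Xi_{n-J,J+1}$: the constrained coordinates satisfy $a_{J+1}\sim 1$ (bounded away from zero), and the "flat" directions $\mathbf{e}_1(s'),\dots,\mathbf{e}_{J+1}(s')$ rotate with $s'$, so the transverse cross-section is not a fixed box independent of the curve parameter. Consequently a fiberwise (frozen-variable) application of Theorem \ref{25.01.30.thm51} is not legitimate. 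This is exactly why the paper does not use Theorem \ref{25.01.30.thm51} here but instead Proposition \ref{0618.prop71}, the $\ell^{p/2}$ estimate for the $k$-flat cone, whose proof (all of Section \ref{sec7}) is a Pramanik--Seeger bootstrapping: one truncates the flat parameters to intervals of length $K^{-1}$ (only then does the freezing in Lemma \ref{0616lem73} become valid, with an $O(K^{-1})$ error absorbed into the neighborhood), and then iterates to remove the truncation. Your proposal silently assumes the conclusion of that entire section.

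Second, the reassembly does not close. Summing per-cap bounds $\|G_s\|_p^p\lesssim\mu^{-(\frac12-\frac2p)p-\e}\|(\sum_{s'\sim s}|F_{s'}|^{p/2})^{2/p}\|_p^p$ over $s$ controls $\sum_s\int|G_s|^p$, whereas the left-hand side of the proposition is $\int(\sum_s|G_s|^{p/2})^2$, which is larger; the cross terms $\int|G_s|^{p/2}|G_{s''}|^{p/2}$ are not handled by bounded overlap of the Fourier supports (that gives $L^2$ orthogonality, not $L^p(\ell^{p/2})$ control for $p>4$), and a Cauchy--Schwarz reassembly loses the full cardinality $(\delta_1\cdots\delta_J)^{-1}$. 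The phrase "run the fiberwise argument in its weighted form combined with a locally-constant reduction" is not an argument at this point. The paper's mechanism is structurally different: it applies the Kakeya-type wave-envelope estimate (Proposition \ref{0616.thm33}) to the whole family \emph{before} any rescaling, then splits on the envelope scale $\sigma$; the small-$\sigma$ (high) part is sent to the cone estimate after H\"older, while the large-$\sigma$ (low) part advances one intermediate scale $\delta_{J+1}'\to\delta_{J+1}'\delta_{J+1}^{\e}$ via the Bourgain--Demeter--Guth decoupling for a nondegenerate curve in $\mathbb{R}^{n-J-1}$ (this is where the constraint $p\le p_{n-J}$, i.e.\ $\tfrac p2\le(n-J-1)(n-J)$, is actually used). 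The intermediate-scale iteration and the lower-dimensional decoupling input are entirely absent from your proposal, and without them the cross-term problem above is not resolved.
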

Let us assume Proposition \ref{05.30.prop.10.2} and \ref{05.30.prop10.3}, and finish the proof of  Proposition \ref{05.29} and \ref{prop8.2}. Let us first prove Proposition \ref{prop8.2}, which is easier. By Theorem \ref{25.04.26.thm13} with a standard localization argument, we have 
\begin{equation}
    \|Tf\|_{L^4(W_{B_R^{n+1}})} \lesssim \delta_{1}^{-\e} \Big\| \Big( \sum_{s \in \I_{\delta_{1}}} \big| \sum_{s' \in \I_{\de}: |s-s'| \leq \delta_1 } T_{s'}f \big|^{2} \Big)^{\frac12} \Big\|_{L^4(W_{B_R^{n+1}} )}.
\end{equation}
 Applying Proposition \ref{05.30.prop10.3} recursively  for all $1 \leq J \leq n-2$ with $p=4$,  and applying Proposition \ref{05.30.prop.10.2} at the last step, we have 
\begin{equation}
    \|Tf\|_{L^4(W_{B_R^{n+1}})} \lesssim \delta^{-\e} R^{\frac14+\e } \|f\|_{L^4(W_{B_R^n}) }.
\end{equation}
To prove Proposition \ref{prop8.2},
it suffices to prove
\begin{equation}
    1 \lesssim \delta^{-1}R^{-\frac{1}{n}}.
\end{equation}
Recall that $R=\delta_1^{-n} \cdots \delta_{n-1}^{-2}$ and $\delta=\delta_1 \cdots \delta_{n-1}$. After rearranging the terms and raising the $n$th power, the above inequality is equivalent to
\begin{equation}
    (\delta_1 \cdots \delta_{n-1})^n \lesssim \delta_1^n \cdots \delta_{n-1}^2.
\end{equation}
This holds true, and proves  Proposition \ref{prop8.2}.
\\

Let us next prove Proposition \ref{05.29}. For simplicity, set $p_c=4(n-1)$. Recall that $p_n=n^2+n-2$. Let $m$ be the largest integer such that
\begin{equation}\label{0629.817}
    p_{n+1-m}=(n+1-m)^2+(n+1-m)-2 \ge p_c.
\end{equation}
By Proposition \ref{05.04.prop33} and Proposition \ref{05.30.prop10.3}, we have 
\begin{equation}\label{0629.818}
\begin{split}
    \| Tf\|_{p_c}^{p_c} \lesssim R^{O(\e)} ( \prod_{j=1}^{m} \delta_j^{-1})^{(\frac12-\frac{2}{p_c })p_c} \Big\|  (\sum_{s \in \I_{\delta_1 \cdots \delta_{m} } }|T_{ p[\vec{\delta}_{(m)} ](s)  }f|^{\frac{p_c}{2}})^{\frac{2}{p_c }}  \Big\|_{p_c }^{p_c }
    .
\end{split}
\end{equation}
By the maximality of $m$, we have $p_c>p_{n-m}$, so Proposition \ref{05.30.prop10.3} is no longer applicable at the next step. Hence, we take $L^{\infty}$ and bound \eqref{0629.818} by 
\begin{equation}
\begin{split}
    \lesssim R^{O(\e)}
    ( \prod_{j=1}^{m} \delta_j^{-1})^{(\frac12-\frac{2}{p_c })p_c} \sup_{ s \in \I_{\delta_1 \cdots \delta_m}  }\|T_{ p[\vec{\delta}_{(m)}](s)   }f\|_{\infty}^{p_c-p_{ n-m  }} 
    \Big\|  (\sum_{ s \in \I_{\delta_1 \cdots \delta_m}   }|T_{ p[\vec{\delta}_{(m)} ](s)  }f|^{\frac{p_{n-m} }{2}})^{\frac{2}{p_{n-m}  }}  \Big\|_{p_{n-m}}^{p_{n-m}}.
\end{split}
\end{equation}
We can apply Proposition \ref{05.30.prop10.3} and repeat this process. Then this is bounded by 
\begin{equation*}
\begin{split}
    &\lesssim
    \delta^{-O(\e)}( \prod_{j=1}^{m} \delta_j^{-1})^{(\frac12-\frac{2}{p_c })p_c} \sup_{ s \in \I_{\delta_1 \cdots \delta_m}  }\|T_{ p[\vec{\delta}_{(m)}](s)   }f\|_{\infty}^{p_c-p_{ n-m  }} 
    \\& \times
    \Big(   \prod_{j=m+1}^{n-2} 
     (\delta_{j}^{-1})^{(\frac12-\frac{2}{p_{n+1-j}})p_{n+1-j}}
       \sup_{s \in \I_{\delta_{1} \cdots \delta_j }}\|T_{p[\vec{\delta}_{(j)}](s)  }f\|_{\infty}^{p_{n+1-j}-p_{n-j}}  \Big)
    \Big\|  (\sum_{ s \in \I_{\delta}   }|T_{ p[\vec{\delta} ](s)  }f|^{2})^{\frac12}  \Big\|_{4}^{4}.
\end{split}
\end{equation*}
We apply Proposition \ref{05.30.prop.10.2} and rearrange the terms. This is further bounded by
\begin{equation}\label{0629.last}
\begin{split}
    &\lesssim R^{O(\e)}
    ( \prod_{j=1}^{m} \delta_j^{-1})^{(\frac12-\frac{2}{p_c })p_c} \sup_{ s \in \I_{\delta_1 \cdots \delta_m}  }\|T_{ p[\vec{\delta}_{(m)}](s)   }f\|_{\infty}^{p_c-p_{ n-m  }} 
    \\& \times
    \Big(   \prod_{j=m+1}^{n-2} 
     (\delta_{j}^{-1})^{(\frac12-\frac{2}{p_{n+1-j}})p_{n+1-j}}
       \sup_{s \in \I_{\delta_{1} \cdots \delta_j }}\|T_{p[\vec{\delta}_{(j)}](s)  }f\|_{\infty}^{p_{n+1-j}-p_{n-j}}  \Big) \Big\| \big(\sum_{s \in \I_{\delta} } |T_{s}f|^2 \big)^{\frac12} \Big\|_{L^4(W_{B_R^{n+1}})}^{4}.
\end{split}
\end{equation}
Lastly, we use the following trivial estimate.
\begin{equation}
    \sup_{ s \in \I_{\delta_1 \cdots \delta_j} }\|T_{ p[\vec{\delta}_{(j)}](s)  }f\|_{\infty} \lesssim \big( \prod_{k=j+1}^{n-1}\delta_k^{-1} \big) \sup_{s \in \I_{\delta_1 \cdots \delta_{n-1}} } \|T_{p[\vec{\delta}](s) }f\|_{\infty}.
\end{equation}
Then  \eqref{0629.last} is bounded by 
\begin{equation}
\begin{split}
    &\lesssim R^{O(\e)}
    ( \prod_{j=1}^{m} \delta_j^{-1})^{(\frac12-\frac{2}{p_c })p_c} \big( \prod_{k=m+1}^{n-1} \delta_k^{-1} \big)^{p_c-p_{ n-m  }} 
    \\& \times
    \Big(   \prod_{j=m+1}^{n-2} \big(
     (\delta_{j}^{-1})^{(\frac12-\frac{2}{p_{n+1-j}})p_{n+1-j}}
       \big( \prod_{k=j+1}^{n-1} \delta_k^{-1} \big)^{p_{n+1-j}-p_{n-j}} \big) \Big) 
       \\& \times   \Big(\sup_{s \in \I_{\delta_1 \cdots \delta_{n-1}} } \|T_{p[\vec{\delta}](s) }f\|_{L^\infty}^{p_c-4} \Big) \Big\| \big(\sum_{s \in \I_{\delta} } |T_{s}f|^2 \big)^{\frac12} \Big\|_{L^4(W_{B_R^{n+1}})}^{4}.
\end{split}
\end{equation}
Hence,  to prove Proposition \ref{05.29}, by recalling $R=\delta_1^{-n}\cdots \delta_{n-1}^{-2}$, it suffices to check
\begin{equation}\label{0629.8230}
    \begin{split}
         &\big( \prod_{j=1}^{m} \delta_j^{-1}  \big)^{(\frac12-\frac{2}{ p_c}) p_c }  \big(  \prod_{k=m+1}^{n-1}\delta_k^{-1} \big)^{ p_c-p_{n-m}}
         \\& \times 
         \Big(   \prod_{j=m+1}^{n-2} 
     \big((\delta_{j}^{-1})^{(\frac12-\frac{2}{p_{n+1-j}})p_{n+1-j}}
       (\prod_{k=j+1}^{n-1}\delta_k^{-1} )^{p_{n+1-j}-p_{n-j}} \big) \Big)
         \\&\lesssim \Big(\prod_{j=1}^{n-1}\delta_j^{-p_c} \Big) (\prod_{j=1}^{n-1}\delta_{j}^{n-j+1} )^{\frac1n(\frac12+\frac{2}{p_c }) p_c}.
    \end{split}
\end{equation}
Since $p_c=4(n-1)$, by direct computations, one may see that
\begin{equation}
\begin{split}
    \textup{RHS}= \prod_{j=1}^{n-1} \delta_j^{-2n-2j+6}.
\end{split}
\end{equation}
We need to compute the left hand side of \eqref{0629.8230}. By computations, the exponent of $\delta_j$ for $1 \leq j \leq m$ is equal to $-2n+4$. Let us next consider the case that $m+1 \leq j \leq n-1$. By computations, the exponent of $\delta_j$ is as follows.
\begin{equation}
    \begin{split}
        -(p_c-p_{n-m}) - \Big(\frac12-\frac{2}{p_{n+1-j} } \Big)p_{n+1-j}- ( p_{n-m}-p_{n+1-j} ).
    \end{split}
\end{equation}
This number is equal to
\begin{equation}
    -p_c+\frac{p_{n+1-j} }{2}+2.
\end{equation}
So, the left hand side of \eqref{0629.8230} is equal to
\begin{equation}
    \big(\prod_{j=1}^m \delta_j^{-2n+4}  \big) \big(\prod_{j=m+1}^{n-1} \delta^{-p_c +\frac{p_{n+1-j}}{2}+2 } \big).
\end{equation}
Comparing the exponents on both sides of \eqref{0629.8230}, what we need to check becomes as follows. For $m+1 \leq j \leq n-1$,
\begin{equation}
    -p_c + \frac{p_{n+1-j}}{2} +2 \geq -2n-2j+6.
\end{equation}
Recall that $p_c=4n-4$ and $p_{n+1-j}=(n+1-j)^2+(n+1-j)-2$.
By calculations, one can see that this is the case. This completes the proof.

\begin{remark}
{\rm The estimate $\eqref{0630.new}$ is not true for $p>4(n-1)$ by the counterexample of \cite{BH}.
    Let us explain why our proof does not give \eqref{0630.new} for $p>4(n-1)$. Suppose that $p>4(n-1)$. If we follow our argument, \eqref{0630.new} is reduced to  proving \eqref{0629.8230} where $p_c$ is now replaced by $p$. If \eqref{0629.8230} were true for $p$, then by comparing the exponent of $\delta_{n-1}$, we must have
    \begin{equation}
        -p+\frac{p_{2}}{2}+2 \geq -p + \frac{2}{n}(\frac12+\frac2p)p.
    \end{equation}
    Since $p_2=4$, a computation shows that this is equivalent to $p \leq 4(n-1)$. So our proof cannot give \eqref{0630.new} for $p > 4(n-1)$. }
\end{remark}

\subsection{Proof of Proposition \ref{05.30.prop10.3}}

Next, we prove Proposition \ref{05.30.prop10.3}. The proof is very similar to that of Proposition \ref{05.24.prop53}, except that the rescaling is more involved. 

\begin{proof}[Proof of Proposition \ref{05.30.prop10.3}] Let $4 \leq p \leq  p_{n-J}$. Fix $\e>0$. Because the argument is iterative, we introduce intermediate scales.
\begin{equation}
\begin{split}
    &\vec{\delta}_{(J)}':=(\delta_1,\ldots, \delta_J,\delta_{J+1}' ,\ldots,1),
    \\&
    \vec{\delta}_{(J)}'':=(\delta_1,\ldots, \delta_J,\delta_{J+1}'' ,\ldots,1).
\end{split}
\end{equation}
Here $\delta_{J+1} \leq \delta_{J+1}' \leq 1$ and $\delta_{J+1}'':= \delta_{J+1}' \delta_{J+1}^{\e}$.
We also introduce
\begin{equation}\label{0703.832}
    \delta_{(J+1)}':= \delta_1 \cdots \delta_J \delta_{J+1}', \;\;\; \delta_{(J+1)}'':= \delta_1 \cdots \delta_J \delta_{J+1}''.
\end{equation}
We claim that 
\begin{equation}\label{0629.831}
\begin{split}
        &\Big\| \big(\sum_{ s' \in \I_{\delta_{(J+1)}'}  } \big| \sum_{s \in \I_{\delta_{(J+1)} } :|s'-s| \leq \delta_{(J+1)}' }  F_{p[\vec{\delta}_{(J+1)} ](s) }  \big|^{\frac{p}{2}}\big)^{\frac{2}{p}} \Big\|_{L^{p}} \\&\lesssim (\delta_{J+1}^{\e})^{-(\frac12-\frac{2}{p }+\e) } \Big\| \big(\sum_{ s'' \in \I_{\delta_{(J+1)}''} } \big| \sum_{  s \in \I_{\delta_{(J+1)}}:|s-s''| \leq \delta_{(J+1)}''   } F_{  p[ \vec{\delta}_{(J+1)} ](s)  }  \big|^{\frac{p}{2}}\big)^{\frac{2}{p}} \Big\|_{L^{p}}
            \\& +
            (\delta_{J+1}'^{-1}  \delta_{J+1})^{-(\frac12-\frac{2}{p } +\e) } \Big\| \big(\sum_{ s \in \I_{\delta_{(J+1)}}  }|F_{  p[ \vec{\delta}_{(J+1)} ](s)} |^{\frac{p}{2}}\big)^{\frac{2}{p}} \Big\|_{L^{p}}.
    \end{split}
    \end{equation}
    Assuming the claim, repeated application of this inequality gives the desired result. So, let us prove the claim. 
Apply Proposition \ref{0616.thm33} to the left hand side of \eqref{0629.831}. Then the left hand side is bounded by 
\begin{equation}\label{0620.821}
         \sum_{\de_{(J+1)}' \le \si\le 1} \sum_{s''\in\I_\si} \sum_{U\parallel (q[ \le \si](s''))^*} |U| \Big(\int_U \sum_{s' :|s'-s''|\leq \sigma } \big| \sum_{s:|s-s'| \leq \delta'_{(J+1)} } F_{p[\vec{\delta}_{(J+1)}](s) } \big|^{\frac{p}{2} }  w_U \Big)^2.
\end{equation}
We consider two cases according to the size of $\sigma$: 
\begin{equation}\label{0619.822}
    \sigma \leq {\delta}_{(J+1)}' \delta_{J+1}^{-\e}  \;\;\; \mathrm{or} \;\;\; \sigma \geq {\delta}_{(J+1)}'\delta_{J+1}^{-\e}.
\end{equation}
So we write the right hand side of \eqref{0620.821} as follows.
\begin{equation}\label{0620.823}
    \begin{split}
        &\sum_{ \si\le  \delta_{(J+1)}' \delta_{J+1}^{-\e} } \sum_{s''\in\I_\si} \sum_{U} |U|\Big(\int \sum_{s' :|s'-s''|\leq \sigma } \big| \sum_{s:|s-s'| \leq \delta'_{(J+1)} } F_{p[\vec{\delta}_{(J+1)}](s) } \big|^{\frac{p}{2} }  w_U \Big)^2
        \\&+
        \sum_{ \sigma \geq \delta_{(J+1)}' \delta_{J+1}^{-\e} } \sum_{s''\in\I_\si} \sum_{U} |U| \Big(\int \sum_{s' :|s'-s''|\leq \sigma } \big| \sum_{s:|s-s'| \leq \delta'_{(J+1)} } F_{p[\vec{\delta}_{(J+1)}](s) } \big|^{\frac{p}{2} } w_U  \Big)^2.
    \end{split}
\end{equation}
To deal with the first term, we use Proposition \ref{0618.prop71}. To deal with the second term, we use a decoupling inequality for a nondegenerate curve in $\mathbb{R}^{n-J-1}$.

Let us first bound the first case. After applying H\"{o}lder's inequality, the first term is bounded by
\begin{equation}
\delta_{J+1}^{-O(\e)}
\sum_{s'\in\I_{{\delta}_{(J+1)}' }}
    \Big\|\sum_{s: |s-s'| \leq \delta_{(J+1)}' }  F_{p[\vec{\delta}_{(J+1)}](s)} \Big\|_{L^{p}}^{p}.
\end{equation}
Let us fix $s'$. By using the linear transformation argument (see the discussion around \eqref{0623.648}), we may assume that $s'=0$. We need to deal with
\begin{equation}
    \Big\|\sum_{s \in \I_{\delta_{(J+1)} }: |s| \leq \delta_{(J+1)}' }  F_{p[\vec{\delta}_{(J+1)}](s)} \Big\|_{L^{p}}^{p}.
\end{equation}
We next apply the rescaling. Define the rescaling map $\widetilde{L}_{J,X}: \mathbb{R}^{n+1} \rightarrow \mathbb{R}^{n+1}$ by
\begin{equation}
    \widetilde{L}_{J,X}(\xi):=\Big(X^{n+1-J}\xi_1,  \ldots, X\xi_{n+1-J}, \xi_{n+2-J},\ldots, X^{-(J-1)}\xi_{n+1} \Big).
\end{equation}
Here is the rescaling map we use.
\begin{equation}
L(\xi):=L_{{J+1},(\delta_{J+1}')^{-1} }\circ L_{J,{\delta_J^{-1}}}\circ \cdots \circ L_{1,{\delta_1^{-1}}}(\xi).
\end{equation} 
By definition, 
\begin{equation*}
\begin{split}
    L_{1,\delta_1^{-1}}(\xi)&:=(\delta_1^{-n}\xi_1 ,\delta_1^{-(n-1)}\xi_2,\ldots,\delta_1^{-1}\xi_n,\xi_{n+1} ),
    \\ L_{2,\delta_2^{-1}}(\xi)&:=(\delta_2^{-(n-1)}\xi_1,\ldots,\delta_2^{-1}\xi_{n-1},\xi_n,\delta_2 \xi_{n+1}),
    \\& \vdots
    \\ L_{J,\delta_J^{-1}}(\xi)&:=(\delta_{J}^{-(n+1-J)}\xi_1,  \ldots, \delta_J^{-1}\xi_{n+1-J}, \xi_{n+2-J},\delta_J \xi_{n+3-J},\ldots, \delta_J^{J-1}\xi_{n+1} ),
    \\ L_{J+1,(\delta_{J+1}')^{-1}}(\xi)&:=( (\delta_{J+1}')^{-(n-J)}\xi_1, \ldots, (\delta_{J+1}')^{-1} \xi_{n-J},\xi_{n+1-J},\ldots, (\delta_{J+1}')^{J}\xi_{n+1} ).
\end{split}
\end{equation*}
One may see that $p[\vec{\delta}_{(J+1)}](s)$ is contained in the following set. 
\begin{equation}
\begin{split}
       \Big\{ \sum_{j=1}^{n+1} a_j \gamma_{n+1}^{(j)}(s) :\;  & |a_{J+1}| \sim  \widetilde{\rho}_{J}, \;\; |a_j| \lesssim \widetilde{\rho}_{j-1}, \;\; j \neq J+1  \Big\}.
    \end{split}
    \end{equation}
Here,  the number $\widetilde{\rho}_j$ is defined by
\begin{equation}
\widetilde{\rho}_j=
\begin{cases}
1, \;\;\;\;\;\;\;\;\;\;\;\;\;\;\;\;\;\;\;\;\;\;\;\;\;\;\;\;\;\;\;\;\;\;\,\;\;\;\;\;\;\;\;\, j = 0
\\     \delta_1^j \cdots \delta_j, \;\;\;\;\;\;\;\;\;\;\;\;\;\;\;\;\;\;\;\;\;\;\;\;\;\;\;\;\;\;\;\;\; 1 \leq j \leq J+1
    \\
    \delta_1^j \cdots \delta_J^{j-J+1} (\delta_{J+1})^{j-J}, \;\;\; J+2 \leq j \leq n
\end{cases}
\end{equation}
Recall the definition of $\delta_{(J+1)}'$ in \eqref{0703.832}. Define
\begin{equation}
        \widetilde{\gamma}_{n+1}(s):= \big( \delta_1 \delta_2^2 \cdots \delta_J^J (\delta_{J+1}')^{J+1} \big)^{-1} L \big( \gamma_{n+1}( \delta'_{(J+1)} s) \big).
    \end{equation}
For $j \geq J+1$ we have
\begin{equation}
    \widetilde{\gamma}_{n+1}^{(j)}(s)= \widetilde{\rho}_{j-1} (\delta_{J+1} \delta_{J+1}'^{-1} )^{J-(j-1)} L \big( \gamma_{n+1}^{(j)}( \delta'_{(J+1)} s) \big).
\end{equation}
Then
the set $L(p(s))$ is contained in
\begin{equation}
\begin{split}
       \Big\{ \sum_{j=1}^{n+1} a_j \widetilde{\gamma}_{n+1}^{(j)}\big((\delta_{(J+1)}')^{-1}  s \big) :\;  |a_{J+1}| \sim 1,\;\; 
        |a_j| \lesssim ( (\delta_{J+1}')^{-1} \delta_{J+1})^{j-1-J}, \; J+2 \leq j \leq n+1  \Big\}.
    \end{split}
    \end{equation}
This is a piece of the form $\Xi_{n-J,J+1}( (\delta_{J+1}'^{-1}\delta_{J+1})^{-(n-J)} )$.
So we can apply the $l^{p/2}$ function estimate (Proposition \ref{0618.prop71}) with $(n,k)$ replaced by $(n-J,J+1)$. Hence, the first term of \eqref{0620.823} is bounded by
\begin{equation}
    \delta_{J+1}^{-O(\e)}  ( \delta_{J+1}'^{-1} \delta_{J+1})^{-(\frac12-\frac{2}{p } +\e) } \Big\| \big(\sum_{ s \in \I_{\delta_{(J+1)}}  }|F_{  p[ \vec{\delta}_{(J+1)} ](s)} |^{\frac{p}{2}}\big)^{\frac{2}{p}} \Big\|_{L^{p}}.
\end{equation}
This gives the desired result because this is the second term of \eqref{0629.831} on the right hand side.
\\

Let us next bound the second term of \eqref{0620.823}. We use $\widetilde{R}:=\delta_1^{-n} \cdots \delta_{J+1}^{-n+J}$.  We need to deal with
\begin{equation}
    \int_{\R^{n+1}} \Big| \sum_{s: |s-s'| \leq \delta_{(J+1)}' }F_{ p[\vec{\delta}_{(J+1)}](s) } \Big|^{\frac{p}{2}} w_U.
\end{equation}
The proof of this part is essentially parallel to that for dealing with the second term of \eqref{0627.718}.
As before, by using the linear transformation argument (see the discussion around \eqref{0623.648}), we may assume that $s'=0$, and we use the same rescaling map $L$. Since $U\| (q[ \le \si](0))^*$, the set $U$ has dimension \begin{equation}
    [-\widetilde{R},\widetilde{R}] \times [-\widetilde{R} \sigma, \widetilde{R} \sigma] \times \cdots \times [-\widetilde{R} \sigma^{n-J}, \widetilde{R} \sigma^{n-J}] \times B
\end{equation}
for some rectangular box $B \subset \R^{J}$ (see the discussion around \eqref{0630.5293}).
Apply $L$ to the frequency variables and $L^{-1}$ to the physical variables. The domain $U$  becomes $L^{-1}(U)$, which is comparable to 
\begin{equation*}
    [-1,1] \times [ -(\delta_{(J+1)}'^{-1} \sigma),(\delta_{(J+1)}'^{-1} \sigma)] \times \cdots \times  
    [-(\delta_{(J+1)}'^{-1} \sigma)^{n-J},(\delta_{(J+1)}'^{-1} \sigma)^{n-J} ]
    \times
    \widetilde{B}
\end{equation*}
for some rectangular box $\widetilde{B} \subset \mathbb{R}^{J+1}$ (compare this with \eqref{0628.7330}). By the assumption that $\sigma \geq \delta_{(J+1)}' \delta_{J+1}^{-\e}$, the set contains
\begin{equation}
    [-1,1] \times [-\delta_{J+1}^{-\e},\delta_{J+1}^{-\e} ]^{n-J-1} \times \tilde{B}.
\end{equation}
Freeze the first coordinate and the coordinates from the $(n-J+1)$th to the $(n+1)$th, and apply  the decoupling inequality for a nondegenerate curve in $\mathbb{R}^{n-J-1}$ by \cite{MR3548534}. To apply the theorem, we need to check that 
\begin{equation}\label{0630.848}
\frac{p}{2} \leq (n-J-1)(n-J).    
\end{equation}
Recall that we have the condition $p \leq p_{n-J}=(n-J)^2+(n-J)-2$. So \eqref{0630.848} is true. By the decoupling theorem, we have 
\begin{equation}
\begin{split}
    &\int_{\R^{n+1}} \Big| \sum_{s: |s-s'| \leq \delta_{(J+1)}' }F_{ p[\vec{\delta}_{(J+1)}](s) } \Big|^{\frac{p}{2}} w_U \\& 
    \lesssim (\delta_{J+1}^{-\e})^{(\frac12-\frac2p) \frac{p}{2}+\e} \sum_{s'' \in \I_{\delta''_{(J+1)}}: |s'-s''| \leq \delta_{(J+1)}' } \int_{\R^{n+1}}
    \Big| \sum_{s: |s-s''| \leq \delta_{(J+1)}'' }F_{ p[\vec{\delta}_{(J+1)}](s) } \Big|^{\frac{p}{2}} w_U.
\end{split}
\end{equation}
By embedding, the second term of \eqref{0620.823} is bounded by
\begin{equation}
    (\delta_{J+1}^{\e})^{-(\frac12-\frac{2}{p }+\e) } \Big\| \big(\sum_{ s'' \in \I_{\delta_{(J+1)}''} } \big| \sum_{  s \in \I_{\delta_{(J+1)}}:|s-s''| \leq \delta_{(J+1)}''   } F_{  p[ \vec{\delta}_{(J+1)} ](s)  }  \big|^{\frac{p}{2}}\big)^{\frac{2}{p}} \Big\|_{L^{p}}.
\end{equation}
This is the first term of \eqref{0629.831} on the right hand side.
\end{proof}

\subsection{Proof of Proposition \ref{05.30.prop.10.2}}
The proof is a simple application of Proposition \ref{0616.thm33}.    By Proposition \ref{0616.thm33} with $q=2$, we have 
\begin{equation}
\begin{split}\label{0617.821}
    \Big\| \big(\sum_{s } |T_sf|^2 \big)^{\frac12} \Big\|_{L^4(W_{B_R^{n+1}})}^{4} 
    \lesssim \delta^{-\e} \sum_{\de\le \si\le 1} \sum_{s'\in\I_\si} \sum_{U\parallel (q[ \le \si](s'))^*} |U| \Big(\int \sum_{s:|s-s'|\le \si}|T_{s}f|^2 w_U \Big)^2.
    \end{split}
\end{equation}
Recall that
\begin{equation}
    T_{s}f(x,t):= a_s \big(\wh f(\xi)\phi_{p[\vec{\delta}](s)} \big)^\vee \big((x,t)-R\bfn_s \big)
\end{equation}
where $|a_s| \lesssim 1$.
By the geometry of the tiles $U$, \eqref{0617.821} is bounded by
\begin{equation}\label{0701.853}
    \delta^{-\e} \sum_{\de\le \si\le 1} \sum_{s'\in\I_\si} \sum_{U\parallel (q[ \le \si](s'))^*} |U| \Big(\int \sum_{s:|s-s'|\le \si}|( \hat{f} \phi_{p[\vec{\delta}](s)} )^{\vee}(x,t) |^2 w_U(x,t) \Big)^2.
\end{equation}
For a fixed $U$, we tile  the frequency space $\mathbb{R}^{n+1}$ by translates of the dual set of $U$ (which is denoted by $U^*$).
By the $L^2$-orthogonality, we have
\begin{equation}\label{07018540}
\begin{split}
\int_{\R^{n+1}} \sum_{s:|s-s'|\le \si}|( \hat{f} \phi_{p[\vec{\delta}](s)} )^{\vee} |^2 w_U
&\lesssim \int_{\R^{n+1}} \sum_{s:|s-s'|\le \si} \sum_{V \| U^* } |( \hat{f} \phi_{p[\vec{\delta}](s)} \chi_{V} )^{\vee} |^2 w_U
\\& \lesssim
    \int_{\R^{n+1}} \sum_{V \| U^* } \sum_{s:|s-s'|\le \si}  |( \hat{f} \phi_{p[\vec{\delta}](s)} \chi_{V} )^{\vee} |^2 w_U.
\end{split}
\end{equation}
Let us fix $\delta \leq \sigma \leq 1 $. By Lemma \ref{conv}, there exists $\vec{\delta'}=(\delta_1',\ldots,\delta_{n-1}')$ so that
\begin{enumerate}
    \item $\sigma=\delta_1' \cdots \delta_{n-1}'$

    \item $100p[\vec{\delta'}](s')$ contains the convex hull of $\cup_{|s-s'|\leq \sigma} p[\vec{\delta}](s)$.
\end{enumerate}
By Lemma \ref{finoverlem},  $\{p[\vec{\delta}](s)\}_s$ is $O(1)$-overlapping.
Hence, \eqref{07018540} is bounded by
\begin{equation}
    \int_{\R^{n+1}} \sum_{V \| U^* }   \Big| (  \hat{f}  \phi_{ p[\vec{\delta'}](s')} \chi_{V} )^{\vee}  \Big|^2 w_U.
\end{equation}
By these inequalities, \eqref{0701.853} is bounded by 
\begin{equation}
\delta^{-\e} \sum_{\de\le \si\le 1} \sum_{s'\in\I_\si} \sum_{U\parallel (q[ \le \si](s'))^*} \int  \Big(\sum_{V \| U^*}  \big|( \hat{f} \phi_{ p[\vec{\delta'}](s')} \chi_V )^{\vee} \big|^2 \Big)^2 W_U.
\end{equation}
This is further bounded by
\begin{equation}\label{0701.955}
\begin{split}
    & \lesssim \delta^{-\e} \sum_{\de\le \si\le 1} \sum_{s'\in\I_\si} \sum_{U\parallel (q[ \le \si](s'))^*}  \int |(\hat{f} \phi_{ p[\vec{\delta'}](s')})^{\vee}|^4 W_U
    \\& \lesssim 
    \delta^{-\e} \sum_{\de\le \si\le 1} \sum_{s'\in\I_\si}   \int |(\hat{f} \phi_{ p[\vec{\delta'}](s')})^{\vee}|^4 W_{B_R}
   \\& \lesssim 
    \delta^{-2\e} \sup_{\sigma} \big( \sum_{s'\in\I_\si}   \int |(\hat{f} \phi_{ p[\vec{\delta'}](s')})^{\vee}|^4 W_{B_R} \big).
\end{split}
\end{equation}
The first inequality follows from 
 the Rubio de Francia inequality \cite{MR850681}.
We claim that for every $2 \leq p \leq \infty$ we have
\begin{equation}\label{0701.8585}
    \big( \sum_{s'\in\I_\si}   \int |(\hat{f} \phi_{ p[\vec{\delta'}](s')})^{\vee}|^p W_{B_R} \big)^{\frac1p} \lesssim \big(\int |f|^p W_{B_R} \big)^{\frac1p}.
\end{equation}
By interpolation, it suffices to prove for $p=2$ and $p=\infty$. To prove the claim for $p=2$, it suffices to prove that $\{p[\vec{\delta}'](s')\}_{s' \in \I_{\sigma} }$ are $O(1)$-overlapping. This is proved in Lemma \ref{finoverlem}. On the other hand, since $\|\widehat{\phi}_{p[\vec{\delta}'](s')} \|_{\infty} \lesssim 1$ it is straightforward to prove the claim for $p=\infty$. Hence,  by interpolation, we obtain \eqref{0701.8585}.
In summary, by \eqref{0617.821}, \eqref{0701.955} and \eqref{0701.8585}, we have proved
\begin{equation}
    \Big\| \big(\sum_{s } |T_sf|^2 \big)^{\frac12} \Big\|_{L^4(W_{B_R^{n+1}})} \lesssim \delta^{-2\e} \big(\int |f(x)|^4 W_{B_R}(x,t)\, dxdt \big)^{\frac14}.
\end{equation}
Since $f$ is independent of $t$, this implies
\begin{equation}
    \Big\| \big(\sum_{s } |T_sf|^2 \big)^{\frac12} \Big\|_{L^4(W_{B_R^{n+1}})}^{4} \lesssim R^{1+O(\e)}\|f\|_{L^4(W_{B_R^{n}})}^4.
\end{equation}
This completes the proof.

\section{Proof of Theorem \ref{2025.04.25.thm11}}

In this section, we prove Theorem \ref{2025.04.25.thm11}.

 The theorem is obvious when $p=2$ and $p=\infty$. By interpolation, it suffices to prove the estimate for $p=4$ and $p=n^2+n-2$. For the rest of the section, let us assume that $p=4$ or $p=n^2+n-2$.
 By a standard localization argument,   we may assume that $f$ is supported on the ball $B_{CR}$ for some constant $C>0$. It suffices to prove 
\begin{equation}
    \|\mathrm{BR}_R(f)\|_{L^p(B_R)} \leq C_{\epsilon}R^{\epsilon} \big( 1+R^{\frac1n(\frac{1}{2}-\frac{2}{p}) }  +R^{\frac1n}R^{-\frac{n+1}{2p}-\frac{1}{np}} \big) \|f\|_{L^p}
\end{equation}
Here, $B_{CR}$ and $B_{R}$ have the same center. By the uncertainty principle, we may further assume that $f$ is constant on a unit ball.
By normalization, we may assume that $\|f\|_p=1$. Write
\begin{equation}
    f(x)= \sum_{j} f(x)\chi_{|f(x)| \sim 2^j}.
\end{equation}
Since $f$ is constant at scale $1$ and supported on $B_{CR}$, after discarding the part where \(|f|\le R^{-N}\), which contributes a rapidly
decaying error, there are $O(\log R)$ many $j$'s. Denote by $f_j$ the function $f(x)\chi_{ |f(x)| \sim 2^j}$. By the wave envelope estimate (Theorem \ref{0501.thm12}),
\begin{equation*}
        \|\mathrm{BR}_R(f_j)\|_{L^p(B_R)} \leq C_{\e}R^{\e} R^{\frac1n(\frac12-\frac2p)} \Big( \sum_{ \substack{ R^{-\frac1n} \leq s \leq 1: \\ dyadic } } \sum_{\tau \in \Theta^n(s^{-n}) } \sum_{U \| U_{\tau,R}  } |U| \|S^{curve}_{U,{\frac{p}{2}}}f_j\|_{L^{\frac{p}{2}}}^p \Big)^{\frac1p}. 
    \end{equation*}
    Recall that
    \begin{equation}
    S_{U,\frac{p}{2}}^{curve}f(x)= \Big(\sum_{\theta \subset \tau}|f_{\theta}(x)|^{\frac{p}{2}} w_U(x) \Big)^{\frac2p}.
\end{equation}
We need to prove
\begin{equation}\label{05.30.83}
    \sum_{ \substack{ R^{-\frac1n} \leq s \leq 1: \\ dyadic } } \sum_{\tau \in \Theta^n(s^{-n}) } \sum_{U \| U_{\tau,R}  } |U| \|S^{curve}_{U,{\frac{p}{2}}}f_j\|_{L^{\frac{p}{2}}(\R^n)}^p \lesssim \|f_j\|_{L^p(\R^n)}^p.
\end{equation}
Since there are $\log(R)$ many $s'$, it suffices to prove
\begin{equation}
    \sum_{\tau \in \Theta^n(s^{-n}) } \sum_{U \| U_{\tau,R}  } |U| \|S^{curve}_{U,{\frac{p}{2}}}f_j\|_{L^{\frac{p}{2}}(\R^n)}^p \lesssim \|f_j\|_{L^p(\R^n)}^p.
\end{equation}
By direct calculations, we have
\begin{equation}
\begin{split}
    |U|\|S^{curve}_{U,{\frac{p}{2}}}f_j\|_{L^{\frac{p}{2}}(\R^n)}^p &\lesssim 
     |U|\|S^{curve}_{U,2}f_j\|_{L^{2}(\R^n)}^4 \sup_{\theta} \|(f_{j})_{\theta}\|_{\infty}^{p-4}
     \\&
     \lesssim |U|\|S^{curve}_{U,2}f_j\|_{L^4}^4 \|f_j\|_{\infty}^{p-4}.
\end{split}
\end{equation}
Hence,
\begin{equation}
    \begin{split}
         \sum_{\tau \in \Theta^n(s^{-n}) } \sum_{U \| U_{\tau,R}  } |U| \|S^{curve}_{U,{\frac{p}{2}}}f_j\|_{L^{\frac{p}{2}}(\R^n)}^p
         \lesssim  \|f_{j}\|_{\infty}^{p-4} \int (\sum_{\theta} |(f_j)_{\theta}|^2)^2.
    \end{split}
\end{equation}
To proceed with the proof, we use the following inequality
    \begin{equation}
    \int (\sum_{\theta} |(f_j)_{\theta}|^2)^2 \lesssim \|f_j\|_4^4.
\end{equation}
This inequality can be proved by following the proof of Proposition \ref{05.30.prop.10.2}. Since the proof is identical, we do not reproduce it here.
Since $f_j(x)=f(x) \chi_{|f(x)| \sim 2^j}$, we have $\|f_j\|_{\infty}^{p-4}\|f_j\|_4^4 \sim \|f_j\|_p^p$. Combining all these inequalities gives
\begin{equation}\label{070199}
    \sum_{\tau \in \Theta^n(s^{-n}) } \sum_{U \| U_{\tau,R}  } |U| \|S^{curve}_{U,{\frac{p}{2}}}f_j\|_{L^{\frac{p}{2}}(\R^n)}^p
         \lesssim  \|f_j\|_p^p.
\end{equation}
So we proved \eqref{05.30.83}, and this completes the proof.

\section{Sharpness of Theorem \ref{2025.04.25.thm11}}\label{sec10}

In this section, we give a heuristic explanation for the sharpness of the Bochner-Riesz type estimate (Theorem \ref{2025.04.25.thm11}). The idea goes back to a counterexample for a ball multiplier operator by  \cite{MR296602}.

Let $C_p(R)$ be the smallest constant such that
\begin{equation}
    \|\mathrm{BR}_R(f)\|_{L^p} \leq C_p(R) \|f\|_{L^p}
\end{equation}
for all functions $f$. We need to show that
\begin{equation}\label{0702.102}
    C_p(R) \gtrsim 1+R^{\frac1n(\frac{1}{2}-\frac{2}{p}) }  +R^{\frac1n}R^{-\frac{n+1}{2p}-\frac{1}{np}}.
\end{equation}
We will construct two examples to prove the lower bound \eqref{0702.102}. \\

The first example can be constructed as follows. Let $f=\sum_{\theta}f_{T_{\theta}}$ where $f_{T_{\theta}}$ is a wave packet with height one, and $T_{\theta}$ has dimension $R^{1/n} \times R^{2/n} \times \cdots \times  R$. Here, $\theta$ determines the direction of $T_{\theta}$ in a canonical way. The volume of each plank is $R^{\frac{n+1}{2}}$. The number of planks is $R^{\frac1n}$. We take $T_{\theta}$ so that a translation of $T_{\theta}$ by $R$ units along the longest direction intersects the origin.
So we have
\begin{equation}
    \|f\|_{L^p} \lesssim R^{\frac{n+1}{2p}+\frac{1}{np}}.
\end{equation}
On the other hand, heuristically, $\mathrm{BR}_R(f_{T_{\theta}})$ is a wave packet with height comparable to one and essentially supported on $10T_{\theta}$.
Considering the outer and inner parts, we have
\begin{equation}
    \begin{split}
        \|\mathrm{BR}_Rf\|_{L^p} \gtrsim R^{\frac1n}+R^{\frac{n+1}{2p}+\frac{1}{np}}.
    \end{split}
\end{equation}
So we have
\begin{equation}
    C_p(R) \gtrsim 1+ R^{\frac1n}R^{-\frac{n+1}{2p}-\frac{1}{np}}.
\end{equation}
This gives the first and third terms on the right hand side of \eqref{0702.102}.
\\

The second example can be constructed as follows. Let $\{a_{\theta}\}$ be random signs.
Let  $ f = \sum_{\theta} a_{\theta}\sum_{T_{\theta}} f_{T_{\theta}}$ where $f_{T_{\theta}}$ is a wave packet with height one. We stack these planks so that for given $\theta$ the union of the planks $\bigcup T_{\theta}$ has  dimension $R^{(n-1)/n} \times R^{(n-1)/n} \times R^{(n-1)/n} \times \cdots \times R^{(n-1)/n}  \times  R$. We take the  planks so that a translation of $\cup T_{\theta}$ by $R$ units along the longest direction intersects the origin. By computations, for given $\theta$, 
\begin{equation}
    \Big| \bigcup T_{\theta} \Big| \sim   R^{\frac{(n-1)^2}{n}+1 } \sim R^{n-1+1/n}.
\end{equation}
Recall that the number of directions is comparable to $R^{1/n}$.
This gives the bound
\begin{equation}
    \|f\|_{L^p} \lesssim R^{(n-1+\frac2n )\frac1p }.
\end{equation}
On the other hand, heuristically, $\mathrm{BR}_R(\sum_{T_{\theta}} f_{T_{\theta}})$ is a wave packet with height comparable to one and essentially supported on $ \bigcup 10T_{\theta}$.
Using the square root cancellation bound on $B_{R^{(n-1)/n}}$, we have
\begin{equation}
\begin{split}
    \|\mathrm{BR}_Rf\|_{L^p} \gtrsim  R^{\frac{n-1}{p}+\frac{1}{2n}}.
\end{split}
\end{equation}
This gives the lower bound
\begin{equation}
    C_p(R) \gtrsim R^{\frac{n-1}{p}+\frac{1}{2n}}R^{-(n-1+\frac2n )\frac1p } \sim R^{\frac1n(\frac12-\frac2p)}. 
\end{equation}
This gives the second bound on the right hand side of \eqref{0702.102}. This explains the lower bound of \eqref{0702.102}.

\appendix
\section{} 
Recall the following result from \cite[Appendix D]{MR4861588}.
\begin{lemma}\label{BGHSlem} 
    For $a\in C_c^\infty (\R)$ supported in an interval $I\subset \R$ and $\phi\in C^\infty(I)$, define the oscillatory integral 
    \[ \mathcal I[\phi,a]:=\int_\R e^{i\phi(s)}a(s)\mathrm{d}s. \]
Let $r\ge 1$. Suppose that for each $j\in \mathbb N_0$, there exist constants $C_j\ge 1$ such that the following conditions hold on $\supp\ a$:
\begin{enumerate}
    \item $|\phi'(s)|>0$,
    \item $|\phi^{(j)}(s)|\le C_j r^{-(j-1)}|\phi'(s)|^j$ for all $j\ge 2$,
    \item $|a^{(j)}(s)|\le C_j r^{-j}|\phi'(s)|^j$ for all $j\ge 0$.
\end{enumerate}
Then for all $N\in\mathbb N_0$ there exists some constant $C(N)$ such that
\[ |\mathcal{I}[\phi,a]|\le C(N) \cdot |\supp\ a|\cdot r^{-N}. \]
    
\end{lemma}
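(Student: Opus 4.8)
The plan is to prove Lemma~\ref{BGHSlem} by the classical non-stationary phase principle, that is, by repeatedly integrating by parts against the oscillating factor $e^{i\phi}$. First I would introduce the first-order differential operator $L$ defined by $Lg:=\frac{1}{i\phi'}g'$, which is well defined on $\supp a$ by hypothesis (1) and satisfies $L(e^{i\phi})=e^{i\phi}$. Since $a$ has compact support in the interior of $I$, integration by parts produces no boundary terms, so for every $N\in\mathbb N_0$,
\[ \mathcal I[\phi,a]=\int_{\R}L^N(e^{i\phi})\,a\,\mathrm{d}s=\int_{\R}e^{i\phi}\,(L^t)^N a\,\mathrm{d}s, \]
where $L^t g=-\frac{\mathrm{d}}{\mathrm{d}s}\!\bigl(\frac{g}{i\phi'}\bigr)$ is the formal transpose of $L$. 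The whole estimate then reduces to the pointwise bound $|(L^t)^N a(s)|\le C(N)\,r^{-N}$ on $\supp a$, after which $|\mathcal I[\phi,a]|\le\int_{\supp a}|(L^t)^N a|\le C(N)\,|\supp a|\,r^{-N}$ is immediate.

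To establish the pointwise bound I would prove, by induction on $N$, the structural claim that $(L^t)^N a$ is a finite linear combination, with coefficients depending only on $N$, of terms of the form
\[ a^{(k_0)}\Bigl(\prod_{l=1}^{j}\phi^{(k_l)}\Bigr)(\phi')^{-(N+j)},\qquad k_0\ge 0,\ k_l\ge 2,\ k_0+\sum_{l=1}^{j}(k_l-1)=N. \]
The base case $N=0$ is trivial. For the inductive step one applies the Leibniz and quotient rules to $-\tfrac{1}{i}\frac{\mathrm{d}}{\mathrm{d}s}\bigl(a^{(k_0)}\prod_l\phi^{(k_l)}(\phi')^{-(N+j+1)}\bigr)$: differentiating the factor $a^{(k_0)}$ raises $k_0$ by one; differentiating a factor $\phi^{(k_l)}$ raises that $k_l$ by one; differentiating the power of $\phi'$ splits off a new factor $\phi''=\phi^{(2)}$ and increases $j$ by one. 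In each of these cases the exponent of $(\phi')^{-1}$ becomes $N+1+j'$ and the balance sum becomes $N+1$, so the form is preserved with $N$ replaced by $N+1$. The balance condition together with $k_l\ge 2$ forces $j\le N$, $k_0\le N$, and $k_l\le N+1$, so only finitely many terms occur and only the constants $C_0,\dots,C_{N+1}$ are involved.

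With this structure in hand, hypotheses (2) and (3) yield for a single term
\[ \Bigl|a^{(k_0)}\Bigl(\prod_{l=1}^{j}\phi^{(k_l)}\Bigr)(\phi')^{-(N+j)}\Bigr|\le C\,r^{-k_0}|\phi'|^{k_0}\prod_{l=1}^{j}r^{-(k_l-1)}|\phi'|^{k_l}\,|\phi'|^{-(N+j)}=C\,r^{-N}, \]
since the total power of $|\phi'|$ collapses to $k_0+\sum_l k_l-(N+j)=0$ by the balance condition, while the power of $r^{-1}$ is exactly $k_0+\sum_l(k_l-1)=N$. Summing over the finitely many terms gives $|(L^t)^N a|\le C(N)r^{-N}$ and completes the argument. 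I expect the only genuine obstacle to be the inductive verification of the structural claim — in particular checking that the balance condition is preserved under each way $L^t$ can act — but this is a routine Fa\`a di Bruno–type bookkeeping; the crucial conceptual point, namely that the hypotheses are phrased relative to powers of $|\phi'|$ so that dividing by $(\phi')^{N+j}$ costs nothing, is already built into the statement.
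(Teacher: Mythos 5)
Your proof is correct and complete: the paper itself states Lemma \ref{BGHSlem} without proof, citing Appendix D of \cite{MR4861588}, and your argument is precisely the standard non-stationary phase proof that the cited reference uses. The inductive structural claim, the balance condition $k_0+\sum_l(k_l-1)=N$, and the observation that the powers of $|\phi'|$ cancel exactly are all verified correctly, so nothing further is needed.
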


\bibliographystyle{alpha}
\bibliography{reference}

\end{document}